\documentclass[11pt,reqno]{amsart}
\usepackage{amsmath,amssymb,amsthm,mathtools}
\usepackage[margin=1in]{geometry}
\usepackage{microtype}
\usepackage[hidelinks,hypertexnames=false]{hyperref}
\usepackage[nameinlink,capitalize,noabbrev]{cleveref}
\usepackage{aliascnt}
\usepackage{enumitem}
\usepackage{mathrsfs}
\usepackage{tikz}
\usepackage{cite}
\usetikzlibrary{arrows.meta}

\allowdisplaybreaks
\numberwithin{equation}{section}

\theoremstyle{plain}
\newtheorem{theorem}{Theorem}[section]
\newaliascnt{proposition}{theorem}
\newtheorem{proposition}[proposition]{Proposition}
\aliascntresetthe{proposition}
\newaliascnt{lemma}{theorem}
\newtheorem{lemma}[lemma]{Lemma}
\aliascntresetthe{lemma}
\newaliascnt{corollary}{theorem}
\newtheorem{corollary}[corollary]{Corollary}
\aliascntresetthe{corollary}

\theoremstyle{definition}
\newaliascnt{definition}{theorem}
\newtheorem{definition}[definition]{Definition}
\aliascntresetthe{definition}

\theoremstyle{remark}
\newaliascnt{remark}{theorem}

\aliascntresetthe{remark}
\crefname{theorem}{Theorem}{Theorems}
\crefname{proposition}{Proposition}{Propositions}
\crefname{lemma}{Lemma}{Lemmas}
\crefname{corollary}{Corollary}{Corollaries}
\crefname{definition}{Definition}{Definitions}
\crefname{remark}{Remark}{Remarks}

\newcommand{\R}{\mathbb R}
\newcommand{\E}{\mathbb E}
\newcommand{\Pp}{\mathcal P}
\newcommand{\M}{\mathcal M}
\newcommand{\supp}{\operatorname{supp}}
\newcommand{\sech}{\operatorname{sech}}
\newcommand{\Var}{\operatorname{Var}}
\newcommand{\dd}{\,\mathrm d}
\newcommand{\cN}{\mathcal N}
\newcommand{\cU}{\mathcal U}

\newcommand{\Law}{\operatorname{Law}}

\title[RSB at arbitrary depth]{Replica symmetry breaking at arbitrary depth}
\author{P.\ M.\ Aronow}
\author{Patrick Lopatto}
\date{}

\begin{document}

\begin{abstract}
We study finite-step replica symmetry breaking (RSB) in mixed even $p$-spin Ising spin glasses at positive temperature and zero external field.
For every integer $k\geq1$, we construct finite polynomial mixtures whose Parisi measures are exactly $k$-RSB, with each such phase persisting on an open set of coefficients.
More generally, for any admissible finite polynomial covariance $\Xi$ with a nondegenerate $r$-RSB Parisi measure, adding $\lambda q^p$ for sufficiently large even $p$ produces, as $\lambda$ varies near a critical value, a unique $r$-RSB-to-$(r+1)$-RSB transition. Since such covariances exist at every finite RSB depth, this yields transitions of arbitrary depth.
The main idea of the proof is that for large $p$, the perturbation $\lambda q^p$ is negligible for overlaps bounded away from $1$, while its effect is concentrated at overlaps very close to $1$.
\end{abstract}

\maketitle

\tableofcontents

\section{Introduction}

A central problem in mean-field spin glass theory is to determine how the equilibrium structure changes with temperature and other parameters of the model.  The Parisi formula expresses the limiting free energy as the minimum of a variational problem over probability measures, and its unique minimizer, called the Parisi measure, serves as the functional order parameter for the model.  In the physical interpretation, this measure describes the distribution and hierarchical organization of overlaps, which quantify the similarity between pairs of typical configurations.  Replica symmetry corresponds to the absence of a nontrivial hierarchy, while replica symmetry breaking describes the emergence of several levels of similarity, interpreted physically as configurations arranged into clusters and nested subclusters.

Within the variational framework, this hierarchy is reflected in the support of the Parisi measure.  A single support point corresponds to a replica-symmetric phase; two support points describe one-step replica symmetry breaking; larger finite supports describe finitely many nested levels; and support filling an interval corresponds to a continuous hierarchy, called full replica symmetry breaking (FRSB).  We say that a Parisi measure is \(k\)-step replica symmetry breaking, or \(k\)-RSB, if it has exactly \(k+1\) support points.

The hierarchical picture of replica symmetry breaking originates in
Parisi's solution of mean-field spin glasses \cite{Parisi1979,Parisi1980Sequence}.
In particular, Parisi's finite-step approximations organized this hierarchy
through a sequence of increasingly refined finite-step order parameters. 
A classical example of a transition between distinct replica-symmetry-breaking structures is Gardner's analysis of the low-temperature instability of a one-step phase of pure $p$-spin models \cite{Gardner1985}.  In Gardner's original scenario, a
one-step replica-symmetry-breaking phase loses stability toward a phase with a
continuous hierarchy of overlaps.  Related phenomena appear in spherical spin
glasses.  Crisanti and Sommers developed a foundational analysis of the
spherical model \cite{CrisantiSommers1992}, while the  analysis of the spherical \(2+p\) model by Crisanti and
Leuzzi exhibited transitions among \(1\)-RSB, FRSB, and an intermediate
\(1\)-FRSB phase, in which a continuous hierarchy of overlaps coexists with a
single additional discrete replica-symmetry-breaking step
\cite{CrisantiLeuzzi2006}.  These predictions illustrate that the structure of the overlap hierarchy may change in subtle ways as
parameters of a spin glass are varied.

The rigorous theory of mean-field spin glasses is built on the Parisi variational formula.  Following Guerra's interpolation bound for the Parisi functional \cite{Guerra2003}, Talagrand proved the formula for the Sherrington--Kirkpatrick model and mixed even \(p\)-spin models \cite{Talagrand2006}, and Panchenko extended it to general mixtures, including odd-spin interactions \cite{Panchenko2014}. 
Talagrand's subsequent study of Parisi measures
\cite{Talagrand2006ParisiMeasures} developed differentiability and
variational properties of the order parameter that underlie subsequent work
on its support.
Auffinger and W.-K.\ Chen proved uniqueness of the Parisi minimizer
\cite{AuffingerChen2015unique} and established basic structural properties of
Parisi measures, including the presence of the origin in the support at zero
external field and smoothness of the density on intervals contained in the
support \cite{AuffingerChen2015properties}.   Jagannath and Tobasco recast the Parisi functional through
stochastic dynamic programming and developed first-variation criteria that
are particularly effective for determining the support of the minimizer
\cite{JagannathTobasco2016,JagannathTobasco2017}.  These variational
conditions are the starting point for the analysis below.

Several rigorous results describe particular forms of replica symmetry
breaking in Ising models.  For the zero-field Sherrington--Kirkpatrick model,
Zhou proved FRSB for inverse temperatures just above the critical value: the
support is an interval starting at zero, the measure has a smooth density
below its endpoint, and the endpoint is an atom \cite{Zhou2026FRSB}.  A recent
preprint extends this description throughout the low-temperature phase,
showing that the positive-temperature Parisi measure has support
\([0,q_\beta]\), a smooth density on \([0,q_\beta)\), and a single terminal
atom \cite{Lopatto2026SKFRSB}.  More generally, Auffinger, W.-K.\ Chen, and Zeng proved that the
zero-temperature Parisi minimizer of every mixed \(p\)-spin Ising model has
infinitely many points in its support
\cite{AuffingerChenZeng2020}.  As a consequence, they showed that for every \(k\geq1\), the
positive-temperature Parisi measure of the same model has at least \(k+1\)
support points at all sufficiently low temperatures.
A recent preprint by H.-B.\ Chen gives a stronger zero-temperature
description of the Sherrington--Kirkpatrick model in which the canonical order parameter is locally absolutely
continuous on \([0,1)\), with smooth density and full support there
\cite{Chen2026ZeroTempFRSB}.  Thus the rigorous picture for the Sherrington--Kirkpatrick model is
one of continuous, rather than finite-step, symmetry breaking throughout the
spin-glass phase.

For the pure Ising \(p\)-spin model, Talagrand established overlap-gap
properties associated with one-step symmetry breaking just below the critical
temperature for sufficiently large \(p\) \cite{Talagrand2000LowTemp}.  More recently, Zhou proved a unique first
critical inverse temperature and a nonempty \(1\)-RSB interval
\cite{Zhou2024Gardner}.  A sequel proves a unique second transition to an
FRSB phase whose support has the form
\(\{0\}\cup[q_\beta,q'_\beta]\) \cite{Zhou2026GardnerII}.  

In the setting of spherical models, Talagrand's multiscale construction \cite{Talagrand2000Multiple} produced systems with an arbitrarily large finite number of well-separated overlap regions, with new overlap scales appearing successively as the temperature decreases.  This 
result localizes the overlap to a union of disjoint intervals and gives
positive mass to the relevant intervals, but it does not show that the
overlap distribution is supported on exactly one point in each interval. The spherical Parisi formula was subsequently placed on a rigorous footing by Talagrand \cite{Talagrand2006Spherical}.  

More recent work has developed a detailed variational theory of finite-step
structure in the spherical setting.  Chen and Sen proved exact \(1\)-RSB at
zero temperature for the pure spherical \(p\)-spin model with $p\ge 3$ 
\cite{ChenSen2017Spherical}, while Jagannath and Tobasco developed the
corresponding zero-temperature variational theory and related spherical RSB
to the coincidence set of an obstacle problem
\cite{JagannathTobasco2017LowTemp,JagannathTobasco2018Spherical}.
Auffinger and Zeng constructed exact two-step examples at zero temperature
\cite{AuffingerZeng2019}, and Zhou developed general criteria for finite-step
spherical Parisi measures and gave an explicit exact \(3\)-RSB example at
zero temperature \cite{Zhou2025SphericalParisi}. Auffinger and Zhou classified the zero-temperature spherical \(p+s\)
model throughout its parameter space, including exact transitions between
\(1\)-RSB and \(2\)-RSB phases as well as the appearance of \(1\)-FRSB and
\(2\)-FRSB phases \cite{AuffingerZhou2025}. They also showed that, in parameter regimes with a continuous zero-temperature
component, the RSB depth at positive temperature becomes arbitrarily large as
the temperature tends to zero. 

We are interested here in realizing a prescribed finite number of levels of
replica symmetry breaking.  To our knowledge, prior rigorous constructions of
finitely supported Parisi order parameters in mean-field Gaussian spin glasses with
covariance determined solely by the overlap have not gone beyond \(3\)-RSB.
In particular, no such model on either the hypercube or the sphere
has been shown to be exactly \(k\)-RSB for any integer \(k\ge4\). 
Nor has prior work established transitions from \(r\)-RSB to
\((r+1)\)-RSB at arbitrarily large finite RSB depth.  Our contribution is to
establish both phenomena.

For every integer \(k\geq1\), we construct finite polynomial mixed even
\(p\)-spin Ising models whose Parisi measures are exactly \(k\)-RSB, with the
resulting \(k\)-RSB phase persisting on an open set of coefficients.  More generally, starting from any admissible
finite polynomial covariance with a nondegenerate \(r\)-RSB Parisi measure,
we show that adding \(\lambda q^p\), for sufficiently large even \(p\),
produces, as \(\lambda\) varies near a critical value, a unique transition
from \(r\)-RSB to \((r+1)\)-RSB.  Since the construction can be iterated,
such transitions occur at every finite RSB depth. 

The main idea is that, for large \(p\), the perturbation
\(\lambda q^p\) is negligible for overlaps bounded away from \(1\), while its
effect is concentrated at overlaps very close to \(1\).  This allows the
pre-existing support of the Parisi measure to remain perturbatively stable
while a single additional support point is created near \(1\). 

\subsection{Main results}

We consider mixed even \(p\)-spin Ising models at zero external field.  Fix a finite set \(\mathscr P\subset\{2,4,6,\ldots\}\) and positive coefficients
\((a_p)_{p\in\mathscr P}\) with \(\sum_{p\in\mathscr P}a_p=1\).  At inverse
temperature \(\beta\), let \(H_N\) be the centered Gaussian Hamiltonian on
\(\Sigma_N=\{-1,1\}^N\) with normalized covariance mixture 
\begin{equation*}
 \xi_0(q)=\sum_{p\in\mathscr P}a_pq^p,
 \qquad q\in[-1,1].
\end{equation*}
Thus
\begin{equation*}
 \E H_N(\sigma^1)H_N(\sigma^2)
 =N\beta^2\xi_0(R_{12}),
 \qquad
 R_{12}=\frac1N\sum_{i=1}^N\sigma_i^1\sigma_i^2.
\end{equation*}
We write
\[
 \Xi=\beta^2\xi_0
\]
for the effective covariance.  Let \(\M\) denote the probability measures on
\([0,1]\), and let \(\Pp_\Psi\) denote the Parisi functional with effective
covariance \(\Psi\); its explicit form is recalled in \Cref{sec:geometry}.
The Parisi measure is the unique minimizer of \(\Pp_\Xi\).

We now state our two main theorems, which are proved in \Cref{s:mainresults}. 
\begin{theorem}\label{thm:main}
For every integer $k\geq1$, there exist even integers
\[
 4=p_0<p_1<\cdots<p_k
\]
and a nonempty open set $\mathcal O_k\subset(0,\infty)^{k+1}$ such that, for
every $\mathbf c=(c_0,\ldots,c_k)\in\mathcal O_k$, the mixed even $p$-spin
Ising model at zero external field with covariance function
\[
 \Xi_{\mathbf c}(q)=\sum_{j=0}^k c_jq^{p_j}
\]
has a $k$-RSB Parisi measure.
\end{theorem}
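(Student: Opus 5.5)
We argue by induction on $k$, using the abstract's perturbative mechanism: adding $\lambda q^{p}$ with $p$ large creates exactly one new support point near $1$, and leaves the existing support essentially unchanged. Call an $r$-RSB Parisi measure \emph{nondegenerate} if the first-variation (Jagannath--Tobasco) conditions hold strictly away from its support. Concretely, the function $\Gamma(u)=\int_u^1\bigl(\xi''(s)\,\E\,\partial_x\Phi(s,X_s)^2 - \ldots\bigr)\,ds$, or its equivalent from \Cref{sec:geometry}, is strictly negative off the support points, with nonvanishing second-order behaviour at each of them. Under this condition the finite-dimensional optimality system for the weights and locations has an invertible Jacobian. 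This nondegeneracy is the hypothesis we propagate through the induction.

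\emph{Base case $k=1$.} We take $\Xi=c_0q^4+c_1q^{p_1}$ and produce the $1$-RSB measure as the transition step applied to a replica-symmetric starting point. Since the field is zero, the origin lies in the support. For $c_0$ below the critical value $c_0<1/\sqrt{\xi''}$-type threshold, the measure $\delta_0$ is the Parisi measure, and it is nondegenerate because $\Xi''(0)=0$ gives strict stability near $0$ for $q^4$. We then apply the perturbation step, described next, with $r=0$.

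\emph{Perturbation step.} Given admissible $\Xi$ with a nondegenerate $r$-RSB Parisi measure $\mu$ supported on $0=q_0<\cdots<q_r<1$, set $\Xi_\lambda=\Xi+\lambda q^p$. We carry out three steps.
\begin{enumerate}[label=(\roman*)]
\item For $p$ large and $\lambda$ in a bounded window, $\lambda q^p$ and its derivatives are $O(\lambda(1-\eta)^{p})$-small on $[0,1-\eta]$. The implicit function theorem applied to the finite optimality system then gives a perturbed $r$-RSB candidate $\mu_\lambda$, together with an $(r+1)$-RSB candidate $\mu'_\lambda$. The latter carries one extra atom at $q_{r+1}(\lambda)\in(1-\eta,1)$; its existence requires a scaling analysis near $1$, in the variable $s=p(1-q)$.
\item We compute the first-variation function of $\mu_\lambda$ near $q=1$. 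Its sign near $1$ is governed by a one-variable quantity of the form $\lambda p\,q^{p-1}$ against the local curvature term $\Xi'(1)-\Xi'(q)$. Exactly one critical $\lambda_c$ exists at which this function touches $0$ at an interior point. Below $\lambda_c$ the candidate $\mu_\lambda$ is optimal; above $\lambda_c$ we switch to $\mu'_\lambda$ and verify the first-variation conditions for it.
\item We verify that strict inequality persists on $[0,1-\eta]$ by continuity from $\mu$, and that the new configuration is again nondegenerate.
\end{enumerate}
Optimality then follows by uniqueness of the Parisi measure together with the Jagannath--Tobasco sufficiency criterion.

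\emph{Openness and conclusion.} Nondegeneracy is an open condition: the Jacobian stays invertible and the strict inequalities persist under small perturbations of all coefficients $(c_0,\ldots,c_k)$, not merely of $\lambda$. Hence the $k$-RSB property holds on an open set $\mathcal O_k$. Iterating the step $k$ times, with $p_1<\cdots<p_k$ chosen successively larger and $\lambda_j$ slightly above each critical value, yields the theorem.

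The main obstacle is step (ii). One must show that near $q=1$ the first-variation condition for the perturbed problem reduces, after rescaling, to a $p$-independent limiting problem that has exactly one new support point and strict inequalities elsewhere. One must also control uniformly in $p$ the Parisi PDE solution on $[1-\eta,1]$, where $\Xi_\lambda''$ is large (of order $\lambda p^2$). My first attempt would be to analyse the rescaled PDE in the layer $1-q\sim 1/p$ explicitly, where it is essentially the PDE for a pure large-$p$ model, since pure $p$-spin is known to be $1$-RSB near criticality (Zhou). I would then match it to the unperturbed solution on $[0,1-\eta]$.
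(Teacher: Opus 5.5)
Your outer architecture matches the paper's. You start from a replica-symmetric seed, iterate a one-step construction that adds $\lambda q^{p}$ with $p$ large and even and $\lambda$ just above a critical value, propagate nondegeneracy, and get openness from stability of nondegenerate finite-step measures. For the seed, $\Xi''(0)=0$ only gives stability near $0$, and no ``$1/\sqrt{\xi''}$'' threshold is meaningful here. The paper instead checks globally, for $\Xi_0=\tfrac1{16}q^4$, that $\Gamma(q)\le2\Xi_0'(q)+2\Xi_0'(q)^2<q$ on $(0,1]$.

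\textbf{The gap is step (ii), which carries the whole induction step.} The transition is not decided by a local comparison of $\lambda pq^{p-1}$ with $\Xi'(1)-\Xi'(q)$. For the mass-one candidate, the only possible violation of $f\le0$ is the value $f(Q)$ at the root of $\Gamma(Q)=Q$ near $1$. But $f(Q)=\tfrac12\int_0^Q\Xi_{p,\lambda}''(\Gamma-q)$ is an integral over the whole hierarchy. To leading order it equals $\lambda/2-s_\Xi$: the $\lambda/2$ comes from $\tfrac12\int\lambda p(p-1)q^{p-2}(1-q)\,dq$, and everything else sums to minus the entropy $s_\Xi=\mathfrak p(1)-\mathfrak p'(1)$ of the unperturbed model. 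So $\lambda_{\mathrm G,p}\to2s_\Xi$, and you need $s_\Xi>0$, which takes a separate Gibbs-entropy argument, even to know a transition exists.

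Nor can the $(r+1)$-RSB candidate come from the implicit function theorem applied directly to the finite optimality system. At threshold the new atom has zero mass, so its location equation $\partial_Q\mathcal P=-(1-m)f'(Q)=0$ degenerates.

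The missing idea is the temperature--mass scaling. If the lower atoms carry total mass $m$ and the new atom lies near $1$, then the recursion below the new atom is, up to exponentially small errors and after rescaling the field by $m$, the Parisi recursion of the normalized lower measure for the covariance $m^2\Xi$. Next, solve $\Gamma(Q)=Q$ for $Q$ and eliminate the lower coordinates. The problem then becomes, uniformly for large $p$, the scalar objective $J_\lambda(m)=\mathfrak p(m)/m+\lambda m/2$, with
\[
\partial_mJ_\lambda(1)=(\lambda-2s_\Xi)/2,\qquad \partial_{m\lambda}J_\lambda=\tfrac12,\qquad \partial_{mm}J_\lambda(1)=\mathfrak p''(1)+2s_\Xi>0.
\]
These three facts give:
\begin{itemize}
\item a unique $\lambda_{\mathrm G,p}$;
\item the boundary minimizer $m=1$ below it;
\item a unique interior minimizer above it, hence exactly one new atom, of mass $1-m$;
\item through a Schur complement, the positive-definite atomic Hessian needed to iterate.
\end{itemize}
Your claim that strict first-variation conditions alone give an invertible Jacobian is unjustified. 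The paper builds positive-definiteness of the atomic Hessian into nondegeneracy and verifies it for each new measure.

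\textbf{Your proposed local analysis is at the wrong scale.} The new atom sits at $1-Q\asymp p^{-1/2}e^{-m^2\lambda p/2}$, exponentially closer to $1$ than the layer $1-q\sim1/p$. In that layer the accumulated variance $\lambda pq^{p-1}$ is already of order $p$, so $\Gamma$ equals $1$ up to exponentially small errors. In the variable $p(1-q)$ the new atom collapses to the boundary. Whether $f$ reaches $0$ there is decided elsewhere: at $v=\lambda pq^{p-1}=O(1)$, that is $p(1-q)\approx\log p$, and by the lower hierarchy.

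What works is the variance coordinate $v$. Above a moving cutoff with $1-a_{p,\lambda}\sim(11\log p)/p$, the estimate $1-\Gamma(q(v))\asymp v^{-1/2}e^{-m^2v/2}$, together with a log-derivative bound, shows that $\Gamma(q)-q$ changes sign exactly once before $Q$, at some $v\in[L,C_0\log p]$. The root $Q$ itself comes from the large-variance expansion $T_{m,V}\log\cosh x\approx mV/2+(1/m-1)\log2+\tfrac1m\log\cosh(mx)$.

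Matching to Zhou's pure $p$-spin result does not transfer. In this layer the field arrives with the law produced by the lower hierarchy, and the competition is against $s_\Xi$, not $\log2$. Likewise, ``strict inequality persists on $[0,1-\eta]$ by continuity from $\mu$'' holds only because $m\to1$ at threshold. The correct comparison function below the cutoff is $m^{-2}f_{m^2\Xi,\nu_z}$, not $f_{\Xi,\mu}$.
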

Equivalently, writing \(c_j=\beta^2a_{p_j}\) with
\(\sum_j a_{p_j}=1\), the theorem gives a \(k\)-RSB phase that is open jointly
in inverse temperature and in the normalized mixture coefficients, with openness in the mixture coefficients understood in the relative
topology of the simplex $\{\boldsymbol a:a_{p_j}>0,\ \sum_j a_{p_j}=1\}$.

\begin{theorem}
\label{thm:gardner-main}
For every integer $r\geq0$, there exists a finite polynomial covariance
$\Xi$ whose Parisi measure is $r$-RSB and for which the following holds. Define
\[
 \mathfrak p(b)=\inf_{\mu\in\M}\Pp_{b^2\Xi}(\mu),
 \qquad
 s_\Xi=\mathfrak p(1)-\mathfrak p'(1).
\]
Then $s_\Xi>0$, and there exists $\eta \in(0,2s_\Xi)$ such that, for every
sufficiently large even integer $p$, there is a unique critical coefficient
\[
 \lambda_{\mathrm G,p}\in(2s_\Xi-\eta,2s_\Xi+\eta)
\]
for the family
\[
 \Xi_{p,\lambda}(q)=\Xi(q)+\lambda q^p,
 \qquad
 \lambda\in[2s_\Xi-\eta,2s_\Xi+\eta],
\]
with the following phase diagram:
\[
 \begin{cases}
 r\mathrm{-RSB}, & \lambda\leq\lambda_{\mathrm G,p},\\
 (r+1)\mathrm{-RSB}, & \lambda>\lambda_{\mathrm G,p}.
 \end{cases}
\]
Moreover,
\[
 \lambda_{\mathrm G,p}\longrightarrow2s_\Xi
 \qquad\text{as }p\to\infty\text{ through even integers}.
\]
\end{theorem}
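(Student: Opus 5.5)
The plan is an induction on $r$, built on the first-variation characterization of the Parisi measure due to Jagannath and Tobasco \cite{JagannathTobasco2016,JagannathTobasco2017}. For $r=0$ take $\Xi=\beta^2q^2$ with $\beta<1$. The Parisi measure is then $\delta_0$, and the first-variation inequality holds strictly away from $0$; this strictness is what I mean by \emph{nondegenerate}.

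For the induction step, fix $\Xi$ with a nondegenerate $r$-RSB Parisi measure $\mu_\Xi$. I would first show that $s_\Xi>0$. Here $\mathfrak p(b)-b\,\mathfrak p'(b)$ is the limiting entropy per spin at inverse temperature $b$. It is positive because at positive temperature the Gibbs measure does not concentrate on $O(1)$ configurations: the top of the support of $\mu_\Xi$ is $<1$.

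\textbf{Step 1: splitting the perturbation.} Fix $\delta>0$ so that $\supp\mu_\Xi\subset[0,1-2\delta]$. The perturbation $\lambda q^p$ and its first two derivatives are $O(p^2(1-\delta)^p)$ on $[0,1-\delta]$.

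\begin{itemize}
\item Away from $1$: using the stability estimates for the Parisi PDE, the first-variation function $\Gamma_{\mu}$ (computed for $\Xi_{p,\lambda}$ and the old measure) is a small perturbation of the unperturbed one on $[0,1-\delta]$. Nondegeneracy then gives stability there, via an implicit function argument in the finitely many parameters (atoms and weights). So every minimizer has, in $[0,1-\delta]$, exactly the $r+1$ atoms of a small deformation of $\mu_\Xi$.
\item Near $1$: the effect of $\lambda q^p$ is concentrated in a layer of width $O(1/p)$ at $q=1$. On the top block of the hierarchy the model becomes REM-like. Concretely, take a candidate with one extra atom $q_*\in(1-\delta,1)$ and Parisi parameter $m\in(0,1)$.
\end{itemize}

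\textbf{Step 2: the limiting variational problem.} Let $p\to\infty$ with $q_*^p$ held at a suitable scale. I expect the Parisi functional to converge to
\[
\mathcal P_\Xi(\mu_\Xi)+\min_{m}\phi_\lambda(m),
\]
where $\phi_\lambda$ is explicit. It would be built from $\mathfrak p$, evaluated at the rescaled temperature $\sqrt m$ coming from the last Ruelle cascade level, together with the term $\lambda(1-m)/2$. A direct computation should show that $\phi_\lambda$ has its minimum at $m=1$ (no new atom) iff $\lambda/2\le \mathfrak p(1)-\mathfrak p'(1)=s_\Xi$. For $\lambda/2>s_\Xi$ it should have a unique nondegenerate interior minimizer $m<1$. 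This is the analogue of the REM condition $\beta^2/2=\log 2$, and it explains the threshold $2s_\Xi$. Choose $\eta$ so that the interior minimizer is nondegenerate on $(2s_\Xi,2s_\Xi+\eta]$, with transversality at $2s_\Xi$.

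\textbf{Step 3: finite $p$ and uniqueness.} For large $p$, pass from the limit to finite $p$ using the implicit function theorem for the Jagannath–Tobasco optimality equations in the $r+2$ atom/weight parameters. This produces a unique $(r+1)$-RSB critical point on one side of a critical $\lambda_{\mathrm G,p}$, and shows the old $r$-RSB measure (slightly deformed) satisfies the inequality conditions on the other side. The transition is unique because the relevant first-variation quantity at the top, namely the sign of $\Gamma$'s excess near $1$, is strictly monotone in $\lambda$ uniformly in large $p$. That monotonicity gives a single crossing $\lambda_{\mathrm G,p}$, and the convergence $\lambda_{\mathrm G,p}\to 2s_\Xi$ is inherited from Step 2. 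The resulting $(r+1)$-RSB measure is nondegenerate, since its inequalities are strict away from its atoms by the same perturbative control. The new covariance is still a finite polynomial, which closes the induction.

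\textbf{Expected main obstacle.} The hardest part is the boundary layer near $q=1$. There one must show, uniformly in large $p$, that the minimizer places either nothing or exactly one atom in $(1-\delta,1]$, and never an interval or several atoms. My first attempt would be to rescale $q=1-t/p$ and prove the first-variation inequality for the rescaled problem directly. Its $p\to\infty$ limit is the REM-type problem, where strict concavity/convexity in $t$ can be checked explicitly.
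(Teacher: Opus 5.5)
Your architecture matches the paper's: keep the old hierarchy stable away from $1$, add one atom near $1$, reduce to a scalar problem in the last Parisi parameter $m$, and read off the threshold $2s_\Xi$ by REM analogy. There are genuine gaps, however. The decisive step is guessed rather than derived, and the guess is wrong.

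\emph{The reduced objective.} The missing idea is an exact temperature--mass scaling. Above the old support the variance is $\approx\lambda p$ at level $m$, which turns the datum at the old top atom into $\frac1m\log\cosh(mx)$ plus a constant. With that datum, the recursion for covariance $\Xi$ and cumulative mass $m\alpha_\nu$ equals $\frac1m$ times the Parisi solution for $(m^2\Xi,\nu)$ evaluated at $mx$. This yields $J_\lambda(m)=\mathfrak p(m)/m+\lambda m/2$. So the lower hierarchy is not frozen at $\Pp_\Xi(\mu_\Xi)$; it is re-optimized for $m^2\Xi$, i.e.\ inverse temperature scaled by $m$, not $\sqrt m$. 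Also, $\lambda$ enters as $+\lambda m/2$. With your $\lambda(1-m)/2$, the boundary derivative at $m=1$ decreases in $\lambda$, so the new atom would appear for small $\lambda$. From the correct $J_\lambda$, $\partial_mJ_\lambda(1)=(\lambda-2s_\Xi)/2$. Transversality comes from $\partial_{mm}J_\lambda(1)=\mathfrak p''(1)+2s_\Xi>0$, which uses both convexity of $\mathfrak p$ and $s_\Xi>0$.

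\emph{The monotone quantity in Step 3.} $\Gamma(Q)-Q=0$ at the distinguished root for every $\lambda$, so it cannot locate the transition. What decides the phase is $f(Q)$ for the mass-one candidate. The mass-transfer formula, together with $f=0$ on the old support, identifies $f(Q)$ with $\partial_mj_p(1,\lambda)$, whose $\lambda$-derivative is at least $1/4$.

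\emph{The boundary layer.} It is not at scale $1/p$. The new atom sits at $1-Q\asymp p^{-1/2}e^{-m^2\lambda p/2}$. The other zero of $\Gamma(q)-q$ in the layer sits at $1-S\asymp(\log p)/p$. Under $q=1-t/p$ these go to $t=0$ and $t=\infty$, and for fixed $t$ one simply has $p(\Gamma-q)\to t$. Your rescaled limit therefore says nothing about the zeros you must control. The paper works instead in the variance coordinate $v=\lambda pq^{p-1}$. It shows that $(1-\Gamma)/(1-q)$ crosses $1$ exactly once for $p^{-10}\leq v\leq C_0\log p$. Beyond that range it uses $\Gamma'\leq\Xi_{p,\lambda}''(1-\Gamma)$ with $1-\Gamma\leq p^{-5}$, which makes $\Gamma-q$ strictly decreasing through $Q$.

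\emph{Positivity of $s_\Xi$.} Top of support below $1$ does not give $s_\Xi>0$. It only excludes concentration on $O(1)$ configurations, whereas positive entropy density needs $e^{cN}$ of them. For instance, the uniform measure on $N$ mutually orthogonal configurations has limiting overlap $0$ but entropy only $\log N$. The paper proves $S_N\geq\sum_i\langle\chi(h_i)\rangle$ with $\chi(x)=\log(2\cosh x)-x\tanh x\geq\frac12e^{-2|x|}$. It then bounds $\E\langle h_i^2\rangle$ by Gaussian integration by parts. Differentiability of $\mathfrak p$ at $1$ comes from smooth continuation of the nondegenerate measure.

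\emph{Nondegeneracy.} Your notion (strict inequality off the support) is too weak to carry the induction. Continuing the atoms by the implicit function theorem needs a positive-definite atomic Hessian. Keeping old contacts from splitting or vanishing under perturbation needs $f''<0$ at each positive atom. Both must be re-proved for the new $(r+1)$-RSB measure. The paper gets $f''(Q)<0$ from $\Gamma'(Q)-1\leq-\frac12$, and the Hessian from a Schur complement with $\partial_{QQ}\Pp>0$.

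\emph{Base case.} For $\Xi=\beta^2q^2$, the measure $\delta_0$ is nondegenerate only if $\Xi''(0)=2\beta^2<1$, so $\beta<1$ is not enough. This same condition must replace the paper's $\Gamma=O(q^3)$ estimate at the origin.
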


\Cref{thm:gardner} applies to every admissible finite
polynomial covariance whose finite-step Parisi measure is nondegenerate.  For
every sufficiently large $p$, it gives a unique transition point in a fixed
neighborhood of $2s_\Xi$.  \Cref{cor:gardner-asymptotics} gives asymptotics for the
mass and location of the new atom.

\subsection{Ideas of the proof}

The proof is organized around the first-variation characterization of the
Parisi measure.  Given a candidate measure $\mu$, let $f_{\Xi,\mu}$ denote
its first-variation function, introduced in \Cref{sec:geometry}.  In the
present even, zero-field setting, the characterization of
Jagannath and Tobasco  reduces the problem to
showing that for all $q\in [0,1]$, we have $f_{\Xi,\mu}(q)\le 0$ 
and that $\mu$ is supported on the zero set of $f_{\Xi,\mu}$ \cite{JagannathTobasco2017}. Thus the zeros of this function determine the possible support points of the
Parisi measure.  Our transition is produced by creating one new zero at very
high overlap while leaving the pre-existing zeros essentially unchanged. 
The three stages of this process are illustrated in
\Cref{fig:gardner-schematic}.

\begin{figure}[tbp]
\centering
\begin{tikzpicture}[x=1.25cm,y=0.9cm,>=Latex]
 \begin{scope}
  \draw[->] (0,0) -- (2.75,0) node[right] {$q$};
  \draw[->] (0,-1.15) -- (0,0.35) node[above] {$f(q)$};
  \node at (1.35,0.55) {\small $\lambda<\lambda_{\mathrm G,p}$};
  \draw plot[smooth] coordinates
   {(0,0) (0.35,-0.55) (0.75,0) (1.20,-0.65)
    (1.75,-0.42) (2.15,-0.22) (2.65,-0.78)};
  \fill (0,0) circle (1.3pt);
  \fill (0.75,0) circle (1.3pt);
 \end{scope}
 \begin{scope}[xshift=4.1cm]
  \draw[->] (0,0) -- (2.75,0) node[right] {$q$};
  \draw[->] (0,-1.15) -- (0,0.35) node[above] {$f(q)$};
  \node at (1.35,0.55) {\small $\lambda=\lambda_{\mathrm G,p}$};
  \draw plot[smooth] coordinates
   {(0,0) (0.35,-0.55) (0.75,0) (1.20,-0.65)
    (1.75,-0.42) (2.15,0) (2.65,-0.78)};
  \fill (0,0) circle (1.3pt);
  \fill (0.75,0) circle (1.3pt);
  \draw (2.15,0) circle (1.7pt);
 \end{scope}
 \begin{scope}[xshift=8.2cm]
  \draw[->] (0,0) -- (2.75,0) node[right] {$q$};
  \draw[->] (0,-1.15) -- (0,0.35) node[above] {$f(q)$};
  \node at (1.35,0.55) {\small $\lambda>\lambda_{\mathrm G,p}$};
  \draw plot[smooth] coordinates
   {(0,0) (0.35,-0.55) (0.75,0) (1.20,-0.65)
    (1.72,-0.38) (2.15,0) (2.65,-0.78)};
  \fill (0,0) circle (1.3pt);
  \fill (0.75,0) circle (1.3pt);
  \fill (2.15,0) circle (1.3pt);
 \end{scope}
\end{tikzpicture}
\caption{Schematic first-variation picture for the transition.  The
first-variation function must remain nonpositive.  Filled circles represent
zeros carrying positive Parisi mass.  Below the transition there are only
the pre-existing support points.  At the critical coefficient a new zero
appears near $q=1$ but carries no mass, represented by the open circle.
Above the transition this zero acquires positive mass and becomes an
additional support point.}
\label{fig:gardner-schematic}
\end{figure}

The proof makes this picture rigorous in three steps.

\emph{Step 1: Identify the limiting transition.}
The basic reason that a high-degree perturbation can add one RSB level
without destroying the existing ones is that $q^p$ is extremely small when
$q$ is bounded away from $1$.  Thus, as $p\to\infty$, the perturbation
$\lambda q^p$ becomes invisible on the overlap scales containing the
pre-existing support and acts only very close to $q=1$.

This separation reduces the limiting problem to a single mass parameter. 
Let $m\in(0,1]$ denote the total Parisi mass retained on the pre-existing
hierarchy, so that $1-m$ is the mass available to a possible new
high-overlap support point. Let $\Xi$ denote the original unperturbed covariance. After optimizing the locations and relative
masses of the pre-existing atoms, the limiting variational objective becomes
\[
 J_\lambda(m)
 =\frac{\mathfrak p(m)}{m}+\frac{\lambda m}{2},
\]
where $\mathfrak p(m)$ is the minimum value of the Parisi functional for the rescaled covariance
$m^2\Xi$.  The quantity
\[
 s_\Xi=\mathfrak p(1)-\mathfrak p'(1)
\]
is the limiting entropy per spin of the Gibbs measure for the unperturbed
model.  The
derivative of $J_\lambda$ at the boundary point $m=1$ changes sign at $\lambda=2s_\Xi$. 
Below this value, the constrained minimizer remains at $m=1$, so no mass is
assigned to a new RSB level.  Above it, strict convexity produces a unique
minimizer with $m<1$, and the missing mass $1-m$ becomes the mass of a new
high-overlap atom.  This one-dimensional bifurcation explains both the
critical coefficient $2s_\Xi$ and the appearance of exactly one additional
RSB level.  The limiting objective is analyzed in
\Cref{sec:limiting-problem}; the positivity of $s_\Xi$ is proved in
\Cref{app:entropy}.

\emph{Step 2: Pass from the limiting regime to large finite $p$.}
The preceding reduction describes the singular limit $p=\infty$, so one
must show that its geometry survives for large but finite interaction
degree.  For atomic Parisi measures, the Parisi PDE can be evaluated through
successive Gaussian Cole--Hopf recursions.  Uniform estimates for these
recursions show that the finite-$p$ variational problem, together with the
derivatives that determine its stationary points, is close to the limiting
one.  Consequently, for every sufficiently large even $p$ there is a unique
critical coefficient $\lambda_{\mathrm G,p}$ near $2s_\Xi$, with an
$r$-RSB branch below the transition and an $(r+1)$-RSB branch above it.

On the $(r+1)$-RSB branch, the new support point lies extremely close to
overlap one.  If $Q$ denotes its location and $m$ the mass retained on the
pre-existing hierarchy, then
\[
 1-Q\asymp p^{-1/2}e^{-m^2\lambda p/2}.
\] 
Thus increasing $p$ produces a genuine separation of overlap scales: the
old support remains at order-one distance from $1$, while the new RSB level
is confined to an increasingly thin region near $1$.  The finite-$p$
approximation is developed in \Cref{sec:high-degree-perturbation}. The
recursion and large-variance estimates used there are collected in
\Cref{app:recursion,app:large-variance}.

\emph{Step 3: Prove that the candidates are globally optimal.}
The preceding steps produce the correct stationary atomic measures, but
stationarity with respect to finitely many masses and support locations does
not by itself imply that they minimize the Parisi functional.  One must
verify the first-variation inequality for every overlap $q\in[0,1]$.

For this purpose, we divide $[0,1]$ into two regions.  Away
from $q=1$, the high-degree perturbation is negligible, so the
first-variation function remains close to that of the unperturbed model and
stays strictly negative away from the pre-existing support points.  The only delicate region is therefore the thin high-overlap region where the
new support point forms.  After passing to a suitable variance coordinate,
the Parisi recursion there can be compared with an explicit Gaussian model.
This yields precise control of the derivative of the first-variation
function.  A monotonicity argument then shows that the first-variation
function cannot become positive or acquire any unintended zeros near
$q=1$.

This establishes exactly the geometry shown in
\Cref{fig:gardner-schematic}.  Below the transition, the zero set contains
only the pre-existing support points.  At
$\lambda=\lambda_{\mathrm G,p}$, an additional zero appears near $q=1$ but
carries no Parisi mass.  Above the transition, that zero becomes a support
point with positive mass, changing the Parisi measure from $r$-RSB to
$(r+1)$-RSB.  The global first-variation analysis and the resulting phase
boundary are completed in \Cref{sec:gardner}.

Finally, the construction can be repeated.  Beginning with an explicit
replica-symmetric model, choose a point above the first transition to add one
support point, then add an interaction of still higher degree to create the
next one.  The separation of overlap scales keeps the previously constructed
support points stable while each new level is added.  Iterating this
procedure gives arbitrary finite RSB depth, and local stability enlarges
each constructed example to a nonempty open set of coefficients.  This
iteration is carried out in \Cref{sec:consequences}.

\subsection{Acknowledgments}
P.L. was partially supported by NSF grant DMS-2450004. This paper was written by the authors with the assistance of large language models, which included suggesting arguments, contributing to drafting and revision, and writing code for computational checks. 

\section{Finite-step Parisi geometry}\label{sec:geometry}

This section develops the finite-step Parisi geometry used throughout the proof.  We first recall the first-variation characterization of Parisi minimizers and derive the corresponding atomic derivative and curvature identities, then introduce the nondegeneracy conditions that make a finite-step minimizer stable under perturbations of the covariance.

We define an \emph{admissible covariance} to be a finite polynomial of the form
\begin{equation*}
 \Xi(q)=c_4q^4+\sum_{p\in\mathscr P}c_pq^p,
 \qquad
 c_4>0,
 \qquad
 \mathscr P\subset \{6, 8, 10, \dots \},
 \qquad
 c_p>0.
\end{equation*}
The sum may be empty.  Every admissible covariance is even and satisfies $\Xi''(q)>0$ for $q\in(0,1]$. 

Recall that $\M$ is the space of probability measures on $[0,1]$.  For $\mu\in\M$, write
\[
 \alpha_\mu(q)=\mu([0,q]).
\]
The Parisi PDE is
\begin{equation}\label{eq:parisi-pde}
 \begin{aligned}
  \partial_q u_\mu(q,x)
  &=-\frac{\Xi''(q)}2
  \left(u_{\mu,xx}(q,x)
  +\alpha_\mu(q)u_{\mu,x}(q,x)^2\right),\\
  u_\mu(1,x)&=\log\cosh x.
 \end{aligned}
\end{equation}
The Parisi functional is
\begin{equation*}
 \Pp_\Xi(\mu)
 =\log2+u_\mu(0,0)
 -\frac12\int_0^1q\Xi''(q)\alpha_\mu(q)\dd q.
\end{equation*}
The Parisi formula identifies the limiting pressure with
$\inf_{\mu\in\M}\Pp_\Xi(\mu)$
\cite{Talagrand2006,Panchenko2014}, and the minimizer is unique
\cite{AuffingerChen2015unique}.

For atomic $\mu$, the solution of \eqref{eq:parisi-pde} is obtained by
successive Gaussian Cole--Hopf transforms; the general case is obtained by
the standard approximation \cite[Section 1]{AuffingerChen2015unique}.  For arbitrary $\mu\in\M$, the associated
optimal diffusion \cite{AuffingerChen2015unique} is
\begin{equation}\label{eq:optimal-diffusion}
 \dd X_q=\Xi''(q)\alpha_\mu(q)u_{\mu,x}(q,X_q)\dd q
 +\sqrt{\Xi''(q)}\dd W_q,
 \qquad X_0=0.
\end{equation}
Set
\begin{equation}\label{eq:Gamma-f}
 \Gamma_{\Xi,\mu}(q)=\E u_{\mu,x}(q,X_q)^2,
 \qquad
 f_{\Xi,\mu}(q)=\frac12\int_0^q\Xi''(s)
 \big(\Gamma_{\Xi,\mu}(s)-s\big)\dd s.
\end{equation}
We often suppress $\Xi$ or $\mu$ from the notation.
We call $q\in[0,1]$ a \emph{contact point} of $(\Xi,\mu)$ if
$f_{\Xi,\mu}(q)=0$.

\subsection{First variation}

The first-variation function used by Jagannath and Tobasco \cite{JagannathTobasco2017} is
\[
 G_{\Xi,\mu}(q)
 =\frac12\int_q^1\Xi''(s)
   \bigl(\Gamma_{\Xi,\mu}(s)-s\bigr)\dd s.
\]
Since $G_{\Xi,\mu}+f_{\Xi,\mu}$ is constant and
$(\nu-\mu)([0,1])=0$, one has
\[
 \int G_{\Xi,\mu}\,\dd(\nu-\mu)
 =-\int f_{\Xi,\mu}\,\dd(\nu-\mu).
\]
Indeed, adding a constant to a first-variation function does not affect its
pairing with a perturbation of a probability measure, since every such
perturbation has total mass zero.  We therefore use $f_{\Xi,\mu}$ as the
equivalent first-variation function throughout the paper.

The proof of \cite[Lemma~3.2]{JagannathTobasco2017} yields the following
quadratic first-variation estimate.   With
\(d_1(\mu,\nu)
=\int_0^1|\alpha_\mu(q)-\alpha_\nu(q)|\dd q\),
\begin{equation}\label{eq:quadratic-first-variation}
 \left|\Pp_\Xi(\nu)-\Pp_\Xi(\mu)
 +\int_0^1f_{\Xi,\mu}\,\dd(\nu-\mu)\right|
 \leq C_\Xi d_1(\mu,\nu)^2.
\end{equation}

\begin{proposition}\label{prop:first-variation-criterion}
For all $\mu,\nu\in\M$,
\begin{equation}\label{eq:directional-derivative}
 \left.\frac{\dd}{\dd t}\Pp_\Xi((1-t)\mu+t\nu)\right|_{t=0^+}
 =-\int_0^1 f_{\Xi,\mu}(q)\dd(\nu-\mu)(q).
\end{equation}
Further, if
\begin{equation}\label{eq:first-variation-criterion}
 f_{\Xi,\mu}(q)\leq0\quad(0\leq q\leq1),
 \qquad
 \mu\big(\{q:f_{\Xi,\mu}(q)=0\}\big)=1,
\end{equation}
then $\mu$ is the unique Parisi measure.  Conversely, the Parisi measure of an even zero-field model satisfies \eqref{eq:first-variation-criterion}.
\end{proposition}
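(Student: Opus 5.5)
The proof splits into three parts: the directional derivative, sufficiency of the criterion, and necessity.

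\emph{Directional derivative.} Let $\mu_t=(1-t)\mu+t\nu$. Then $\alpha_{\mu_t}-\alpha_\mu=t(\alpha_\nu-\alpha_\mu)$, so $d_1(\mu,\mu_t)=t\,d_1(\mu,\nu)\le t$. Apply the quadratic estimate \eqref{eq:quadratic-first-variation} with $\nu$ replaced by $\mu_t$. Since $\mu_t-\mu=t(\nu-\mu)$, this gives
\[
\Bigl|\Pp_\Xi(\mu_t)-\Pp_\Xi(\mu)+t\int_0^1 f_{\Xi,\mu}\,\dd(\nu-\mu)\Bigr|\le C_\Xi t^2 .
\]
Dividing by $t$ and letting $t\downarrow0$ yields \eqref{eq:directional-derivative}. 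This uses only that $f_{\Xi,\mu}$ is continuous and bounded on $[0,1]$, which follows from $\Gamma_{\Xi,\mu}\in[0,1]$ (because $|u_{\mu,x}|\le1$) and $\Xi''$ bounded.

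\emph{Sufficiency.} Assume \eqref{eq:first-variation-criterion} holds. Then $\int f_{\Xi,\mu}\,\dd\mu=0$, and for every $\nu\in\M$ we have $\int f_{\Xi,\mu}\,\dd\nu\le0$. So the right side of \eqref{eq:directional-derivative} is nonnegative for every $\nu$. The Parisi functional is strictly convex on $\M$ along linear interpolations; this is the content of the uniqueness theorem of Auffinger--Chen \cite{AuffingerChen2015unique}, which we take as given. For a convex function $g(t)=\Pp_\Xi(\mu_t)$ on $[0,1]$ we have $g(1)-g(0)\ge g'(0^+)\ge0$. Hence $\Pp_\Xi(\nu)\ge\Pp_\Xi(\mu)$ for all $\nu$, so $\mu$ is a minimizer. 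Uniqueness of the minimizer then identifies $\mu$ as the Parisi measure.

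\emph{Necessity.} Let $\mu$ be the Parisi measure. Minimality forces the right side of \eqref{eq:directional-derivative} to be $\ge0$ for all $\nu$.
\begin{itemize}
\item Taking $\nu=\delta_q$ gives $f_{\Xi,\mu}(q)\le\int f_{\Xi,\mu}\,\dd\mu$ for every $q$. Integrating this inequality against $\mu$ shows that $f_{\Xi,\mu}$ attains its maximum value $M=\int f\,\dd\mu$ $\mu$-almost everywhere, i.e.\ $\mu$ is supported on $\{f_{\Xi,\mu}=M\}$.
\item It remains to show $M=0$. By definition $f_{\Xi,\mu}(0)=0$, so $M\ge0$. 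It therefore suffices to show that $0\in\supp\mu$ in the even zero-field case. This is the result of Auffinger--Chen \cite{AuffingerChen2015properties} that the origin lies in the support at zero external field, which is quoted in the introduction.
\item Since $f_{\Xi,\mu}$ is continuous, $f_{\Xi,\mu}(0)=M$, hence $M=0$. This gives both conditions in \eqref{eq:first-variation-criterion}.
\end{itemize}

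\emph{Main obstacle.} The main point to check carefully is the necessity step $M=0$, which relies on the zero-field support result. A direct alternative is to show that $\Gamma_{\Xi,\mu}(s)-s$ has the right sign near $0$. By symmetry, $u_{\mu,x}(s,\cdot)$ is odd, and $\Gamma(0)=u_{\mu,x}(0,0)^2=0$. This implies $f_{\Xi,\mu}<0$ just to the right of $0$ if the support started away from $0$, which would contradict $f=M$ on the support. The convexity invoked in the sufficiency step is also imported rather than proved; it is exactly the ingredient in \cite{AuffingerChen2015unique}.
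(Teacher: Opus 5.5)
Your proof is correct and follows the paper's argument essentially step for step: the quadratic first-variation estimate applied to $\mu_t$ gives the directional derivative, convexity plus Auffinger--Chen uniqueness gives sufficiency, and $\nu=\delta_q$ together with $0\in\supp\mu$ at zero field gives necessity; your use of continuity of $f_{\Xi,\mu}$ to pass from $\mu$-a.e.\ equality to $f_{\Xi,\mu}(0)=M$ is a small piece of care the paper leaves implicit. Drop the ``direct alternative'' in your last paragraph, because it does not work: $f_{\Xi,\mu}<0$ just to the right of $0$ is fully compatible with $M>0$ and $\supp\mu$ bounded away from $0$, so no contradiction follows, and $0\in\supp\mu$ really does need the cited theorem.
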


\begin{proof}
Apply \eqref{eq:quadratic-first-variation} to
$\mu_t=(1-t)\mu+t\nu$ and divide by $t$.  Since
$d_1(\mu_t,\mu)=t d_1(\mu,\nu)$, this proves
\eqref{eq:directional-derivative}.  Now assume that \eqref{eq:first-variation-criterion} holds.  Since $\mu$
is supported on the zero set of $f_{\Xi,\mu}$,
\[
 \int f_{\Xi,\mu}\,\dd\mu=0,
\]
while $f_{\Xi,\mu}\leq0$ implies
\[
 \int f_{\Xi,\mu}\,\dd\nu\leq0
\]
for every $\nu\in\M$.  Hence every directional derivative in
\eqref{eq:directional-derivative} is nonnegative.  By convexity of the Parisi functional
\cite{AuffingerChen2015unique}, $\mu$ is therefore a global minimizer, and
uniqueness from the same reference identifies it as the Parisi measure. 

Conversely, suppose that $\mu$ is the Parisi measure.  First-order optimality
and \eqref{eq:directional-derivative} give, for every $\nu\in\M$,
\[
 \int f_{\Xi,\mu}\,\dd\nu
 \leq
 \int f_{\Xi,\mu}\,\dd\mu.
\]
Taking $\nu=\delta_q$ shows that $f_{\Xi,\mu}$ is everywhere bounded above by
its $\mu$-average.  Since the $\mu$-average is itself bounded above by the
maximum of $f_{\Xi,\mu}$, the two quantities are equal.  Hence $\mu$ is
supported on the maximizers of $f_{\Xi,\mu}$. In the even zero-field setting, $0\in\supp\mu$
\cite{AuffingerChen2015properties}, while $f_{\Xi,\mu}(0)=0$ by definition.
Thus the maximum value of $f_{\Xi,\mu}$ is zero, proving
\eqref{eq:first-variation-criterion}.
\end{proof}

For atomic measures, the first derivatives with respect to masses and support
locations can be read from $f$.

\begin{lemma}\label{lem:atomic-derivatives}
Let $\mu=\sum_{i=0}^r w_i\delta_{q_i}\in\M$ be a probability measure with
$w_i>0$ and $q_0<\cdots<q_r$. If the location $q_i\in(0,1)$ is varied while all masses
and other locations are fixed, then
\begin{equation}\label{eq:location-derivative}
 \partial_{q_i}\Pp_\Xi(\mu)=-w_i f_{\Xi,\mu}'(q_i).
\end{equation}
If mass is moved from $q_j$ to $q_i$, the derivative along this path is
\begin{equation*}
 -f_{\Xi,\mu}(q_i)+f_{\Xi,\mu}(q_j).
\end{equation*}
Further, if $0\in\supp\mu$ and $\mu$ is stationary with respect to all
mass transfers between its atoms and all variations of support
point locations in $(0,1)$, then
\[
 f_{\Xi,\mu}(q_i)=0
 \quad\text{for every }q_i\in\supp\mu,
 \qquad
 f_{\Xi,\mu}'(q_i)=0
 \quad\text{for every }q_i\in\supp\mu\cap(0,1).
\]
\end{lemma}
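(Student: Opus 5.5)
The plan is to derive all three claims from the directional-derivative formula \eqref{eq:directional-derivative} in \Cref{prop:first-variation-criterion}, using one-parameter paths through atomic measures.

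\emph{Mass transfers.} Fix $i\neq j$ and, for small $t\ge0$, set $\mu_t=\mu+t(\delta_{q_i}-\delta_{q_j})$. Since $w_j>0$, this is a probability measure for $t\le w_j$. Write it as $\mu_t=(1-t)\mu+t\nu$ with $\nu=\mu+\delta_{q_i}-\delta_{q_j}$. This $\nu$ is a probability measure as long as $w_j\ge1$ fails trivially, and we may rescale if needed: set $\nu=\mu+s(\delta_{q_i}-\delta_{q_j})$ with $s\le w_j$, and reparametrize. Then \eqref{eq:directional-derivative} gives the one-sided derivative at $t=0$:
\[
-\int f\,\dd(\delta_{q_i}-\delta_{q_j})=-f(q_i)+f(q_j).
\]
This is the claimed formula. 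Reversing the direction (moving mass from $q_i$ to $q_j$, allowed because $w_i>0$) gives the negative value, so the derivative is genuinely two-sided.

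\emph{Location variations.} Let $\mu^{(h)}$ be $\mu$ with the atom at $q_i$ moved to $q_i+h$, where $|h|$ is small enough to preserve the ordering and stay inside $(0,1)$. We have
\[
d_1(\mu^{(h)},\mu)=w_i|h| ,
\]
since $\alpha$ changes only on an interval of length $|h|$, by exactly $w_i$. Apply \eqref{eq:quadratic-first-variation} with $\nu=\mu^{(h)}$:
\[
\Pp(\mu^{(h)})-\Pp(\mu)=-w_i\bigl(f(q_i+h)-f(q_i)\bigr)+O(h^2).
\]
Dividing by $h$ and letting $h\to0$ yields \eqref{eq:location-derivative}, provided $f$ is differentiable at $q_i$. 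That differentiability holds because $f'(q)=\tfrac12\Xi''(q)(\Gamma(q)-q)$, and $\Gamma$ is continuous: $u_x$ is bounded and continuous, and $X_q$ has continuous paths, so dominated convergence applies.

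\emph{Stationarity consequences.} Stationarity under every mass transfer gives $f(q_i)=f(q_j)$ for all atoms. Since $0\in\supp\mu$ is an atom and $f(0)=0$ by definition, every atom satisfies $f(q_i)=0$. Stationarity in the interior locations, combined with $w_i>0$ and \eqref{eq:location-derivative}, gives $f'(q_i)=0$ for each $q_i\in(0,1)$.

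The only delicate points are the continuity of $\Gamma$ (needed to differentiate $f$) and the fact that \eqref{eq:quadratic-first-variation} holds uniformly. Both follow from the standard regularity of $u_\mu$ in \cite{AuffingerChen2015unique,JagannathTobasco2017}, which I would cite rather than reprove.
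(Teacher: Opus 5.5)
Your proposal is correct and follows the paper's proof essentially step by step: the quadratic estimate \eqref{eq:quadratic-first-variation} with $d_1(\mu_h,\mu)=w_i|h|$ gives the location derivative, the directional-derivative formula \eqref{eq:directional-derivative} gives the mass-transfer derivative, and $f_{\Xi,\mu}(0)=0$ together with $0\in\supp\mu$ gives the stationarity consequences. The only blemish is the garbled sentence about $\nu=\mu+\delta_{q_i}-\delta_{q_j}$, which is not a probability measure in general. Your rescaled choice $\nu=\mu+s(\delta_{q_i}-\delta_{q_j})$ with $0<s\le w_j$ is the right one, and your added justification of the differentiability of $f_{\Xi,\mu}$ via continuity of $\Gamma_{\Xi,\mu}$ is consistent with \Cref{lem:curvature}.
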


\begin{proof}
For the location derivative put
\[
 \mu_h=\mu+w_i(\delta_{q_i+h}-\delta_{q_i}),
\]
and take $|h|$ smaller than the distance from $q_i$ to the neighboring atoms
and to $\{0,1\}$.  Since
$d_1(\mu_h,\mu)=w_i|h|$, \eqref{eq:quadratic-first-variation} gives
\begin{equation*}
 \Pp_\Xi(\mu_h)-\Pp_\Xi(\mu)
 =-w_i\big(f_{\Xi,\mu}(q_i+h)-f_{\Xi,\mu}(q_i)\big)+O(h^2).
\end{equation*}
Dividing by $h$ on each side proves \eqref{eq:location-derivative}. The mass-transfer formula follows from
\eqref{eq:directional-derivative}.  
Finally, stationarity under mass transfers implies
$f_{\Xi,\mu}(q_i)=f_{\Xi,\mu}(0)$ for every support point $q_i$.
Since $f_{\Xi,\mu}(0)=0$, the mass stationarity conditions give
$f_{\Xi,\mu}(q_i)=0$, while \eqref{eq:location-derivative} gives
$f_{\Xi,\mu}'(q_i)=0$ at every movable interior support point. 
\end{proof}

\subsection{Differential identities and curvature}

The next identity and curvature estimate control the high-overlap residual
$\Gamma(q)-q$.  For $a,V\geq0$, set
\begin{equation}\label{eq:T-def}
 T_{a,V}g(x)=
 \begin{cases}
  a^{-1}\log\E e^{ag(x+\sqrt VZ)},&a>0,\\
  \E g(x+\sqrt VZ),&a=0,
 \end{cases}
\end{equation}
and write
\[
 (\mathsf H_tg)(x)=\E g(x+\sqrt tZ),
 \qquad
 p_t(x,y)=\frac1{\sqrt{2\pi t}}e^{-(y-x)^2/(2t)}
 \quad(t>0).
\]
At zero time set $p_0(x,\dd y)=\delta_x(\dd y)$, so that $\mathsf H_0=I$.
For $c>0$, let
\[
 \mathcal M_c=\left\{\eta:\eta\text{ is a finite signed Borel measure on
 }\R,\ \int_{\R}e^{c|x|}\,|\eta|(\dd x)<\infty\right\},
\]
with norm
\begin{equation}\label{e:thenorm}
 \|\eta\|_{c,*}=\int_{\R}e^{c|x|}\,|\eta|(\dd x).
\end{equation}
If a supremum over \(q\)-derivatives ranges across support points of an atomic
measure, the derivatives are taken separately on each open interval between
successive atoms, and the supremum also includes the left- and right-hand
limits at every atom.  

\begin{lemma}\label{lem:curvature}
For every atomic $\mu$, $\Gamma_{\Xi,\mu}$ is continuous on $[0,1]$, and on
each interval where $\alpha_\mu$ is constant,
\begin{equation}\label{eq:Gamma-derivative}
 \Gamma_{\Xi,\mu}'(q)
 =\Xi''(q)\E u_{\mu,xx}(q,X_q)^2.
\end{equation}
The identity holds one-sidedly at atoms.  At every interior atom, the
left- and right-hand values in \eqref{eq:Gamma-derivative} agree, and 
$\Gamma_{\Xi,\mu}$ is $C^1$ across every interior atom.  In particular,
$f_{\Xi,\mu}''$ has a common left- and right-hand value at every positive
interior atom.

At each continuity point of $\alpha_\mu$, and at each atom by taking the
left- and right-hand limits separately,
\begin{equation}\label{eq:curvature}
 |u_{\mu,x}|\leq1,
 \qquad
 0\leq u_{\mu,xx}\leq1-u_{\mu,x}^2.
\end{equation}
Further,
\begin{equation}\label{eq:Gamma-curvature}
 0\leq\Gamma'(q)\leq\Xi''(q)\bigl(1-\Gamma(q)\bigr)
\end{equation}
wherever $\Gamma'$ exists.
\end{lemma}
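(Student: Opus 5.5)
The plan is to work with the explicit atomic structure.  For $\mu=\sum_{i=0}^r w_i\delta_{q_i}$, $\alpha_\mu$ is a step function equal to a constant $a\in[0,1]$ on each interval $(q_{i},q_{i+1})$ and on $(q_r,1]$.  On such an interval the Parisi PDE is solved by the Cole--Hopf representation $u_\mu(q,\cdot)=T_{a,V}u_\mu(q',\cdot)$ with $V=\Xi'(q')-\Xi'(q)$.  This gives smoothness of $u$ in $x$ with bounded derivatives, and continuity in $q$ across atoms, since $u$ itself is continuous in $q$ and only the coefficient $\alpha$ jumps.  The bounds \eqref{eq:curvature} are proved first, by backward induction over the intervals starting from $u(1,x)=\log\cosh x$.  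There $u_x=\tanh x$ and $u_{xx}=1-\tanh^2x$, so the bounds hold with equality in the second.

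For the induction step, write $u(q,x)=a^{-1}\log\E e^{a g(x+\sqrt VZ)}$ with $g$ satisfying \eqref{eq:curvature}.  Let $\langle\cdot\rangle$ denote the tilted Gaussian average with weight $e^{ag}$.  Then
\[
 u_x=\langle g'\rangle,\qquad u_{xx}=\langle g''\rangle+a\bigl(\langle g'^2\rangle-\langle g'\rangle^2\bigr).
\]
Hence $|u_x|\le1$ and $u_{xx}\ge0$.  For the upper bound,
\[
 u_{xx}\le\langle 1-g'^2\rangle+a\langle g'^2\rangle-a\langle g'\rangle^2\le1-\langle g'\rangle^2=1-u_x^2,
\]
using $a\le1$ and $(1-a)\langle g'^2\rangle\ge(1-a)\langle g'\rangle^2$.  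The case $a=0$ is the same computation.

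Next, derive \eqref{eq:Gamma-derivative} by Itô's formula applied to $u_x(q,X_q)^2$ on an interval where $\alpha\equiv a$.  Differentiating the PDE in $x$ gives
\[
 \partial_q u_x=-\tfrac{\Xi''}{2}\bigl(u_{xxx}+2a u_xu_{xx}\bigr).
\]
The drift of $X$ is $\Xi'' a u_x$.  So the $dq$ coefficient of $d(u_x^2)$ is
\[
 2u_x\Bigl(-\tfrac{\Xi''}2(u_{xxx}+2au_xu_{xx})+\Xi''au_xu_{xx}+\tfrac{\Xi''}{2}u_{xxx}\Bigr)+\Xi''u_{xx}^2=\Xi''u_{xx}^2.
\]
The martingale term has zero expectation because all derivatives are bounded.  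Taking expectations gives \eqref{eq:Gamma-derivative}.  Continuity of $\Gamma$ follows because $X$ has continuous paths and $u_x$ is continuous in $(q,x)$.

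At an interior atom, the left and right values of $\Gamma'$ are $\Xi''(q_i)\E u_{xx}(q_i^\pm,X_{q_i})^2$.  Since $u(q_i,\cdot)$ is one and the same function on both sides, $u_{xx}(q_i^-,\cdot)=u_{xx}(q_i^+,\cdot)$ and the two values coincide.  So $\Gamma$ is $C^1$ across atoms, and then $f''=\tfrac12\bigl(\Xi'''(\Gamma-q)+\Xi''(\Gamma'-1)\bigr)$ has matching one-sided values.

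Finally, \eqref{eq:Gamma-curvature} follows from \eqref{eq:curvature}.  The lower bound is immediate from \eqref{eq:Gamma-derivative}.  For the upper bound,
\[
 \Gamma'=\Xi''\E u_{xx}^2\le\Xi''\E u_{xx}\le\Xi''\E(1-u_x^2)=\Xi''(1-\Gamma),
\]
using $0\le u_{xx}\le1$.

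The main obstacle is the curvature induction, specifically verifying that the variance term does not break the bound $u_{xx}\le 1-u_x^2$.  The inequality chain above handles it, relying on $a\le 1$.  Regularity needed for Itô's formula is routine, since the recursion preserves smooth bounded derivatives.
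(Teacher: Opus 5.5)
Your proposal is correct and follows essentially the same route as the paper. Both arguments use the Cole--Hopf induction $u_x=\langle g'\rangle$, $u_{xx}=\langle g''\rangle+a\Var(g')$, with the $(1-a)$-variance slack giving $u_{xx}\le1-u_x^2$; It\^o's formula for $u_x(q,X_q)^2$ for \eqref{eq:Gamma-derivative}; and matching of $u_{xx}$ across atoms for the $C^1$ property. The paper is slightly more explicit on one point: the one-sided derivatives at an atom are the limits of $\Xi''(q)\E u_{xx}(q,X_q)^2$. This follows from $T_{a,V}g\to g$ together with two spatial derivatives as $V\downarrow0$, and from dominated convergence using continuous paths and $0\le u_{xx}\le1$.
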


\begin{proof}
Set $U_q=u_{\mu,x}(q,X_q)$.  On an interval where $\alpha_\mu$ is constant,
differentiating the Parisi PDE and applying It\^o's formula gives
\[
 \dd U_q=\sqrt{\Xi''(q)}u_{\mu,xx}(q,X_q)\dd W_q.
\]
It\^o's formula for $U_q^2$ proves \eqref{eq:Gamma-derivative}.  If $q_i$ is an atom and $g(x)=u_\mu(q_i,x)$, then on the interval
immediately below $q_i$, direct calculation gives
\[
 u_\mu(q,x)
 =T_{a,\Xi'(q_i)-\Xi'(q)}g(x)
\]
for the corresponding constant value $a$ of $\alpha_\mu$.  As
$V\downarrow0$, one has $T_{a,V}g\to g$ together with the first two spatial
derivatives.  Letting $q\uparrow q_i$, so that
$\Xi'(q_i)-\Xi'(q)\to0$, shows that the left-hand limits of
$u_\mu$, $u_{\mu,x}$, and $u_{\mu,xx}$ agree with their values at $q_i$.
Since $g=u_\mu(q_i,\cdot)$ is the endpoint value supplied by the adjacent
recursion above $q_i$, these quantities match across the atom.

For $0\leq a\leq1$ and $U=T_{a,V}g$, let $\E_a$ denote expectation under
the tilted Gaussian law proportional to
\[
 e^{a g(x+\sqrt V Z)}\,\dd\mathbb P(Z)
\]
for $a>0$, with the untilted Gaussian law when $a=0$.  Differentiating the
Cole--Hopf formula \eqref{eq:T-def} in $x$ gives
\[
 U'=\E_a g',
 \qquad
 U''=\E_a g''+a\Var_a(g').
\]
Suppose that
\[
 |g'|\leq1,
 \qquad
 0\leq g''\leq1-g'^2.
\]
The formula for $U''$ immediately gives $U''\geq0$, while
\[
 \begin{aligned}
 (1-U'^2)-U''
 &=\E_a\bigl[(1-g'^2)-g''\bigr]
   +(1-a)\Var_a(g') \geq0.
 \end{aligned}
\]
Moreover, $|U'|=|\E_a g'|\leq1$.  Thus a Cole--Hopf step preserves the
bounds
\[
 |g'|\leq1,
 \qquad
 0\leq g''\leq1-g'^2.
\]
Since the terminal function $g=\log\cosh$ satisfies
\[
 g'=\tanh,\qquad g''=\sech^2=1-g'^2,
\]
applying this observation successively to the Cole--Hopf recursions proves
\eqref{eq:curvature}.  

The optimal diffusion has continuous sample paths.  Together with the
matching of $u_{\mu,xx}$ across each atom and the bound
$0\leq u_{\mu,xx}\leq1$, dominated convergence gives
\[
 \E u_{\mu,xx}(q,X_q)^2
 \longrightarrow
 \E u_{\mu,xx}(q_i,X_{q_i})^2
 \qquad\text{as }q\to q_i
\]
from either side.  Thus the one-sided values in
\eqref{eq:Gamma-derivative} agree, so $\Gamma_{\Xi,\mu}$ is $C^1$ across
every interior atom, and $f_{\Xi,\mu}''(q_i)$ at a positive interior atom
denotes the corresponding common limit.

Finally,
\[
 \E u_{xx}^2\leq\E(1-u_x^2)^2\le \E(1-u_x^2) =1-\Gamma,
\]
and \eqref{eq:Gamma-derivative} gives \eqref{eq:Gamma-curvature}.
\end{proof}

The next estimate isolates the contact at the origin uniformly under 
high-degree perturbations.

\begin{lemma}\label{lem:origin-contact}
Let $\Xi$ be admissible and let $\mu$ be an atomic probability measure.  Then
$\Gamma_{\Xi,\mu}(q)=O(q^3)$ and there exists 
$q_0>0$ such that
\begin{equation}\label{eq:uniform-origin-exclusion}
 \Gamma_{\Xi,\mu}(q)-q\leq-\frac q2,
 \qquad 0<q\leq q_0.
\end{equation}
Both assertions are uniform over all pairs $(\Xi,\mu)$, with $\Xi$ admissible
and $\mu$ atomic, such that
$\Xi'(q)\leq Cq^3$ on a common interval $[0,\delta]$.  In particular, the
implied $O(q^3)$ constant and the choice of $q_0$ may be taken to depend only
on $C$ and $\delta$. 
\end{lemma}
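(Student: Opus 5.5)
We will show that $\Gamma$ vanishes at the origin and grows at most cubically, using only the curvature bound \eqref{eq:Gamma-curvature} together with the hypothesis $\Xi'(q)\le Cq^3$ on $[0,\delta]$. The second assertion \eqref{eq:uniform-origin-exclusion} then follows at once: if $\Gamma(q)\le Kq^3$ on $[0,\delta]$ with $K$ depending only on $C,\delta$, choose $q_0=\min\{\delta,(2K)^{-1/2}\}$. For $0<q\le q_0$ this gives $\Gamma(q)\le Kq^3\le q/2$, hence $\Gamma(q)-q\le -q/2$.

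\emph{Step 1: $\Gamma(0)=0$.} At $q=0$ the diffusion starts at $X_0=0$. Since $\Xi$ is even and the model has zero field, $u_\mu(q,\cdot)$ is even in $x$. Indeed, the terminal condition $\log\cosh$ is even, and each Cole--Hopf step $T_{a,V}$ preserves evenness because $Z\stackrel{d}{=}-Z$. Consequently $u_{\mu,x}(0,0)=0$ and $\Gamma(0)=u_{\mu,x}(0,0)^2=0$.

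\emph{Step 2: integrate the curvature bound.} By \cref{lem:curvature}, $\Gamma$ is continuous on $[0,1]$. Between atoms it is differentiable with $0\le\Gamma'\le \Xi''(1-\Gamma)\le \Xi''$, using $\Gamma\ge0$. At an atom the one-sided derivatives exist and obey the same bound. The case $q=0$ being itself an atom causes no trouble: $\Gamma$ is continuous there and the bound holds on $(0,\varepsilon)$ and on the later intervals. Hence $\Gamma$ is Lipschitz on $[0,1]$, being piecewise $C^1$ with finitely many atoms, and therefore absolutely continuous. Integrating from $0$ gives
\[
\Gamma(q)\le \int_0^q\Xi''(s)\,\dd s=\Xi'(q)-\Xi'(0)=\Xi'(q)\le Cq^3,
\qquad 0\le q\le\delta.
\]
This shows $\Gamma(q)=O(q^3)$ with constant $C$ on $[0,\delta]$. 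For a fixed admissible $\Xi$ the hypothesis is automatic: $\Xi'(q)=4c_4q^3+\cdots\le\Xi'(1)q^3$ for $q\in[0,1]$, because every term has degree at least $3$.

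\emph{Expected obstacle.} The only delicate point is justifying the integration, namely that $\Gamma$ is absolutely continuous across the atoms and at $q=0$, where $\alpha_\mu$ jumps. The $C^1$ matching and continuity statements in \cref{lem:curvature} cover this. Uniformity then holds because every constant depends only on $C$ and $\delta$, and on nothing about $\mu$.
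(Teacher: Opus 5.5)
Your argument is correct, but it takes a different route from the paper's. The paper uses symmetry at every level $q$ to get $u_{\mu,x}(q,0)=0$. Combined with $0\le u_{\mu,xx}\le1$ from \eqref{eq:curvature}, this gives the pointwise bound $|u_{\mu,x}(q,x)|\le|x|$, so $\Gamma(q)\le\E X_q^2$. The paper then estimates this second moment directly from the optimal diffusion \eqref{eq:optimal-diffusion}: the martingale part has variance $\Xi'(q)$, and since $|u_{\mu,x}|\le1$ and $\alpha_\mu\le1$, the integrated drift is at most $\Xi'(q)$. This yields $\Gamma(q)\le2\Xi'(q)+2\Xi'(q)^2$. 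You instead use symmetry only at $q=0$ to get $\Gamma(0)=0$, and then integrate the differential inequality \eqref{eq:Gamma-curvature} across the finitely many recursion intervals. That gives the sharper bound $\Gamma(q)\le\Xi'(q)$; in fact a Gronwall step on $1-\Gamma$ gives $\Gamma(q)\le1-e^{-\Xi'(q)}$. The trade-off lies in what each route borrows from \Cref{lem:curvature}. Yours needs the derivative identity \eqref{eq:Gamma-derivative}, which is an It\^o computation, together with continuity of $\Gamma$ across atoms. The paper's needs only the pointwise bounds \eqref{eq:curvature} and a crude moment estimate for the SDE, with no regularity of $\Gamma$ in $q$. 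Both constants depend only on $C$ and $\delta$, so uniformity is immediate either way.
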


\begin{proof}
By symmetry, $u_x(q,0)=0$, and \Cref{lem:curvature} gives
$|u_x(q,x)|\leq|x|$.  Up to time $q$, the martingale part of the optimal diffusion has second moment $\Xi'(q)$, while the absolute value of its integrated drift is at most $\Xi'(q)$.  Hence
\[
 \E X_q^2\leq2\Xi'(q)+2\Xi'(q)^2,
 \qquad
 \Gamma(q)\leq\E X_q^2=O(q^3),
\]
with uniform constants under the stated hypotheses.  A common $q_0$ makes
the final bound at most $q/2$, which proves
\eqref{eq:uniform-origin-exclusion}.
\end{proof}

\subsection{Nondegenerate finite-step Parisi measures}

Let
\begin{equation}\label{eq:atomic-measure}
 \mu_z=\sum_{i=0}^rw_i\delta_{q_i},
 \qquad
 0=q_0<q_1<\cdots<q_r<1,
 \qquad
 w_i>0,
 \qquad
 \sum_{i=0}^rw_i=1.
\end{equation}
We use the coordinates
\[
 z=(q_1,\ldots,q_r,w_0,\ldots,w_{r-1}),
 \qquad
 w_r=1-\sum_{i=0}^{r-1}w_i.
\]
For $r=0$, the coordinate vector is empty, $\mu_z=\delta_0$, and the empty Hessian is positive definite by convention.

The following regularity result is used repeatedly in the finite-step
analysis.  It gives uniform stability of the Parisi functional, its atomic
derivatives, and the first-variation quantities under perturbations of the
covariance and of the atomic coordinates.  Its proof, based on the
finite-recursion estimates of \Cref{app:recursion}, is deferred
to that appendix.

\begin{proposition}
\label{prop:finite-step-regularity}
Fix $r\geq0$, and let $\mathcal K$ be a compact set of pairs $(\Xi,z)$ in
the product of the $C^4$ topology for $\Xi$ and the Euclidean topology for $z$,
where $\Xi$ is admissible and
\[
 \mu_z=\sum_{i=0}^r w_i\delta_{q_i},
 \qquad
 0=q_0<q_1<\cdots<q_r<1.
\]
Set $q_{r+1}=1$.  Suppose that there exists $\delta>0$ such that, for every
$(\Xi,z)\in\mathcal K$,
\[
 w_i\geq\delta
 \quad (0\leq i\leq r),
\]
and
\[
 q_{i+1}-q_i\geq\delta,
 \qquad
 \Xi'(q_{i+1})-\Xi'(q_i)\geq\delta
 \quad (0\leq i\leq r).
\]

Then, on a neighborhood of $\mathcal K$ relative to the set of pairs
$(\Xi,z)$ for which $\Xi$ is admissible and $\mu_z$ is an atomic probability
measure of the displayed form,
\[
 (\Xi,z)\longmapsto
 \Pp_\Xi(\mu_z),\qquad
 (\Xi,z)\longmapsto
 \nabla_z\Pp_\Xi(\mu_z),\qquad
 (\Xi,z)\longmapsto
 D_z^2\Pp_\Xi(\mu_z)
\]
are locally Lipschitz, where $\Xi$ is measured in the $C^4$ norm and $z$ in
the Euclidean norm. For each fixed $\Xi$, the map
$z\mapsto\Pp_\Xi(\mu_z)$ is $C^3$, with the displayed gradient and Hessian
as its first and second derivatives.  The local Lipschitz constants may be
chosen uniformly on the product neighborhoods in a finite cover of
$\mathcal K$.

For $0\leq i\leq r$ and $t\in[0,1]$, write
\[
 q_i(t)=q_i+t(q_{i+1}-q_i).
\]
On each interval $[q_i,q_{i+1}]$, the quantities
\[
 f_{\Xi,\mu_z}(q_i(t)),\qquad
 \partial_q f_{\Xi,\mu_z}(q_i(t)),\qquad
 \partial_q^2 f_{\Xi,\mu_z}(q_i(t)),
\]
and
\[
 \Gamma_{\Xi,\mu_z}(q_i(t)),\qquad
 \partial_q\Gamma_{\Xi,\mu_z}(q_i(t))
\]
are jointly locally Lipschitz in $(\Xi,z)$, uniformly for $t\in[0,1]$.
At $t=0$ and $t=1$, the overlap derivatives are taken as one-sided limits
from within the interval $[q_i,q_{i+1}]$.

If $\vartheta\mapsto\Xi_\vartheta$ is a smooth finite-dimensional family of
admissible polynomial covariances, then all of the quantities above depend
smoothly on $(\vartheta,z)$ wherever the displayed lower bounds hold.
\end{proposition}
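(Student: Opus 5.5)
The plan is to write everything explicitly through the finite Cole--Hopf recursion and check that each ingredient is a locally Lipschitz (or smooth) function of $(\Xi,z)$ on a neighborhood where the separation bounds hold. Fix $(\Xi,z)$ with $q_{r+1}=1$ and set $V_i=\Xi'(q_{i+1})-\Xi'(q_i)$ for $0\le i\le r$. Since $\alpha_\mu=\sum_{j\le i}w_j=:m_i$ on $[q_i,q_{i+1})$, the recursion reads
\[
 g_{r+1}=\log\cosh,\qquad g_i=T_{m_i,V_i}g_{i+1},
\]
with $u_\mu(0,0)=g_0(0)$. The remaining term $-\tfrac12\int_0^1 q\Xi''\alpha_\mu$ equals $-\tfrac12\sum_i m_i\int_{q_i}^{q_{i+1}}q\Xi''(q)\dd q$. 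This is an explicit polynomial expression in $(q_i,w_i)$ and in the coefficients/values of $\Xi$, $\Xi'$, and $\Xi''$, hence smooth. The main work is therefore to show that the composition $(m_0,V_0,\dots,m_r,V_r)\mapsto g_0(0)$ is $C^3$ in its arguments, with derivatives up to order two locally Lipschitz. Because $\Xi$ is measured in $C^4$, the variables $V_i$ and their derivatives in $q_i$ (which involve $\Xi''$ and $\Xi'''$) are $C^1$-Lipschitz in $\Xi$. This explains the $C^4$ topology.

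For a single step I will use the norm $\|\cdot\|_{c,*}$ in duality form. Each $g_i$ satisfies $|g_i'|\le1$ by \Cref{lem:curvature}, so $|g_i(x)|\le C+|x|$ and $e^{m g_i}$ has at most exponential growth. For $m\ge\delta$ and $V\ge\delta$, I expect the map $(m,V,g)\mapsto T_{m,V}g$ to be smooth on functions of linear growth, in a weighted sup norm with weight $e^{-c|x|}$. Derivatives in $V$ are obtained by moving the derivative onto the Gaussian kernel (heat equation, $\partial_V\mathsf H_V=\tfrac12\partial_x^2\mathsf H_V$, and kernel derivatives are bounded when $V\ge\delta$). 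Derivatives in $m$ produce tilted moments of $g$. Derivatives in $g$ are tilted expectations. All of these are bounded uniformly using Gaussian integrability against $e^{c|x|}$; this is where the measures in $\mathcal M_c$ enter, as the tilted pushforward laws. Composing $r+1$ such steps and evaluating at $0$ gives the $C^3$ statement and the local Lipschitz bounds for $\Pp$, $\nabla_z\Pp$, and $D_z^2\Pp$. Uniformity follows by compactness: cover $\mathcal K$ by finitely many neighborhoods on which the lower bounds hold with $\delta/2$ and $\Xi$ stays in a $C^4$-ball. The smooth-family statement follows the same way, since all inputs then depend smoothly on $(\vartheta,z)$.

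For $f$ and $\Gamma$ on $[q_i,q_{i+1}]$, I write $u_\mu(q_i(t),\cdot)=T_{m_i,\Xi'(q_{i+1})-\Xi'(q_i(t))}g_{i+1}$. The law of $X_{q_i(t)}$ is the forward object, given by a tilted-Gaussian recursion: the density of $X_q$ is obtained from the successive Gibbs/Cole--Hopf change of measure, and it lies in $\mathcal M_c$ with norm Lipschitz in parameters. Then $\Gamma=\int u_x^2\,\dd\Law(X_q)$, and $\partial_q\Gamma=\Xi''\int u_{xx}^2\,\dd\Law(X_q)$ by \eqref{eq:Gamma-derivative}. Both are bilinear pairings of a Lipschitz measure with Lipschitz bounded functions, hence Lipschitz uniformly in $t$. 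Finally, $f$, $f'=\tfrac12\Xi''(\Gamma-q)$, and $f''$ follow by integrating or differentiating these expressions together with $\Xi''$ and $\Xi'''$.

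The main obstacle is the endpoint $t\to1$ on each interval, where the remaining variance $\Xi'(q_{i+1})-\Xi'(q_i(t))\to0$ and kernel derivatives blow up. I will handle this by never differentiating the kernel there. Instead I use the regularity of $g_{i+1}$ itself ($g_{i+1}',g_{i+1}''$ are bounded by \Cref{lem:curvature}, and higher derivatives are bounded inductively because the terminal condition $\log\cosh$ is smooth), so that $T_{a,V}g\to g$ in $C^2$ uniformly as $V\downarrow0$. The parameter-derivatives of $g_{i+1}$ are controlled by the preceding step, where the variance is at least $\delta$.
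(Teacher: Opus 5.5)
Your architecture matches the paper's: the finite recursion $u_i=T_{a_i,V_i}u_{i+1}$, the explicit integrated term, forward laws in $\mathcal M_c$, $\Gamma$ and $\partial_q\Gamma$ as pairings, a variance flow through vanishing variance, and a finite cover of $\mathcal K$. Your reduction of $\Pp_\Xi(\mu_z)$ to a finite-dimensional function of $(m_i,V_i)$, composed with $(\Xi,z)\mapsto(m,V)$, is a legitimate shortcut around the paper's $\mathsf T^k$ machinery.

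One correction there. $T_{m,V}$ is not defined on any weighted-sup-norm neighborhood: adding $\varepsilon e^{c|x|}$ to $g$ makes $\E e^{mg(x+\sqrt VZ)}$ infinite. You should either restrict to the convex set $\{|g'|\leq1\}$ with directions of bounded derivative, as the paper does via centered increments, or simply differentiate the nested Gaussian integrals in the finitely many parameters.

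\textbf{The gap: the left end of each interval.} The split point $q_i(t)$ creates two short variances:
\begin{itemize}
\item the backward one, $S=\Xi'(q_{i+1})-\Xi'(q_i(t))$, which vanishes as $t\uparrow1$; you handle this correctly by the variance flow;
\item the forward one, $W=\Xi'(q_i(t))-\Xi'(q_i)$, which vanishes as $t\downarrow0$; you never address this.
\end{itemize}
Your claim that $\Law(X_{q_i(t)})=\rho_iK_W$ is Lipschitz in $\mathcal M_c$ uniformly in $t$ does not follow from kernel estimates. The derivative $\partial_Wp_W(x,\cdot)$ has total variation of order $W^{-1}$. For $i=0$, where $\rho_0=\delta_0$, the map $W\mapsto\delta_0K_W$ is not even continuous in total variation at $W=0$. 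Smoothness of $g_{i+1}$ does not help here, because the degeneracy sits in the forward measure, not the backward function.

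\textbf{How the paper avoids it.} The paper never forms this law. It writes $\Gamma(q_i(t))=\langle\rho_i,\mathcal N_{a_i,\tau V_i}(h_{i,\tau},(\partial_xh_{i,\tau})^2)\rangle$, with $\tau=\tau_i(t)$ the variance fraction. The short forward kernel then acts on the smooth observable. Regularity down to $W=\tau V_i=0$ comes from the flow $\partial_WN=\tfrac12N_{xx}+aU_xN_x$ (\Cref{lem:normalized-observable-calculus,lem:split-observable-stability}), at the cost of two more spatial derivatives. Lipschitz dependence on $(\Xi,z)$ then reduces to that of $\tau_i(t)$, which is uniform in $t$ since $V_i\geq\delta$.

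\textbf{How to repair your route instead.} You can keep the measure-valued approach, but you must prove that parameter changes move $W$ only relatively: $|\Delta W|\leq CW|\Delta(\Xi,z)|$. Then the two short Gaussian kernels stay $O(|\Delta(\Xi,z)|)$ apart in weighted total variation.
\begin{itemize}
\item For $i\geq1$, this follows from $\Xi''\geq c>0$ on $[\delta,1]$.
\item For $i=0$, it needs $\Xi'(s)\geq cs^3$ (from $c_4>0$) together with $|(\Xi-\widetilde\Xi)'(s)|\leq\tfrac16s^3\|\Xi-\widetilde\Xi\|_{C^4}$.
\end{itemize}
So the admissible structure at the origin and the $C^4$ topology enter essentially at this step.
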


\begin{definition}\label{def:nondegenerate}
Suppose that $\mu_z$ in \eqref{eq:atomic-measure} is the Parisi measure of
$\Xi$.  We call the $r$-RSB Parisi measure $\mu_z$ \emph{nondegenerate} if:
\begin{enumerate}[label=\textup{(\roman*)}]
 \item the Hessian $D_z^2\Pp_\Xi(\mu_z)$ is positive definite;
 \item
 \begin{equation*}
  f_{\Xi,\mu_z}(q)=0
  \quad\Longleftrightarrow\quad
  q\in\{q_0,\ldots,q_r\};
 \end{equation*}
 \item
 \begin{equation*}
  f_{\Xi,\mu_z}''(q_i)<0,
  \qquad 1\leq i\leq r.
 \end{equation*}
\end{enumerate}
\end{definition}

The three conditions have distinct roles. Positive definiteness of the atomic Hessian allows the stationary point to be continued uniquely under small perturbations.  The exact
zero-set condition allows the first-variation criterion to be verified
globally, while negative curvature at the positive contacts prevents existing
contacts from splitting or disappearing.

We now combine the regularity in
\Cref{prop:finite-step-regularity} with the three nondegeneracy conditions
above to show that a nondegenerate finite-step Parisi measure persists under
small perturbations of the covariance.

\begin{proposition}\label{prop:stability}
Let $\Xi$ be an admissible covariance whose Parisi measure
$\mu_{z_*}$ is $r$-RSB and nondegenerate.  Define
\[
 \|h\|_{C^4}
 =
 \max_{0\leq j\leq4}\sup_{q\in[0,1]}|h^{(j)}(q)|.
\]
Then there exist $\varepsilon>0$ and a neighborhood $\cN$ of $z_*$ in the
atomic coordinate space such that, for every admissible covariance
$\widetilde\Xi$ satisfying
\[
 \|\widetilde\Xi-\Xi\|_{C^4}<\varepsilon,
\]
there is a unique $z(\widetilde\Xi)\in\cN$ for which
$\mu_{z(\widetilde\Xi)}$ is the nondegenerate $r$-RSB Parisi measure of
$\widetilde\Xi$.  Moreover, $z(\Xi)=z_*$, and the map
\[
 \widetilde\Xi\longmapsto z(\widetilde\Xi)
\]
is locally Lipschitz with respect to the $C^4$ norm.

If $\vartheta\mapsto\Xi_\vartheta$ is a smooth finite-dimensional family of
admissible polynomial covariances satisfying
\[
 \|\Xi_\vartheta-\Xi\|_{C^4}<\varepsilon,
\]
then $\vartheta\mapsto z(\Xi_\vartheta)$ is smooth.  In particular, for fixed
even exponents
\[
 4=p_0<p_1<\cdots<p_d,
\]
the set of coefficient vectors
\[
 (c_0,\ldots,c_d)\in(0,\infty)^{d+1}
\]
for which the covariance
\[
 \sum_{j=0}^d c_jq^{p_j}
\]
has a nondegenerate $r$-RSB Parisi measure is open in
$(0,\infty)^{d+1}$, and the support locations and masses of that measure
depend smoothly on the coefficients.
\end{proposition}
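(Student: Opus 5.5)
The approach is implicit function theorem plus a global verification via the first-variation criterion (\Cref{prop:first-variation-criterion}).

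\emph{Step 1: continue the stationary point.} By \Cref{lem:atomic-derivatives}, $z_*$ is a critical point of $z\mapsto\Pp_\Xi(\mu_z)$. By condition (i) of \Cref{def:nondegenerate}, the Hessian $D_z^2\Pp_\Xi(\mu_{z_*})$ is invertible. Take $\mathcal K=\{(\Xi,z_*)\}$; it satisfies the separation hypotheses of \Cref{prop:finite-step-regularity} because $w_i>0$, $q_0<\cdots<q_r<1$, and $\Xi'$ is strictly increasing ($\Xi''>0$ on $(0,1]$). That proposition makes $\nabla_z\Pp_{\widetilde\Xi}(\mu_z)$ and $D^2_z\Pp_{\widetilde\Xi}(\mu_z)$ locally Lipschitz in $(\widetilde\Xi,z)$, and $C^1$ in $z$ for fixed $\widetilde\Xi$. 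I would then apply a Lipschitz-parameter implicit function theorem (a contraction-mapping argument with Newton map $z\mapsto z-D^2_z\Pp_\Xi(\mu_{z_*})^{-1}\nabla_z\Pp_{\widetilde\Xi}(\mu_z)$). This gives a neighborhood $\cN$ of $z_*$ and $\varepsilon>0$ such that each $\widetilde\Xi$ with $\|\widetilde\Xi-\Xi\|_{C^4}<\varepsilon$ has a unique critical point $z(\widetilde\Xi)\in\cN$. The map $\widetilde\Xi\mapsto z(\widetilde\Xi)$ is Lipschitz, and the Hessian there stays positive definite by continuity. For a smooth family $\Xi_\vartheta$, the smooth-dependence clause of \Cref{prop:finite-step-regularity} lets the classical implicit function theorem give smoothness in $\vartheta$. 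Shrinking $\cN$ keeps $w_i\ge\delta$ and the gaps $\ge\delta$, so $\mu_{z(\widetilde\Xi)}$ is still an $r$-RSB atomic measure with the same number of atoms. By \Cref{lem:atomic-derivatives}, stationarity gives $f=0$ at every $q_i$ and $f'=0$ at every $q_i$ with $i\ge1$.

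\emph{Step 2: global first-variation inequality.} This is the main obstacle. We must show $f_{\widetilde\Xi,\mu_{\widetilde z}}\le0$ on $[0,1]$, with zeros only at the $\widetilde q_i$, where $\widetilde z=z(\widetilde\Xi)$.

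Near the origin, use \Cref{lem:origin-contact}. The bound $\widetilde\Xi'(q)\le Cq^3$ holds uniformly on a fixed $[0,\delta]$ for all admissible $\widetilde\Xi$ that are $C^4$-close to $\Xi$: such $\widetilde\Xi$ are polynomials with no $q^2$ term, so $\widetilde\Xi'(0)=\widetilde\Xi''(0)=\widetilde\Xi'''(0)=0$, and Taylor's theorem with the $C^4$ bound does the rest. Then $\Gamma-q\le -q/2$ on $(0,q_0]$, and since $\widetilde\Xi''>0$, $f$ is strictly negative on $(0,q_0]$.

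Near each positive atom $q_i$, use condition (iii). We have $f''_{\Xi,\mu_{z_*}}(q_i)<0$, and by \Cref{lem:curvature} $f''$ is continuous across atoms. Joint Lipschitz dependence of $\partial_q^2 f$ on $(\Xi,z)$, uniform in the rescaled variable $t$ (\Cref{prop:finite-step-regularity}), gives $f''_{\widetilde\Xi,\mu_{\widetilde z}}\le -c<0$ on a fixed-size window around $\widetilde q_i$. Combined with $f(\widetilde q_i)=0$ and $f'(\widetilde q_i)=0$, this makes $f<0$ on the punctured window. The same holds one-sidedly near $q=1$ if relevant; no atom sits at $1$, since $q_r<1$.

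On the remaining compact set $K$, which lies a fixed distance from all atoms, condition (ii) together with continuity gives $f_{\Xi,\mu_{z_*}}\le -c'$ on $K$. The uniform-in-$t$ Lipschitz estimate on $f(q_i(t))$ then transfers this to $f_{\widetilde\Xi,\mu_{\widetilde z}}\le -c'/2$. The reparametrization $q_i(t)$ moves the sets only slightly, which is harmless because $f$ is Lipschitz in $q$ with uniform constants.

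Combining the three regions verifies \eqref{eq:first-variation-criterion}. Hence $\mu_{\widetilde z}$ is the unique Parisi measure, and the estimates just proved also yield nondegeneracy conditions (i)--(iii) for it.

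\emph{Step 3: coefficient corollary.} For fixed exponents, the map $c\mapsto\sum_j c_j q^{p_j}$ is linear and continuous into $C^4$, and every $c\in(0,\infty)^{d+1}$ gives an admissible covariance. Applying Steps 1--2 at any nondegenerate point shows that a neighborhood of coefficient vectors also has nondegenerate $r$-RSB Parisi measures, so the set is open. The smooth-family clause gives smooth dependence of the atoms and masses on $c$. Uniqueness of $z(\widetilde\Xi)$ within $\cN$ follows from the implicit function theorem, and it is consistent with global uniqueness of the Parisi measure.
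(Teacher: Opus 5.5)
Your proposal is correct and follows essentially the same route as the paper. You continue the stationary atomic point using the positive-definite atomic Hessian and the joint regularity of \Cref{prop:finite-step-regularity}, then verify the first-variation criterion on the same three regions as the paper: the origin via \Cref{lem:origin-contact}, fixed windows around the positive atoms via a uniform negative bound on $f''$, and the compact remainder via the exact zero-set condition. The only difference is technical: you continue the critical point by a Newton-type contraction, whereas the paper uses uniform strong convexity on a closed ball (an interior minimizer for existence, strong monotonicity for uniqueness and the Lipschitz bound), and both give the same conclusions.
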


\begin{proof}
We divide the proof into three steps. First, strong convexity in the atomic coordinates continues the stationary atomic measure and gives its Lipschitz and smooth dependence on the covariance. Second, we show that the continued stationary point still satisfies the global first-variation criterion, by treating the origin, neighborhoods of the positive support points, and the compact remainder separately. Finally, we verify persistence of nondegeneracy and deduce openness in coefficient space.

\medskip
\noindent\emph{Step 1: Continuation of the atomic stationary point.}
Assume first that $r\geq1$.  On a neighborhood of $z_*$ with separated
support points and positive masses, define
\[
 \mathcal F(\widetilde\Xi,z)
 =
 \nabla_z\Pp_{\widetilde\Xi}(\mu_z).
\]
Since $\mu_{z_*}$ is the Parisi measure of $\Xi$,
\[
 \mathcal F(\Xi,z_*)=0.
\]
By \Cref{prop:finite-step-regularity}, $\mathcal F$ is locally Lipschitz in
$\widetilde\Xi$ and $C^1$ in $z$.  Moreover,
\[
 D_z\mathcal F(\Xi,z_*)
 =
 D_z^2\Pp_\Xi(\mu_{z_*})
\]
is positive definite.  Hence, after choosing $\rho>0$ sufficiently small
and shrinking to a $C^4$ neighborhood $\cU$ of $\Xi$ within the admissible
covariances, there exists $\kappa>0$ such that
\[
 D_z^2\Pp_{\widetilde\Xi}(\mu_z)\succeq\kappa I
\]
for every $\widetilde\Xi\in\cU$ and every
$z\in\overline B_\rho(z_*)$.  We choose $\rho$ so that this closed ball lies
inside the atomic coordinate space with separated support points and
positive masses.  Integrating the Hessian along the segment from $z'$ to
$z$ gives
\begin{equation}\label{e:strongmonotonicity}
 \left\langle
 \mathcal F(\widetilde\Xi,z)
 -
 \mathcal F(\widetilde\Xi,z'),
 z-z'
 \right\rangle
 \geq
 \kappa|z-z'|^2.
\end{equation}

We next show that $\mathcal F(\widetilde\Xi,\cdot)$ has a unique zero in
$B_\rho(z_*)$.  Writing $h=z-z_*$ and integrating the gradient along the
segment $z_*+th$, $0\leq t\leq1$, gives
\[
 \Pp_{\widetilde\Xi}(\mu_z)
 -
 \Pp_{\widetilde\Xi}(\mu_{z_*})
 =
 \int_0^1
 \left\langle
 \mathcal F(\widetilde\Xi,z_*+th),h
 \right\rangle
 \,\dd t.
\]
Using \eqref{e:strongmonotonicity} with $z_*+th$ and $z_*$, we obtain
\[
 \Pp_{\widetilde\Xi}(\mu_z)
 -
 \Pp_{\widetilde\Xi}(\mu_{z_*})
 \geq
 -|\mathcal F(\widetilde\Xi,z_*)|\,|z-z_*|
 +
 \frac{\kappa}{2}|z-z_*|^2.
\]
Since $\mathcal F(\Xi,z_*)=0$, \Cref{prop:finite-step-regularity} allows us to
shrink $\cU$ so that
\[
 |\mathcal F(\widetilde\Xi,z_*)|
 \leq
 \frac{\kappa\rho}{4}
\]
for every $\widetilde\Xi\in\cU$.  Consequently,
\[
 \Pp_{\widetilde\Xi}(\mu_z)
 -
 \Pp_{\widetilde\Xi}(\mu_{z_*})
 \geq
 \frac{\kappa\rho^2}{4}
\]
when $|z-z_*|=\rho$.  A minimizer of
$z\mapsto\Pp_{\widetilde\Xi}(\mu_z)$ on the closed ball
$\overline B_\rho(z_*)$ is therefore interior and hence is a zero of
$\mathcal F$.  The monotonicity estimate
\eqref{e:strongmonotonicity} shows that this zero is unique.  Denote it by
$z(\widetilde\Xi)$.

The same estimate gives the dependence of this zero on the covariance.  If we set 
$z_j=z(\Xi_j)$ for $\Xi_1,\Xi_2\in\cU$, then
\[
 \begin{aligned}
 \kappa|z_1-z_2|^2
 &\leq
 \left\langle
 \mathcal F(\Xi_1,z_1)-\mathcal F(\Xi_1,z_2),
 z_1-z_2
 \right\rangle = 
 \left\langle
 \mathcal F(\Xi_2,z_2)-\mathcal F(\Xi_1,z_2),
 z_1-z_2
 \right\rangle.
 \end{aligned}
\]
The covariance comparison in \Cref{prop:finite-step-regularity} therefore gives
\[
 \kappa|z(\Xi_1)-z(\Xi_2)|
 \leq
 C\|\Xi_1-\Xi_2\|_{C^4}.
\]
Thus $\widetilde\Xi\mapsto z(\widetilde\Xi)$ is locally Lipschitz.

For a smooth finite-dimensional polynomial family
$\vartheta\mapsto\Xi_\vartheta$, the map
\[
 (\vartheta,z)
 \longmapsto
 \mathcal F(\Xi_\vartheta,z)
\]
is smooth by \Cref{prop:finite-step-regularity}.  At every zero constructed above,
\[
 D_z\mathcal F(\Xi_\vartheta,z(\Xi_\vartheta))
 =
 D_z^2\Pp_{\Xi_\vartheta}(\mu_{z(\Xi_\vartheta)})
 \succeq\kappa I.
\]
The implicit-function theorem therefore gives smooth dependence on
$\vartheta$ locally at every parameter value.  Uniqueness of the zero makes
these local branches agree, so $\vartheta\mapsto z(\Xi_\vartheta)$ is
smooth wherever $\Xi_\vartheta\in\cU$.

For $r=0$, the atomic coordinate space is empty and we set
$z(\widetilde\Xi)=z_*$.  The remainder of the argument applies without the
positive support points considered below.

\medskip
\noindent\emph{Step 2: Global first-variation inequality and exact zero set.} 
We now show that the stationary atomic measure just constructed is the
Parisi measure.  The atomic coordinates $z(\widetilde\Xi)$ remain in a fixed
compact subset of the atomic coordinate space.  Moreover, every admissible
covariance satisfies
\[
 \widetilde\Xi'(0)
 =
 \widetilde\Xi''(0)
 =
 \widetilde\Xi'''(0)
 =
 0.
\]
Taylor's theorem and the local $C^4$ bound therefore give
\[
 0\leq\widetilde\Xi'(q)
 \leq
 \frac16\|\widetilde\Xi''''\|_\infty q^3
 \leq
 Cq^3
\]
uniformly for $\widetilde\Xi\in\cU$.  Hence
\Cref{lem:origin-contact} gives, after shrinking $\cU$, an
$\varepsilon_0>0$, chosen smaller than $q_1/2$ when $r\geq1$, such that
$\widetilde q_1>\varepsilon_0$ for every $\widetilde\Xi\in\cU$ and
\[
 \Gamma_{\widetilde\Xi,\mu_{z(\widetilde\Xi)}}(q)-q
 \leq
 -\frac q2,
 \qquad
 0<q\leq\varepsilon_0.
\]
Since $\widetilde\Xi''(q)>0$ for $q>0$,
\[
 f_{\widetilde\Xi,\mu_{z(\widetilde\Xi)}}'(q)
 =
 \frac12\widetilde\Xi''(q)
 \left(
 \Gamma_{\widetilde\Xi,\mu_{z(\widetilde\Xi)}}(q)-q
 \right)
 <0
\]
on $(0,\varepsilon_0]$.  Since
$f_{\widetilde\Xi,\mu_{z(\widetilde\Xi)}}(0)=0$, it follows that
\[
 f_{\widetilde\Xi,\mu_{z(\widetilde\Xi)}}(q)<0,
 \qquad
 0<q\leq\varepsilon_0.
\]

Suppose now that $r\geq1$.  For the original support points, set
$q_{r+1}=1$ and write
\[
 q_j(t)=q_j+t(q_{j+1}-q_j),
 \qquad
 0\leq j\leq r,
 \qquad
 0\leq t\leq1.
\]
By nondegeneracy, $f_{\Xi,\mu_{z_*}}''(q_i)<0$ for
$1\leq i\leq r$.  Using the one-sided continuity of $f''$ from the two
intervals adjacent to each $q_i$, we may choose $\eta>0$ and $c>0$ such
that
\[
 f_{\Xi,\mu_{z_*}}''(q_{i-1}(t))
 \leq
 -4c,
 \qquad
 1-\eta\leq t\leq1,
\]
and
\[
 f_{\Xi,\mu_{z_*}}''(q_i(t))
 \leq
 -4c,
 \qquad
 0\leq t\leq\eta,
\]
for every $1\leq i\leq r$.

For $\widetilde\Xi\in\cU$, write the continued support points as
\[
 0=\widetilde q_0<\widetilde q_1<\cdots<\widetilde q_r<1,
 \qquad
 \widetilde q_{r+1}=1,
\]
and set
\[
 \widetilde q_j(t)
 =
 \widetilde q_j+t(\widetilde q_{j+1}-\widetilde q_j).
\]
By \Cref{prop:finite-step-regularity}, after shrinking $\cU$ once more,
\[
 f_{\widetilde\Xi,\mu_{z(\widetilde\Xi)}}''
 \bigl(\widetilde q_{i-1}(t)\bigr)
 \leq
 -2c,
 \qquad
 1-\eta\leq t\leq1,
\]
and
\[
 f_{\widetilde\Xi,\mu_{z(\widetilde\Xi)}}''
 \bigl(\widetilde q_i(t)\bigr)
 \leq
 -2c,
 \qquad
 0\leq t\leq\eta.
\]
Because $z(\widetilde\Xi)$ is a zero of $\mathcal F$, the atomic
stationarity conditions and \Cref{lem:atomic-derivatives} give
\[
 f_{\widetilde\Xi,\mu_{z(\widetilde\Xi)}}(\widetilde q_i)
 =
 f_{\widetilde\Xi,\mu_{z(\widetilde\Xi)}}'(\widetilde q_i)
 =
 0,
 \qquad
 1\leq i\leq r.
\]
Integrating the strict negative bound on $f''$ on either side of
$\widetilde q_i$ shows that
\[
 f_{\widetilde\Xi,\mu_{z(\widetilde\Xi)}}(q)<0
\]
at every other point in these adjacent subintervals.  Thus
$\widetilde q_i$ is the unique zero there.

It remains to exclude additional zeros between these neighborhoods.  Decrease
$\eta$ if necessary so that $\eta<1/2$, and choose
\[
 0<\tau_0<1-\eta,
 \qquad
 2\tau_0q_1<\varepsilon_0.
\]
Since $\widetilde q_1\to q_1$, after shrinking $\cU$ we may assume
$\widetilde q_1\leq2q_1$.  Hence
\[
 \widetilde q_0(t)=t\widetilde q_1\leq\varepsilon_0,
 \qquad 0\leq t\leq\tau_0,
\]
so this entire part of the first continued interval is covered by the
origin estimate.  Define the fixed compact parameter sets
\[
 K_0=[\tau_0,1-\eta],
 \qquad
 K_j=[\eta,1-\eta]\quad(1\leq j\leq r-1),
 \qquad
 K_r=[\eta,1].
\]
The middle family is empty when $r=1$.  None of the sets $q_j(K_j)$ contains
a support point of $\mu_{z_*}$.  By the exact zero-set condition in the
definition of nondegeneracy and compactness, there exists $\gamma>0$ such that
\[
 f_{\Xi,\mu_{z_*}}(q_j(t))\leq-3\gamma,
 \qquad t\in K_j,
 \qquad 0\leq j\leq r.
\]
The uniform comparison in \Cref{prop:finite-step-regularity}, together with
$z(\widetilde\Xi)\to z_*$, gives, after shrinking $\cU$ once more,
\[
 f_{\widetilde\Xi,\mu_{z(\widetilde\Xi)}}
 \bigl(\widetilde q_j(t)\bigr)
 \leq-2\gamma,
 \qquad t\in K_j,
 \qquad 0\leq j\leq r.
\]
For the first interval, $[0,\tau_0]$ is covered by the origin estimate and
$[1-\eta,1]$ by the neighborhood of $\widetilde q_1$; for each interior
interval the two endpoint pieces are covered by the neighborhoods of its
support points; and on the last interval $[0,\eta]$ is covered by the
neighborhood of $\widetilde q_r$.  Thus the displayed compact sets and the
previous local estimates cover every continued interval.

When $r=0$, the same conclusion away from the origin follows directly from
the exact zero-set condition for $f_{\Xi,\delta_0}$ and the uniform
comparison in \Cref{prop:finite-step-regularity}. Combining this with the origin estimate, and in the case $r\geq1$ with the
estimates near the positive support points, we have shown in all cases that
\[
 f_{\widetilde\Xi,\mu_{z(\widetilde\Xi)}}(q)\leq0,
 \qquad 0\leq q\leq1,
\]
with equality exactly at the atoms of
$\mu_{z(\widetilde\Xi)}$.  By \Cref{prop:first-variation-criterion}, $\mu_{z(\widetilde\Xi)}$ is the unique Parisi measure of
$\widetilde\Xi$.

\medskip
\noindent\emph{Step 3: Persistence of nondegeneracy and openness.}
The positive-definite Hessian bound already obtained above persists
throughout $\cU$, and the preceding estimates give
\[
 f_{\widetilde\Xi,\mu_{z(\widetilde\Xi)}}''
 (\widetilde q_i)<0,
 \qquad
 1\leq i\leq r.
\]
Together with the exact zero-set property just proved, this shows that the
continued Parisi measure is nondegenerate.

Finally, choose $\varepsilon>0$ so that every admissible
$\widetilde\Xi$ satisfying
\[
 \|\widetilde\Xi-\Xi\|_{C^4}<\varepsilon
\]
belongs to the final neighborhood $\cU$, and take
\[
 \cN=B_\rho(z_*).
\]
For fixed even exponents $p_0,\ldots,p_d$, the map
\[
 (c_0,\ldots,c_d)
 \longmapsto
 \sum_{j=0}^d c_jq^{p_j}
\]
is linear, hence smooth, as a map into $C^4([0,1])$.  Applying the previous local persistence 
result at any coefficient vector whose covariance has a nondegenerate
$r$-RSB Parisi measure shows that such coefficient vectors form an open
subset of $(0,\infty)^{d+1}$.  The smooth finite-dimensional dependence
proved above gives smooth dependence of the support locations and masses on
the coefficients.
\end{proof}

\section{The high-degree limiting problem}\label{sec:limiting-problem}

We now study the limiting problem mentioned in the proof sketch. Fix throughout this section an admissible covariance $\Xi$ whose Parisi measure
$\mu_*$ is $r$-RSB and nondegenerate and let
$z_*$ be the atomic coordinates of $\mu_*$.  For atomic coordinates $z$ in
the neighborhood supplied by \Cref{prop:stability}, write
\[
 \nu_z=\mu_z,
 \qquad
 a(z)=\max\supp\nu_z.
\]
By \Cref{prop:stability}, for $b$ in a neighborhood of $1$, the covariance
$b^2\Xi$ has a nondegenerate $r$-RSB Parisi measure
$\nu_b=\mu_{z(b)}$, where $z(1)=z_*$ and $b\mapsto z(b)$ is smooth.  Write
\begin{equation*}
 \mathfrak p(b)=\inf_{\mu\in\M}\Pp_{b^2\Xi}(\mu).
\end{equation*}
On this neighborhood,
$\mathfrak p(b)=\Pp_{b^2\Xi}(\nu_b)$ and $\mathfrak p$ is smooth.

\subsection{Temperature--mass scaling}\label{s:tempmass}

In the constructions below, the existing lower atoms may carry total mass
$m$ rather than total mass one, with the remaining mass later assigned to a
new atom near $1$.  If $\nu$ is the probability measure obtained by
normalizing the lower atoms, then their cumulative mass below a cutoff
$q_{\mathrm c}$ is $m\alpha_\nu$.  The next lemma shows that the Parisi
recursion with covariance $\Xi$ and cumulative mass $m\alpha_\nu$ is,
after an explicit rescaling of the field, exactly the usual Parisi recursion
for the probability measure $\nu$ with covariance $m^2\Xi$.  Since replacing
$\Xi$ by $m^2\Xi$ is the same as changing the inverse-temperature scale by a
factor $m$, this converts variation of the lower mass into variation of
temperature.

The notation $m\nu$ in the subscripts below means that the recursion uses
the cumulative profile $m\alpha_\nu$; note that when $m<1$, $m\nu$ is not  an element of the probability-measure space $\mathcal M$.

\begin{lemma}\label{lem:temperature-mass-scaling}
Let $\nu$ be an atomic probability measure on $[0,1]$ with
\[
 a_\nu=\max\supp\nu<1,
\]
let $q_{\mathrm c}\in[a_\nu,1)$, let $m>0$, and set
\[
 \psi_m(x)=\frac1m\log\cosh(mx).
\]
Let $U$ be the Parisi PDE solution for covariance $m^2\Xi$ and order
parameter $\nu$, and, for $0\leq q\leq q_{\mathrm c}$, define
\begin{equation}\label{eq:temperature-mass-function}
 V(q,x)
 =
 \frac1mU(q,mx)
 -
 \frac m2\bigl(\Xi'(1)-\Xi'(q_{\mathrm c})\bigr).
\end{equation}
Then $V$ solves
\[
 \partial_qV(q,x)
 =
 -\frac{\Xi''(q)}2
 \left(
 V_{xx}(q,x)+m\alpha_\nu(q)V_x(q,x)^2
 \right),
 \qquad
 V(q_{\mathrm c},x)=\psi_m(x).
\]

Let $X$ solve
\[
 \dd X_q
 =
 \Xi''(q)m\alpha_\nu(q)V_x(q,X_q)\,\dd q
 +
 \sqrt{\Xi''(q)}\,\dd W_q,
 \qquad
 X_0=0,
\]
and let $Y$ be the optimal diffusion for $(m^2\Xi,\nu)$, driven by the same
Brownian motion.  Then
\begin{equation}\label{eq:temperature-mass-diffusion}
 Y_q=mX_q,
 \qquad
 0\leq q\leq q_{\mathrm c},
\end{equation}
almost surely.

Define
\[
 \Gamma_{\Xi,m\nu}^{\psi_m}(q)
 =
 \E V_x(q,X_q)^2
\]
and
\[
 f_{\Xi,m\nu}^{\psi_m}(q)
 =
 \frac12\int_0^q
 \Xi''(s)
 \left(
 \Gamma_{\Xi,m\nu}^{\psi_m}(s)-s
 \right)\dd s.
\]
Then
\begin{equation}\label{eq:temperature-mass-Gamma-f}
 \Gamma_{\Xi,m\nu}^{\psi_m}(q)
 =
 \Gamma_{m^2\Xi,\nu}(q),
 \qquad
 f_{\Xi,m\nu}^{\psi_m}(q)
 =
 \frac1{m^2}f_{m^2\Xi,\nu}(q),
 \quad
 0\leq q\leq q_{\mathrm c}.
\end{equation}
Additionally, 
\begin{equation}\label{eq:temperature-mass-functional}
\begin{split}
 \frac1m\Pp_{m^2\Xi}(\nu)
 ={}&
 \log2+\left(\frac1m-1\right)\log2
 +\frac m2\bigl(\Xi'(1)-\Xi'(q_{\mathrm c})\bigr)\\
 &+
 V(0,0)
 -\frac m2\int_0^1q\Xi''(q)\alpha_\nu(q)\dd q.
\end{split}
\end{equation}

Finally, $\psi_m$ is one-Lipschitz for every $m>0$.  Moreover, for every
compact interval $I\Subset(0,\infty)$ and every pair of nonnegative integers
$j,\ell$ with $j+\ell\geq1$,
\[
 \sup_{m\in I,\,x\in\R}
 \left|
 \partial_m^\ell\partial_x^j\psi_m(x)
 \right|
 <\infty.
\]
\end{lemma}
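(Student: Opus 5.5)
The plan is to verify each assertion by direct substitution, starting from the observation that makes the terminal condition work. Since $a_\nu\le q_{\mathrm c}$, we have $\alpha_\nu\equiv1$ on $[q_{\mathrm c},1]$. There the Parisi PDE for $(m^2\Xi,\nu)$ is solved by one Cole--Hopf step with $a=1$, as in \eqref{eq:T-def}:
\[
U(q_{\mathrm c},x)=\log\E\cosh\bigl(x+m\sqrt{\Xi'(1)-\Xi'(q_{\mathrm c})}\,Z\bigr)
=\log\cosh x+\tfrac{m^2}{2}\bigl(\Xi'(1)-\Xi'(q_{\mathrm c})\bigr).
\]
Substituting into \eqref{eq:temperature-mass-function}, the constant cancels and gives $V(q_{\mathrm c},x)=\frac1m\log\cosh(mx)=\psi_m(x)$.

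\emph{The PDE for $V$.} By the chain rule, $V_x(q,x)=U_x(q,mx)$, $V_{xx}(q,x)=mU_{xx}(q,mx)$, and $\partial_qV=\frac1m\partial_qU$. Inserting the PDE for $U$, which carries the factor $m^2\Xi''$, gives
\[
\partial_qV=-\tfrac{\Xi''}{2}\bigl(mU_{xx}+m\alpha_\nu U_x^2\bigr)=-\tfrac{\Xi''}{2}\bigl(V_{xx}+m\alpha_\nu V_x^2\bigr).
\]
This holds on each interval of constancy of $\alpha_\nu$, and across atoms by continuity.

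\emph{The diffusions.} Multiplying the $X$-equation by $m$ shows that $mX$ has drift $m^2\Xi''\alpha_\nu U_x(q,mX_q)$ and noise $m\sqrt{\Xi''}\,\dd W$. These are exactly the coefficients of the optimal diffusion \eqref{eq:optimal-diffusion} for $(m^2\Xi,\nu)$. The drift is Lipschitz in space because $0\le U_{xx}\le1$ by \Cref{lem:curvature}, so pathwise uniqueness gives \eqref{eq:temperature-mass-diffusion}.

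\emph{The identities for $\Gamma$ and $f$.} From $Y_q=mX_q$ we get $V_x(q,X_q)=U_x(q,Y_q)$, which yields the $\Gamma$ identity. The $f$ identity then follows because $(m^2\Xi)''=m^2\Xi''$, which produces the factor $1/m^2$.

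\emph{The functional identity \eqref{eq:temperature-mass-functional}.} Write
\[
\Pp_{m^2\Xi}(\nu)=\log2+U(0,0)-\tfrac{m^2}{2}\int_0^1 q\Xi''\alpha_\nu\,\dd q,
\qquad
U(0,0)=mV(0,0)+\tfrac{m^2}{2}\bigl(\Xi'(1)-\Xi'(q_{\mathrm c})\bigr).
\]
Dividing by $m$ and writing $\frac1m\log2=\log2+(\frac1m-1)\log2$ gives the claim.

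\emph{Bounds on $\psi_m$.} We have $\psi_m'(x)=\tanh(mx)$, so $\psi_m$ is one-Lipschitz. For the uniform derivative bounds I would use the homogeneity $\psi_m(x)=m^{-1}L(mx)$ with $L=\log\cosh$ and $y=mx$. Each $\partial_m^\ell\partial_x^j\psi_m$ is a finite combination of terms $m^{-a}y^kL^{(k+j)}(y)$.

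For $k+j\ge2$, the function $L^{(k+j)}$ is a polynomial in $\sech^2 y$ with no constant term, so it decays exponentially, and $y^kL^{(k+j)}(y)$ is bounded. The only dangerous terms are those with $k+j\le1$, which are unbounded individually. When $j\ge1$ and $k=0$ the term is $\tanh y$ or better, hence bounded. When $j=0$ and $\ell\ge1$, I would organize the computation through
\[
\partial_m\psi_m=m^{-2}h(mx),\qquad h(y)=yL'(y)-L(y)=y\tanh y-\log\cosh y.
\]
Here $h$ is bounded, since it tends to $\log2$ at $\pm\infty$, and $yh'(y)=y^2\sech^2y$ is bounded. Iterating, the higher $m$- and $x$-derivatives of $m^{-2}h(mx)$ involve only $y^kh^{(k)}(y)$ with $k\ge1$, together with $x$-derivatives $h^{(k)}$ for $k\ge1$. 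All of these are bounded, being expressed through $y^{k+1}L^{(k+1)}$ and its relatives, and the powers of $m$ are uniformly bounded on a compact $I$.

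This bookkeeping for the $m$-derivatives is the only step requiring care. The rest is mechanical substitution; the one subtle point there is justifying pathwise uniqueness across atoms, which follows from the one-sided continuity in \Cref{lem:curvature}.
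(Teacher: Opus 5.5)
Your treatment of the terminal value, the PDE for $V$, the diffusion identity via pathwise uniqueness, the $\Gamma$ and $f$ identities, and the functional identity matches the paper's proof step for step.

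The proof is correct. The only divergence is in the uniform bounds on $\partial_m^\ell\partial_x^j\psi_m$. The paper invokes smoothness on bounded $x$-sets. For $|x|\geq1$ it uses the explicit decomposition $\psi_m(x)=|x|-\frac{\log2}{m}+\frac1m\log(1+e^{-2m|x|})$. This isolates the unbounded part in two terms whose positive-order derivatives are trivially bounded, and leaves an exponentially small remainder.

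You instead use the scaling $\psi_m(x)=m^{-1}L(mx)$ and write each derivative as a combination of terms $m^{-a}y^kL^{(k+j)}(y)$. The only non-decaying combination, where $L(y)$ and $yL'(y)$ each grow like $|y|$, you absorb into the bounded function $h(y)=y\tanh y-\log\cosh y$. Both routes work. The paper's avoids the bookkeeping of the Euler operator $y\,\frac{d}{dy}$, while yours makes explicit why $\partial_m$ cancels the linear growth.

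Two small slips do not affect validity:
\begin{itemize}
\item $L^{(n)}$ for odd $n\geq3$ is not a polynomial in $\sech^2y$; for example $L'''(y)=-2\tanh y\,\sech^2y$. The correct statement is that $L^{(n)}$, $n\geq2$, equals $\sech^2y$ times a polynomial in $\tanh y$, which still gives the exponential decay you need.
\item The iterated derivatives of $m^{-2}h(mx)$ also contain $h$ itself, the $k=0$ term, alongside the $y^kh^{(k)}$ with $k\geq1$. This is harmless, since you already showed $h$ is bounded.
\end{itemize}
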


\begin{proof}
Since $q_{\mathrm c}\geq a_\nu$, we have
$\alpha_\nu(q)=1$ for $q\in[q_{\mathrm c},1]$.  Hence the Cole--Hopf formula
gives
\[
 \begin{aligned}
 U(q_{\mathrm c},y)
 &=
 \log\E\cosh\left(
 y+m\sqrt{\Xi'(1)-\Xi'(q_{\mathrm c})}\,Z
 \right)\\
 &=
 \log\cosh y
 +
 \frac{m^2}{2}
 \bigl(\Xi'(1)-\Xi'(q_{\mathrm c})\bigr).
 \end{aligned}
\]
It follows immediately from \eqref{eq:temperature-mass-function} that
\[
 V(q_{\mathrm c},x)
 =
 \frac1m\log\cosh(mx)
 =
 \psi_m(x).
\]

Write $U_y$ and $U_{yy}$ for derivatives in the spatial variable of $U$.
From \eqref{eq:temperature-mass-function},
\[
 V_x(q,x)=U_y(q,mx),
 \quad
 V_{xx}(q,x)=mU_{yy}(q,mx), \quad 
 V_q(q,x)=\frac1mU_q(q,mx).
\]
On each interval between successive atoms of $\nu$, the Parisi PDE for $U$
is
\[
 U_q
 =
 -\frac{m^2\Xi''}2
 \left(
 U_{yy}+\alpha_\nu U_y^2
 \right).
\]
Substituting the preceding identities gives
\[
 V_q
 =
 -\frac{\Xi''}2
 \left(
 V_{xx}+m\alpha_\nu V_x^2
 \right).
\]
The identities extend to the interval endpoints by continuity, proving the
asserted equation for $V$.

We next compare the optimal diffusions.  If $X$ is the diffusion in the
statement and $Z_q=mX_q$, then
\[
 \begin{aligned}
 \dd Z_q
 &=
 m^2\Xi''(q)\alpha_\nu(q)V_x(q,X_q)\,\dd q
 +
 m\sqrt{\Xi''(q)}\,\dd W_q\\
 &=
 m^2\Xi''(q)\alpha_\nu(q)
 U_y(q,Z_q)\,\dd q
 +
 \sqrt{m^2\Xi''(q)}\,\dd W_q.
 \end{aligned}
\]
Thus $Z$ satisfies the optimal SDE for covariance $m^2\Xi$ and order
parameter $\nu$, with the same Brownian motion and the same initial
condition as $Y$.  By \Cref{lem:curvature}, $0\leq U_{yy}\leq1$, so the
drift is globally Lipschitz in the spatial variable, uniformly on the
finite time interval.  Hence pathwise uniqueness holds for the SDE, and 
\[
 Y_q=mX_q,
 \qquad
 0\leq q\leq q_{\mathrm c},
\]
almost surely, proving \eqref{eq:temperature-mass-diffusion}.

Using again $V_x(q,x)=U_y(q,mx)$, we obtain
\[
 \Gamma_{\Xi,m\nu}^{\psi_m}(q)
 =
 \E V_x(q,X_q)^2
 =
 \E U_y(q,Y_q)^2
 =
 \Gamma_{m^2\Xi,\nu}(q).
\]
Consequently,
\[
 \begin{aligned}
 f_{m^2\Xi,\nu}(q)
 &=
 \frac12\int_0^q
 m^2\Xi''(s)
 \bigl(\Gamma_{m^2\Xi,\nu}(s)-s\bigr)\,\dd s =
 m^2f_{\Xi,m\nu}^{\psi_m}(q),
 \end{aligned}
\]
which proves \eqref{eq:temperature-mass-Gamma-f}.

For the functional identity, the definition of the Parisi functional gives
\[
 \frac1m\Pp_{m^2\Xi}(\nu)
 =
 \frac{\log2}{m}
 +
 \frac1mU(0,0)
 -
 \frac m2
 \int_0^1q\Xi''(q)\alpha_\nu(q)\,\dd q.
\]
By \eqref{eq:temperature-mass-function},
\[
 \frac1mU(0,0)
 =
 V(0,0)
 +
 \frac m2
 \bigl(\Xi'(1)-\Xi'(q_{\mathrm c})\bigr), 
\]
which gives \eqref{eq:temperature-mass-functional}.

Finally,
\[
 \partial_x\psi_m(x)=\tanh(mx),
\]
so $\psi_m$ is one-Lipschitz.  Fix a compact interval
$I\Subset(0,\infty)$.  On bounded $x$-sets, all the required mixed
derivatives are uniformly bounded by smoothness.  For $|x|\geq1$, use
\[
 \psi_m(x)
 =
 |x|-\frac{\log2}{m}
 +
 \frac1m\log(1+e^{-2m|x|}).
\]
The derivatives of $|x|$ and $-(\log2)/m$ of positive total order are
uniformly bounded on this region.  Every mixed derivative of
\[
 \frac1m\log(1+e^{-2m|x|})
\]
is a finite sum of terms bounded by a polynomial in $|x|$ times
$e^{-2m|x|}$, with coefficients uniformly bounded for $m\in I$.  Since
$m$ is bounded away from zero on $I$, these terms are uniformly bounded in
$x$.  This proves the stated derivative estimates.
\end{proof}

\subsection{The limiting reduced objective}

We first record a finite-temperature fact that will determine the critical
parameter in the reduced objective.  A quantitative lower bound is proved in
\Cref{app:entropy}.

\begin{proposition}\label{prop:positive-entropy}
Let $\Xi$ be an admissible covariance with a nondegenerate finite-step Parisi
measure.  Let $G_N$ be the Gibbs measure of the corresponding $N$-spin
model and set
\[
 S_N=-\sum_{\sigma}G_N(\sigma)\log G_N(\sigma).
\]
Then the thermodynamic limit
\[
 s_\Xi=\lim_{N\to\infty}\frac1N\E S_N
\]
exists, satisfies $
 s_\Xi=\mathfrak p(1)-\mathfrak p'(1)$, 
and is strictly positive.
\end{proposition}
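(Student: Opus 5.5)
We will realize the entropy as a temperature derivative of the free energy and then argue positivity from the discreteness of the spin space. Introduce the inverse-temperature parameter $b$ and the finite-volume pressure $F_N(b)=\frac1N\E\log\sum_\sigma e^{bH_N(\sigma)}$, where $H_N$ has covariance $N\Xi(R_{12})$. Writing $G_N$ for the Gibbs measure at $b=1$, one has the exact identity
\[
 \frac1N S_N=\frac1N\log\sum_\sigma e^{H_N(\sigma)}-\frac1N\langle H_N\rangle ,
\]
so that $\frac1N\E S_N=F_N(1)-F_N'(1)$, because $F_N'(1)=\frac1N\E\langle H_N\rangle$. By the Parisi formula \cite{Talagrand2006,Panchenko2014}, $F_N(b)\to\mathfrak p(b)$ for every $b$, since the effective covariance at parameter $b$ is $b^2\Xi$. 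Each $F_N$ is convex in $b$, so Griffiths' lemma gives $F_N'(1)\to\mathfrak p'(1)$ at any point where $\mathfrak p$ is differentiable. The discussion preceding \Cref{s:tempmass} shows that $\mathfrak p$ is smooth near $b=1$: \Cref{prop:stability} applied to the family $b^2\Xi$ yields smooth atomic coordinates $z(b)$ and $\mathfrak p(b)=\Pp_{b^2\Xi}(\mu_{z(b)})$. Consequently the limit of $\frac1N\E S_N$ exists and equals $\mathfrak p(1)-\mathfrak p'(1)$.

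It remains to show positivity, and here the finite-$N$ bound $S_N\ge0$ only gives $s_\Xi\ge0$. Our first approach is to compute $\mathfrak p'(1)$ via the envelope theorem. Stationarity in $z$ gives $\mathfrak p'(1)=\partial_b\Pp_{b^2\Xi}(\mu)\big|_{b=1}$ with $\mu=\mu_*$ held fixed. Differentiating the Parisi functional in the covariance amounts to Gaussian integration by parts along the optimal diffusion, and it yields the standard formula
\[
 \mathfrak p'(1)=\int_0^1 \Xi''(q)\bigl(1-\alpha_*(q)\bigr)\,\dots
\]
In the equivalent form, $\mathfrak p'(1)=\Xi(1)-\int \Xi(q)\,\dd\mu_*(q)$, which is the usual energy formula $\lim\frac1N\E\langle H_N\rangle=\Xi(1)-\E\langle\Xi(R_{12})\rangle$. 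Meanwhile $\mathfrak p(1)$ can be written explicitly through the Cole--Hopf recursion for the atomic measure $\mu_*$. The difference $s_\Xi$ can then be expressed, level by level, as a sum of conditional entropies of tilted Gaussian variables passed through $\log\cosh$. Concretely, at the top level with $g=\log\cosh$ and variance $V=\Xi'(1)-\Xi'(q_r)>0$, the contribution is an expectation of the binary entropy of $\tfrac{1}{2}(1+\tanh(Y))$ under a tilted Gaussian law. That contribution is strictly positive because $\tanh$ never reaches $\pm1$, and the remaining lower-level terms should be nonnegative as averaged entropy-like quantities.

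The main obstacle is verifying the sign of the lower-level terms. The tilting weights $w_i$ generate terms of the form $\frac{1}{m}\log\E e^{m g}-\E_m g$, which are relative entropies and hence nonnegative; we would check this carefully, using that Jensen's inequality gives the correct direction.

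A fallback route is a quantitative lower bound that avoids the explicit level-by-level identity. Convexity of $\mathfrak p$ gives $\mathfrak p(1)-\mathfrak p'(1)\ge\mathfrak p(1)-\frac{\mathfrak p(1+h)-\mathfrak p(1)}{h}$. Alternatively, we can bound $\frac1N\E S_N$ below directly: condition on a pure state, use that the Gibbs measure restricted to a thin shell still spreads over exponentially many configurations, and note that the overlap at the top atom satisfies $q_r<1$. In this approach $q_r<1$ forces $1-R_{11'}$ to be bounded below within a state, so by a counting (Fano-type) bound a positive fraction of the spins are not frozen. This gives entropy of order $N(1-q_r)$ up to constants, and hence $s_\Xi>0$.
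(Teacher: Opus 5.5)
Your first paragraph is correct and matches the paper: the identity $\frac1N\E S_N=F_N(1)-F_N'(1)$, convexity of $F_N$, the Parisi formula, and differentiability of $\mathfrak p$ at $1$ from \Cref{prop:stability} give existence of the limit and $s_\Xi=\mathfrak p(1)-\mathfrak p'(1)$. The gap is in positivity, and none of your three routes closes it.

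\emph{The level-by-level route.} It cannot work using only sign properties of the Cole--Hopf recursion for the fixed measure $\mu_*$, which is all the envelope step leaves you. The same bookkeeping applies to any atomic measure. For $\mu=\delta_0$ one has $\Pp_{b^2\Xi}(\delta_0)=\log2+b^2\Xi(1)/2$, so the corresponding quantity is $\log2-\Xi(1)/2$, which is negative once $\Xi(1)>2\log2$. In that replica-symmetric case there are no lower levels at all. Your claim that the top-level term is strictly positive and the rest nonnegative would therefore prove a false statement. Positivity has to come from genuine information about the Gibbs measure, or from optimality of $\mu_*$, not from the fact that $\tanh$ avoids $\pm1$.

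The sign you assert for the tilting terms is also backwards. By the Gibbs variational principle, $\frac1m\log\E e^{mg}-\E_m g=-\frac1m D(P_m\|P)\leq0$. Jensen compares $\log\E e^{mg}$ with the untilted mean $m\E g$, not with $\E_m g$.

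\emph{The fallbacks.} The convexity inequality is circular. Positivity of $\mathfrak p(1)-h^{-1}(\mathfrak p(1+h)-\mathfrak p(1))$ says $\mathfrak p(1+h)<(1+h)\mathfrak p(1)$, i.e.\ that $\mathfrak p(b)/b$ decreases. Since $\frac{\dd}{\dd b}(\mathfrak p(b)/b)=-(\mathfrak p(b)-b\mathfrak p'(b))/b^2$, that is the entropy statement itself.

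The pure-state counting step is false as stated: overlaps bounded away from $1$ do not force extensive entropy. The uniform measure on the $N$ rows of a Hadamard matrix has all off-diagonal overlaps equal to $0$ and coincidence probability $1/N$. Yet its entropy is only $\log N$.

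\emph{What is missing.} You need a per-spin randomness bound that is uniform in $N$. The paper gets it from the chain rule, conditioning each spin on all the others:
\[
S_N\geq\sum_i\langle\chi(h_i)\rangle,\qquad \chi(x)=\log(2\cosh x)-x\tanh x\geq\tfrac12e^{-2|x|},
\]
where $h_i$ is the cavity field. Two Gaussian integrations by parts (\Cref{lem:gaussian-gibbs-estimate}) control the cavity field under the Gibbs measure, which is biased toward configurations aligned with it:
\[
\E\langle h_i^2\rangle\leq\Xi'(1)+4\Xi'(1)^2=M_\Xi.
\]
Jensen and Cauchy--Schwarz then give $\frac1N\E S_N\geq\frac12e^{-2\sqrt{M_\Xi}}$ for every $N$. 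This bound passes to the limit and yields $s_\Xi>0$.
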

We shall also use that $b\mapsto\mathfrak p(b)$ is convex.  Indeed, by the
Parisi formula it is the pointwise limit of the convex finite-volume
pressures; see the proof of \Cref{prop:positive-entropy}.

For $\lambda>0$ and $m$ near one, define
\begin{equation*}
 J_\lambda(m)=\frac{\mathfrak p(m)}m+\frac{\lambda m}{2},
\end{equation*}
and set
\begin{equation*}
 \lambda_{\mathrm G}=2s_\Xi,
 \qquad
 \kappa_\Xi=\mathfrak p''(1)+2s_\Xi>0.
\end{equation*}
The positivity of $\kappa_\Xi$ follows from convexity of $\mathfrak p$ and
\Cref{prop:positive-entropy}.

The next proposition studies the local geometry of the reduced
temperature--mass objective near $m=1$.  It continues the nondegenerate
$r$-RSB Parisi measure smoothly under the scaling $\Xi\mapsto m^2\Xi$ and
establishes the uniform convexity and boundary derivative estimates needed
to analyze the transition near $\lambda=\lambda_{\mathrm G}$.

\begin{proposition}\label{prop:limiting-corner}
Let $\Xi$ be an admissible covariance whose Parisi measure
$\nu_{z_*}$ is nondegenerate and $r$-RSB.  There exist 
$0<\eta_m<1$, $\eta\in(0,\lambda_{\mathrm G}/2)$, positive constants
$c_z,c_m,c_\partial$, and a smooth map
\[
 m\longmapsto z(m),
 \qquad m\in M_-=[1-\eta_m,1],
\]
with $z(1)=z_*$, such that $\nu_{z(m)}$ is the nondegenerate $r$-RSB
Parisi measure of $m^2\Xi$.  Set
\[
 \Lambda=[\lambda_{\mathrm G}-2\eta,
          \lambda_{\mathrm G}+2\eta].
\]
Then
\begin{equation}\label{eq:limiting-corner-geometry}
 \begin{gathered}
  \partial_mJ_\lambda(1)=\frac{\lambda-\lambda_{\mathrm G}}2,
  \qquad
  \partial_{mm}J_\lambda(1)=\kappa_\Xi,
  \qquad
  \partial_{m\lambda}J_\lambda=\frac12,\\
  D_z^2\Pp_{m^2\Xi}(\nu_{z(m)})\succeq c_zI,
  \qquad
  \partial_{mm}J_\lambda(m)\geq2c_m,
  \qquad
  \partial_mJ_\lambda(1-\eta_m)\leq-2c_\partial.
 \end{gathered}
\end{equation}
The inequalities are uniform for $(m,\lambda)\in M_-\times\Lambda$.
\end{proposition}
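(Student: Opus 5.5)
The plan is to obtain the branch $z(m)$ from \Cref{prop:stability} applied to the one-parameter family $m\mapsto m^2\Xi$. We then compute the derivatives of $J_\lambda$ at $m=1$ explicitly and get every uniform inequality by continuity, shrinking $\eta_m$ first and then choosing $\eta$ in terms of $\eta_m$.

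\emph{Step 1: the branch $z(m)$.} For $m>0$ the covariance $m^2\Xi$ is admissible, and $\vartheta=m\mapsto m^2\Xi$ is a smooth finite-dimensional polynomial family. Moreover $\|m^2\Xi-\Xi\|_{C^4}=|m^2-1|\,\|\Xi\|_{C^4}$, which is below the $\varepsilon$ of \Cref{prop:stability} once $\eta_m$ is small. Hence \Cref{prop:stability} supplies a smooth map $m\mapsto z(m)$ on $M_-$ with $z(1)=z_*$, such that $\nu_{z(m)}$ is the nondegenerate $r$-RSB Parisi measure of $m^2\Xi$. The last assertion of \Cref{prop:finite-step-regularity} then shows that
\[
 \mathfrak p(m)=\Pp_{m^2\Xi}(\nu_{z(m)})
\]
is smooth in $m$, and that $D_z^2\Pp_{m^2\Xi}(\nu_{z(m)})$ is continuous in $m$. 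This Hessian is positive definite at $m=1$ by nondegeneracy. After shrinking $\eta_m$, it is therefore $\succeq c_zI$ on $M_-$ for some $c_z>0$; equivalently, the bound $\kappa$ from Step~1 of the proof of \Cref{prop:stability} can be used directly. For $r=0$ this assertion is vacuous.

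\emph{Step 2: derivatives at $m=1$.} Differentiating directly gives
\[
 \partial_mJ_\lambda(m)=\frac{\mathfrak p'(m)}m-\frac{\mathfrak p(m)}{m^2}+\frac\lambda2,
 \qquad
 \partial_{mm}J_\lambda(m)=\frac{\mathfrak p''(m)}m-\frac{2\mathfrak p'(m)}{m^2}+\frac{2\mathfrak p(m)}{m^3},
\]
and $\partial_{m\lambda}J_\lambda=\tfrac12$. At $m=1$, using $s_\Xi=\mathfrak p(1)-\mathfrak p'(1)$ and $\lambda_{\mathrm G}=2s_\Xi$, these become
\[
 \partial_mJ_\lambda(1)=\frac{\lambda-\lambda_{\mathrm G}}2,
 \qquad
 \partial_{mm}J_\lambda(1)=\mathfrak p''(1)+2s_\Xi=\kappa_\Xi .
\]
Positivity of $\kappa_\Xi$ comes from the convexity of $\mathfrak p$ together with \Cref{prop:positive-entropy}. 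Here one must check that the convexity in $b$ applies on the range where $\mathfrak p$ is smooth, so that $\mathfrak p''(1)\ge0$.

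\emph{Step 3: uniform bounds and the choice of $\eta$.} The second derivative $\partial_{mm}J_\lambda$ does not depend on $\lambda$ and is continuous in $m$. Shrinking $\eta_m$ gives $\partial_{mm}J_\lambda\geq\kappa_\Xi/2=:2c_m$ on $M_-$, uniformly in $\lambda$. Integrating this bound from $1-\eta_m$ to $1$ yields
\[
 \partial_mJ_\lambda(1-\eta_m)\leq\frac{\lambda-\lambda_{\mathrm G}}2-2c_m\eta_m\leq\eta-2c_m\eta_m
 \qquad(\lambda\in\Lambda).
\]
Now fix $\eta_m$ and choose $\eta\leq c_m\eta_m$ with $\eta<\lambda_{\mathrm G}/2$; the latter is possible because $s_\Xi>0$. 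With $c_\partial=c_m\eta_m/2$ this gives $\partial_mJ_\lambda(1-\eta_m)\leq-2c_\partial$.

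The order of choices matters: $\eta_m$ is fixed first, from the stability neighborhood, the Hessian bound and the $J''$ bound, and only then is $\eta$ chosen. Almost everything else is routine. The only substantive inputs are the smoothness of $\mathfrak p$ along the branch, which rests on \Cref{prop:stability} and \Cref{prop:finite-step-regularity}, and the positivity of $\kappa_\Xi$. The latter requires convexity of $\mathfrak p$ and $s_\Xi>0$, which is where the appeal to \Cref{prop:positive-entropy} is essential.
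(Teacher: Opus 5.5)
Your proposal is correct and follows essentially the same route as the paper: obtain the smooth branch $z(m)$ from \Cref{prop:stability} applied to the family $m^2\Xi$, compute $\partial_mJ_\lambda(1)=(\lambda-\lambda_{\mathrm G})/2$ and $\partial_{mm}J_\lambda(1)=\mathfrak p''(1)+2s_\Xi$ directly, and use convexity of $\mathfrak p$ together with $s_\Xi>0$ to get $\kappa_\Xi>0$. Then shrink $\eta_m$ by continuity to get the Hessian and $\partial_{mm}J_\lambda$ bounds, and only afterwards choose $\eta<\min\{\lambda_{\mathrm G}/2,c_m\eta_m\}$, obtaining the endpoint bound by integrating $\partial_{mm}J_\lambda$; your explicit constants $2c_m=\kappa_\Xi/2$ and $c_\partial=c_m\eta_m/2$ are harmless variants of the paper's choices.
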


\begin{proof}
Since
\[
 J_\lambda(m)=\frac{\mathfrak p(m)}m+\frac{\lambda m}{2},
\]
direct differentiation gives
\[
 \partial_mJ_\lambda(m)
 =
 \frac{\mathfrak p'(m)}m-\frac{\mathfrak p(m)}{m^2}
 +\frac{\lambda}{2},
 \qquad
 \partial_{m\lambda}J_\lambda=\frac12.
\]
Using
$s_\Xi=\mathfrak p(1)-\mathfrak p'(1)$ and
$\lambda_{\mathrm G}=2s_\Xi$ therefore gives
\[
 \partial_mJ_\lambda(1)
 =
 \frac{\lambda-\lambda_{\mathrm G}}2,
 \qquad
 \partial_{mm}J_\lambda(1)
 =
 \mathfrak p''(1)+2s_\Xi
 =
 \kappa_\Xi.
\]
Since $\mathfrak p$ is convex and $s_\Xi>0$ by
\Cref{prop:positive-entropy}, we have $\kappa_\Xi>0$.

By \Cref{prop:stability}, after restricting $m$ to a neighborhood of $1$,
the Parisi measure of $m^2\Xi$ has the form $\nu_{z(m)}$, where $z(m)$ is
smooth and $z(1)=z_*$.  Continuity of the atomic Hessian and of
$\partial_{mm}J_\lambda$ therefore allows us to choose
$\eta_m>0$ and $c_z,c_m>0$ such that, on
$M_-=[1-\eta_m,1]$,
\[
 D_z^2\Pp_{m^2\Xi}(\nu_{z(m)})\succeq c_zI,
 \qquad
 \partial_{mm}J_\lambda(m)\geq2c_m.
\]
Choose
\[
 0<\eta<\min\{\lambda_{\mathrm G}/2,c_m\eta_m\}.
\]
For $\lambda\in\Lambda$ we then have
$\partial_mJ_\lambda(1)\leq\eta$, and hence
\[
 \begin{aligned}
 \partial_mJ_\lambda(1-\eta_m)
 &=
 \partial_mJ_\lambda(1)
 -
 \int_{1-\eta_m}^1\partial_{mm}J_\lambda(s)\,\dd s\leq
 \eta-2c_m\eta_m
 <
 -c_m\eta_m.
 \end{aligned}
\]
Taking any $c_\partial<c_m\eta_m/2$ gives $
 \partial_mJ_\lambda(1-\eta_m)\leq-2c_\partial$ 
uniformly for $\lambda\in\Lambda$.
\end{proof}

\section{Uniform high-degree perturbation}\label{sec:high-degree-perturbation}

In this section, we compare the finite-$p$ problem for
$\Xi+\lambda q^p$ with the limiting reduced objective of the previous section.

Fix an admissible covariance $\Xi$ whose Parisi measure $\mu_*$ is
nondegenerate and $r$-RSB, together with $M_-$, $\Lambda$, and $z(m)$ from
\Cref{prop:limiting-corner}.  Recall from \Cref{sec:limiting-problem} that,
for atomic coordinates $z$ near $z_*$, we write $\nu_z=\mu_z$ and
$a(z)=\max\supp\nu_z$.

If $r=0$, take $\cN_z=\{z_*\}$; all assertions about $z$-derivatives and
$\partial\cN_z$ below are then vacuous.  If $r\geq1$, continuity of $z(m)$
and \Cref{prop:finite-step-regularity} allow us, after decreasing $\eta_m$
and correspondingly replacing $M_-$ by $[1-\eta_m,1]$ if necessary, to
choose $\cN_z=\overline{B_\rho(z_*)}$ inside the region of atomic coordinates
with positive masses and strictly ordered support points so that, for some
$\rho_0,\kappa_0>0$,
\[
 \operatorname{dist}(z(m),\partial\cN_z)\geq\rho_0,
 \qquad
 D_z^2\!\left[\frac1m\Pp_{m^2\Xi}(\nu_z)\right]
 \succeq 2\kappa_0 I
 \quad (z,m)\in\cN_z\times M_-.
\]
We now regard the resulting interval $M_-=[1-\eta_m,1]$ as the final choice.
Retaining the constant $c_m$ from \Cref{prop:limiting-corner}, decrease
$\eta$, if necessary, so that
\[
 0<\eta<\min\{\lambda_{\mathrm G}/2,c_m\eta_m\},
\]
and reset
\[
 \Lambda=[\lambda_{\mathrm G}-2\eta,
          \lambda_{\mathrm G}+2\eta].
\]
For $\lambda\in\Lambda$, the same calculation as in the proof of
\Cref{prop:limiting-corner} now gives, at the actual left endpoint,
\begin{equation}\label{eq:final-left-endpoint-bound}
 \partial_mJ_\lambda(1-\eta_m)
 \leq \eta-2c_m\eta_m
 <-c_m\eta_m.
\end{equation}
All the other bounds from \Cref{prop:limiting-corner} persist under these
restrictions.

In either case, compactness gives uniform positive lower bounds on the
masses and support gaps and a constant $a_*<1$ such that
\[
 a(z)\leq a_*,
 \qquad z\in\cN_z.
\]

Given an even integer $p$ (which we think of as large) and $\lambda\in\Lambda$, write
\begin{equation*}
 \Xi_{p,\lambda}(q)=\Xi(q)+\lambda q^p.
\end{equation*}
For $m\in M_-$ and $a(z)<Q<1$, set
\begin{equation}\label{eq:finite-candidate}
 \mu_{p,z,m,Q}=m\nu_z+(1-m)\delta_Q.
\end{equation}
Write $\Pp^{\mathrm{cand}}_{p,\lambda}(z,m,Q)$ for its Parisi functional,
and abbreviate
\[
 \Gamma_{p,\lambda;z,m,Q}
 =
 \Gamma_{\Xi_{p,\lambda},\mu_{p,z,m,Q}},
 \qquad
 f_{p,\lambda;z,m,Q}
 =
 f_{\Xi_{p,\lambda},\mu_{p,z,m,Q}}.
\]

Whenever an $m$-derivative at $m=1$ appears below, it denotes the left
derivative; the finite-recursion estimates will be uniform up to $m=1$ with
this convention.  For $m<1$, \eqref{eq:location-derivative} gives
\begin{equation*}
 \partial_Q\Pp^{\mathrm{cand}}_{p,\lambda}(z,m,Q)
 =
 -\frac{1-m}{2}\Xi_{p,\lambda}''(Q)
 \bigl(\Gamma_{p,\lambda;z,m,Q}(Q)-Q\bigr).
\end{equation*}
Since $\Lambda\subset(0,\infty)$ by \Cref{prop:limiting-corner} and $\Xi$ is
admissible, $\Xi_{p,\lambda}''(Q)>0$.  Hence, for $m<1$, $Q$-stationarity is
equivalent to
\begin{equation}\label{eq:top-equation}
 Q=\Gamma_{p,\lambda;z,m,Q}(Q).
\end{equation}
At $m=1$ the functional is $Q$-independent, so its $Q$-derivative no
longer selects a root.  The equation \eqref{eq:top-equation}, however, has a
well-defined limit as $m\uparrow1$, and we use this limiting equation to
select the root near $1$.  As we will see below, this root is a contact point
of the corresponding first-variation function precisely at the transition.

\subsection{The root near \texorpdfstring{$q=1$}{q=1} and the reduced functional}

The lower-recursion approximation in \Cref{lem:lower-recursion}, together
with the endpoint and high-overlap estimates in
\Cref{lem:endpoint-asymptotics,lem:high-overlap-asymptotics}, shows that the
natural scale near $Q=1$ is
\begin{equation*}
 t_p(m,\lambda)=p^{-1/2}e^{-m^2\lambda p/2}.
\end{equation*}
This is because, after writing $Q=1-t_p(m,\lambda)y$, the ratio
\[
 \frac{1-\Gamma_{p,\lambda;z,m,Q}(Q)}
      {t_p(m,\lambda)}
\]
has a finite, nontrivial limit as $p\to\infty$ for $y$ in compact subsets
of $(0,\infty)$.  Accordingly, fix
$\mathcal Y\Subset(0,\infty)$ and set
\[
 Q_p=1-t_p(m,\lambda)y,
 \qquad y\in\mathcal Y.
\]
Then \eqref{eq:top-equation} becomes
\[
 y=
 \frac{1-\Gamma_{p,\lambda;z,m,Q_p}(Q_p)}
      {t_p(m,\lambda)},
\]
and the estimates below identify its limiting right-hand side.  Throughout, $O_{C^J}(r_p)$ means, as $p\to\infty$, that the indicated
quantity and all mixed derivatives in the displayed finite-dimensional
parameters of total order at most $J$ are $O(r_p)$, uniformly on the
parameter set under consideration.  The notation $o_{C^J}(1)$ has the
analogous meaning, with $O(r_p)$ replaced by $o(1)$. 
For maps taking values in $\mathcal M_c$, the same notation is understood
with the norm $\|\cdot\|_{c,*}$; derivatives at boundary points are
interpreted one-sidedly when appropriate.  In particular, all $C^3$
estimates below are uniform on
$\cN_z\times M_-\times\Lambda\times\mathcal Y$ and control mixed
derivatives in $(z,m,\lambda,y)$ through total order three; at $m=1$,
$m$-derivatives are understood from the left.

Let
\[
 W_p=\Xi_{p,\lambda}'(Q_p)-\Xi_{p,\lambda}'(a(z))
\]
be the variance increment from the largest lower support point $a(z)$ to
$Q_p$.  Let $\overline\rho_p$ be the law at $a(z)$ of the optimal diffusion
for the candidate measure $\mu_{p,z,m,Q_p}$.  Let $\rho_{z,m}$ be the law of
$X_{a(z)}$, where $X$ is the diffusion in
\Cref{lem:temperature-mass-scaling} with $\nu=\nu_z$ and
$q_{\mathrm c}=a(z)$.  Then, for some $A,c>0$,
\begin{align}
 W_p&=\lambda p+\Xi'(1)-\Xi'(a(z))
      +O_{C^3}(p^Ae^{-cp}),
 \label{eq:endpoint-input-W}\\
 \overline\rho_p&=\rho_{z,m}
      +O_{C^3(\mathcal M_{c_0})}(p^Ae^{-cp})
 \label{eq:endpoint-input-law}
\end{align}
uniformly on $\cN_z\times M_-\times\Lambda\times\mathcal Y$ for every fixed
$c_0>0$. 
Indeed, every $m$- and $\lambda$-derivative of $t_p$ through order three is
bounded by a polynomial in $p$ times $t_p$, while differentiation in $y$
introduces an additional factor of $t_p$.  Thus replacing $Q_p$ by $1$ in
the Taylor expansions around $Q=1$ produces exponentially small errors,
also after the required derivatives.  Moreover, the high-degree term at the
lower support satisfies, together with its required $z$-derivatives,
\[
 \lambda p\,a(z)^{p-1}=O(p^Aa_*^{p-A}).
\]
The law estimate is precisely \eqref{eq:lower-law-approximation}.

\begin{proposition}\label{prop:uniform-endpoint-reduction}
Define
\begin{align*}
 d_{\mathrm{end}}(z,m)
 &=\frac{2^{m-1}}{\sqrt{2\pi}}
 \left(\int_{\R}\cosh^{m-2}u\,\dd u\right)
 \int_{\R}\sech(mx)\,\rho_{z,m}(\dd x),\\
 d_0(z,m,\lambda)
 &=\lambda^{-1/2}d_{\mathrm{end}}(z,m)
 \exp\left(-\frac{m^2}{2}
  \bigl(\Xi'(1)-\Xi'(a(z))\bigr)\right).
\end{align*}
The function $d_0$ is $C^3$ and satisfies
\[
 0<d_-\leq d_0\leq d_+<\infty
\]
on $\cN_z\times M_-\times\Lambda$.  If we set
\[
 \mathcal A_p(z,m,\lambda,y)
 =1-\Gamma_{p,\lambda;z,m,Q_p}(Q_p),
\]
then, for every compact $\mathcal Y\Subset(0,\infty)$,
\begin{equation}\label{eq:uniform-endpoint-reduction}
 \left\|t_p^{-1}\mathcal A_p-d_0\right\|_
 {C^3(\cN_z\times M_-\times\Lambda\times\mathcal Y)}
 \longrightarrow0.
\end{equation}
For every sufficiently large even $p$, the
equation \eqref{eq:top-equation} has exactly one solution in the rescaled window
\[
 Q=1-t_py,
 \qquad
 y\in[d_-/2,2d_+].
\]
It can be written
\begin{equation}\label{eq:transition-Q-root}
 Q_{p,\lambda}(z,m)
 =1-t_p(m,\lambda)y_{p,\lambda}(z,m),
 \qquad c\leq y_{p,\lambda}(z,m)\leq C,
\end{equation}
where
\begin{equation}\label{eq:endpoint-prefactor-C3}
 \|y_{p,\lambda}-d_0\|_{C^3(\cN_z\times M_-\times\Lambda)}
 \longrightarrow0.
\end{equation}
\end{proposition}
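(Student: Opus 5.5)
The plan is to compute $1-\Gamma$ at the top atom exactly through the Cole--Hopf structure, and then reduce the asymptotics to a large-variance Gaussian integral.

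\emph{Exact formula.} For $q\in[Q,1]$ we have $\alpha=1$, so $u(Q,x)=\log\cosh x+\tfrac12(\Xi_{p,\lambda}'(1)-\Xi_{p,\lambda}'(Q))$. Hence $u_x(Q,\cdot)=\tanh$, and
\[
 \mathcal A_p=\E\sech^2(X_{Q_p}).
\]
On $[a(z),Q_p]$ the cumulative mass is $m$, and the increment of $\Xi_{p,\lambda}'$ is $W_p$. The optimal diffusion then has the tilted transition law: given $X_{a(z)}=x$, the law of $X_{Q_p}$ has density proportional to $p_{W_p}(x,y)\cosh^m y$. (This is the standard Cole--Hopf/Girsanov description, the same tilt $\E_a$ as in \Cref{lem:curvature}.) Therefore
\[
 \mathcal A_p=\int\overline\rho_p(\dd x)\,
 \frac{\int p_{W_p}(x,y)\cosh^{m-2}y\,\dd y}{\int p_{W_p}(x,y)\cosh^{m}y\,\dd y}.
\]

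\emph{Large-variance asymptotics.} Since $m\le1<2$, $\cosh^{m-2}$ is integrable. The numerator is therefore $(2\pi W_p)^{-1/2}\bigl(\int\cosh^{m-2}u\,\dd u\bigr)(1+O(W_p^{-1}(1+x^2)))$. For the denominator, write $\cosh^m y=2^{-m}e^{m|y|}(1+e^{-2|y|})^m$ and complete the square. This gives
\[
 2^{1-m}\cosh(mx)\,e^{m^2W_p/2}\bigl(1+O(e^{-cW_p}\text{-type and }W_p^{-1/2})\bigr),
\]
with errors growing at most like $e^{c|x|}$. Inserting \eqref{eq:endpoint-input-W} gives
\[
 W_p^{-1/2}e^{-m^2W_p/2}=t_p\,\lambda^{-1/2}e^{-\frac{m^2}{2}(\Xi'(1)-\Xi'(a(z)))}(1+o(1)).
\]
Replacing $\overline\rho_p$ by $\rho_{z,m}$ via \eqref{eq:endpoint-input-law} then yields $t_p^{-1}\mathcal A_p\to d_0$. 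The $e^{c_0|x|}$ weight in $\|\cdot\|_{c_0,*}$ absorbs the $x$-growth of the error terms and of the $m$-derivatives of $\sech(mx)$ and $\cosh^{\pm m}$.

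For the $C^3$ statement I would differentiate under the integral. The $m,\lambda$-derivatives of the explicit kernels produce polynomial factors in $y$, $x$ and $p$ against Gaussian weights. These keep the relative errors at size $O(p^{-1/2}\mathrm{poly})$ after dividing by the exact $t_p$-prefactor, and the derivatives of $t_p$ itself are handled by the remark preceding the proposition. The $y$-dependence enters only through $Q_p$ in $W_p$ and in $\Xi_{p,\lambda}'(1)-\Xi_{p,\lambda}'(Q_p)$, which cancels in the ratio. Its derivatives are $O(p^{A}t_p)$, hence exponentially small. The properties of $d_0$ follow from the formula: it is $C^3$ because $\rho_{z,m}$ depends smoothly on $(z,m)$ in $\mathcal M_{c_0}$ (finite recursion, \Cref{lem:temperature-mass-scaling} and \Cref{prop:finite-step-regularity}). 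It is bounded above and below on the compact set by continuity and strict positivity: $\sech(mx)>0$, and $\rho_{z,m}$ is a probability measure with exponential moments.

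\emph{Root.} Set $F_p(y)=t_p^{-1}\mathcal A_p(z,m,\lambda,y)$. By \eqref{eq:uniform-endpoint-reduction}, $F_p=d_0+o_{C^3}(1)$ on $[d_-/2,2d_+]$, and in particular $\partial_yF_p=o(1)$. Thus $y\mapsto F_p(y)$ maps this interval into itself and is a contraction for large $p$, giving a unique fixed point $y_{p,\lambda}$ with $|y_{p,\lambda}-d_0|=o(1)$. The implicit function theorem applied to $y-F_p=0$, whose $y$-derivative is $1-o(1)$, gives $C^3$ dependence. Implicit differentiation and the $C^3$ closeness of $F_p$ to $d_0$ then give \eqref{eq:endpoint-prefactor-C3}.

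The main obstacle I expect is the uniform $C^3$ control of the denominator asymptotics together with the tilted measure. The ratio has an exponentially large normalization, so every parameter derivative must be shown to carry the same factor $e^{m^2W_p/2}$ up to polynomial losses. I would first prove these asymptotics with explicit Gaussian remainder bounds, pulling out the exact $e^{m^2W/2}\cosh(mx)$ factor before differentiating.
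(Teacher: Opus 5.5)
Your proposal is correct and follows essentially the paper's route: the exact identity $\mathcal A_p=\E\sech^2Y$ under the $\cosh^m$-tilted Gaussian transition from $a(z)$ to $Q_p$; the exact factorization $\mathsf H_W(\cosh^m)(x)=2^{1-m}e^{m^2W/2}\cosh(mx)\bigl(1+\text{exponentially small}\bigr)$ together with the $W^{-1/2}$ expansion of $\mathsf H_W(\cosh^{m-2})$; composition with $W_p=\lambda p+\Xi'(1)-\Xi'(a(z))+o_{C^3}(1)$ and replacement of $\overline\rho_p$ by $\rho_{z,m}$, all of which the paper packages as \Cref{lem:endpoint-asymptotics}; and finally the implicit function theorem for the root, where your contraction argument is an equivalent substitute for the paper's sign-change-plus-monotonicity argument. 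The one point your $C^3$ sketch should state explicitly is that each $W$-derivative of the numerator remainder gains a factor $W^{-1}$ (the bound $\partial_t^j\mathcal R_g=O(t^{-1-j})$ in \Cref{lem:averaged-heat-expansion}); this is exactly what absorbs the factor $p$ produced by every $\lambda$-derivative of $W_p$, whereas bounds that are merely polynomial in $p$ would not give $o_{C^3}(1)$.
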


\begin{proof}
For $q\in[Q_p,1]$, the cumulative mass of the candidate measure is one.
Hence the Cole--Hopf formula gives
\[
 u(Q_p,y)=\log\cosh y+C
\]
for a constant $C$, so $u_x(Q_p,y)=\tanh y$.  On
$(a(z),Q_p)$ the cumulative mass is $m$.  Therefore, conditional on the
field $X=x$ at $a(z)$, the field $Y$ at $Q_p$ has density
\[
 \frac{p_{W_p}(x,y)\cosh^m y}
      {\mathsf H_{W_p}(\cosh^m)(x)}\,\dd y.
\]
Since $1-\tanh^2y=\sech^2y$, it follows that
\[
 \mathcal A_p
 =
 1-\Gamma_{p,\lambda;z,m,Q_p}(Q_p)
 =
 \E\sech^2Y.
\]

We apply \eqref{eq:endpoint-rescaled} from
\Cref{lem:endpoint-asymptotics} with $J=3$, $\vartheta=(z,m)$, and
\[
 B(z,m,\lambda)=\Xi'(1)-\Xi'(a(z)),
\]
using \eqref{eq:endpoint-input-W}--\eqref{eq:endpoint-input-law} and
\Cref{lem:lower-recursion} to verify its variance, incoming-law regularity,
and exponential-moment hypotheses.  Indeed,
\[
 W_p
 =
 \lambda p+B(z)+o_{C^3}(1),
 \qquad
 B(z)=\Xi'(1)-\Xi'(a(z)),
\]
while $\overline\rho_p\to\rho_{z,m}$ in
$C^3(\mathcal M_{c_0})$, and $\lambda$ is bounded away from zero on
$\Lambda$.  The lemma therefore gives
\[
 t_p^{-1}\mathcal A_p
 =
 d_0(z,m,\lambda)+o_{C^3}(1),
\]
uniformly on
$\cN_z\times M_-\times\Lambda\times\mathcal Y$, with $m$-derivatives at
$m=1$ understood from the left.  Its leading coefficient is
\[
 d_0(z,m,\lambda)
 =
 \lambda^{-1/2}d_{\mathrm{end}}(z,m)
 \exp\left(
 -\frac{m^2}{2}
 \bigl(\Xi'(1)-\Xi'(a(z))\bigr)
 \right),
\]
which is exactly the function in the statement.  This proves
\eqref{eq:uniform-endpoint-reduction}.  The same lemma and compactness of
$\cN_z\times M_-\times\Lambda$ give the asserted $C^3$ regularity and the
uniform bounds
\[
 0<d_-\leq d_0\leq d_+<\infty.
\]

For the root argument, apply \eqref{eq:uniform-endpoint-reduction} with
$\mathcal Y=[d_-/2,2d_+]$. 
It remains to solve the endpoint equation.  Set
\[
 F_p(z,m,\lambda,y)
 =
 y-t_p^{-1}\mathcal A_p(z,m,\lambda,y).
\]
By \eqref{eq:uniform-endpoint-reduction},
\[
 F_p(z,m,\lambda,y)
 =
 y-d_0(z,m,\lambda)+o_{C^3}(1).
\]
Thus, uniformly in $(z,m,\lambda)$,
\[
 F_p(d_-/2)<0,
 \qquad
 F_p(2d_+)>0
\]
for all sufficiently large $p$, while
\[
 \partial_yF_p=1+o(1)>0
\]
throughout $[d_-/2,2d_+]$.  Hence there is exactly one root in this
interval.  Writing it as $y_{p,\lambda}(z,m)$ gives
\eqref{eq:transition-Q-root}, and the parameter-dependent implicit-function
theorem, together with the $C^3$ convergence above, gives \eqref{eq:endpoint-prefactor-C3}.
\end{proof}

The next proposition shows that after substituting the distinguished root
$Q_{p,\lambda}(z,m)$ into the finite-$p$ candidate objective, the resulting reduced objective
converges in $C^3$ to
\[
 \frac1m\Pp_{m^2\Xi}(\nu_z)+\frac{\lambda m}{2}.
\]

\begin{proposition}\label{prop:root-reduced-limit}
For every sufficiently large even $p$, let $Q_{p,\lambda}(z,m)$ be the solution from \Cref{prop:uniform-endpoint-reduction}, and define
\begin{equation}\label{eq:transition-reduced-functional}
 \widehat\Pp_{p,\lambda}(z,m)
 =
 \Pp^{\mathrm{cand}}_{p,\lambda}
 \bigl(z,m,Q_{p,\lambda}(z,m)\bigr).
\end{equation}
Then, as $p\to\infty$ through even integers,
\begin{equation}\label{eq:transition-C3-limit}
 \left\|
 \widehat\Pp_{p,\lambda}
 -
 \left(
 \frac1m\Pp_{m^2\Xi}(\nu_z)+\frac{\lambda m}{2}
 \right)
 \right\|_{C^3(\cN_z\times M_-\times\Lambda)}
 \longrightarrow0,
\end{equation}
where $m$-derivatives at $m=1$ are understood from the left.
\end{proposition}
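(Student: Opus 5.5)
The plan is to write the candidate functional explicitly through its Cole--Hopf recursion and split it into three pieces: an upper piece on $[Q,1]$, a middle piece on $[a(z),Q]$ with cumulative mass $m$, and a lower piece on $[0,a(z)]$. Then compare each piece with the right-hand side of \eqref{eq:temperature-mass-functional}, using the mass scaling of \Cref{lem:temperature-mass-scaling} with $q_{\mathrm c}=a(z)$.

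\emph{Upper piece.} On $[Q,1]$ the mass is one, so $u(Q,y)=\log\cosh y+\tfrac12(\Xi_{p,\lambda}'(1)-\Xi_{p,\lambda}'(Q))$. Here $\Xi_{p,\lambda}'(1)-\Xi_{p,\lambda}'(Q)\approx \lambda p^2 t_p y+O(t_p)$, which is $O(p^{3/2}e^{-m^2\lambda p/2})$ together with its derivatives, because parameter derivatives of $t_p$ cost only polynomial factors in $p$.

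\emph{Middle piece.} On $(a(z),Q)$ the Cole--Hopf step gives
\[
 u(a(z),x)=\frac1m\log \mathsf H_{W}(\cosh^m)(x)+\text{const},
 \qquad W=W_p \text{ as in \eqref{eq:endpoint-input-W}}.
\]
Since $\cosh^m y=2^{-m}e^{m|y|}(1+O(e^{-2|y|}))$, we get
\[
 \mathsf H_W(\cosh^m)(x)=2^{-m}e^{m^2W/2}\bigl(e^{mx}\Phi(\cdot)+e^{-mx}\Phi(\cdot)\bigr)+\text{error}
 \approx 2^{-m}e^{m^2W/2}\cdot 2\cosh(mx)
\]
up to $O(e^{-cW})$-type relative errors, which are superpolynomially small since $W\approx\lambda p$. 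Hence
\[
 u(a(z),x)=\psi_m(x)+\frac{\log 2}{m}-\log 2+\frac{m}{2}W+o_{C^3}(1),
\]
with the error holding in a weighted norm allowing linear growth in $x$. This is the step where the large-variance estimates of \Cref{app:large-variance} should be invoked rather than redone.

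\emph{Lower piece.} Below $a(z)$, $\Xi_{p,\lambda}$ differs from $\Xi$ by $O(p^A a_*^{p-A})$ in $C^4$ on $[0,a(z)]$. The recursion with cumulative mass $m\alpha_{\nu_z}$ is Lipschitz in its terminal datum and in the covariance, by \Cref{lem:lower-recursion} and the recursion estimates of \Cref{app:recursion}. So $u(0,0)$ equals $V(0,0)$ plus the same additive constants, up to $o_{C^3}(1)$.

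\emph{Energy term.} The correction is $-\frac12\int_0^1 q\Xi_{p,\lambda}''\alpha\,\dd q$, with $\alpha=m\alpha_{\nu_z}$ on $[0,Q)$ and $\alpha=1$ on $[Q,1]$. Split it at $a(z)$. The $\lambda q^p$ contribution is
\[
 -\frac{\lambda}{2}\Bigl[m\int_0^Q p(p-1)q^{p-1}\,\dd q+\int_Q^1 p(p-1)q^{p-1}\,\dd q\Bigr]
 =-\frac{\lambda m}{2}(p-1)+\text{tiny}.
\]
The constant $\frac m2 W\approx\frac m2(\lambda p+\Xi'(1)-\Xi'(a(z)))$ from the middle step then combines with it to give exactly $\frac{\lambda m}{2}+\frac m2(\Xi'(1)-\Xi'(a(z)))$. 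The remaining $\Xi$ terms reproduce \eqref{eq:temperature-mass-functional}, since $\int_{a}^1 q\Xi''$ with mass $m$ versus $1$ is handled by the identity's own constant bookkeeping. Matching $\log2$ terms with $(\frac1m-1)\log 2$ then gives the limit $\frac1m\Pp_{m^2\Xi}(\nu_z)+\frac{\lambda m}2$. This cancellation of order-$p$ terms must hold exactly, not just to leading order, so I would track $W$ precisely via \eqref{eq:endpoint-input-W}.

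\emph{$C^3$ control and the main obstacle.} Derivatives in $z,m,\lambda$ pass through each step by the $C^3$-uniform versions of the estimates above. Dependence on $Q_{p,\lambda}(z,m)$ enters only through $t_p y_{p,\lambda}$, and by \eqref{eq:endpoint-prefactor-C3} its derivatives are polynomial in $p$ times $t_p$, hence negligible. Alternatively, the chain rule term $\partial_Q\Pp^{\mathrm{cand}}$ vanishes at the root by \eqref{eq:top-equation}. The main obstacle is making the large-variance asymptotic for $\frac1m\log\mathsf H_W(\cosh^m)-\frac m2 W$ uniform in $C^3$ with respect to $m$, including at $m=1$ from the left, in a norm strong enough to propagate through the lower recursion against the laws in $\mathcal M_{c_0}$. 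I would first try the explicit Gaussian-integral representation and bound derivatives of the error by $e^{-c|x|}$-weighted terms times $e^{-cW}$.
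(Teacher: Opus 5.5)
Your proposal is correct and is essentially the paper's argument. The paper packages your upper, middle and lower pieces into \Cref{lem:lower-recursion}, whose large-variance input \Cref{lem:large-V} is stated for $m\in I\Subset(0,2)$, so the left derivative at $m=1$ is not a real obstacle. That lemma gives \eqref{eq:quantitative-lower-recursion}--\eqref{eq:additive-constant} with an $O_{C^3}(p^Ae^{-cp})$ remainder in $(z,m,\lambda,y)$, after which the paper does exactly your energy bookkeeping, composed with the $C^3$-bounded root $y_{p,\lambda}$. The only difference is cosmetic: the paper combines the top-interval terms exactly into $\frac{1-m}{2}\int_Q^1(1-q)\Xi''(q)\dd q$ and $\frac{\lambda}{2}(1-m)\bigl(1-pQ^{p-1}+(p-1)Q^p\bigr)$, both $O(p^At_p^2)$, whereas your separate $O(p^At_p)$ bounds are already exponentially small and suffice.
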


\begin{proof}
Set $
 Q=Q_{p,\lambda}(z,m)$ 
and write $\mathcal R_p$ for the remainder in
\eqref{eq:quantitative-lower-recursion} evaluated at the corresponding
$y=y_{p,\lambda}(z,m)$.  
Substitute
\eqref{eq:quantitative-lower-recursion} and
\eqref{eq:additive-constant} into the definition of
$\Pp^{\mathrm{cand}}_{p,\lambda}$.  Since $a(z)<Q$ and $\alpha_{\nu_z}(q)=1$ for $q\geq a(z)$,
the cumulative mass function of the candidate is
\[
 \alpha_{\mu_{p,z,m,Q}}(q)
 =
 \begin{cases}
  m\alpha_{\nu_z}(q), & 0\leq q<Q,\\
  1, & Q\leq q\leq1.
 \end{cases}
\]
Also, by \eqref{eq:temperature-mass-function} and
\eqref{eq:temperature-mass-functional},
\[
 \frac1m\Pp_{m^2\Xi}(\nu_z)
 =
 \log2+\left(\frac1m-1\right)\log2
 +\frac1m u_{m^2\Xi,\nu_z}(0,0)
 -\frac m2\int_0^1q\Xi''(q)\alpha_{\nu_z}(q)\dd q.
\]
Thus the logarithmic terms, the scaled lower-recursion term, and the
contribution below $Q$ from the fixed covariance cancel directly.  The
remaining fixed-covariance contribution is
\[
 \frac{1-m}{2}\bigl(\Xi'(1)-\Xi'(Q)\bigr)
 -\frac{1-m}{2}\int_Q^1q\Xi''(q)\dd q
 =
 \frac{1-m}{2}\int_Q^1(1-q)\Xi''(q)\dd q.
\]

For the high-degree contribution, set
\[
 \mathscr D_p(z)
 =
 p(p-1)\int_0^{a(z)}q^{p-1}
 \bigl(1-\alpha_{\nu_z}(q)\bigr)\dd q.
\]
Since $1-\alpha_{\nu_z}$ vanishes on $[a(z),1]$,
\[
 p(p-1)\int_0^1q^{p-1}\alpha_{\nu_z}(q)\dd q
 =
 (p-1)-\mathscr D_p(z),
\]
while
\[
 p(p-1)\int_Q^1q^{p-1}\dd q
 =
 (p-1)(1-Q^p).
\]
Substituting these identities into the $q^p$ contribution from
\eqref{eq:additive-constant} and from the integral term in the Parisi
functional gives
\begin{align*}
 &\widehat\Pp_{p,\lambda}(z,m)
 -\left(\frac1m\Pp_{m^2\Xi}(\nu_z)+\frac{\lambda m}{2}\right)
 -\mathcal R_p\\
 &\quad=
 \frac{1-m}{2}\int_Q^1(1-q)\Xi''(q)\dd q+
 \frac\lambda2\left[
 (1-m)\big(1-pQ^{p-1}+(p-1)Q^p\big)
 -mpa(z)^{p-1}+m\mathscr D_p(z)\right].
\end{align*}

We estimate the terms on the right separately.  Since $\Xi$ is fixed and
smooth, the first term has $C^3$ norm
\[
 O\bigl(p^At_p^2\bigr),
\]
using
\eqref{eq:transition-Q-root}--\eqref{eq:endpoint-prefactor-C3} and hence
\[
 1-Q=O_{C^3}(p^At_p).
\]
Likewise, the function
\[
 F_p(Q)=1-pQ^{p-1}+(p-1)Q^p
\]
satisfies
\[
 F_p(1)=F_p'(1)=0.
\]
Its $Q$-derivatives have at most polynomial growth in $p$, so
\[
 F_p(Q)=O_{C^3}(p^At_p^2).
\] 
Moreover, since $a(z)\leq a_*<1$, the two terms supported below $a(z)$
satisfy
\[
 pa(z)^{p-1}+\mathscr D_p(z)
 =
 O_{C^3}(p^Aa_*^{p-A}).
\]
Finally, \eqref{eq:lower-remainder-bound}, together with the $C^3$
regularity of $y_{p,\lambda}$, gives
\[
 \mathcal R_p=O_{C^3}(p^Ae^{-cp}).
\]

Combining these estimates and using the uniform exponential bound on
$t_p$, we obtain, for some $c_t>0$,
\[
 \left\|
 \widehat\Pp_{p,\lambda}
 -\left(
 \frac1m\Pp_{m^2\Xi}(\nu_z)+\frac{\lambda m}{2}
 \right)
 \right\|_{C^3(\cN_z\times M_-\times\Lambda)}
 \leq
 Cp^A\bigl(e^{-2c_t p}+a_*^{p-A}+e^{-cp}\bigr)
 \longrightarrow0.
\]
Here derivatives involving $m$ at $m=1$ are understood from the left, as
throughout this section.  This proves
\eqref{eq:transition-C3-limit}.
\end{proof}

After eliminating $Q$ as an independent variable by substituting the
distinguished root $Q_{p,\lambda}(z,m)$, it remains to determine the lower
atomic coordinates $z$ and the total mass $m$ carried by the lower atoms.
The next proposition shows that, for each $(m,\lambda)$ near
$(1,\lambda_{\mathrm G})$, the $z$-stationarity equations have a unique
nearby solution $z_p(m,\lambda)$.  Thus, for each fixed $m$ and $\lambda$,
the $z$-variables are determined uniquely, and the finite-dimensional
candidate objective reduces to
\[
 j_p(m,\lambda)
 =
 \widehat\Pp_{p,\lambda}(z_p(m,\lambda),m).
\]
In other words, after first minimizing in $z$, the remaining
finite-dimensional optimization is the scalar problem
\[
 \min_{m\in M_-}j_p(m,\lambda).
\]

The boundary choice $m=1$ corresponds to a candidate in which the new point
near $1$ carries zero mass.  We refer to the resulting family
\[
 \lambda\longmapsto(z_p(1,\lambda),1)
\]
as the mass-one family.  It exists throughout the parameter window, but it
need not minimize the scalar objective with respect to $m$.

The boundary derivative
\[
 G_p(\lambda)=\partial_mj_p(1,\lambda)
\]
determines whether the minimizing value of $m$ remains at the boundary or
moves into the interior.  For $\lambda<\lambda_{\mathrm G,p}$, one has
$G_p(\lambda)<0$, and the unique minimizer of
$j_p(\,\cdot\,,\lambda)$ on $M_-$ is $m=1$.  At
$\lambda=\lambda_{\mathrm G,p}$, the boundary derivative vanishes.  For
$\lambda>\lambda_{\mathrm G,p}$, one has $G_p(\lambda)>0$, so decreasing
$m$ from $1$ lowers the objective.  Strict convexity then gives a unique
interior minimizer $m_{p,\lambda}<1$, and hence a second family
\[
 \lambda\longmapsto
 \bigl(z_p(m_{p,\lambda},\lambda),m_{p,\lambda}\bigr),
\]
which we call the interior branch.  Along this branch the new point near
$1$ carries positive mass $1-m_{p,\lambda}$.  Thus the mass-one family and
the interior branch correspond respectively to the boundary and interior
minimizers of the same scalar objective $j_p(\,\cdot\,,\lambda)$.  As
$\lambda\downarrow\lambda_{\mathrm G,p}$, the interior minimizer approaches
$m=1$ and the two families meet.  The following proposition makes this picture precise. 

\begin{proposition}\label{prop:stationary-branches}
With the final intervals $M_-$ and $\Lambda$ fixed in
\Cref{sec:high-degree-perturbation}, the following assertions hold for every
sufficiently large even $p$.

\begin{enumerate}[label=\textup{(\roman*)}]
 \item There is a unique $C^2$ map
 \[
  z_p:M_-\times\Lambda\longrightarrow\cN_z
 \]
 satisfying
 \[
  \partial_z\widehat\Pp_{p,\lambda}(z_p(m,\lambda),m)=0,
  \qquad
  D_z^2\widehat\Pp_{p,\lambda}(z_p(m,\lambda),m)\succeq cI
 \]
 uniformly on $M_-\times\Lambda$.  Moreover, viewing $z(m)$ as a function
 on $M_-\times\Lambda$ independent of $\lambda$,
 \[
  \|z_p-z\|_{C^2(M_-\times\Lambda)}\longrightarrow0,
 \]
 with $m$-derivatives at $m=1$ understood from the left.

 Writing
 \[
  z_p^-(\lambda)=z_p(1,\lambda),
 \]
 one has
 \[
  \|z_p^--z_*\|_{C^1(\Lambda)}\longrightarrow0,
  \qquad
  D_z^2\widehat\Pp_{p,\lambda}(z_p^-(\lambda),1)\succeq cI
 \]
 uniformly on $\Lambda$.

\item Define
 \[
  j_p(m,\lambda)
  =
  \widehat\Pp_{p,\lambda}(z_p(m,\lambda),m).
 \]
 Then $j_p\in C^3(M_-\times\Lambda)$ and, writing
 \[
  J(m,\lambda)=J_\lambda(m),
 \]
 one has
 \[
  \|j_p-J\|_{C^3(M_-\times\Lambda)}\longrightarrow0,
 \]
 with $m$-derivatives at $m=1$ understood from the left.  Uniformly on
 $M_-\times\Lambda$,
 \begin{equation}\label{eq:one-sided-scalar-convexity}
  \partial_{mm}j_p\geq c,
  \qquad
  \partial_{m\lambda}j_p\geq\frac14,
  \qquad
  \partial_mj_p(1-\eta_m,\lambda)\leq-c.
 \end{equation}

 \item The boundary derivative
 \begin{equation}\label{eq:boundary-m-derivative}
  G_p(\lambda)
  =
  \partial_mj_p(1,\lambda)
  =
  \frac{\lambda}{2}-s_\Xi+o_{C^1}(1)
 \end{equation}
 has a unique zero
 \[
  \lambda_{\mathrm G,p}\in\Lambda,
  \qquad
  \lambda_{\mathrm G,p}\longrightarrow\lambda_{\mathrm G}.
 \]
 For $\lambda\leq\lambda_{\mathrm G,p}$, $m=1$ is the unique minimizer of
 $j_p(\,\cdot\,,\lambda)$ on $M_-$.  For
 $\lambda>\lambda_{\mathrm G,p}$, there is a unique minimizer
 \[
  m_{p,\lambda}\in\operatorname{int}M_-,
  \qquad
  \partial_mj_p(m_{p,\lambda},\lambda)=0.
 \]
 The map $\lambda\mapsto m_{p,\lambda}$ is smooth and strictly decreasing,
 and its first two derivatives are bounded uniformly in $p$.

 Setting
 \[
  z_p^{\mathrm{bif}}(\lambda)
  =
  z_p(m_{p,\lambda},\lambda),
 \]
 one has
 \[
  D_{(z,m)}^2\widehat\Pp_{p,\lambda}
  \bigl(z_p^{\mathrm{bif}}(\lambda),m_{p,\lambda}\bigr)
  \succeq cI
 \]
 uniformly for $\lambda>\lambda_{\mathrm G,p}$.  As $\lambda$ decreases  to $\lambda_{\mathrm G,p}$,
\[
 m_{p,\lambda}\longrightarrow1,
 \qquad
 z_p^{\mathrm{bif}}(\lambda)
 \longrightarrow z_p^-(\lambda_{\mathrm G,p}).
\]
so the interior branch meets the mass-one branch at
$(z_p^-(\lambda_{\mathrm G,p}),1)$.
\end{enumerate}
\end{proposition}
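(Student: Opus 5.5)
The proof reduces everything to the $C^3$ convergence in \Cref{prop:root-reduced-limit} and the uniform convexity from \Cref{prop:limiting-corner}. We use perturbation arguments of the same type as Step~1 of \Cref{prop:stability}. Write
\[
 L(z,m,\lambda)=\frac1m\Pp_{m^2\Xi}(\nu_z)+\frac{\lambda m}{2},
 \qquad
 \Phi_p=\widehat\Pp_{p,\lambda}-L .
\]
By \Cref{prop:root-reduced-limit}, $\|\Phi_p\|_{C^3(\cN_z\times M_-\times\Lambda)}\to0$.

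\emph{Step (i).} On $\cN_z\times M_-$ we have $D_z^2L\succeq2\kappa_0I$ and $\operatorname{dist}(z(m),\partial\cN_z)\ge\rho_0$, and $\partial_zL(z(m),m,\lambda)=0$. The last identity holds because $\nu_{z(m)}$ is the Parisi measure of $m^2\Xi$, so $z(m)$ is atomic-stationary, and the $\lambda m/2$ term is $z$-independent. For large $p$ the $C^2$ perturbation gives $D_z^2\widehat\Pp_{p,\lambda}\succeq\kappa_0I$ on $\cN_z$ and $|\partial_z\widehat\Pp_{p,\lambda}(z(m),m)|=o(1)$. The strong-monotonicity argument from \Cref{prop:stability} then yields a unique zero $z_p(m,\lambda)$ in $\cN_z$, lying within $o(1)$ of $z(m)$ and hence interior. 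The implicit function theorem applied to $\partial_z\widehat\Pp_{p,\lambda}$, which is $C^2$, gives $z_p\in C^2$. Differentiating the identity $\partial_z\widehat\Pp(z_p,m)=0$ expresses $Dz_p=-(D_z^2\widehat\Pp)^{-1}D_{(m,\lambda)}\partial_z\widehat\Pp$, and the same formula holds for $z(m)$ with $L$ in place of $\widehat\Pp$. Comparing the two formulas shows $\|z_p-z\|_{C^2}\to0$ (first derivatives through the formula, second by differentiating once more, using the $C^3$ closeness). Restricting to $m=1$ gives the claims for $z_p^-$.

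\emph{Step (ii).} This is the main technical point, since $z_p$ is only $C^2$ but we need $j_p\in C^3$ with $C^3$ convergence. The envelope identity $\partial_mj_p=\partial_m\widehat\Pp(z_p,m)$, and likewise for $\partial_\lambda$, follows from $z$-stationarity. This expresses the first derivatives of $j_p$ as $C^2$ functions of $(z_p,m,\lambda)$, so $j_p\in C^3$. Since $L$ minimized over $z$ is $J$, with minimizer $z(m)$, the same envelope formula holds for $J$ with $L$ and $z(m)$. The $C^2$ convergence of $\partial_{(m,\lambda)}\widehat\Pp\to\partial_{(m,\lambda)}L$ and of $z_p\to z$ then gives $\|j_p-J\|_{C^3}\to0$. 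The inequalities \eqref{eq:one-sided-scalar-convexity} follow from $\partial_{mm}J\ge2c_m$, $\partial_{m\lambda}J=\tfrac12$, and \eqref{eq:final-left-endpoint-bound}, by taking $p$ large.

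\emph{Step (iii).} From (ii), $G_p(\lambda)=\partial_mJ_\lambda(1)+o_{C^1}(1)=\tfrac\lambda2-s_\Xi+o_{C^1}(1)$. Since $G_p'\ge\tfrac14$ and $G_p(\lambda_{\mathrm G}\pm2\eta)=\pm\eta+o(1)$, there is a unique zero $\lambda_{\mathrm G,p}$ with $\lambda_{\mathrm G,p}\to\lambda_{\mathrm G}$. Strict convexity in $m$ then settles the minimization. If $G_p(\lambda)\le0$, then $\partial_mj_p<0$ on $[1-\eta_m,1)$, so $m=1$ is the unique minimizer. If $G_p(\lambda)>0$, then the sign change against $\partial_mj_p(1-\eta_m,\lambda)\le-c$ gives a unique interior critical point $m_{p,\lambda}$, and it is the minimizer. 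The implicit function theorem applied to $\partial_mj_p(m,\lambda)=0$, with $\partial_{mm}j_p\ge c$, gives smoothness with $m'_{p,\lambda}=-\partial_{m\lambda}j_p/\partial_{mm}j_p\le-\tfrac1{4C}<0$. The uniform $C^3$ bounds on $j_p$ give uniform bounds on $m'_{p,\lambda}$ and $m''_{p,\lambda}$.

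For the joint Hessian at $(z_p^{\mathrm{bif}},m_{p,\lambda})$, take the Schur complement with respect to the $z$-block, which is $\succeq cI$. The complement equals $\partial_{mm}j_p\ge c$, and the entries are bounded, so the full $(z,m)$-Hessian is uniformly positive definite. Finally, as $\lambda\downarrow\lambda_{\mathrm G,p}$, monotonicity and the sign of $G_p$ force $m_{p,\lambda}\to1$. Continuity of $z_p$ then gives $z_p^{\mathrm{bif}}(\lambda)\to z_p(1,\lambda_{\mathrm G,p})=z_p^-(\lambda_{\mathrm G,p})$.
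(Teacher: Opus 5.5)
Your proposal is correct and follows essentially the paper's argument: $C^3$ closeness of $\widehat\Pp_{p,\lambda}$ to $\frac1m\Pp_{m^2\Xi}(\nu_z)+\frac{\lambda m}{2}$, strong convexity in $z$ to continue $z(m)$, the envelope identity to get $j_p\to J$ in $C^3$, convexity in $m$ for the boundary/interior dichotomy, and the Schur complement for the joint Hessian. (For continuing $z(m)$ you use the small-gradient and strong-monotonicity variant from \Cref{prop:stability} instead of the paper's boundary-value margin, which is an equivalent route.) The one point to tighten is that the implicit function theorem applied to the $C^3$ function $j_p$ only gives $m_{p,\lambda}\in C^2$, which is enough for the uniform bounds on its first two derivatives; to claim full smoothness you need the paper's additional observation that for fixed finite $p$ the recursion data, and hence $\widehat\Pp_{p,\lambda}$, $z_p$ and $j_p$, are $C^\infty$ at interior points $m<1$.
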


\begin{proof}
Assume first that $r\geq1$, and set
\[
 \Phi_m(z)=\frac1m\Pp_{m^2\Xi}(\nu_z),
 \qquad
 F_\lambda(z,m)=\Phi_m(z)+\frac{\lambda m}{2}.
\]
Thus \eqref{eq:transition-C3-limit} states that
$\widehat\Pp_{p,\lambda}\to F_\lambda$ in $C^3$, uniformly on
$\cN_z\times M_-\times\Lambda$.

By the choice of $\cN_z$,
\[
 D_z^2\Phi_m(z)\succeq2\kappa_0I
 \qquad (z\in\cN_z),
\]
and
\[
 \operatorname{dist}(z(m),\partial\cN_z)\geq\rho_0.
\]
Since $z(m)$ is stationary for $\Phi_m$ and $\cN_z$ is convex, Taylor's
formula gives, for $z\in\partial\cN_z$,
\[
  \Phi_m(z)-\Phi_m(z(m))
 \geq \kappa_0|z-z(m)|^2
 \geq \kappa_0\rho_0^2.
\]
The term $\lambda m/2$ is independent of $z$, so the same boundary margin
holds for $F_\lambda$.  The uniform $C^2$ convergence in
\eqref{eq:transition-C3-limit} therefore implies, for all sufficiently
large $p$,
\[
  D_z^2\widehat\Pp_{p,\lambda}(z,m)\succeq \kappa_0I 
 \qquad
 (z,m,\lambda)\in\cN_z\times M_-\times\Lambda,
\]
and, after increasing $p$ once more if necessary,
\[
 \widehat\Pp_{p,\lambda}(z,m)
 >
 \widehat\Pp_{p,\lambda}(z(m),m)
 \qquad (z\in\partial\cN_z).
\]
Hence a minimizer of
$z\mapsto\widehat\Pp_{p,\lambda}(z,m)$ on the compact set $\cN_z$ lies in
its interior and satisfies the $z$-stationarity equation.  The displayed
strict convexity makes this stationary point unique; denote it by
$z_p(m,\lambda)$.

The invertibility of the $z$-Hessian and the parameter-dependent
implicit-function theorem give a $C^2$ map
\[
 (m,\lambda)\longmapsto z_p(m,\lambda)
\]
on $M_-\times\Lambda$, with the usual left-derivative interpretation at
$m=1$.  Moreover, the stationarity equations
\[
 \partial_z\widehat\Pp_{p,\lambda}(z_p(m,\lambda),m)=0,
 \qquad
 \partial_z\Phi_m(z(m))=0,
\]
together with the uniform $C^3$ convergence, imply
\[
 \|z_p-z\|_{C^2(M_-\times\Lambda)}\longrightarrow0,
\]
where $z(m)$ is viewed as independent of $\lambda$.  In particular,
$z_p^-(\lambda)=z_p(1,\lambda)$ satisfies the assertions in
part~\textup{(i)}.

Now define
\[
 j_p(m,\lambda)
 =
 \widehat\Pp_{p,\lambda}(z_p(m,\lambda),m).
\]
Because the $z$-gradient vanishes at $z_p$, the envelope identity gives
\[
 \partial_a j_p(m,\lambda)
 =
 \partial_a\widehat\Pp_{p,\lambda}(z_p(m,\lambda),m),
 \qquad a\in\{m,\lambda\}.
\]
Differentiating this identity twice more and using the $C^2$ convergence of
$z_p$ together with \eqref{eq:transition-C3-limit} gives
\[
 j_p\longrightarrow J_\lambda(m)
 \quad\text{in }C^3(M_-\times\Lambda).
\]
The uniform bounds $\partial_{mm}J_\lambda\geq2c_m$ and
$\partial_{m\lambda}J_\lambda=1/2$, together with
\eqref{eq:final-left-endpoint-bound}, therefore imply, after increasing $p$
if necessary, 
\[
 \partial_{mm}j_p\geq c,
 \qquad
 \partial_{m\lambda}j_p\geq\frac14,
 \qquad
 \partial_mj_p(1-\eta_m,\lambda)\leq-c,
\]
uniformly on the stated parameter set.  This proves part~\textup{(ii)}.

If $r=0$, the $z$-coordinate space is empty.  Define $z_p$ to be its unique
empty-valued map.  The preceding $z$-dependent assertions are then vacuous,
while the $C^3$ convergence of $j_p$ and
\eqref{eq:one-sided-scalar-convexity} follow directly from
\eqref{eq:transition-C3-limit} and
\eqref{eq:limiting-corner-geometry}.  Thus parts~\textup{(i)}--\textup{(ii)}
hold in this case as well.

We now minimize $j_p(\,\cdot\,,\lambda)$ over $M_-$.  By the $C^3$
convergence just proved and the identity
\[
 \partial_mJ_\lambda(1)
 =\frac{\lambda-\lambda_{\mathrm G}}2
 =\frac{\lambda}{2}-s_\Xi,
\]
the boundary derivative satisfies
\[
 G_p(\lambda)
 =\partial_mj_p(1,\lambda)
 =\frac{\lambda}{2}-s_\Xi+o_{C^1}(1),
\]
which is \eqref{eq:boundary-m-derivative}.  Moreover,
\[
 G_p'(\lambda)
 =\partial_{m\lambda}j_p(1,\lambda)\geq\frac14.
\]
At the two endpoints
$\lambda=\lambda_{\mathrm G}\pm2\eta$, the limiting values of $G_p$ are
$\pm\eta$.  Hence, for all sufficiently large $p$, $G_p$ has opposite
signs at the endpoints of $\Lambda$.  Strict monotonicity then gives a
unique zero
\[
 \lambda_{\mathrm G,p}\in\Lambda,
 \qquad
 \lambda_{\mathrm G,p}\longrightarrow\lambda_{\mathrm G}.
\]

If $\lambda\leq\lambda_{\mathrm G,p}$, then $G_p(\lambda)\leq0$.  For every
$m<1$, \eqref{eq:one-sided-scalar-convexity} gives
\[
 \partial_mj_p(m,\lambda)
 =
 G_p(\lambda)
 -\int_m^1\partial_{mm}j_p(s,\lambda)\,\dd s
 <0.
\]
Thus $j_p(\,\cdot\,,\lambda)$ is strictly decreasing toward the right
endpoint, and $m=1$ is its unique minimizer on $M_-$.

If $\lambda>\lambda_{\mathrm G,p}$, then
\[
 \partial_mj_p(1,\lambda)=G_p(\lambda)>0,
 \qquad
 \partial_mj_p(1-\eta_m,\lambda)<0.
\]
Since $\partial_{mm}j_p\geq c$, the function
$m\mapsto\partial_mj_p(m,\lambda)$ is strictly increasing.  It therefore
has exactly one zero
\[
 m_{p,\lambda}\in\operatorname{int}M_-,
\]
and this point is the unique minimizer of
$j_p(\,\cdot\,,\lambda)$.  For fixed finite $p$ the recursion data are
smooth in the interior, so the implicit-function theorem makes
$\lambda\mapsto m_{p,\lambda}$ smooth.  Differentiating
\[
 \partial_mj_p(m_{p,\lambda},\lambda)=0
\]
gives
\[
 \partial_\lambda m_{p,\lambda}
 =
 -\frac{\partial_{m\lambda}j_p}
        {\partial_{mm}j_p}<0,
\]
so the interior minimizer is strictly decreasing in $\lambda$.  A second
differentiation gives
\[
 \partial_{\lambda\lambda}m_{p,\lambda}
 =
 -\frac{
 \partial_{mmm}j_p(\partial_\lambda m_{p,\lambda})^2
 +2\partial_{mm\lambda}j_p\,\partial_\lambda m_{p,\lambda}
 +\partial_{m\lambda\lambda}j_p}
 {\partial_{mm}j_p}.
\]
The uniform $C^3$ bound for $j_p$ and the lower bound on
$\partial_{mm}j_p$ therefore give uniform bounds for the first two
$\lambda$-derivatives of $m_{p,\lambda}$.

It remains to verify the Hessian assertion on the interior branch.  At a
point $(z_p(m,\lambda),m)$ write
\[
 A=D_z^2\widehat\Pp_{p,\lambda},
 \qquad
 b=D_{zm}^2\widehat\Pp_{p,\lambda},
 \qquad
 d=\partial_{mm}\widehat\Pp_{p,\lambda}.
\]
Differentiating the $z$-stationarity equation in $m$ gives
\[
 \partial_m z_p=-A^{-1}b.
\]
Using the envelope identity once more,
\[
 \partial_{mm}j_p
 =d-b^{\mathsf T}A^{-1}b.
\]
Thus $\partial_{mm}j_p$ is exactly the Schur complement of the $z$-block
$A$ in
\[
 D_{(z,m)}^2\widehat\Pp_{p,\lambda}
 =
 \begin{pmatrix}
  A & b\\
  b^{\mathsf T} & d
 \end{pmatrix}.
\]
Since $A\succeq cI$, $\partial_{mm}j_p\geq c$, and the uniform $C^2$
bounds give $\|b\|\leq C$, the Schur-complement criterion gives, after
decreasing $c>0$ if necessary,
\[
 D_{(z,m)}^2\widehat\Pp_{p,\lambda}
 \bigl(z_p^{\mathrm{bif}}(\lambda),m_{p,\lambda}\bigr)
 \succeq cI.
\]
For $r=0$, the $z$-block is empty and this assertion reduces directly to
$\partial_{mm}j_p\geq c$.

Finally, let $\lambda\downarrow\lambda_{\mathrm G,p}$ with
$\lambda>\lambda_{\mathrm G,p}$.  Any limit point $m_*$ of
$m_{p,\lambda}$ satisfies, by continuity,
\[
 \partial_mj_p(m_*,\lambda_{\mathrm G,p})=0.
\]
At $\lambda=\lambda_{\mathrm G,p}$ we also have
$\partial_mj_p(1,\lambda_{\mathrm G,p})=0$, and strict convexity makes this
zero unique.  Hence $m_*=1$, so
\[
 m_{p,\lambda}\longrightarrow1.
\]
The continuity of $z_p$ then gives
\[
 z_p^{\mathrm{bif}}(\lambda)
 =
 z_p(m_{p,\lambda},\lambda)
 \longrightarrow
 z_p(1,\lambda_{\mathrm G,p})
 =
 z_p^-(\lambda_{\mathrm G,p}).
\]
This proves the meeting assertion and completes part~\textup{(iii)}.
\end{proof}

We now associate candidate measures with the two families from
\Cref{prop:stationary-branches}.  On the mass-one family, set
\[
 \mu^-_{p,\lambda}=\nu_{z^-_p(\lambda)},
 \qquad
 Q^-_{p,\lambda}=Q_{p,\lambda}(z^-_p(\lambda),1).
\]
Here $Q^-_{p,\lambda}$ is the distinguished root near $1$, but it carries
zero mass because $m=1$.

For $\lambda>\lambda_{\mathrm G,p}$, the interior branch has
$m_{p,\lambda}<1$, so the distinguished root carries positive mass
$1-m_{p,\lambda}$.  Set
\begin{equation}\label{eq:bifurcating-candidate}
 \mu^{\mathrm{bif}}_{p,\lambda}
 =
 m_{p,\lambda}\nu_{z^{\mathrm{bif}}_p(\lambda)}
 +(1-m_{p,\lambda})\delta_{Q^{\mathrm{bif}}_{p,\lambda}},
 \qquad
 Q^{\mathrm{bif}}_{p,\lambda}
 =
 Q_{p,\lambda}
 \bigl(z^{\mathrm{bif}}_p(\lambda),m_{p,\lambda}\bigr).
\end{equation}
For the corresponding first-variation functions and overlap observables,
write
\begin{align*}
 f^-_{p,\lambda}
 &=f_{\Xi_{p,\lambda},\mu^-_{p,\lambda}},
 &\Gamma^-_{p,\lambda}
 &=\Gamma_{\Xi_{p,\lambda},\mu^-_{p,\lambda}},\\
 f^{\mathrm{bif}}_{p,\lambda}
 &=f_{\Xi_{p,\lambda},\mu^{\mathrm{bif}}_{p,\lambda}},
 &\Gamma^{\mathrm{bif}}_{p,\lambda}
 &=\Gamma_{\Xi_{p,\lambda},\mu^{\mathrm{bif}}_{p,\lambda}}.
\end{align*}

On the interior branch, the stationarity equations imply that the
first-variation function vanishes at every support point of the candidate
measure.  At each positive lower support point
$q_i(z^{\mathrm{bif}}_p(\lambda))$, $1\leq i\leq r$, the lower mass and
location equations give
\[
 f^{\mathrm{bif}}_{p,\lambda}
   \bigl(q_i(z^{\mathrm{bif}}_p(\lambda))\bigr)
 =
 (f^{\mathrm{bif}}_{p,\lambda})'
   \bigl(q_i(z^{\mathrm{bif}}_p(\lambda))\bigr)
 =0,
\]
while at the new support point the $m$-stationarity equation and the
endpoint root equation give
\[
 f^{\mathrm{bif}}_{p,\lambda}
   (Q^{\mathrm{bif}}_{p,\lambda})=0,
 \qquad
 (f^{\mathrm{bif}}_{p,\lambda})'
   (Q^{\mathrm{bif}}_{p,\lambda})=0.
\]

\subsection{Comparison below the cutoff and high-overlap geometry}

We now separate the overlap interval into two regions.  Below a cutoff
$a_{p,\lambda}$ approaching $1$, the high-degree perturbation $\lambda q^p$
and its first few derivatives are uniformly small, so the first-variation
function can be compared with that of the scaled lower problem.  Above the
cutoff, the high-degree term is no longer negligible, and we instead use the
high-overlap estimates developed above.

For either the mass-one candidate $\mu^-_{p,\lambda}$ or, when
$\lambda>\lambda_{\mathrm G,p}$, the interior candidate
$\mu^{\mathrm{bif}}_{p,\lambda}$, write
\[
 R(q)=\Gamma(q)-q,
 \qquad
 A(q)=1-\Gamma(q),
 \qquad
 f'(q)=\frac12\Xi_{p,\lambda}''(q)R(q),
\]
where $f$ and $\Gamma$ are the first-variation function and overlap
observable associated with the chosen candidate. 
Thus the sign of $R$ determines whether the first-variation function is
increasing or decreasing, while $A=1-\Gamma$ measures the deficit of the
average squared magnetization from its maximal value $1$.

For all sufficiently large $p$, define the cutoff point
$a_{p,\lambda}\in(0,1)$ by 
\begin{equation}\label{e:movingcutoff}
 \lambda p\,a_{p,\lambda}^{p-1}=p^{-10}.
\end{equation}
Equivalently,
\[
 a_{p,\lambda}
 =
 \exp\!\left[-\frac{11\log p+\log\lambda}{p-1}\right].
\]
Hence, uniformly for $\lambda\in\Lambda$,
\[
 1-a_{p,\lambda}
 =
 \frac{11\log p+\log\lambda}{p}
 +O\left(\frac{(\log p)^2}{p^2}\right),
\]
and
\begin{equation}\label{eq:high-term-small-moving}
 \sup_{0\leq q\leq a_{p,\lambda}}
 \left|\partial_q^j(\lambda q^p)\right|
 \leq Cp^{j-11},
 \qquad 0\leq j\leq4.
\end{equation}

\begin{lemma}\label{lem:moving-endpoint-comparison}
Let $f_p$ denote the first-variation function associated either with the
mass-one candidate $\mu^-_{p,\lambda}$, for $\lambda\in\Lambda$, or with
the interior candidate $\mu^{\mathrm{bif}}_{p,\lambda}$, for
$\lambda\in\Lambda$ with $\lambda>\lambda_{\mathrm G,p}$.  Let $(z,m)$
denote the corresponding lower atomic coordinates and lower mass.  Then,
as $p\to\infty$ through even integers,
\begin{equation}\label{eq:moving-endpoint-C2-comparison}
 \max_{0\leq j\leq2}
 \sup_{0\leq q\leq a_{p,\lambda}}
 \left|
 \partial_q^j
 \left(
 f_p(q)-\frac1{m^2}f_{m^2\Xi,\nu_z}(q)
 \right)
 \right|
 \longrightarrow0.
\end{equation}
Here the derivatives are taken separately on each interval between
successive support points.  At each support point, the supremum is understood
to include both one-sided limiting values of the derivative. The convergence is uniform
over both of the stated parameter families.
\end{lemma}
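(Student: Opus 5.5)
We reduce both sides to the same recursion on $[0,a_{p,\lambda}]$ and then compare the covariances and the terminal data fed in at the cutoff.

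\emph{Step 1: reduce the unperturbed side to a recursion with cumulative mass $m\alpha_{\nu_z}$.} We apply \Cref{lem:temperature-mass-scaling} with $q_{\mathrm c}=a_{p,\lambda}$. This is legitimate because $a(z)\le a_*<a_{p,\lambda}$ for large $p$. The lemma expresses $\frac1{m^2}f_{m^2\Xi,\nu_z}$ on $[0,a_{p,\lambda}]$ as $f^{\psi_m}_{\Xi,m\nu_z}$. This is the first-variation function for covariance $\Xi$, cumulative mass profile $m\alpha_{\nu_z}$, and terminal data $\psi_m$ at $a_{p,\lambda}$.

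\emph{Step 2: write the candidate side in the same form.} For the candidate, the cumulative mass on $[0,a_{p,\lambda}]$ is also $m\alpha_{\nu_z}$, since $Q>a_{p,\lambda}$ by \eqref{eq:transition-Q-root}. Its first-variation function is therefore given by the same recursion, now with covariance $\Xi_{p,\lambda}$ and terminal data
\[
g_p=u_{\mu}(a_{p,\lambda},\cdot),
\]
which is the solution propagated down from overlap $1$ through the atom at $Q$. On the mass-one family we take $m=1$, and $g_p$ is simply the Gaussian smoothing of $\log\cosh$.

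\emph{Step 3: compare the two recursions on $[0,a_{p,\lambda}]$.} It suffices to show two things. First, the covariances agree to within $Cp^{-9}$ in $C^4$ on $[0,a_{p,\lambda}]$, which is \eqref{eq:high-term-small-moving}. Second, $g_p-\psi_m$ (up to an additive constant, which does not affect $u_x$) is small in the first few spatial derivatives, uniformly on $\R$ or with exponential weights. For the second point, consider the Cole--Hopf step on $[a_{p,\lambda},Q]$ with exponent $m$. Its variance is
\[
\Xi_{p,\lambda}'(Q)-\Xi_{p,\lambda}'(a_{p,\lambda})=\lambda p+\Xi'(1)-\Xi'(a_{p,\lambda})+o(1),
\]
which is large. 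The fixed-covariance part is $O(\log p/p)$, and the step from $Q$ to $1$ contributes only a constant. The large-variance estimates of \Cref{app:large-variance} should show that $T_{m,W}(\log\cosh)$ has $x$-derivative $\tanh(mx)$ up to errors vanishing as $W\to\infty$. Heuristically, $\cosh^m$ averaged against a wide Gaussian is proportional to $\cosh(mx)$ up to relative errors $O(1/W)$. The same estimates should hold for derivatives up to order three. The factor $\frac1m$ is consistent with $\psi_m=\frac1m\log\cosh(mx)$.

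\emph{Step 4: propagate the perturbations to $\Gamma$ and $f$.} Given the Step 3 smallness in $\Xi$ and in the terminal data, we use the finite-recursion stability estimates of \Cref{app:recursion} (the ones underlying \Cref{prop:finite-step-regularity}). These convert the smallness into uniform smallness of $u_x$, $u_{xx}$ along the diffusion, hence of $\Gamma$ and $\Gamma'$, uniformly in $q\le a_{p,\lambda}$. The terminal time $a_{p,\lambda}$ moves with $p$, so we note that on $[a(z),a_{p,\lambda}]$ the mass profile is constant and the recursion is a single smoothing with bounded variance. Regularity is therefore uniform in the endpoint. We also use that $z$ and $m$ range over compact sets with separated atoms.

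\emph{Step 5: integrate.} Finally, $f'=\frac12\Xi''(\Gamma-q)$ and $f''=\frac12\Xi'''(\Gamma-q)+\frac12\Xi''(\Gamma'-1)$. Combining the closeness of $\Xi_{p,\lambda}''$, $\Xi_{p,\lambda}'''$ to $\Xi''$, $\Xi'''$ with the closeness of $\Gamma$ and $\Gamma'$, and integrating from $0$, gives \eqref{eq:moving-endpoint-C2-comparison} for $j=0,1,2$ on each interval between atoms. The one-sided limits at atoms are included by continuity.

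\emph{Main obstacle.} The hardest part is Step 3: controlling the terminal data at the moving cutoff uniformly in $(z,m)$, including $m=1$, with enough spatial derivatives. I would get this from the large-variance lemma with variance of order $\lambda p$. The error from $Q\neq1$ is exponentially small, and the growth of $\log\cosh$ is handled in the weighted norms $\mathcal M_c$.
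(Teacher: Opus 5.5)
Your proposal is correct and follows essentially the same route as the paper. The paper likewise removes the additive constant from $u_p(a_{p,\lambda},\cdot)$ to get $g_p=T_{m,V_p}\log\cosh-\tfrac{mV_p}{2}-(\tfrac1m-1)\log 2$ with $V_p=\lambda p+O(1)$, and uses \Cref{lem:large-V} to make $g_p-\psi_m$ small in $\mathsf T^9$. It then compares the two recursions on the common grid ending at $a_{p,\lambda}$ via \Cref{lem:split-observable-stability}, tracking the variance fractions $\tau_p(q),\tau_0(q)$ so that observables are compared at the same $q$ (a point your Step~4 leaves implicit), and finishes with the $f'$, $f''$ formulas and the identification via \Cref{lem:temperature-mass-scaling}.

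There are only minor slips, none of which matter since only $o(1)$ errors are needed:
\begin{itemize}
\item The large-variance error is actually $O(V^{C}e^{-cV})$, not $O(1/W)$.
\item At $m=1$ the top step is the Cole--Hopf transform $T_{1,V}\log\cosh=\log\cosh+V/2$, so $g_p=\psi_1$ exactly. It is not a Gaussian smoothing of $\log\cosh$.
\item \eqref{eq:high-term-small-moving} gives a $C^4$ discrepancy of $O(p^{-7})$, not $O(p^{-9})$.
\end{itemize}
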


\begin{proof}
Fix either the mass-one candidate or the interior candidate, and write
$(z,m,Q)$ for its parameters.  Let $u_p$, $\Gamma_p$, and $f_p$ denote
the corresponding Parisi solution, overlap observable, and first-variation
function.

We first identify the boundary data at the cutoff $a_{p,\lambda}$.  On
$[Q,1]$ the cumulative mass is one.  Thus, if
\[
 V_p^{\rm top}
 =
 \Xi_{p,\lambda}'(1)-\Xi_{p,\lambda}'(Q),
\]
then
\[
 u_p(Q,x)
 =
 \log\cosh x+\frac{V_p^{\rm top}}2.
\]
On $[a_{p,\lambda},Q]$ the cumulative mass is $m$.  Setting
\[
 V_p
 =
 \Xi_{p,\lambda}'(Q)-\Xi_{p,\lambda}'(a_{p,\lambda}),
\]
we therefore obtain
\[
 u_p(a_{p,\lambda},x)
 =
 \frac{V_p^{\rm top}}2+T_{m,V_p}\log\cosh x.
\]
Thus
\[
 u_p(a_{p,\lambda},x)
 =
 C_p+g_p(x),
\]
where
\[
 C_p
 =
 \frac{V_p^{\rm top}}2+\frac{mV_p}{2}
 +\left(\frac1m-1\right)\log2
\]
is independent of $x$, and
\begin{equation}\label{eq:moving-terminal-datum}
 g_p(x)
 =
 T_{m,V_p}\log\cosh x-\frac{mV_p}{2}
 -\left(\frac1m-1\right)\log2.
\end{equation}
Removing $C_p$ subtracts the same constant from the solution below
$a_{p,\lambda}$ and leaves its spatial derivatives, optimal diffusion,
$\Gamma_p$, and $f_p$ unchanged.

The definition of the cutoff gives
\[
 \lambda p\,a_{p,\lambda}^{p-1}=p^{-10},
\]
while the endpoint estimate gives $Q=1-O(e^{-cp})$.  Hence
\[
 V_p
 =
 \Xi'(Q)-\Xi'(a_{p,\lambda})
 +\lambda pQ^{p-1}-p^{-10}
 =
 \lambda p+O(1)
\]
uniformly over the two candidate families.  In particular,
$V_p\asymp p$.  Applying \Cref{lem:large-V} to
\eqref{eq:moving-terminal-datum} gives, for every fixed $K$,
\begin{equation}\label{eq:moving-terminal-comparison}
 \|g_p-\psi_m\|_{\mathsf T^K}
 \leq Cp^Ae^{-cp},
 \qquad
 \psi_m(x)=\frac1m\log\cosh(mx),
\end{equation}
uniformly over the same families.

We now compare the two Parisi problems below $a_{p,\lambda}$.  Use the
common atomic grid
\[
 0=q_0<q_1<\cdots<q_r=a(z)<q_{r+1}=a_{p,\lambda}.
\]
For $q\leq a_{p,\lambda}<Q$, the cumulative mass of the finite-$p$
candidate is exactly $m\alpha_{\nu_z}(q)$.  Thus the two problems to be
compared are
\[
 \begin{array}{c|c|c}
  & \text{finite-$p$ problem} & \text{comparison problem}\\ \hline
  \text{covariance}
  & \Xi_{p,\lambda} & \Xi\\
  \text{cumulative mass}
  & m\alpha_{\nu_z} & m\alpha_{\nu_z}\\
  \text{boundary value at }a_{p,\lambda}
  & g_p & \psi_m.
 \end{array}
\]
By \eqref{eq:high-term-small-moving},
\[
 \|\Xi_{p,\lambda}-\Xi\|_{C^4([0,a_{p,\lambda}])}
 \longrightarrow0.
\]
The full variance increments for both the finite-$p$ and comparison
problems on this common grid are uniformly bounded away from zero.  On the
intervals below $a(z)$ this follows from compactness of the lower atomic
family, while on the final interval $[a(z),a_{p,\lambda}]$ it follows, for
large $p$, from
\[
 \Xi'(a_{p,\lambda})-\Xi'(a(z))\geq c>0.
\]
At this point we use only the zeroth-order comparison supplied by
\Cref{lem:split-observable-stability}.  For each fixed candidate parameter
choice, regard the finite-$p$ and comparison recursions as recursion data
with no external parameter.  Their level data agree exactly, while
\eqref{eq:high-term-small-moving} gives an $O(p^{-10})$ bound for the
differences of their variance increments, and
\eqref{eq:moving-terminal-comparison}, with $K=9$, gives
\[
 \|g_p-\psi_m\|_{\mathsf T^9}
 \leq Cp^Ae^{-cp}.
\]
Together with the uniform upper and lower bounds on the variance increments,
these estimates verify the hypotheses of
\Cref{lem:split-observable-stability}, with constants uniform over both
candidate families.  Applying the lemma pointwise and then taking the
supremum over the candidate parameters therefore gives bounds uniform over
both families.

It remains to compare the observables at points inside the grid intervals.
If $q\in[q_i,q_{i+1}]$, define the variance fraction
\[
 \tau_\bullet(q)
 =
 \frac{\zeta_\bullet'(q)-\zeta_\bullet'(q_i)}
      {\zeta_\bullet'(q_{i+1})-\zeta_\bullet'(q_i)},
 \qquad
 \zeta_p=\Xi_{p,\lambda},
 \qquad
 \zeta_0=\Xi.
\]
The $C^4$ covariance convergence and the uniform lower bound on the full
variance increments imply
\[
 \sup_{0\leq q\leq a_{p,\lambda}}
 |\tau_p(q)-\tau_0(q)|\longrightarrow0.
\]
The split-observable estimates in
\Cref{lem:split-observable-stability}, applied to $u_x^2$ and $u_{xx}^2$,
therefore give, uniformly on $[0,a_{p,\lambda}]$ and including the
one-sided limits at the grid points,
\begin{align}
 \sup_{q\leq a_{p,\lambda}}
 |\Gamma_p(q)-\Gamma_0(q)|
 &\longrightarrow0,
 \label{eq:moving-Gamma-comparison}\\
 \sup_{q\leq a_{p,\lambda}}
 \left|
 \Xi_{p,\lambda}''(q)\E u_{p,xx}(q,X_q^p)^2
 -
 \Xi''(q)\E u_{0,xx}(q,X_q^0)^2
 \right|
 &\longrightarrow0.
 \label{eq:moving-Gamma-prime-comparison}
\end{align}
Here $(u_0,X^0,\Gamma_0)$ denotes the comparison problem.  By
\eqref{eq:Gamma-derivative}, the second display is precisely the uniform
comparison of the one-sided derivatives $\Gamma_p'$ and $\Gamma_0'$.

For either problem, writing $\zeta$ for its covariance,
\[
 f'(q)
 =
 \frac12\zeta''(q)\bigl(\Gamma(q)-q\bigr),
\]
and
\[
 f''(q)
 =
 \frac12\zeta'''(q)\bigl(\Gamma(q)-q\bigr)
 +\frac12\zeta''(q)\bigl(\Gamma'(q)-1\bigr).
\]
Hence the $C^4$ covariance comparison together with
\eqref{eq:moving-Gamma-comparison}--\eqref{eq:moving-Gamma-prime-comparison}
gives uniform convergence of $f_p'$ to $f_0'$ and of both one-sided values
of $f_p''$ to those of $f_0''$.  Since both first-variation functions
satisfy $f(0)=0$, integration gives uniform convergence of $f_p$ to $f_0$.

Finally, the comparison problem has covariance $\Xi$, cumulative mass
$m\alpha_{\nu_z}$, and boundary value $\psi_m$ at $a_{p,\lambda}$.
Therefore \Cref{lem:temperature-mass-scaling} identifies its
first-variation function exactly as
\[
 f_0(q)
 =
 \frac1{m^2}f_{m^2\Xi,\nu_z}(q),
 \qquad
 0\leq q\leq a_{p,\lambda}.
\]
The preceding comparisons now give
\eqref{eq:moving-endpoint-C2-comparison}.
\end{proof}

The preceding comparison allows us to transfer the first-variation
geometry of the scaled lower problem to the finite-$p$ candidates below
the cutoff.  Nondegeneracy of the scaled reference $r$-RSB Parisi measure
$\nu_{z(m)}$ implies that its first-variation function is nonpositive,
vanishes only at the support points of $\nu_{z(m)}$, and has strictly
negative second derivative at every positive support point.  The lower support remains 
uniformly separated from $1$, and $a_{p,\lambda}\to1$.  By compactness
of the scaled family and the exact zero-set condition, the scaled
first-variation function is therefore uniformly negative at the cutoff.
The following lemma shows that the $C^2$ comparison in \Cref{lem:moving-endpoint-comparison} transfers
these properties to the finite-$p$ candidates for all sufficiently large
$p$.

\begin{lemma}\label{lem:moving-cutoff}
There exist $p_0\geq1$ and constants $c,\gamma>0$ such that, for every
even $p\geq p_0$, the following holds for both the mass-one candidate
with $\lambda\in\Lambda$ and the interior candidate with
$\lambda\in\Lambda$ and $\lambda>\lambda_{\mathrm G,p}$.

Let $f_p$ be the corresponding first-variation function, and let
$\nu_z$ be its lower atomic measure.  Then
\[
 f_p(q)\leq0,
 \qquad
 0\leq q\leq a_{p,\lambda},
\]
and
\[
 f_p(q)=0
 \quad\Longleftrightarrow\quad
 q\in\supp\nu_z,
 \qquad
 0\leq q\leq a_{p,\lambda}.
\]
Moreover, at every positive support point $q\in\supp\nu_z$,
\[
 f_p''(q)\leq-c,
\]
where $f_p''(q)$ denotes the common left- and right-hand limiting value
at an interior support point, and
\begin{equation}\label{eq:moving-buffer}
 f_p(a_{p,\lambda})\leq-\gamma.
\end{equation}
The constants $p_0$, $c$, and $\gamma$ are uniform over both parameter
families.
\end{lemma}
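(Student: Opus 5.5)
The argument transfers the nondegenerate geometry of the scaled lower problem through the $C^2$ comparison of \Cref{lem:moving-endpoint-comparison}, in the same way as Step~2 of the proof of \Cref{prop:stability}. We work with the reference functions
\[
 \phi_{z,m}(q)=\frac1{m^2}f_{m^2\Xi,\nu_z}(q).
\]
Along both families the parameters $(z,m)$ are $(z_p(m,\lambda),m)$ with $m\in M_-$. By \Cref{prop:stationary-branches}, $\|z_p-z\|_{C^2}\to0$, so $(z,m)$ lies uniformly close to the compact curve $\{(z(m),m):m\in M_-\}$. On that curve $\nu_{z(m)}$ is the nondegenerate $r$-RSB Parisi measure of $m^2\Xi$.

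\textbf{Step 1: uniform estimates on the reference curve.} We first fix uniform constants for $\phi_{z(m),m}$, $m\in M_-$. \Cref{lem:origin-contact} gives $\varepsilon_0>0$ with $\phi'<0$ on $(0,\varepsilon_0]$. Compactness of $M_-$ and the continuity statements of \Cref{prop:finite-step-regularity} give $\eta,c>0$ such that $\phi''\leq-4c$ on the $\eta$-fractions of the grid intervals adjacent to each positive support point. The exact zero-set condition gives $\gamma>0$ with $\phi\leq-3\gamma$ on the compact remainder $q_j(K_j)$. Since $\Xi''>0$ and $\Gamma<1$, the function $\phi$ is strictly decreasing on $[a(z(m)),1]$ once past the $\eta$-neighbourhood of $a$. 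Hence $\phi\leq-3\gamma$ on all of $[a+\eta(1-a),1]$, and in particular at $a_{p,\lambda}\to1$. By the Lipschitz dependence in \Cref{prop:finite-step-regularity}, all of these bounds persist, with $4c,3\gamma$ replaced by $3c,\tfrac52\gamma$, for the actual $(z,m)$ once $p$ is large.

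\textbf{Step 2: transfer to $f_p$.} \Cref{lem:moving-endpoint-comparison} gives $\sup_{q\leq a_{p,\lambda}}|\partial_q^j(f_p-\phi_{z,m})|\to0$ for $j\leq2$, including one-sided limits at support points. For large $p$ this yields:
\begin{itemize}[label={}]
 \item \emph{Origin.} $f_p'<0$ on $(0,\varepsilon_0]$, by applying \Cref{lem:origin-contact} directly to $(\Xi_{p,\lambda},\mu)$, since $\Xi_{p,\lambda}'(q)\leq Cq^3$ uniformly.
 \item \emph{Near positive support points.} $f_p''\leq-2c$ on the neighbourhoods of the positive support points.
 \item \emph{Remainder and cutoff.} $f_p\leq-2\gamma$ on the remainder pieces and at the cutoff, which gives \eqref{eq:moving-buffer}.
\end{itemize}
On the final segment $[a(z),a_{p,\lambda}]$ we use the Step 1 bound $\phi\leq-\tfrac52\gamma$ on $[a+\eta(1-a),a_{p,\lambda}]$ together with uniform $C^0$ closeness there.

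\textbf{Step 3: zeros.} The atomic stationarity identities recorded after \eqref{eq:bifurcating-candidate}, and the analogous ones for the mass-one family from the $z$-stationarity in \Cref{prop:stationary-branches}(i) via \Cref{lem:atomic-derivatives}, give $f_p(q_i)=f_p'(q_i)=0$ at each lower support point. Mass stationarity among the lower atoms holds because $z$ includes the relative masses. Integrating $f_p''\leq-2c$ away from each $q_i$ makes $q_i$ a strict local maximum and the unique zero in its neighbourhood. The origin piece gives $f_p<0$ on $(0,\varepsilon_0]$, and the other pieces are strictly negative. Together these give $f_p\leq0$ on $[0,a_{p,\lambda}]$ with zero set exactly $\supp\nu_z$, and $f_p''(q_i)\leq-c$. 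Here the common two-sided value of $f_p''$ comes from \Cref{lem:curvature}.

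\textbf{Main obstacle.} The delicate point is the last interval $[a(z),a_{p,\lambda}]$, whose right end moves toward $1$ with $p$. The covering by fixed fractions $K_r=[\eta,1]$ must therefore be done on the reference side over $[a,1]$ and then restricted, rather than on the moving grid. Monotonicity of $\phi$ above $a$ makes this painless. Uniformity in $\lambda$ and over both families follows because all constants depend only on a compact neighbourhood of the reference curve.
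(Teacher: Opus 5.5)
Your argument is correct in substance, but one sentence in Step~1 is false and should be deleted. You assert that, ``since $\Xi''>0$ and $\Gamma<1$,'' the reference function $\phi$ is strictly decreasing on $[a(z(m)),1]$ past the $\eta$-neighbourhood of $a$.

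This does not follow. We have $\phi'(q)=\tfrac12\Xi''(q)\bigl(\Gamma(q)-q\bigr)$, so monotonicity would require $\Gamma<q$, not $\Gamma<1$. Nondegeneracy only rules out zeros of $\phi$ above $a$; it does not rule out negative interior local maxima. The paper supplies a counterexample. By \Cref{thm:gardner}(i), the mass-one measures $\mu^-_{p,\lambda}$ with $\lambda<\lambda_{\mathrm G,p}$ are nondegenerate. Yet by \Cref{prop:high-overlap-geometry} their first-variation functions increase on $(S_{p,\lambda},Q^-_{p,\lambda})$, well above the top atom. Taking such a covariance as the $\Xi$ of the lemma, with $m=1$, therefore breaks your claim.

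Fortunately the claim is not needed. The exact zero-set condition gives $\phi<0$ on all of $(a,1]$. Joint continuity and compactness in $(m,t)\in M_-\times[\eta,1]$ then already give $\phi\le-3\gamma$ on $q_r([\eta,1])=[a+\eta(1-a),1]$. This is exactly your set $q_r(K_r)$ with $K_r=[\eta,1]$. So your ``main obstacle'' is resolved by compactness alone. The cutoff bound then follows from $a_{p,\lambda}\to1$ and $a(z)\le a_*<1$.

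With that sentence removed, your route differs from the paper's mainly in the choice of coordinates.
\begin{itemize}
\item The paper compares $f_p$, at fixed $q$, with $\overline F_m=m^{-2}f_{m^2\Xi,\nu_{z(m)}}$, which is built on the unperturbed grid $z(m)$. This forces a separate cross-grid argument to transfer second derivatives: affine maps $\theta_i$, a common modulus of continuity, and passing through displaced support points using the common two-sided value of $f''$. The remainder is then handled by the compact set $\{(m,q)\in M_-\times[\delta,1]:|q-q_i(m)|\ge\varepsilon\}$, which reaches $q=1$ and so covers the cutoff automatically.
\item You instead compare $f_p$ with $\phi_{z,m}$ on the \emph{same} grid via \Cref{lem:moving-endpoint-comparison}. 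You then compare $\phi_{z,m}$ with $\phi_{z(m),m}$ at matching grid fractions, which is exactly the form in which \Cref{prop:finite-step-regularity} is stated. This follows the template of Step~2 of \Cref{prop:stability} and avoids the affine cross-grid estimate entirely.
\end{itemize}
Both routes produce the same uniform constants. The paper's version yields neighbourhoods $|q-q_i(m)|<2\varepsilon$ stated in fixed coordinates. Yours is somewhat more economical for this lemma.
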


\begin{proof}
Let $(z,m)$ be the lower atomic coordinates and lower mass of the candidate
under consideration.  Thus $m=1$ on the mass-one family, while
$m=m_{p,\lambda}$ on the interior branch.  Define
\[
 \bar\nu_m=\nu_{z(m)},
 \qquad
 \overline F_m(q)
 =\frac1{m^2}f_{m^2\Xi,\bar\nu_m}(q).
\]
By \Cref{prop:stationary-branches},
\[
 |z-z(m)|\longrightarrow0
\]
uniformly over both parameter families; on the mass-one family this uses
$m=1$ and $z(1)=z_*$.  Hence
\Cref{prop:finite-step-regularity}, together with
\Cref{lem:moving-endpoint-comparison}, implies
\begin{equation}\label{eq:moving-reference-comparison}
 \sup_{0\leq q\leq a_{p,\lambda}}
 |f_p(q)-\overline F_m(q)|
 \longrightarrow0
\end{equation}
uniformly over both families.

The same comparisons also transfer second-derivative estimates to fixed
neighborhoods of the positive support points.  We make the moving-grid comparison quantitative.  Set
\[
 F_{z,m}(q)=\frac1{m^2}f_{m^2\Xi,\nu_z}(q),
\]
and write
\[
 q_0(m)=\widetilde q_0=0,
 \qquad
 q_{r+1}(m)=\widetilde q_{r+1}=1.
\]
For $0\leq i\leq r$, let
\[
 \theta_i:[q_i(m),q_{i+1}(m)]
 \longrightarrow[\widetilde q_i,\widetilde q_{i+1}]
\]
be the affine map matching the two endpoints.  The uniform support-gap
bounds and $|z-z(m)|\to0$ imply
\begin{equation}\label{eq:moving-grid-displacement}
 \delta_p
 :=
 \max_{0\leq i\leq r}
 \|\theta_i-I\|_\infty
 \longrightarrow0.
\end{equation}
Moreover, if $q\in[q_i(m),q_{i+1}(m)]$, then the variance fractions of
$q$ in the recursion for $\bar\nu_m$ and of $\theta_i(q)$ in the recursion
for $\nu_z$ differ by $o(1)$, uniformly in $i$, $m$, and the two candidate
families.  Indeed, after writing the two fractions over a common denominator,
the numerator difference is $o(1)$ by convergence of the endpoints and the
uniform continuity of $\Xi'$, while the denominators are bounded uniformly
away from zero.

Consequently, the split-observable estimates in
\Cref{lem:split-observable-stability}, together with the formula
\[
 f''(q)
 =
 \frac12\zeta'''(q)\bigl(\Gamma(q)-q\bigr)
 +\frac12\zeta''(q)\bigl(\Gamma'(q)-1\bigr),
\]
give
\begin{equation}\label{eq:moving-grid-affine-C2}
 \varepsilon_p
 :=
 \max_{0\leq i\leq r}
 \sup_{q\in[q_i(m),q_{i+1}(m)]}
 \left|
 F_{z,m}''(\theta_i(q))-\overline F_m''(q)
 \right|
 \longrightarrow0,
\end{equation}
with one-sided values at the endpoints.  The same split-observable bounds give
a common modulus of continuity $\omega$, with $\omega(s)\downarrow0$ as
$s\downarrow0$, for the one-sided second derivatives of both families on all
grid intervals.

On the intersection of corresponding grid intervals,
\eqref{eq:moving-grid-displacement}--\eqref{eq:moving-grid-affine-C2} give
\[
 \left|
 F_{z,m}''(q)-\overline F_m''(q)
 \right|
 \leq
 \omega(\delta_p)+\varepsilon_p.
\]
If $q$ lies in the interval between an old support point $q_i(m)$ and its
perturbed position $\widetilde q_i$, then both distances to the corresponding
endpoint are at most $\delta_p$.  Since the left- and right-hand limiting
values of $f''$ agree at every interior support point, we may pass through the
endpoint and obtain instead
\[
 \left|
 F_{z,m}''(q)-\overline F_m''(q)
 \right|
 \leq
 2\omega(\delta_p)+\varepsilon_p.
\]
Thus, on every fixed compact subinterval of $(0,1)$,
\[
 \sup_q
 \left|
 F_{z,m}''(q)-\overline F_m''(q)
 \right|
 \longrightarrow0,
\]
where at support points both one-sided values are included.  Combining this
with \eqref{eq:moving-endpoint-C2-comparison} gives the same fixed-coordinate
comparison with $F_{z,m}''$ replaced by $f_p''$. 

Write
\[
 \supp\bar\nu_m=\{0,q_1(m),\ldots,q_r(m)\},
 \qquad
 \supp\nu_z=\{0,\widetilde q_1,\ldots,\widetilde q_r\}.
\]
The convergence of the atomic coordinates gives
\[
 \max_{1\leq i\leq r}
 |\widetilde q_i-q_i(m)|\longrightarrow0
\]
uniformly.  The family $\{\bar\nu_m:m\in M_-\}$ is compact and
nondegenerate.  In particular, its positive support points remain uniformly
separated from one another and from $0$ and $1$, and
\[
 \overline F_m''(q_i(m))<0,
 \qquad 1\leq i\leq r.
\]
By compactness and continuity, when $r\geq1$ we may choose
$\varepsilon>0$ and $c_0>0$ such that the sets
\[
 \bigl\{q:|q-q_i(m)|<2\varepsilon\bigr\},
 \qquad 1\leq i\leq r,
\]
are pairwise disjoint, remain uniformly separated from $0$ and $1$, and
satisfy
\begin{equation}\label{eq:reference-support-concavity}
 \overline F_m''(q)\leq-4c_0
 \qquad
 \text{whenever }|q-q_i(m)|<2\varepsilon.
\end{equation}
When $r=0$, fix any $\varepsilon, c_0>0$; the curvature assertion is then vacuous. 
After increasing $p_0$ if necessary, each $\widetilde q_i$ lies within
$\varepsilon/2$ of $q_i(m)$, and the preceding fixed-coordinate comparison
gives
\[
 f_p''(q)\leq-2c_0
 \qquad
 \text{whenever }|q-q_i(m)|<2\varepsilon,
\]
with the one-sided interpretation at support points.

On the interior branch, $Q$ is stationary, while at $m=1$ the candidate
functional is $Q$-independent.  Hence on both families the $z$-stationarity
equations are the fixed-$(m,Q)$ lower mass and location stationarity equations.
By \Cref{lem:atomic-derivatives},
\[
 f_p(\widetilde q_i)=f_p'(\widetilde q_i)=0.
\]
Integrating the strict concavity estimate on either side of
$\widetilde q_i$ therefore yields
\[
 f_p(q)\leq-c_0|q-\widetilde q_i|^2
 \qquad
 \text{whenever }|q-q_i(m)|<\varepsilon.
\]
Thus $\widetilde q_i$ is the unique zero of $f_p$ in this neighborhood,
and
\[
 f_p''(\widetilde q_i)\leq-2c_0.
\]

We next exclude additional zeros near the origin.  Since the lowest degree
in $\Xi$ is at least four,
\[
 \Xi'(q)\leq Cq^3
\]
on a fixed interval near $q=0$.  Moreover, for every fixed
$\delta_0<1$,
\[
 \lambda p q^{p-1}
 \leq
 \bigl(\sup_{\lambda\in\Lambda}\lambda\bigr)
 p\delta_0^{p-4}q^3,
 \qquad
 0\leq q\leq\delta_0,
\]
and
\[
 \sup_{p\geq4}p\delta_0^{p-4}<\infty.
\]
Hence
\[
 \Xi_{p,\lambda}'(q)\leq Cq^3
\]
on a common interval near the origin, uniformly in $p$ and
$\lambda\in\Lambda$.  The uniform form of \Cref{lem:origin-contact}
therefore gives $\delta>0$, independent of $p$ and of the candidate, such
that
\[
 \Gamma_p(q)-q\leq-\frac q2,
 \qquad
 0<q\leq\delta.
\]
Reducing $\delta$ if necessary, we may also assume that
$[0,\delta]$ is disjoint from the positive-support neighborhoods chosen
above.  Since $\Xi_{p,\lambda}''(q)>0$ for $q>0$,
\[
 f_p'(q)
 =
 \frac12\Xi_{p,\lambda}''(q)
 \bigl(\Gamma_p(q)-q\bigr)
 <0,
 \qquad
 0<q\leq\delta.
\]
Because $f_p(0)=0$, it follows that
\[
 f_p(q)<0,
 \qquad
 0<q\leq\delta.
\]

It remains to control the region separated from both the origin and the
positive support points.  For $r\geq1$, define
\[
 \mathcal K_\varepsilon
 =
 \left\{
 (m,q)\in M_-\times[\delta,1]:
 |q-q_i(m)|\geq\varepsilon
 \text{ for every }1\leq i\leq r
 \right\}.
\]
For $r=0$, set
\[
 \mathcal K_\varepsilon=M_-\times[\delta,1].
\]
This set is compact.  Since $\bar\nu_m$ is a nondegenerate Parisi measure,
$\overline F_m$ is nonpositive and vanishes exactly on
$\supp\bar\nu_m$.  Consequently it is strictly negative on
$\mathcal K_\varepsilon$, and by compactness there is $\gamma>0$ such that
\[
 \max_{(m,q)\in\mathcal K_\varepsilon}
 \overline F_m(q)\leq-3\gamma.
\]
By \eqref{eq:moving-reference-comparison}, after increasing $p_0$ once
more,
\begin{equation}\label{eq:finite-p-negative-away-support}
 f_p(q)\leq-2\gamma
\end{equation}
whenever $q\leq a_{p,\lambda}$ and
$(m,q)\in\mathcal K_\varepsilon$.

The three regions considered above cover $[0,a_{p,\lambda}]$.  Near the
origin, $f_p$ is strictly negative away from $0$; near each positive
support point, it is nonpositive and vanishes only at the corresponding
point $\widetilde q_i$; and on the remaining region
\eqref{eq:finite-p-negative-away-support} gives a uniform negative bound.
Therefore
\[
 f_p(q)\leq0,
 \qquad
 0\leq q\leq a_{p,\lambda},
\]
and
\[
 f_p(q)=0
 \quad\Longleftrightarrow\quad
 q\in\supp\nu_z
\]
on this interval.

Finally, the positive support points of $\bar\nu_m$, and hence those of
$\nu_z$ for large $p$, remain uniformly below some $a_*<1$, whereas
\[
 a_{p,\lambda}\longrightarrow1
\]
uniformly for $\lambda\in\Lambda$.  Thus, for all sufficiently large $p$,
\[
 (m,a_{p,\lambda})\in\mathcal K_\varepsilon.
\]
Applying \eqref{eq:finite-p-negative-away-support} at the cutoff gives
\[
 f_p(a_{p,\lambda})\leq-2\gamma.
\]
After decreasing $\gamma$ if necessary and taking $c=c_0$, all assertions
of the lemma follow.
\end{proof}

The preceding lemma determines the first-variation geometry up to the cutoff
$a_{p,\lambda}$.  It remains to analyze the high-overlap region
$(a_{p,\lambda},1]$.  Recall that
\[
 R(q)=\Gamma(q)-q,
 \qquad
 f'(q)=\frac12\Xi_{p,\lambda}''(q)R(q),
\]
so, since $\Xi_{p,\lambda}''(q)>0$ for $q>0$, the sign of $R$ determines
the monotonicity of the first-variation function.  The distinguished point
$Q_{p,\lambda}$ satisfies $R(Q_{p,\lambda})=0$.  The next proposition
shows that there is exactly one additional zero of $R$ between the cutoff
and $Q_{p,\lambda}$ and determines the sign of $R$ on
$(a_{p,\lambda},1]$.

\begin{proposition}\label{prop:high-overlap-geometry}
For every sufficiently large even $p$, consider either the mass-one
candidate $\mu^-_{p,\lambda}$ with $\lambda\in\Lambda$, or the interior
candidate $\mu^{\mathrm{bif}}_{p,\lambda}$ with
$\lambda\in\Lambda$ and $\lambda>\lambda_{\mathrm G,p}$.  Let
$\Gamma$ be the corresponding overlap observable, set
\[
 R(q)=\Gamma(q)-q,
\]
and let $Q_{p,\lambda}$ denote the corresponding distinguished root near
$1$.  Then there is a unique point
\[
 S_{p,\lambda}\in(a_{p,\lambda},Q_{p,\lambda})
\]
such that
\[
 R(S_{p,\lambda})=0.
\]
Moreover,
\begin{equation}\label{eq:high-overlap-sign-pattern}
 \begin{aligned}
  R(q)&<0,
  &&a_{p,\lambda}<q<S_{p,\lambda},\\
  R(q)&>0,
  &&S_{p,\lambda}<q<Q_{p,\lambda},\\
  R(Q_{p,\lambda})&=0,\\
  R(q)&<0,
  &&Q_{p,\lambda}<q\leq1.
 \end{aligned}
\end{equation}
Thus the only zeros of $R$ in $(a_{p,\lambda},1]$ are
$S_{p,\lambda}$ and $Q_{p,\lambda}$.
\end{proposition}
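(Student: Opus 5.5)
We analyze $R=\Gamma-q$ on $(a_{p,\lambda},1]$ by splitting at $Q=Q_{p,\lambda}$, and on $(a_{p,\lambda},Q)$ we pass to the variance coordinate $v=\Xi_{p,\lambda}'(q)-\Xi_{p,\lambda}'(a_{p,\lambda})\in[0,V_p]$, with $V_p=\lambda p+O(1)$ as in the proof of \Cref{lem:moving-endpoint-comparison}.

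\emph{The interval $(Q,1]$.} The cumulative mass there is one, so $\Gamma$ obeys the exact heat-flow relation $\Gamma'(q)=\Xi_{p,\lambda}''(q)\E\sech^4 Y_q$. Write $A=1-\Gamma$. Then $-A'=\Xi''\E\sech^4\le \Xi''A$, and $R' = -A'-1$. Near $q=1$ we have $\Xi''_{p,\lambda}\approx\lambda p^2$, while $A(Q)=1-Q=t_p y$ is exponentially small. So $\Gamma'(q)\le \Xi''(q)A(q)\le Cp^2 t_p\ll1$ on $[Q,1]$, after checking that $A$ stays of order $t_p$ there, which follows from Gronwall since $\int_Q^1\Xi''=O(p^2t_p)$. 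Hence $R'<0$ on $(Q,1]$, and together with $R(Q)=0$ this gives $R<0$ on $(Q,1]$.

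\emph{The interval $(a_{p,\lambda},Q)$.} The mass is $m$, the incoming law at $a_{p,\lambda}$ is close to $\rho$, and the datum at $Q$ is $\log\cosh$. In the variable $v$, the quantity $\Gamma$ is an explicit function $\mathcal G_p(v)$, namely the tilted-Gaussian expectation of $\tanh^2$ used in \Cref{lem:endpoint-asymptotics,lem:high-overlap-asymptotics}. Meanwhile $q$ as a function of $v$ is $q(v)=(\text{inverse of }\Xi_{p,\lambda}')$. I would compare $\mathcal G_p(v)$ with $q(v)$ in three regimes.

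\emph{(a) Near the cutoff,} i.e.\ $v\le \varepsilon p$. Here $\Gamma$ stays close to the scaled lower value, which is bounded away from $1$ because $f$ is strictly decreasing at $a_{p,\lambda}$ (\Cref{lem:moving-cutoff} and the comparison lemma give $R(a_{p,\lambda})<-c$). By contrast $q\ge a_{p,\lambda}\to1$. So $R<0$ there.

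\emph{(b) Intermediate range $v\asymp p$.} Both $1-\Gamma$ and $1-q$ are explicit: $1-\Gamma\approx C v^{-1/2}e^{-m^2 v/2}\cdot(\dots)$ from the large-variance lemma, whereas $1-q\approx (\log(\lambda p/(\lambda p - v+\dots)))/p$ is only polynomially small in $p$ for $v$ away from $V_p$. So $R$ changes sign exactly once. To show this I would prove that $\partial_v\log(1-\Gamma)\approx -m^2/2$ while $\partial_v\log(1-q)$ is $O(1/(V_p-v))$. Then $\log(1-\Gamma)-\log(1-q)$ is strictly decreasing wherever $V_p-v\gg1$, which yields a unique crossing $S_{p,\lambda}$ with the sign change from $-$ to $+$.

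\emph{(c) Near $Q$,} i.e.\ $V_p-v=O(1)$. Here $1-q\approx (V_p-v)/(\lambda p^2)+t_py$ and $1-\Gamma(Q)=1-Q$. Using the convexity bound \eqref{eq:Gamma-curvature} together with the sharp asymptotic $\Gamma'(Q^-)\approx \Xi''(Q)\E\sech^4$, which is comparable to $p^2 t_p\ll 1$, I would show that $R$ is strictly decreasing on a window below $Q$ whose length dominates the region not covered by (b). Since $R(Q)=0$, this gives $R>0$ on that window, and it matches the positive sign obtained from (b).

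The main obstacle is regime (b)/(c): obtaining monotonicity of $\log(1-\Gamma)-\log(1-q)$ uniformly across the whole range, so that no spurious extra zeros appear. I would first try to get $C^1$ control of $v\mapsto e^{m^2v/2}\sqrt v\,(1-\mathcal G_p(v))$ via the Gaussian comparison model, showing it is within $o(1)$ relative error of a smooth positive limit. If that derivative control is insufficient near $v\approx V_p$, I would instead use the differential inequality for $A$ directly.
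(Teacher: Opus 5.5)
Your argument on $(Q_{p,\lambda},1]$ is fine. It is even simpler than you make it: $\Gamma'\geq0$ makes $A=1-\Gamma$ nonincreasing, so $A\leq 1-Q_{p,\lambda}$ there without Gronwall.

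The gap is that you put the new zero $S_{p,\lambda}$ in the wrong place, so regimes (a) and (b) both fail.

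In (a) you claim $R<0$ for all $v\leq\varepsilon p$, on the grounds that $\Gamma$ stays near its scaled lower value. But the comparison with the scaled problem (\Cref{lem:moving-endpoint-comparison}) holds only up to $q=a_{p,\lambda}$. Past the cutoff, $q$ moves by only $O(\log p/p)$ while the variance grows by order $p$, and $\Gamma$ responds to the variance. Use $v=\lambda pq^{p-1}$, which differs from your coordinate by $O(1)$. Then \Cref{cor:high-overlap-v-window} gives $1-\Gamma(q(v))\asymp v^{-1/2}e^{-m^2v/2}$ for $L\leq v\leq C_0\log p$, while $1-q(v)=p^{-1}\log(\lambda p/v)+O((\log p)^2/p^2)$. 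So $R$ is already positive at $v=C_0\log p$ for $C_0$ large. The zero sits near $v\approx(2/m^2)\log p$, deep inside your regime (a), where your claimed sign is false.

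Conversely, (b) contains no zero. There $1-\Gamma$ is exponentially small and $1-q$ is of order $1/p$, so $R>0$ throughout, and ``$R$ changes sign exactly once'' does not follow from your estimates. Your formula $1-q\approx p^{-1}\log(\lambda p/(\lambda p-v))$ interchanges $v$ with the remaining variance. The correct one is $p^{-1}\log(\lambda p/v)$, which is consistent with your own expansion in (c).

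The missing ingredient is a split at a large constant $L$ and at $C_0\log p$, with a different tool on each piece.

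\emph{Initial piece.} For $p^{-10}\leq v\leq L$, monotonicity of $A$ gives $A(q(v))\geq A(q(L))\geq c_L>0$, while $1-q(v)\leq 1-a_{p,\lambda}\leq C\log p/p$. Hence $R<0$.

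\emph{Logarithmic window.} For $L\leq v\leq C_0\log p$ you need the sharp estimate
\[
 \frac{\dd}{\dd v}\log A(q(v))=-\frac{m^2}{2}-\frac1{2v}+O(v^{-2})+O(p^{-1}).
\]
This is available because on this window the main heat-kernel term is only polynomially small and so dominates the $O(e^{-cp})$ errors. Combine it with $\bigl|\frac{\dd}{\dd v}\log(1-q(v))\bigr|\leq C/(v\log(\lambda p/v))=o(1)$. Then $A/(1-q)$ is strictly decreasing. Choosing $m_0^2C_0/2>6$ makes $A\leq p^{-5}$ at $v=C_0\log p$, so the ratio crosses $1$ exactly once.

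\emph{Terminal tail.} The obstacle you anticipate near $v\asymp p$ never has to be confronted. Once $A\leq p^{-5}$, monotonicity of $A$ and $\Gamma'\leq\Xi_{p,\lambda}''A\leq Cp^2A$ give $R'\leq-\frac12$ on all of $[q(C_0\log p),1]$. So $R$ decreases strictly through $Q_{p,\lambda}$, which yields $R>0$ on $(S_{p,\lambda},Q_{p,\lambda})$ and $R<0$ on $(Q_{p,\lambda},1]$ at once.

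Seeking relative $C^1$ control of $1-\Gamma$ for $v\asymp p$, as you propose, would be unnecessary. It would also be unsupported by the available bounds, since there $e^{-m^2v/2}$ need not dominate the error terms of size $e^{-c(V_p-v)}$.
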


\begin{proof}
We divide the proof into five steps.  First, we reduce the determination of the sign of $R(q)=\Gamma(q)-q$  
to a crossing problem for a scalar ratio in the high-overlap variance
coordinate.  Second, we verify uniformly for the two candidate branches
that the high-overlap asymptotics developed in
\Cref{app:large-variance} apply on the
required variance range.  Third, we show that the ratio is strictly above
one on the initial range.  Fourth, on a logarithmic variance window we prove
that it crosses one exactly once.  Finally, we treat the terminal tail
separately and show that $R$ is strictly decreasing through the distinguished
zero $Q_{p,\lambda}$.  Combining these regimes completes the proof.

\medskip
\noindent\emph{Step 1: Reduction to a ratio crossing in the variance coordinate.}
Fix one of the two candidates in the statement, and suppress the candidate
from the notation.  Recall that
\[
 A(q)=1-\Gamma(q),
 \qquad
 R(q)=\Gamma(q)-q=(1-q)-A(q).
\]
Introduce the high-overlap coordinate
\[
 v(q)=\lambda p q^{p-1},
 \qquad
 q(v)=\left(\frac{v}{\lambda p}\right)^{1/(p-1)},
\]
and define
\[
 \mathscr R(v)
 =\frac{A(q(v))}{1-q(v)}.
\]
Since $1-q(v)>0$, we have the exact identity
\begin{equation}\label{eq:R-ratio-identity}
 R(q(v))
 =
 (1-q(v))\bigl(1-\mathscr R(v)\bigr).
\end{equation}
Consequently,
\[
 \mathscr R(v)>1 \Longleftrightarrow R(q(v))<0,
 \qquad
 \mathscr R(v)=1 \Longleftrightarrow R(q(v))=0,
 \qquad
 \mathscr R(v)<1 \Longleftrightarrow R(q(v))>0.
\]
Thus it is enough first to show that $\mathscr R$ crosses the value $1$
exactly once on the interval
\[
 [p^{-10},\,C_0\log p],
\]
for a fixed constant $C_0>0$ chosen below.

By the definition of the cutoff,
\[
 v(a_{p,\lambda})=p^{-10}.
\]
The endpoint formula \eqref{eq:transition-Q-root} gives
\[
 Q_{p,\lambda}
 =
 1-t_p(m,\lambda)y_{p,\lambda}(z,m),
 \qquad
 0<c\leq y_{p,\lambda}(z,m)\leq C,
\]
with $t_p(m,\lambda)$ exponentially small in $p$.  Hence
\begin{equation}\label{eq:v-at-Q}
 v(Q_{p,\lambda})
 =\lambda p\,Q_{p,\lambda}^{p-1}
 =\lambda p+o(1)
\end{equation}
uniformly over both parameter families.

\medskip
\noindent\emph{Step 2: Verification of the high-overlap asymptotic regime.}
We next verify the variance parametrization required in
\Cref{cor:high-overlap-v-window}.  Let
\[
 W_p^{\mathrm{cut}}
 =
 \Xi_{p,\lambda}'(Q_{p,\lambda})
 -\Xi_{p,\lambda}'(a_{p,\lambda})
\]
be the total variance accumulated from $a_{p,\lambda}$ to
$Q_{p,\lambda}$, and let
\[
 \tau_p(v)
 =
 \Xi_{p,\lambda}'(q(v))
 -\Xi_{p,\lambda}'(a_{p,\lambda})
\]
be the variance accumulated from $a_{p,\lambda}$ to $q(v)$.  Since
\[
 \Xi_{p,\lambda}'(q)
 =\Xi'(q)+\lambda p q^{p-1},
\]
the cutoff identity gives
\begin{equation}\label{eq:tau-v-exact}
 \tau_p(v)
 =
 v-p^{-10}
 +\Xi'(q(v))-\Xi'(a_{p,\lambda}).
\end{equation}
Thus
\[
 \tau_p(v)=v+B_p(v),
 \qquad
 |B_p(v)|\leq C.
\]
Moreover,
\[
 q'(v)=\frac{q(v)}{(p-1)v},
\]
so
\begin{equation}\label{eq:tau-v-derivative}
 |B_p'(v)|
 =
 \left|
 \Xi''(q(v))\frac{q(v)}{(p-1)v}
 \right|
 \leq\frac{C}{pv}.
\end{equation}

The same calculation at $Q_{p,\lambda}$, together with
\eqref{eq:v-at-Q}, gives
\[
 W_p^{\mathrm{cut}}
 =
 \lambda p+O(1).
\]
More precisely,
\begin{align*}
 W_p^{\mathrm{cut}}-\tau_p(v)
 &=
 \Xi_{p,\lambda}'(Q_{p,\lambda})
 -\Xi_{p,\lambda}'(q(v))\\
 &=
 v(Q_{p,\lambda})-v
 +\Xi'(Q_{p,\lambda})-\Xi'(q(v)).
\end{align*}
Since $\Lambda$ is a compact subset of $(0,\infty)$, there is
$\lambda_->0$ such that $\lambda\geq\lambda_-$ for
$\lambda\in\Lambda$.  Therefore, for every fixed $C_0>0$,
\begin{equation}\label{eq:remaining-variance-lower}
 W_p^{\mathrm{cut}}-\tau_p(v)\geq cp,
 \qquad
 p^{-10}\leq v\leq C_0\log p,
\end{equation}
for all sufficiently large $p$.

We also verify the required derivatives with respect to the branch
parameters $(z,m,\lambda)$.  At fixed $v$,
\[
 \partial_\lambda q(v)
 =-\frac{q(v)}{(p-1)\lambda},
 \qquad
 \partial_{\lambda\lambda}q(v)
 =\frac{p\,q(v)}{(p-1)^2\lambda^2},
\]
and the same formulas hold for $a_{p,\lambda}$ after setting
$v=p^{-10}$.  
The function $\tau_p(v)$ is independent of $z$ and $m$,
and these formulas give its required $\lambda$-derivative bounds.
The explicit $\lambda$- and $a_{p,\lambda}$-dependence of
$W_p^{\mathrm{cut}}$ has polynomially bounded derivatives by the formulas
above.  Its remaining branch dependence enters through $Q_{p,\lambda}$,
and \eqref{eq:transition-Q-root} together with
\eqref{eq:endpoint-prefactor-C3} gives polynomially bounded derivatives
through order two.

The remaining input to \Cref{cor:high-overlap-v-window} is the law of the
optimal field at $a_{p,\lambda}$.  Set
\[
 \mathcal Y_0=[d_-/2,2d_+].
\]
Apply \Cref{lem:high-overlap-asymptotics} with
\[
 I=M_-,
 \qquad
 t_0=1,
 \qquad
 \Theta=\cN_z\times\Lambda\times\mathcal Y_0,
\]
where the parameter $\theta$ there is $(z,\lambda,y)$.  Let $c_0$ be the
exponential weight furnished by that lemma, and fix $b>c_0$.  By
\Cref{lem:lower-recursion}, the family
\[
 (z,m,\lambda,y)\longmapsto
 \rho^{\rm cut}_{p,z,m,\lambda,y}
\]
is uniformly $C^2$ as an $\mathcal M_b$-valued map on
$\cN_z\times M_-\times\Lambda\times\mathcal Y_0$ and has uniformly bounded
exponential moments of every fixed order.  Thus the incoming-law hypotheses
of \Cref{lem:high-overlap-asymptotics} hold uniformly.

For the mass-one candidate we evaluate this family at
\[
 (z,m,y)=
 \bigl(z_p^-(\lambda),1,
 y_{p,\lambda}(z_p^-(\lambda),1)\bigr),
\]
whereas for the interior candidate we evaluate it at
\[
 (z,m,y)=
 \bigl(z_p^{\mathrm{bif}}(\lambda),m_{p,\lambda},
 y_{p,\lambda}(z_p^{\mathrm{bif}}(\lambda),m_{p,\lambda})\bigr).
\]
These parameter values lie in the preceding compact set by
\Cref{prop:uniform-endpoint-reduction}.

For the variance still to be traversed after the observation point, set
\[
 s_p(v)=W_p^{\mathrm{cut}}-\tau_p(v).
\]
Since the candidate cumulative mass is $m$ on
$[a_{p,\lambda},Q_{p,\lambda})$ and equals $1$ above
$Q_{p,\lambda}$, the top interval gives
$u_x(Q_{p,\lambda},x)=\tanh x$.  Hence, with
$\theta=(z,\lambda,y)$ and
$\rho_{p,m,\theta}=\rho^{\rm cut}_{p,z,m,\lambda,y}$, the Markov
representation in \Cref{lem:high-overlap-asymptotics} gives the exact identity
\[
 \Gamma(q(v))
 =
 \Gamma_p\bigl(\tau_p(v),s_p(v);m,\theta\bigr).
\] 
For $L$ and $C_0$ to be fixed below, on
$L\leq v\leq C_0\log p$, \eqref{eq:remaining-variance-lower} gives
\[
 s_p(v)\geq cp,
\]
while $W_p^{\mathrm{cut}}$ is independent of $v$ and
\eqref{eq:tau-v-derivative} gives
\[
 |s_p'(v)|=|\tau_p'(v)|
 \leq1+\frac{C}{pv}
 \leq C.
\]
Together with \eqref{eq:tau-v-exact}--\eqref{eq:tau-v-derivative}, these are
the remaining variance hypotheses of \Cref{cor:high-overlap-v-window}.

Set
\[
 m_0=\inf M_->0,
 \qquad
 m_+=\sup M_-.
\]
Choose $L\geq L_*$, with $L_*$ as in
\Cref{cor:high-overlap-v-window}, large enough that
\[
 \tau_p(v)\geq1
 \qquad\text{for }v\geq L
\]
uniformly in $p$; this is possible because $\tau_p(v)=v+O(1)$.  Increase
$L$, if necessary, so that 
\[
 \frac{C}{L^2}\leq\frac{m_0^2}{16},
\]
where $C$ is the constant in
\eqref{eq:high-overlap-log-derivative}.  Next choose a fixed $C_0>0$ such
that
\[
 \frac{m_0^2C_0}{2}>6.
\]
After $L$ and $C_0$ have been fixed, take $p$ sufficiently large that
\[
 L<C_0\log p,
 \qquad
 \frac{C}{p}\leq\frac{m_0^2}{16},
\]
and all of the preceding estimates hold.

\medskip
\noindent\emph{Step 3: The ratio is initially above one.}
We first consider
\[
 p^{-10}\leq v\leq L.
\]
At $v=L$, \Cref{cor:high-overlap-v-window} gives
\[
 A(q(L))
 \geq cL^{-1/2}e^{-m_+^2L/2}
 =c_L>0.
\]
By \eqref{eq:Gamma-derivative}, $\Gamma$ is nondecreasing, and hence
$A=1-\Gamma$ is nonincreasing.  Since $q(v)$ is increasing in $v$,
\[
 A(q(v))\geq A(q(L))\geq c_L,
 \qquad
 p^{-10}\leq v\leq L.
\]
Also $q(v)\geq a_{p,\lambda}$, so
\[
 1-q(v)
 \leq1-a_{p,\lambda}
 \leq C\frac{\log p}{p}.
\]
It follows that
\begin{equation}\label{eq:ratio-initial-above-one}
 \mathscr R(v)
 \geq\frac{c_Lp}{C\log p}>1,
 \qquad
 p^{-10}\leq v\leq L,
\end{equation}
for all sufficiently large $p$.

\medskip
\noindent\emph{Step 4: Unique crossing on the logarithmic window.}
We next consider
\[
 L\leq v\leq C_0\log p.
\]
By \Cref{cor:high-overlap-v-window},
\begin{equation}\label{eq:A-v-factorization}
 A(q(v))
 =
 b_p(v)v^{-1/2}e^{-m^2v/2},
 \qquad
 0<c\leq b_p(v)\leq C,
\end{equation}
and
\begin{equation}\label{eq:A-v-log-derivative}
 \left|
 \frac{\dd}{\dd v}\log A(q(v))
 +\frac{m^2}{2}+\frac1{2v}
 \right|
 \leq\frac{C}{v^2}+\frac{C}{p}.
\end{equation}

The exact formula for $q(v)$ gives
\[
 q(v)
 =
 \exp\left(
 -\frac1{p-1}\log\frac{\lambda p}{v}
 \right).
\]
Since $v\leq C_0\log p$, the exponent is
$O((\log p)/p)$ uniformly, and therefore
\begin{equation}\label{eq:q-v-relation}
 1-q(v)
 =
 \frac1p\log\frac{\lambda p}{v}
 +O\left(\frac{(\log p)^2}{p^2}\right).
\end{equation}
Differentiating the exact formula gives
\begin{equation}\label{eq:q-v-log-derivative}
 \frac{\dd}{\dd v}\log(1-q(v))
 =
 -\frac{q(v)}{(p-1)v(1-q(v))}.
\end{equation}
Uniformly for $L\leq v\leq C_0\log p$,
\[
 1-q(v)
 \geq
 \frac{c}{p}\log\frac{\lambda p}{v},
\]
and hence
\begin{equation}\label{eq:denominator-log-small}
 \left|
 \frac{\dd}{\dd v}\log(1-q(v))
 \right|
 \leq
 \frac{C}{v\log(\lambda p/v)}
 =o(1)
\end{equation}
uniformly as $p\to\infty$.  After increasing $p$ once more, the
right-hand side of \eqref{eq:denominator-log-small} is at most
$m_0^2/16$ throughout this interval.

Combining
\eqref{eq:A-v-log-derivative} and
\eqref{eq:denominator-log-small}, using $m\geq m_0$ and the choices of
$L$ and $p$, yields
\begin{align*}
 \frac{\dd}{\dd v}\log\mathscr R(v)
 &=
 \frac{\dd}{\dd v}\log A(q(v))
 -
 \frac{\dd}{\dd v}\log(1-q(v))\\
 &\leq
 -\frac{m_0^2}{2}
 +\frac{C}{L^2}
 +\frac{C}{p}
 +\frac{m_0^2}{16}\\
 &\leq-\frac{m_0^2}{4}.
\end{align*}
Thus $\mathscr R$ is strictly decreasing on
$[L,C_0\log p]$.

At $v=C_0\log p$, \eqref{eq:A-v-factorization} and $m\geq m_0$ give
\[
 A(q(C_0\log p))
 \leq
 C(C_0\log p)^{-1/2}p^{-m_0^2C_0/2}
 \leq p^{-5}
\]
for all sufficiently large $p$.  On the other hand,
\eqref{eq:q-v-relation} gives
\[
 1-q(C_0\log p)
 \asymp\frac{\log p}{p}.
\]
Hence
\begin{equation}\label{eq:ratio-right-below-one}
 \mathscr R(C_0\log p)<1.
\end{equation}

By \eqref{eq:ratio-initial-above-one},
$\mathscr R(L)>1$, whereas
\eqref{eq:ratio-right-below-one} gives
$\mathscr R(C_0\log p)<1$.  Since $\mathscr R$ is strictly decreasing on
$[L,C_0\log p]$, there is a unique
\[
 v_S\in(L,C_0\log p)
\]
such that
\[
 \mathscr R(v_S)=1.
\]
Set
\[
 S_{p,\lambda}=q(v_S).
\]
By \eqref{eq:R-ratio-identity} and
\eqref{eq:ratio-initial-above-one},
\[
 R(q)<0,
 \qquad
 a_{p,\lambda}<q<S_{p,\lambda},
\]
while strict decrease of $\mathscr R$ after $v_S$ gives
\begin{equation}\label{eq:R-sign-before-tail}
 R(q)>0,
 \qquad
 S_{p,\lambda}<q\leq q(C_0\log p).
\end{equation}

\medskip
\noindent\emph{Step 5: The terminal tail and the root at $Q_{p,\lambda}$.}
It remains to determine the sign of $R$ on
\[
 [q(C_0\log p),1].
\]
Since $\Gamma$ is nondecreasing, $A=1-\Gamma$ is nonincreasing.
Therefore
\begin{equation}\label{eq:A-tail-small}
 A(q)\leq p^{-5},
 \qquad
 q(C_0\log p)\leq q\leq1.
\end{equation}
Moreover, \eqref{eq:transition-Q-root}, together with
$m\geq m_0$ and the positive lower bound for $\lambda$ on $\Lambda$,
gives
\[
 1-Q_{p,\lambda}
 \leq Cp^{-1/2}e^{-cp}.
\]
By contrast,
\[
 1-q(C_0\log p)
 \asymp\frac{\log p}{p}.
\]
Hence
\begin{equation}\label{eq:Q-beyond-log-window}
 Q_{p,\lambda}>q(C_0\log p)
\end{equation}
for all sufficiently large $p$.

By \Cref{lem:curvature},
\[
 \Gamma'(q)
 \leq
 \Xi_{p,\lambda}''(q)A(q)
\]
for every $q$ in the interior of a recursion interval, and at an atomic
point the same inequality holds for both one-sided derivatives.  Since
$\lambda\in\Lambda$ and $\Lambda$ is compact,
\[
 \sup_{0\leq q\leq1}
 \Xi_{p,\lambda}''(q)
 \leq Cp^2.
\]
Combining this bound with \eqref{eq:A-tail-small} gives
\begin{equation}\label{eq:R-decreasing-tail}
 R'(q)
 =
 \Gamma'(q)-1
 \leq Cp^2p^{-5}-1
 \leq-\frac12
\end{equation}
for every $q$ in the interior of a recursion interval contained in
$[q(C_0\log p),1]$.  At an atomic point, the same bound holds for both
one-sided derivatives.

The function $R$ is continuous.  Integrating
\eqref{eq:R-decreasing-tail} separately on the intervals between
successive atomic points therefore shows that $R$ is strictly decreasing
on
\[
 [q(C_0\log p),1].
\]
By \eqref{eq:R-sign-before-tail},
\[
 R(q(C_0\log p))>0,
\]
and by the endpoint root equation,
\[
 R(Q_{p,\lambda})=0.
\]
Together with \eqref{eq:Q-beyond-log-window} and strict decrease, this
implies
\[
 R(q)>0,
 \qquad
 q(C_0\log p)\leq q<Q_{p,\lambda},
\]
and
\[
 R(q)<0,
 \qquad
 Q_{p,\lambda}<q\leq1.
\]
Combining these inequalities with the sign of $R$ on
$(a_{p,\lambda},q(C_0\log p)]$ proves
\eqref{eq:high-overlap-sign-pattern} and shows that the only zeros of
$R$ in $(a_{p,\lambda},1]$ are
$S_{p,\lambda}$ and $Q_{p,\lambda}$.
\end{proof}

The preceding results now reduce the global first-variation check to a
single scalar quantity.  By \Cref{lem:moving-cutoff}, the first-variation
function is already nonpositive on $[0,a_{p,\lambda}]$, with zeros exactly
at the lower support points.  By
\Cref{prop:high-overlap-geometry}, on $(a_{p,\lambda},1]$ the function
first decreases, then increases up to $Q_{p,\lambda}$, and decreases again
after $Q_{p,\lambda}$.  Since its value at the cutoff is uniformly
negative, the only possible obstruction to the inequality
\[
 f(q)\leq0,\qquad 0\leq q\leq1,
\]
is therefore its value at $Q_{p,\lambda}$.  On the interior branch,
stationarity forces this value to be zero, so the same geometry gives the
global inequality automatically.  The next proposition records these
conclusions and identifies the resulting zero sets.

\begin{proposition}\label{prop:global-first-variation}
For every sufficiently large even $p$, the following assertions hold.

For the mass-one candidate $\mu^-_{p,\lambda}$ with
$\lambda\in\Lambda$,
\[
 f^-_{p,\lambda}(q)\leq0
 \qquad\text{for every }q\in[0,1]
\]
if and only if
\[
 f^-_{p,\lambda}(Q^-_{p,\lambda})\leq0.
\]
If
\[
 f^-_{p,\lambda}(Q^-_{p,\lambda})<0,
\]
then
\[
 \{q\in[0,1]:f^-_{p,\lambda}(q)=0\}
 =
 \supp\mu^-_{p,\lambda}.
\]
If instead
\[
 f^-_{p,\lambda}(Q^-_{p,\lambda})=0,
\]
then
\[
 \{q\in[0,1]:f^-_{p,\lambda}(q)=0\}
 =
 \supp\mu^-_{p,\lambda}\cup\{Q^-_{p,\lambda}\}.
\]
In the latter case $Q^-_{p,\lambda}$ is an additional zero of the
first-variation function but is not a support point of
$\mu^-_{p,\lambda}$, because the new atom has zero mass when $m=1$.

For the interior candidate $\mu^{\mathrm{bif}}_{p,\lambda}$ with
$\lambda\in\Lambda$ and $\lambda>\lambda_{\mathrm G,p}$,
\[
 f^{\mathrm{bif}}_{p,\lambda}(q)\leq0
 \qquad\text{for every }q\in[0,1],
\]
and
\[
 \{q\in[0,1]:f^{\mathrm{bif}}_{p,\lambda}(q)=0\}
 =
 \supp\mu^{\mathrm{bif}}_{p,\lambda}
 =
 \supp\nu_{z_p^{\mathrm{bif}}(\lambda)}
 \cup\{Q^{\mathrm{bif}}_{p,\lambda}\}.
\]
The measure $\mu^{\mathrm{bif}}_{p,\lambda}$ is the unique Parisi measure.
It has $r+2$ support points and is $(r+1)$-RSB.
\end{proposition}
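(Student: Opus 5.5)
We combine \Cref{lem:moving-cutoff} on $[0,a_{p,\lambda}]$ with \Cref{prop:high-overlap-geometry} on $(a_{p,\lambda},1]$, using $f'(q)=\tfrac12\Xi_{p,\lambda}''(q)R(q)$ with $\Xi_{p,\lambda}''>0$ on $(0,1]$. Fix either candidate. By \Cref{prop:high-overlap-geometry}, $f$ is strictly decreasing on $[a_{p,\lambda},S_{p,\lambda}]$, strictly increasing on $[S_{p,\lambda},Q_{p,\lambda}]$, and strictly decreasing on $[Q_{p,\lambda},1]$. Hence
\[
 \max_{[a_{p,\lambda},1]}f=\max\{f(a_{p,\lambda}),f(Q_{p,\lambda})\}.
\]
Moreover, any zero of $f$ in $(a_{p,\lambda},1]$ must be $Q_{p,\lambda}$ whenever $f(a_{p,\lambda})<0$. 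Indeed, on $(a_{p,\lambda},S_{p,\lambda}]$ we have $f<f(a_{p,\lambda})<0$. On $(S_{p,\lambda},Q_{p,\lambda})$ we have $f<f(Q_{p,\lambda})$. On $(Q_{p,\lambda},1]$ we again have $f<f(Q_{p,\lambda})$. So if $f(Q_{p,\lambda})\le0$, the only possible zero there is $Q_{p,\lambda}$, and this occurs exactly when $f(Q_{p,\lambda})=0$.

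\emph{Mass-one case.} Here $f^-_{p,\lambda}(a_{p,\lambda})\le-\gamma<0$ by \eqref{eq:moving-buffer}. By \Cref{lem:moving-cutoff}, $f^-\le0$ on $[0,a_{p,\lambda}]$ with zero set exactly $\supp\nu_{z_p^-(\lambda)}=\supp\mu^-_{p,\lambda}$. Combining this with the monotonicity picture, global nonpositivity is equivalent to $f^-(Q^-_{p,\lambda})\le0$. The converse direction is trivial. If the inequality is strict, no new zeros appear. If equality holds, the zero set gains exactly $Q^-_{p,\lambda}$. This point is not in $\supp\mu^-_{p,\lambda}$, since $\mu^-_{p,\lambda}=\nu_{z^-_p(\lambda)}$ has support below $a_*<a_{p,\lambda}<Q^-_{p,\lambda}$.

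\emph{Interior case.} Stationarity gives $f^{\mathrm{bif}}(Q^{\mathrm{bif}}_{p,\lambda})=0$, as recorded after \eqref{eq:bifurcating-candidate}. The main point to check is this mass stationarity: the $m$-derivative of the candidate functional at fixed $(z,Q)$ is a mass transfer from the lower atoms to $\delta_Q$. By \Cref{lem:atomic-derivatives} this derivative equals $-f(Q)+\int f\,\dd\nu_z$ up to sign. Since $f$ vanishes on $\supp\nu_z$ by the $z$-stationarity, and $\partial_Q$ vanishes at the root by \eqref{eq:top-equation}, the envelope identity shows that $\partial_m j_p(m_{p,\lambda},\lambda)=0$ forces $f(Q)=0$. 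The same monotonicity argument then gives $f^{\mathrm{bif}}\le0$ on $[0,1]$, with zero set $\supp\nu_{z^{\mathrm{bif}}_p(\lambda)}\cup\{Q^{\mathrm{bif}}_{p,\lambda}\}$. Since $1-m_{p,\lambda}>0$ and all lower masses are positive, this set equals $\supp\mu^{\mathrm{bif}}_{p,\lambda}$.

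\Cref{prop:first-variation-criterion} then identifies $\mu^{\mathrm{bif}}_{p,\lambda}$ as the unique Parisi measure. The measure has $r+1$ lower atoms, uniformly separated and below $a_*$, together with one atom at $Q^{\mathrm{bif}}_{p,\lambda}>a_{p,\lambda}$. This gives $r+2$ distinct support points, so the measure is $(r+1)$-RSB.
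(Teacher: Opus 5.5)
Your proof is correct and follows the paper's argument essentially step for step. On $[0,a_{p,\lambda}]$ you use the moving-cutoff lemma, and above the cutoff you use the decrease--increase--decrease pattern forced by the sign of $R$, which reduces everything to the value $f(Q_{p,\lambda})$; on the interior branch you get $f=0$ on the lower support from $z$-stationarity and $f(Q)=0$ from $m$-stationarity via the mass-transfer formula, since the $Q$-chain term vanishes at the root. (One small correction: the claim that any zero in $(a_{p,\lambda},1]$ must be $Q_{p,\lambda}$ whenever $f(a_{p,\lambda})<0$ also requires $f(Q_{p,\lambda})\le0$, which you do add in the following sentence.)
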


\begin{proof}
We first consider the mass-one candidate.  By
\Cref{lem:moving-cutoff},
\[
 f^-_{p,\lambda}(q)\leq0,
 \qquad
 0\leq q\leq a_{p,\lambda},
\]
with equality exactly at the points of
$\supp\mu^-_{p,\lambda}$, and
\[
 f^-_{p,\lambda}(a_{p,\lambda})\leq-\gamma.
\]
Since
\[
 (f^-_{p,\lambda})'(q)
 =
 \frac12\Xi_{p,\lambda}''(q)R(q)
\]
and $\Xi_{p,\lambda}''(q)>0$ for $q>0$, the sign pattern in
\Cref{prop:high-overlap-geometry} shows that
$f^-_{p,\lambda}$ decreases on
$(a_{p,\lambda},S_{p,\lambda})$, increases on
$(S_{p,\lambda},Q^-_{p,\lambda})$, and decreases on
$(Q^-_{p,\lambda},1]$.  Hence
\[
 \max_{a_{p,\lambda}\leq q\leq1}
 f^-_{p,\lambda}(q)
 =
 \max\left\{
 f^-_{p,\lambda}(a_{p,\lambda}),
 f^-_{p,\lambda}(Q^-_{p,\lambda})
 \right\}.
\]
The first term is at most $-\gamma$.  Therefore
\[
 f^-_{p,\lambda}(q)\leq0
 \quad\text{for every }q\in[0,1]
\]
if and only if
\[
 f^-_{p,\lambda}(Q^-_{p,\lambda})\leq0.
\]

If
\[
 f^-_{p,\lambda}(Q^-_{p,\lambda})<0,
\]
then the monotonicity on
$(a_{p,\lambda},S_{p,\lambda})$,
$(S_{p,\lambda},Q^-_{p,\lambda})$, and
$(Q^-_{p,\lambda},1]$ shows that
\[
 f^-_{p,\lambda}(q)<0,
 \qquad
 a_{p,\lambda}<q\leq1.
\]
Together with \Cref{lem:moving-cutoff}, this gives
\[
 \{q:f^-_{p,\lambda}(q)=0\}
 =
 \supp\mu^-_{p,\lambda}.
\]
If instead
\[
 f^-_{p,\lambda}(Q^-_{p,\lambda})=0,
\]
then strict monotonicity on the three open intervals above shows that
$Q^-_{p,\lambda}$ is the only zero in
$(a_{p,\lambda},1]$.  Hence
\[
 \{q:f^-_{p,\lambda}(q)=0\}
 =
 \supp\mu^-_{p,\lambda}\cup\{Q^-_{p,\lambda}\}.
\]

We now consider the interior candidate.  Write
\[
 z=z_p^{\mathrm{bif}}(\lambda),
 \qquad
 m=m_{p,\lambda},
 \qquad
 Q=Q^{\mathrm{bif}}_{p,\lambda}.
\]
Here $m<1$.  Since $Q$ is stationary, the $z$-stationarity equations for
the reduced functional are the fixed-$(m,Q)$ $z$-stationarity equations for
$\Pp^{\mathrm{cand}}_{p,\lambda}$.  Hence
\Cref{lem:atomic-derivatives} gives
\[
 f^{\mathrm{bif}}_{p,\lambda}(q)=0,
 \qquad q\in\supp\nu_z.
\]
Likewise, the $z$- and $Q$-chain terms vanish in the $m$-derivative, so
$m$-stationarity and the mass-transfer formula give
\[
 0
 =
 f^{\mathrm{bif}}_{p,\lambda}(Q)
 -
 \int f^{\mathrm{bif}}_{p,\lambda}(q)\,\nu_z(\dd q)
 =
 f^{\mathrm{bif}}_{p,\lambda}(Q).
\]
The endpoint root equation gives
\[
 R(Q)=0.
\]
By \Cref{lem:moving-cutoff}, the first-variation function is nonpositive
on $[0,a_{p,\lambda}]$ and vanishes there exactly at
$\supp\nu_z$.  Above the cutoff, the sign pattern from
\Cref{prop:high-overlap-geometry} shows that the function decreases up to
$S_{p,\lambda}$, then increases strictly until it reaches the value $0$
at $Q$, and then decreases strictly above $Q$.  Thus
\[
 f^{\mathrm{bif}}_{p,\lambda}(q)<0
 \qquad
 \text{for }
 q\in(a_{p,\lambda},1]\setminus\{Q\}.
\]
It follows that
\[
 f^{\mathrm{bif}}_{p,\lambda}(q)\leq0,
 \qquad
 0\leq q\leq1,
\]
and
\[
 \{q:f^{\mathrm{bif}}_{p,\lambda}(q)=0\}
 =
 \supp\nu_z\cup\{Q\}
 =
 \supp\mu^{\mathrm{bif}}_{p,\lambda}.
\]
By \Cref{prop:first-variation-criterion}, 
$\mu^{\mathrm{bif}}_{p,\lambda}$ is the unique Parisi measure.  Since
$\nu_z$ has $r+1$ support points and $Q$ is a new support point, this
measure has $r+2$ support points and is $(r+1)$-RSB.
\end{proof}

It remains to verify that the interior Parisi measure identified above is
nondegenerate, as required for the subsequent iteration.

\begin{proposition}\label{prop:interior-nondegeneracy}
For every sufficiently large even $p$ and every
$\lambda\in\Lambda$ with $\lambda>\lambda_{\mathrm G,p}$,
the Parisi measure $\mu^{\mathrm{bif}}_{p,\lambda}$ is nondegenerate in the sense of \Cref{def:nondegenerate}.
\end{proposition}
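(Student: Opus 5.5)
We verify the three conditions of \Cref{def:nondegenerate} for $\mu^{\mathrm{bif}}_{p,\lambda}$, with atomic coordinates $z^{\mathrm{new}}=(z,Q,\text{masses})$. Here the lower masses are $m w_i$ and the top mass is $1-m$. Condition (ii), the exact zero set, is already contained in \Cref{prop:global-first-variation}, so only the Hessian condition (i) and the curvature condition (iii) remain.

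\emph{Curvature (iii).} At the lower support points $\widetilde q_i$, \Cref{lem:moving-cutoff} gives $f''\le -c$. At $Q=Q^{\mathrm{bif}}_{p,\lambda}$ we use
\[
 f''(Q)=\tfrac12\Xi_{p,\lambda}'''(Q)R(Q)+\tfrac12\Xi_{p,\lambda}''(Q)R'(Q).
\]
Since $R(Q)=0$, this reduces to $\tfrac12\Xi_{p,\lambda}''(Q)R'(Q)$. Because $Q>q(C_0\log p)$ by \eqref{eq:Q-beyond-log-window}, the bound \eqref{eq:R-decreasing-tail} gives $R'(Q)\le-\tfrac12$ for both one-sided values, which agree by \Cref{lem:curvature}. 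Since $\Xi_{p,\lambda}''(Q)>0$, we get $f''(Q)<0$.

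\emph{Hessian (i).} This is the main step. Change coordinates from the full atomic vector to $(z,m,Q)$, where $z$ consists of the normalized lower locations and masses. This is a smooth diffeomorphism near the point (lower masses $m w_i$, top mass $1-m$), so positive definiteness at a critical point is coordinate invariant: at a critical point the Hessian transforms by congruence with the Jacobian, with no first-order term.

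We show $D^2_{(z,m,Q)}\Pp^{\mathrm{cand}}_{p,\lambda}\succ0$ by a Schur complement in the $Q$ variable.
\begin{enumerate}[label=\textup{(\alph*)}]
 \item \emph{The $Q$-block.} By \eqref{eq:location-derivative},
 \[
  \partial_Q\Pp^{\mathrm{cand}}=-(1-m)f'(Q),\qquad \partial_{QQ}\Pp^{\mathrm{cand}}=-(1-m)f''(Q)>0
 \]
 at the critical point, since $f'(Q)=0$.
 \item \emph{The reduced objective.} The Schur complement of this block is the Hessian in $(z,m)$ of $(z,m)\mapsto\min_Q\Pp^{\mathrm{cand}}$ near the critical point. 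Because $Q$ is a nondegenerate local minimum, this minimum is attained at the root $Q_{p,\lambda}(z,m)$ of \eqref{eq:top-equation}. By the uniqueness in \Cref{prop:uniform-endpoint-reduction}, this reduced function is exactly $\widehat\Pp_{p,\lambda}$ near the branch.
 \item \emph{Conclusion.} \Cref{prop:stationary-branches}(iii) gives $D^2_{(z,m)}\widehat\Pp_{p,\lambda}\succeq cI$ on the interior branch. Hence the full Hessian is positive definite by the Schur-complement criterion.
\end{enumerate}

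The delicate point in (b) is that $\widehat\Pp_{p,\lambda}$ is obtained by substituting the distinguished root rather than by minimizing over $Q$. At a critical point in $Q$, however, the envelope identity makes the second derivatives of the two constructions coincide. Both equal $A-bd^{-1}b^{\mathsf T}$, where $d=\partial_{QQ}\Pp^{\mathrm{cand}}$ and $b$ collects the mixed $(z,m)$–$Q$ second derivatives. This holds because the root satisfies $\partial_Q\Pp^{\mathrm{cand}}=0$ identically for $m<1$, and differentiating that identity gives $\partial Q/\partial(z,m)=-d^{-1}b$.

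We also need smoothness of $\Pp^{\mathrm{cand}}$ in $(z,m,Q)$ at the fixed $p$. This follows from \Cref{prop:finite-step-regularity}: the masses are positive ($1-m>0$) and the support gaps, including $1-Q>0$, are positive for each fixed $p$. No uniformity in $p$ is needed for nondegeneracy at a given $p$.
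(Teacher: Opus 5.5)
Your architecture is the paper's: condition (ii) from \Cref{prop:global-first-variation}, condition (iii) from \Cref{lem:moving-cutoff} together with $R(Q)=0$ and \eqref{eq:R-decreasing-tail}, and condition (i) via a Schur complement in $Q$, \Cref{prop:stationary-branches}, and a congruence under the change of coordinates. Part (iii) is fine. The gap is in step (a).

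The formula $\partial_Q\Pp^{\mathrm{cand}}_{p,\lambda}(z,m,Q)=-(1-m)f'_{p,\lambda;z,m,Q}(Q)$ is correct. However, the first-variation function on the right depends on $Q$ through the candidate measure $\mu_{p,z,m,Q}$, so differentiating once more does not give $-(1-m)f''(Q)$. At the root, where $\Gamma(Q)-Q=0$ kills the $\Xi_{p,\lambda}'''$ term, the correct expression is
\[
\partial_{QQ}\Pp^{\mathrm{cand}}_{p,\lambda}
=-\tfrac{1-m}{2}\,\Xi_{p,\lambda}''(Q)\,\frac{\mathrm d}{\mathrm dQ}\bigl[\Gamma_{p,\lambda;z,m,Q}(Q)-Q\bigr].
\]
This is a total derivative in which the atom and the observation point move together. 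Your bound $R'(Q)\le-\tfrac12$ controls only the derivative in the observation variable with the measure frozen; the paper explicitly flags this distinction. It says nothing about the term $\partial_Q\Gamma_{p,\lambda;z,m,Q}(q)\big|_{q=Q}$, which is nonzero in general. In this regime that term turns out to be exponentially small relative to $\Xi_{p,\lambda}''(Q)$, but that needs proof. In particular, you cannot import the sign of $f''(Q)$ from part (iii) into the Hessian.

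The missing input is the endpoint reduction. Write $Q=1-t_py$. There is an exact identity $\Gamma_{p,\lambda;z,m,Q}(Q)-Q=t_pE_p$ with $E_p=y-t_p^{-1}\mathcal A_p$. The $C^3$ convergence \eqref{eq:uniform-endpoint-reduction} to the $y$-independent limit $d_0$ gives $\partial_yE_p=1+o(1)$ on $[d_-/2,2d_+]$. Since $\partial y/\partial Q=-1/t_p$, the total derivative above equals $-(1+o(1))$. Hence $\partial_{QQ}\Pp^{\mathrm{cand}}_{p,\lambda}=\tfrac{1-m}{2}\Xi_{p,\lambda}''(Q)(1+o(1))>0$. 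This is also where large $p$ enters the Hessian argument, so your remark that no uniformity in $p$ is needed holds only for the smoothness of $\Pp^{\mathrm{cand}}_{p,\lambda}$ at fixed $p$.

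With (a) repaired this way, your steps (b) and (c) coincide with the paper's proof: implicit differentiation gives $D_xQ_{p,\lambda}=-d^{-1}b$, and the Schur identity is $D_x^2\widehat\Pp_{p,\lambda}=A-bd^{-1}b^{\mathsf T}$. The detour through $\min_Q\Pp^{\mathrm{cand}}_{p,\lambda}$ is unnecessary. Before invoking the congruence, you should also record that the full gradient of $\Pp^{\mathrm{cand}}_{p,\lambda}$ vanishes. This follows from $D_x\widehat\Pp_{p,\lambda}=D_x\Pp^{\mathrm{cand}}_{p,\lambda}+\partial_Q\Pp^{\mathrm{cand}}_{p,\lambda}\,D_xQ_{p,\lambda}$, since both $D_x\widehat\Pp_{p,\lambda}$ and $\partial_Q\Pp^{\mathrm{cand}}_{p,\lambda}$ vanish there.
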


\begin{proof}
Write
\[
 z=z_p^{\mathrm{bif}}(\lambda),
 \qquad
 m=m_{p,\lambda},
 \qquad
 Q=Q^{\mathrm{bif}}_{p,\lambda},
\]
and abbreviate
\[
 f=f^{\mathrm{bif}}_{p,\lambda},
 \qquad
 \Gamma=\Gamma_{p,\lambda;z,m,Q},
 \qquad
 R(q)=\Gamma(q)-q.
\]

By \Cref{prop:global-first-variation}, the first-variation function of
$\mu^{\mathrm{bif}}_{p,\lambda}$ vanishes exactly on its support.
At every positive lower support point,
\Cref{lem:moving-cutoff} gives
\[
 (f^{\mathrm{bif}}_{p,\lambda})''(q)\leq-c<0.
\]
At the new support point $Q$,
\[
 f''(q)
 =
 \frac12\Xi_{p,\lambda}'''(q)R(q)
 +
 \frac12\Xi_{p,\lambda}''(q)R'(q).
\]
By \Cref{lem:curvature}, $\Gamma'$ has the same left- and right-hand
limiting value at the interior atomic point $Q$, and hence so does $R'$.
Since $R(Q)=0$ and \eqref{eq:R-decreasing-tail} gives
\[
 R'(Q)\leq-\frac12,
\]
we obtain
\[
 (f^{\mathrm{bif}}_{p,\lambda})''(Q)
 =
 \frac12\Xi_{p,\lambda}''(Q)R'(Q)<0.
\]
Here $R'(Q)$ differentiates the observation variable $q$ with the candidate
measure held fixed; it is distinct from the total $Q$-derivative of the
candidate family used below. 
Thus the exact zero-set and negative-curvature conditions in
\Cref{def:nondegenerate} have been verified.

We finally prove positive definiteness of the atomic Hessian.  Write
\[
 \mathcal P_p(z,m,Q)
 =
 \Pp^{\mathrm{cand}}_{p,\lambda}(z,m,Q),
\]
with $\lambda$ fixed.  For
\[
 Q=1-t_p(m,\lambda)y,
\]
define
\[
 E_p(z,m,\lambda,y)
 =
 y-t_p(m,\lambda)^{-1}
 \mathcal A_p(z,m,\lambda,y).
\]
Since
\[
 \mathcal A_p(z,m,\lambda,y)
 =
 1-\Gamma_{p,\lambda;z,m,Q}(Q),
\]
we have the exact identity
\begin{align*}
 \Gamma_{p,\lambda;z,m,Q}(Q)-Q
 &=
 1-\mathcal A_p-(1-t_py)\\
 &=
 t_pE_p(z,m,\lambda,y).
\end{align*}
The $C^3$ endpoint estimate
\eqref{eq:uniform-endpoint-reduction} gives
\[
 \partial_yE_p=1+o(1)
\]
uniformly for
\[
 y\in[d_-/2,2d_+].
\]
Holding $(z,m,\lambda)$ fixed and using
\[
 \frac{\partial y}{\partial Q}=-\frac1{t_p},
\]
we therefore obtain, at the distinguished root,
\[
 \partial_Q
 \bigl(\Gamma_{p,\lambda;z,m,Q}(Q)-Q\bigr)
 =
 -\partial_yE_p
 =
 -(1+o(1)).
\]

For $m<1$, the exact location-derivative identity is
\[
 \partial_Q\mathcal P_p(z,m,Q)
 =
 -\frac{1-m}{2}\Xi_{p,\lambda}''(Q)
 \bigl(\Gamma_{p,\lambda;z,m,Q}(Q)-Q\bigr).
\]
Differentiating in $Q$ and then using the root equation gives
\begin{align*}
 \partial_{QQ}\mathcal P_p
 &=
 -\frac{1-m}{2}
 \Xi_{p,\lambda}''(Q)
 \partial_Q
 \bigl(\Gamma_{p,\lambda;z,m,Q}(Q)-Q\bigr)\\
 &=
 \frac{1-m}{2}\Xi_{p,\lambda}''(Q)(1+o(1))
 >0.
\end{align*}
Here the term containing $\Xi_{p,\lambda}'''(Q)$ vanishes because
$\Gamma_{p,\lambda;z,m,Q}(Q)-Q=0$ at the root.

Let
\[
 x=(z,m)
\]
and let
\[
 Q=Q_{p,\lambda}(x)
\]
be the distinguished solution of the $Q$-stationarity equation.  The
root-reduced objective is
\[
 \widehat\Pp_{p,\lambda}(x)
 =
 \mathcal P_p(x,Q_{p,\lambda}(x)).
\]
At
\[
 x=\bigl(z_p^{\mathrm{bif}}(\lambda),m_{p,\lambda}\bigr),
 \qquad
 Q=Q^{\mathrm{bif}}_{p,\lambda},
\]
the $Q$-stationarity equation gives
\[
 \partial_Q\mathcal P_p(x,Q)=0,
\]
while the $(z,m)$-stationarity equations give
\[
 D_x\widehat\Pp_{p,\lambda}(x)=0.
\]
Since
\[
 D_x\widehat\Pp_{p,\lambda}
 =
 D_x\mathcal P_p
 +
 \partial_Q\mathcal P_p\,D_xQ_{p,\lambda},
\]
we also have
\[
 D_x\mathcal P_p(x,Q)=0.
\]
Thus the full gradient of $\mathcal P_p$ vanishes at the interior
candidate.

Differentiate
\[
 \partial_Q\mathcal P_p
 \bigl(x,Q_{p,\lambda}(x)\bigr)=0
\]
with respect to $x$.  Since
$\partial_{QQ}\mathcal P_p>0$,
\[
 D_xQ_{p,\lambda}
 =
 -(\partial_{QQ}\mathcal P_p)^{-1}
 D_{Qx}^2\mathcal P_p.
\]
A second differentiation of the reduced objective gives the exact
Schur-complement formula
\[
 D_x^2\widehat\Pp_{p,\lambda}
 =
 D_{xx}^2\mathcal P_p
 -
 D_{xQ}^2\mathcal P_p\,
 (\partial_{QQ}\mathcal P_p)^{-1}
 D_{Qx}^2\mathcal P_p.
\]
By \Cref{prop:stationary-branches},
\[
 D_x^2\widehat\Pp_{p,\lambda}(x)
\]
is positive definite at
\[
 x=\bigl(z_p^{\mathrm{bif}}(\lambda),m_{p,\lambda}\bigr).
\]
Since $\partial_{QQ}\mathcal P_p>0$, the Schur-complement criterion implies
that the full Hessian
\[
 D_{(z,m,Q)}^2\mathcal P_p
\]
is positive definite.

It remains to transfer the positive definiteness of
$D_{(z,m,Q)}^2\mathcal P_p$ to the atomic coordinates used in
\eqref{eq:atomic-measure}.  If
\[
 \nu_z=\sum_{i=0}^rw_i(z)\delta_{q_i(z)},
 \qquad q_0=0,
\]
then the interior candidate has lower masses
$mw_0(z),\ldots,mw_r(z)$ and top mass $1-m$.  Its independent atomic
coordinates are obtained from $(z,m,Q)$ by
\[
 (z,m,Q)
 \longmapsto
 \bigl(
 q_1(z),\ldots,q_r(z),Q,\,
 mw_0(z),\ldots,mw_r(z)
 \bigr).
\]
This map is a local smooth diffeomorphism.  Indeed, from the displayed
atomic masses one recovers
\[
 m=\sum_{i=0}^r mw_i(z)>0,
\]
and then recovers each normalized lower mass by division by $m$, together
with the lower locations and $Q$.

Let $J$ denote the Jacobian, at the stationary point, of the map from the
atomic coordinates to the $(z,m,Q)$ coordinates. Since the gradient of the
candidate objective vanishes at this point, the Hessian in the atomic
coordinates is
\[
    J^\top D_{(z,m,Q)}^2\mathcal P_p\,J.
\]
The coordinate change is a local diffeomorphism, so $J$ is invertible.
Consequently, this Hessian is positive definite if and only if
$D_{(z,m,Q)}^2\mathcal P_p$ is positive definite.

The interior Parisi measure thus has a positive-definite atomic Hessian,
its first-variation function vanishes exactly on its support, and its
second derivative is strictly negative at every positive support point.
It is therefore nondegenerate.
\end{proof}

\section{A high-degree RSB transition}\label{sec:gardner}

\Cref{prop:stationary-branches} gives a distinguished value
$\lambda_{\mathrm G,p}$ at which the minimizing value of the lower mass
moves from the boundary $m=1$ into the interior $m<1$.  To determine
whether this local bifurcation is the actual transition of the Parisi
measure, we must determine when the mass-one candidate satisfies the
global first-variation inequality.  By
\Cref{prop:global-first-variation}, this is determined by the single value
\[
 f^-_{p,\lambda}(Q^-_{p,\lambda}).
\]
For $\lambda\in\Lambda$, set
\[
 \mathcal G_p(\lambda)
 =
 f^-_{p,\lambda}(Q^-_{p,\lambda}).
\]
The next theorem identifies $\mathcal G_p$ with the boundary derivative
from \Cref{prop:stationary-branches} and gives the resulting change in RSB
depth.

\begin{theorem}\label{thm:gardner}
Let $\Xi$ be an admissible finite polynomial covariance whose Parisi
measure $\mu_*$ is $r$-RSB and nondegenerate.  Define
\[
 \mathfrak p(b)
 =
 \inf_{\mu\in\M}\Pp_{b^2\Xi}(\mu),
 \qquad
 s_\Xi
 =
 \mathfrak p(1)-\mathfrak p'(1),
 \qquad
 \lambda_{\mathrm G}
 =
 2s_\Xi.
\]
There exists $\eta\in(0,\lambda_{\mathrm G})$ such that, for every
sufficiently large even integer $p$, there is a unique
\[
 \lambda_{\mathrm G,p}
 \in
 (\lambda_{\mathrm G}-\eta,\lambda_{\mathrm G}+\eta),
 \qquad
 \lambda_{\mathrm G,p}\longrightarrow\lambda_{\mathrm G},
\]
with the following properties for
\[
 \Xi_{p,\lambda}(q)=\Xi(q)+\lambda q^p,
 \qquad
 \lambda\in
 [\lambda_{\mathrm G}-\eta,\lambda_{\mathrm G}+\eta].
\]

\begin{enumerate}[label=\textup{(\roman*)}]
 \item If $\lambda<\lambda_{\mathrm G,p}$, the unique Parisi measure is $
  \mu^-_{p,\lambda}$.
 It is $r$-RSB and nondegenerate.

 \item If $\lambda=\lambda_{\mathrm G,p}$, the unique Parisi measure is $
  \mu^-_{p,\lambda_{\mathrm G,p}}$, 
 and is $r$-RSB.  Its first-variation function satisfies
 \[
  \{q\in[0,1]:
    f^-_{p,\lambda_{\mathrm G,p}}(q)=0\}
  =
  \supp\mu^-_{p,\lambda_{\mathrm G,p}}
  \cup
  \{Q^-_{p,\lambda_{\mathrm G,p}}\}.
 \]
 The point $Q^-_{p,\lambda_{\mathrm G,p}}$ carries zero mass and is not a
 support point of the measure.  At this point,
 \[
  f^-_{p,\lambda_{\mathrm G,p}}
    (Q^-_{p,\lambda_{\mathrm G,p}})
  =
  (f^-_{p,\lambda_{\mathrm G,p}})'
    (Q^-_{p,\lambda_{\mathrm G,p}})
  =
  0,
 \]
 while
 \[
  (f^-_{p,\lambda_{\mathrm G,p}})''
    (Q^-_{p,\lambda_{\mathrm G,p}})
  <0.
 \]
 The critical Parisi measure is not nondegenerate in the sense of
 \Cref{def:nondegenerate}: its first-variation zero set is strictly larger
 than its support.

 \item If $\lambda>\lambda_{\mathrm G,p}$, the unique Parisi measure is $
  \mu^{\mathrm{bif}}_{p,\lambda}$. 
 It is $(r+1)$-RSB and nondegenerate.
\end{enumerate}

On
\[
 (\lambda_{\mathrm G}-\eta,\lambda_{\mathrm G,p})
\]
the atomic masses and support points of $\mu^-_{p,\lambda}$ depend
smoothly on $\lambda$.  On
\[
 (\lambda_{\mathrm G,p},\lambda_{\mathrm G}+\eta)
\]
the functions
\[
 \lambda\longmapsto z_p^{\mathrm{bif}}(\lambda),
 \qquad
 \lambda\longmapsto m_{p,\lambda},
 \qquad
 \lambda\longmapsto Q^{\mathrm{bif}}_{p,\lambda}
\]
are smooth, and hence so are all atomic masses and support points of
$\mu^{\mathrm{bif}}_{p,\lambda}$.
\end{theorem}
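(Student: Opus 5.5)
The plan is to reduce all three cases to the sign of the scalar $\mathcal G_p(\lambda)=f^-_{p,\lambda}(Q^-_{p,\lambda})$, and to identify this sign with that of $G_p(\lambda)=\partial_mj_p(1,\lambda)$ from \Cref{prop:stationary-branches}. Take $\eta$ to be the constant fixed in \Cref{sec:high-degree-perturbation}; then $\Lambda\supset[\lambda_{\mathrm G}-\eta,\lambda_{\mathrm G}+\eta]$. The value $\lambda_{\mathrm G,p}$ is the unique zero of $G_p$ supplied by \Cref{prop:stationary-branches}(iii). It lies in the open window for large $p$, since $\lambda_{\mathrm G,p}\to\lambda_{\mathrm G}$.

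\textbf{Key identity.} The main step is to show that
\[
 G_p(\lambda)=-f^-_{p,\lambda}(Q^-_{p,\lambda}).
\]
I would argue via the envelope identity. Since $z_p$ is $z$-stationary,
\[
 G_p(\lambda)=\partial_m\widehat\Pp_{p,\lambda}(z^-_p(\lambda),1)=\partial_m\Pp^{\mathrm{cand}}_{p,\lambda}+\partial_Q\Pp^{\mathrm{cand}}_{p,\lambda}\,\partial_mQ_{p,\lambda}.
\]
At $m=1$ the candidate functional does not depend on $Q$, so the second term vanishes. The left $m$-derivative of $m\nu_z+(1-m)\delta_Q$ at $m=1$ is the mass-transfer direction $\nu_z-\delta_Q$. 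By \eqref{eq:directional-derivative} it therefore equals
\[
 f^-(Q)-\int f^-\,\dd\nu_z
\]
with the appropriate orientation. Because $m$ decreases, the actual identity is $-G_p=f^-(Q)-\int f^-\,\dd\nu_z$. The integral vanishes, since $f^-$ is zero on $\supp\nu_{z_p^-}$ by \Cref{lem:atomic-derivatives}. Fixing this orientation carefully, and justifying the one-sided derivative at $m=1$ uniformly through the $C^3$ convention of \Cref{sec:high-degree-perturbation}, is the step I expect to need the most care. The sign check comes from the limiting picture: for small $\lambda$ we have $G_p<0$ and $m=1$ minimizes, so $f^-(Q)<0$ there.

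\textbf{Cases.} Given the identity, the three cases follow.
\begin{itemize}
\item For $\lambda<\lambda_{\mathrm G,p}$ we have $G_p<0$, hence $\mathcal G_p<0$. \Cref{prop:global-first-variation} then gives $f^-\le0$ with zero set exactly $\supp\mu^-$, so $\mu^-$ is the Parisi measure by \Cref{prop:first-variation-criterion}. For nondegeneracy, the zero-set condition holds by that proposition, and negative curvature holds by \Cref{lem:moving-cutoff}. The atomic Hessian is $D_z^2\widehat\Pp_{p,\lambda}(z_p^-,1)\succeq cI$, which equals $D_z^2\Pp_{\Xi_{p,\lambda}}(\nu_z)$ because the functional is $Q$-independent at $m=1$.
\item At $\lambda=\lambda_{\mathrm G,p}$ we have $\mathcal G_p=0$, and the same proposition gives the enlarged zero set. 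Here $f'(Q^-)=0$ by the root equation $R(Q^-)=0$. Also $f''(Q^-)=\tfrac12\Xi''_{p,\lambda}(Q^-)R'(Q^-)<0$ by \eqref{eq:R-decreasing-tail}, which is valid for the mass-one candidate by \Cref{prop:high-overlap-geometry}. Failure of condition (ii) of \Cref{def:nondegenerate} is then immediate.
\item For $\lambda>\lambda_{\mathrm G,p}$, \Cref{prop:global-first-variation} and \Cref{prop:interior-nondegeneracy} give the claim directly. By uniqueness of the Parisi measure, no other candidate can compete.
\end{itemize}

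\textbf{Smoothness.} On the lower range, $z_p^-$ is smooth because the implicit function theorem applies to the finite-$p$ smooth recursion data with invertible $z$-Hessian. On the upper range, $m_{p,\lambda}$ is smooth by \Cref{prop:stationary-branches}(iii), and $z_p^{\mathrm{bif}}=z_p(m_{p,\lambda},\lambda)$ is smooth by composition. Finally, $Q^{\mathrm{bif}}_{p,\lambda}$ is smooth because $\partial_{QQ}\mathcal P_p>0$ (computed in \Cref{prop:interior-nondegeneracy}), so the implicit function theorem applies to the root equation. Uniqueness of $\lambda_{\mathrm G,p}$ as a transition point follows because the RSB depth is $r$ exactly when $\lambda\le\lambda_{\mathrm G,p}$.
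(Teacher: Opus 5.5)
Your plan has the same architecture as the paper's proof: reduce everything to the sign of $\mathcal G_p(\lambda)=f^-_{p,\lambda}(Q^-_{p,\lambda})$, relate it to $G_p=\partial_mj_p(1,\cdot)$ via the envelope identity and $Q$-independence at $m=1$, then read off the three regimes. However, the key identity you state, $G_p=-f^-_{p,\lambda}(Q^-_{p,\lambda})$, has the wrong sign. The correct relation is $G_p(\lambda)=f^-_{p,\lambda}(Q^-_{p,\lambda})$, i.e.\ \eqref{eq:G-boundary-contact}.

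Your first computation was right. Differentiating along $\partial_m\mu_m=\nu_z-\delta_Q$, \eqref{eq:directional-derivative} gives $-\int f^-\,\dd(\nu_z-\delta_Q)=f^-(Q)-\int f^-\,\dd\nu_z$. The extra flip ``because $m$ decreases'' counts the orientation twice. From $m=1$ the admissible direction is $\delta_Q-\nu_z$ with parameter $t=1-m\geq0$, so $\frac{\dd}{\dd t}\big|_{t=0^+}\Pp^{\mathrm{cand}}_{p,\lambda}(z,1-t,Q)=-f^-(Q)+\int f^-\,\dd\nu_z$. But the left $m$-derivative equals $-\frac{\dd}{\dd t}\big|_{t=0^+}$, which restores the sign. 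Since $f^-=0$ on $\supp\nu_{z_p^-(\lambda)}$, this gives $G_p=f^-(Q^-)$.

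This is not cosmetic. Taken literally, your identity gives $f^-_{p,\lambda}(Q^-_{p,\lambda})>0$ for $\lambda<\lambda_{\mathrm G,p}$. By \Cref{prop:global-first-variation}, the mass-one candidate would then fail $f\leq0$ below the transition, contradicting (i). Your heuristic sign check ($G_p<0$ with $m=1$ optimal, hence $f^-(Q)<0$) and your case step ``$G_p<0$, hence $\mathcal G_p<0$'' both silently use the $+$ sign. So the proposal is internally inconsistent exactly at the step you flagged as delicate. Once the identity is corrected to $\mathcal G_p=G_p$, your case analysis coincides with the paper's.

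For smoothness you apply the implicit-function theorem directly rather than \Cref{prop:stability}. That route works, with one caveat. \Cref{prop:stationary-branches} only provides $z_p\in C^2$, so writing $z_p^{\mathrm{bif}}=z_p(m_{p,\lambda},\lambda)$ yields smoothness only after you rerun the implicit-function argument at fixed $p$ on $m<1$, using the smooth-family clause of \Cref{prop:finite-step-regularity}. The paper avoids this by applying \Cref{prop:stability} at the nondegenerate Parisi measures in each open phase.
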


\begin{proof}
Choose $\eta\in(0,\lambda_{\mathrm G})$ so that
\[
 [\lambda_{\mathrm G}-\eta,\lambda_{\mathrm G}+\eta]
 \subset\Lambda.
\]
Since $\lambda_{\mathrm G,p}\to\lambda_{\mathrm G}$, increase the lower
bound on $p$ once more so that
$\lambda_{\mathrm G,p}\in(\lambda_{\mathrm G}-\eta,
\lambda_{\mathrm G}+\eta)$. 
After increasing the lower bound on $p$ if necessary, the stationary-branch,
moving-cutoff, and high-overlap conclusions of
\Cref{prop:stationary-branches,lem:moving-cutoff,prop:high-overlap-geometry}
hold uniformly.  The same is true of the global first-variation and
nondegeneracy conclusions by
\Cref{prop:global-first-variation,prop:interior-nondegeneracy}.  Thus all of
these conclusions hold uniformly for 
\[
 \lambda\in
 [\lambda_{\mathrm G}-\eta,\lambda_{\mathrm G}+\eta].
\]

We first identify $\mathcal G_p$ with the boundary derivative
$G_p$ from \eqref{eq:boundary-m-derivative}.  Recall that
\[
 j_p(m,\lambda)
 =
 \widehat\Pp_{p,\lambda}(z_p(m,\lambda),m).
\]
Since
\[
 \partial_z\widehat\Pp_{p,\lambda}
 \bigl(z_p(m,\lambda),m\bigr)=0,
\]
differentiation in $m$ gives
\[
 \partial_mj_p(m,\lambda)
 =
 \partial_m\widehat\Pp_{p,\lambda}
 \bigl(z_p(m,\lambda),m\bigr).
\]
At $m=1$, set
\[
 z=z^-_p(\lambda),
 \qquad
 Q=Q^-_{p,\lambda}.
\]
The candidate functional is independent of $Q$ when $m=1$, so the
derivative of the selected root $Q_{p,\lambda}(z,m)$ contributes no
chain-rule term to the one-sided $m$-derivative.  Therefore
\[
 G_p(\lambda)
 =
 \partial_m\Pp^{\mathrm{cand}}_{p,\lambda}(z,1,Q),
\]
where the $m$-derivative is the trace from $m<1$.

At fixed $(z,Q)$, varying $m$ moves mass in the signed direction
$\nu_z-\delta_Q$.  The mass-transfer formula in
\Cref{lem:atomic-derivatives} gives 
\begin{align*}
 \partial_m\Pp^{\mathrm{cand}}_{p,\lambda}(z,m,Q)
 &=
 -\int
 f_{p,\lambda;z,m,Q}(q)
 \,\dd(\nu_z-\delta_Q)(q)\\
 &=
 f_{p,\lambda;z,m,Q}(Q)
 -
 \int
 f_{p,\lambda;z,m,Q}(q)\,\nu_z(\dd q).
\end{align*}
On the mass-one family,
\Cref{lem:moving-cutoff} gives
\[
 f^-_{p,\lambda}(q)=0
 \qquad
 \text{for every }q\in\supp\nu_{z^-_p(\lambda)}.
\]
The integral in the preceding display therefore vanishes, and
\begin{equation}\label{eq:G-boundary-contact}
 \mathcal G_p(\lambda)
 =
 f^-_{p,\lambda}(Q^-_{p,\lambda})
 =
 G_p(\lambda).
\end{equation}

By \Cref{prop:stationary-branches},
\[
 G_p'(\lambda)\geq\frac14
\]
on the parameter interval, and $G_p$ has a unique zero
$\lambda_{\mathrm G,p}$ satisfying
\[
 \lambda_{\mathrm G,p}\longrightarrow\lambda_{\mathrm G}.
\]
Thus
\[
 \mathcal G_p(\lambda)<0
 \quad\text{for }\lambda<\lambda_{\mathrm G,p},
\]
\[
 \mathcal G_p(\lambda_{\mathrm G,p})=0,
\]
and
\[
 \mathcal G_p(\lambda)>0
 \quad\text{for }\lambda>\lambda_{\mathrm G,p}.
\]

Suppose first that $\lambda<\lambda_{\mathrm G,p}$.  Then
\[
 f^-_{p,\lambda}(Q^-_{p,\lambda})<0.
\]
By \Cref{prop:global-first-variation},
\[
 f^-_{p,\lambda}(q)\leq0
 \qquad
 \text{for every }q\in[0,1],
\]
and
\[
 \{q:f^-_{p,\lambda}(q)=0\}
 =
 \supp\mu^-_{p,\lambda}.
\]
\Cref{prop:first-variation-criterion}  identifies
$\mu^-_{p,\lambda}$ as the unique Parisi measure.  Its atomic coordinates
are precisely the lower coordinates $z^-_p(\lambda)$, because the
distinguished point $Q^-_{p,\lambda}$ carries zero mass.  By \Cref{prop:stationary-branches}, the Hessian of the reduced objective
with respect to the lower atomic coordinates $z$ is positive definite
along the mass-one family.  Since $m=1$, these $z$-coordinates are exactly
the atomic coordinates of $\mu^-_{p,\lambda}$.  Its atomic Hessian is
therefore positive definite. At each positive support point,
\Cref{lem:moving-cutoff} gives
\[
 (f^-_{p,\lambda})''(q)<0.
\]
All three conditions in \Cref{def:nondegenerate} are satisfied, so
$\mu^-_{p,\lambda}$ is nondegenerate and $r$-RSB.

At $\lambda=\lambda_{\mathrm G,p}$,
\eqref{eq:G-boundary-contact} gives
\[
 f^-_{p,\lambda_{\mathrm G,p}}
   (Q^-_{p,\lambda_{\mathrm G,p}})=0.
\]
By \Cref{prop:global-first-variation},
\[
 f^-_{p,\lambda_{\mathrm G,p}}(q)\leq0
 \qquad
 \text{for every }q\in[0,1],
\]
and its zero set is exactly
\[
 \supp\mu^-_{p,\lambda_{\mathrm G,p}}
 \cup
 \{Q^-_{p,\lambda_{\mathrm G,p}}\}.
\]
\Cref{prop:first-variation-criterion}  again identifies the mass-one candidate as
the unique Parisi measure.  The point
$Q^-_{p,\lambda_{\mathrm G,p}}$ carries zero mass because $m=1$ and
therefore is not in the support of the measure.  The endpoint root
equation gives
\[
 (f^-_{p,\lambda_{\mathrm G,p}})'
   (Q^-_{p,\lambda_{\mathrm G,p}})=0.
\]
At this point $R=0$, while \eqref{eq:R-decreasing-tail} gives
\[
 R'(Q^-_{p,\lambda_{\mathrm G,p}})<0.
\]
Therefore
\begin{align*}
 (f^-_{p,\lambda_{\mathrm G,p}})''
   (Q^-_{p,\lambda_{\mathrm G,p}})
 &=
 \frac12
 \Xi_{p,\lambda_{\mathrm G,p}}''
   (Q^-_{p,\lambda_{\mathrm G,p}})
 R'(Q^-_{p,\lambda_{\mathrm G,p}})<0.
\end{align*}
The zero set is strictly larger than the support, so
condition~\textup{(ii)} of \Cref{def:nondegenerate} fails.

Finally, suppose that $\lambda>\lambda_{\mathrm G,p}$.  By
\Cref{prop:stationary-branches},
\[
 m_{p,\lambda}<1.
\]
The interior candidate
\[
 \mu^{\mathrm{bif}}_{p,\lambda}
 =
 m_{p,\lambda}\nu_{z_p^{\mathrm{bif}}(\lambda)}
 +(1-m_{p,\lambda})
   \delta_{Q^{\mathrm{bif}}_{p,\lambda}}
\]
therefore has positive mass at the new support point.  By \Cref{prop:global-first-variation}, this candidate is the unique Parisi
measure and has $r+2$ support points, so it is $(r+1)$-RSB.  By
\Cref{prop:interior-nondegeneracy}, it is nondegenerate.

We finally verify the asserted smooth dependence.  On each of the two open
phases, the Parisi measure has just been shown to be nondegenerate.  Since
\[
 \lambda\longmapsto\Xi_{p,\lambda}=\Xi+\lambda q^p
\]
is a smooth finite-dimensional family of admissible polynomial covariances,
\Cref{prop:stability} applies locally at every parameter value in either open
phase and gives smooth dependence of all atomic masses and support points on
$\lambda$.  Uniqueness of the Parisi measure makes these local smooth
branches agree on overlaps.

On the upper phase, $Q^{\mathrm{bif}}_{p,\lambda}$ is the largest support
point and $m_{p,\lambda}$ is the total mass of the lower atoms, so both are
smooth.  The vector $z_p^{\mathrm{bif}}(\lambda)$ consists of the lower
support points together with the normalized lower masses; since
$m_{p,\lambda}>0$, it is therefore a smooth function of the atomic
coordinates.  This proves the stated smoothness on both open phases.
\end{proof}

The theorem identifies the Parisi measure on each side of
$\lambda_{\mathrm G,p}$.  We next quantify how the additional atom appears
as $\lambda$ increases above this value.  The scalar stationarity equation
determines the new atomic mass to first order in
$\lambda-\lambda_{\mathrm G,p}$, while the endpoint-root estimate
determines the distance of the new support point from $1$.

\begin{corollary}\label{cor:gardner-asymptotics}
Let $\eta$ be as in \Cref{thm:gardner}, and define
\[
 \kappa_\Xi
 =
 \mathfrak p''(1)+2s_\Xi>0.
\]
There exist $0<\eta_0<\eta/2$, $p_0<\infty$, and $C<\infty$ such that,
for every even $p\geq p_0$,
\[
 |\lambda_{\mathrm G,p}-\lambda_{\mathrm G}|<\frac{\eta}{2},
\]
and the following assertions hold.

Define
\begin{equation}\label{eq:kappa-p-definition}
 \kappa_p
 =
 \frac{
 \partial_{mm}j_p(1,\lambda_{\mathrm G,p})
 }{
 2\partial_{m\lambda}j_p(1,\lambda_{\mathrm G,p})
 },
\end{equation}
where the $m$-derivatives at $m=1$ are understood from the left.  Then
\[
 \kappa_p>0,
 \qquad
 \kappa_p\longrightarrow\kappa_\Xi.
\]

For
\[
 \lambda=\lambda_{\mathrm G,p}+\delta,
 \qquad
 0<\delta\leq\eta_0,
\]
let $1-m_{p,\lambda}$ be the mass of the new atom in the Parisi measure
$\mu^{\mathrm{bif}}_{p,\lambda}$, and write
\[
 Q_{p,\lambda}
 =
 Q^{\mathrm{bif}}_{p,\lambda}
\]
for its location.  Then
\begin{equation}\label{eq:gardner-uniform-mass-law}
 \left|
 1-m_{p,\lambda}
 -\frac{\delta}{2\kappa_p}
 \right|
 \leq C\delta^2,
\end{equation}
and
\begin{equation}\label{eq:gardner-uniform-location-law}
 C^{-1}p^{-1/2}
 e^{-m_{p,\lambda}^2\lambda p/2}
 \leq
 1-Q_{p,\lambda}
 \leq
 Cp^{-1/2}
 e^{-m_{p,\lambda}^2\lambda p/2}.
\end{equation}

If $p_j\to\infty$ through even integers and, for all sufficiently large $j$, 
\[
 \lambda_j
 =
 \lambda_{\mathrm G,p_j}+\delta_j,
 \qquad
 \delta_j>0,
 \qquad
 \delta_j\longrightarrow0,
\]
then
\[
 1-m_{p_j,\lambda_j}
 =
 \frac{\delta_j}{2\kappa_\Xi}
 +o(\delta_j).
\]
\end{corollary}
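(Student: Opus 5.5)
The corollary reduces almost entirely to the scalar problem for $j_p$ from \Cref{prop:stationary-branches}, combined with the endpoint-root estimate of \Cref{prop:uniform-endpoint-reduction}.

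\emph{Choice of constants and convergence of $\kappa_p$.} First choose $\eta_0<\eta/2$ so small that $[\lambda_{\mathrm G,p},\lambda_{\mathrm G,p}+\eta_0]\subset\Lambda$ for all large $p$. This is possible because $\lambda_{\mathrm G,p}\to\lambda_{\mathrm G}$, which also gives $|\lambda_{\mathrm G,p}-\lambda_{\mathrm G}|<\eta/2$ after enlarging $p_0$. By \eqref{eq:one-sided-scalar-convexity}, $\partial_{mm}j_p\ge c$ and $\partial_{m\lambda}j_p\ge1/4$, so $\kappa_p>0$. The $C^3$ convergence $j_p\to J$ then gives
\[
\kappa_p\to\frac{\partial_{mm}J_{\lambda_{\mathrm G}}(1)}{2\,\partial_{m\lambda}J}=\frac{\kappa_\Xi}{2\cdot\frac12}=\kappa_\Xi .
\]
Here I use \eqref{eq:limiting-corner-geometry}, continuity of $\partial_{mm}J_\lambda(1)$ in $\lambda$ (it equals $\mathfrak p''(1)+2s_\Xi$ for all $\lambda$), and the fact that the second derivatives of $j_p$ are uniformly equicontinuous, since the third derivatives are bounded.

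\emph{Mass law.} Set $g(m,\lambda)=\partial_mj_p(m,\lambda)$. Then $m_{p,\lambda}$ solves $g(m_{p,\lambda},\lambda)=0$, and $g(1,\lambda_{\mathrm G,p})=0$. Expand $g$ around $(1,\lambda_{\mathrm G,p})$ to second order:
\[
0=\partial_{mm}j_p\,(m_{p,\lambda}-1)+\partial_{m\lambda}j_p\,\delta+O\bigl((1-m_{p,\lambda})^2+\delta^2\bigr).
\]
The derivatives are evaluated at $(1,\lambda_{\mathrm G,p})$, with left $m$-derivatives, and the remainder constant comes from the uniform $C^3$ bound on $j_p$. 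The main obstacle is to show first that $1-m_{p,\lambda}=O(\delta)$ uniformly in $p$. For this I use that $m_{p,\lambda_{\mathrm G,p}^+}=1$ (\Cref{prop:stationary-branches}(iii)) together with the uniform bound $|\partial_\lambda m_{p,\lambda}|=\partial_{m\lambda}j_p/\partial_{mm}j_p\le C/c$. Integrating this bound in $\lambda$ gives $1-m_{p,\lambda}\le C\delta$. Substituting back and dividing by $\partial_{mm}j_p$ yields
\[
1-m_{p,\lambda}=\frac{\partial_{m\lambda}j_p}{\partial_{mm}j_p}\,\delta+O(\delta^2)=\frac{\delta}{2\kappa_p}+O(\delta^2),
\]
which is \eqref{eq:gardner-uniform-mass-law}. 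Alternatively, one can Taylor expand $\lambda\mapsto m_{p,\lambda}$ directly, using the uniform bound on its second derivative from \Cref{prop:stationary-branches} and the limit of the first derivative, $-1/(2\kappa_p)$, as $\lambda\downarrow\lambda_{\mathrm G,p}$. The triangular-array statement then follows by combining \eqref{eq:gardner-uniform-mass-law} with $\kappa_{p_j}\to\kappa_\Xi$: the difference $\delta_j\bigl(1/(2\kappa_{p_j})-1/(2\kappa_\Xi)\bigr)+O(\delta_j^2)$ is $o(\delta_j)$.

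\emph{Location law.} This is immediate from \eqref{eq:transition-Q-root}. We have $Q^{\mathrm{bif}}_{p,\lambda}=1-t_p(m_{p,\lambda},\lambda)\,y_{p,\lambda}$ with $c\le y_{p,\lambda}\le C$ uniformly on $\cN_z\times M_-\times\Lambda$, and the interior-branch parameters lie in that set. Since $t_p=p^{-1/2}e^{-m^2\lambda p/2}$, this gives \eqref{eq:gardner-uniform-location-law}.
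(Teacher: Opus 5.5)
Your proposal is correct and follows the paper's proof: $\kappa_p>0$ and $\kappa_p\to\kappa_\Xi$ come from \eqref{eq:one-sided-scalar-convexity} and the $C^3$ convergence $j_p\to J$, the location law comes directly from \eqref{eq:transition-Q-root}, and the mass law comes from the scalar stationarity equation $\partial_mj_p(m_{p,\lambda},\lambda)=0$. The paper uses your ``alternative'' route, namely a one-variable Taylor expansion of $\lambda\mapsto m_{p,\lambda}$ at $\lambda_{\mathrm G,p}$ with one-sided derivative $-1/(2\kappa_p)$ and a uniform bound on $\partial_{\lambda\lambda}m_{p,\lambda}$; your primary two-variable expansion of $\partial_mj_p$, with the a priori bound $1-m_{p,\lambda}\le C\delta$, is an equally valid minor variant.
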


\begin{proof}
By \Cref{prop:stationary-branches}, the interior branch meets the
mass-one family at
\[
 m_{p,\lambda_{\mathrm G,p}}=1.
\]
We extend $\lambda\mapsto m_{p,\lambda}$ continuously to
$\lambda=\lambda_{\mathrm G,p}$ by this value.

Along the interior branch,
\[
 \partial_mj_p(m_{p,\lambda},\lambda)=0.
\]
Differentiating this identity in $\lambda$ gives
\begin{equation}\label{eq:m-lambda-first-derivative}
 \partial_\lambda m_{p,\lambda}
 =
 -\frac{
 \partial_{m\lambda}j_p(m_{p,\lambda},\lambda)
 }{
 \partial_{mm}j_p(m_{p,\lambda},\lambda)
 }.
\end{equation}
The bounds in \eqref{eq:one-sided-scalar-convexity} give
\[
 \partial_{mm}j_p\geq c,
 \qquad
 \partial_{m\lambda}j_p\geq\frac14.
\]
The uniform $C^3$ bound for $j_p$ from
\Cref{prop:stationary-branches} also gives
$|\partial_{m\lambda}j_p|\leq C$.  Hence the derivative in
\eqref{eq:m-lambda-first-derivative} is negative and uniformly bounded. 

Taking the right-hand limit as
$\lambda\downarrow\lambda_{\mathrm G,p}$ gives
\[
 \left.
 \partial_\lambda m_{p,\lambda}
 \right|_{\lambda=\lambda_{\mathrm G,p}+}
 =
 -\frac{
 \partial_{m\lambda}j_p(1,\lambda_{\mathrm G,p})
 }{
 \partial_{mm}j_p(1,\lambda_{\mathrm G,p})
 }
 =
 -\frac1{2\kappa_p}.
\]
The positivity of the two derivatives in
\eqref{eq:kappa-p-definition}, supplied by
\eqref{eq:one-sided-scalar-convexity}, gives
\[
 \kappa_p>0.
\]

The $C^3$ convergence of $j_p$ to $J_\lambda$ in
\Cref{prop:stationary-branches}, together with
$\lambda_{\mathrm G,p}\to\lambda_{\mathrm G}$, gives
\[
 \partial_{mm}j_p(1,\lambda_{\mathrm G,p})
 \longrightarrow
 \partial_{mm}J_{\lambda_{\mathrm G}}(1)
 =
 \kappa_\Xi
\]
and
\[
 \partial_{m\lambda}j_p(1,\lambda_{\mathrm G,p})
 \longrightarrow
 \partial_{m\lambda}J_{\lambda_{\mathrm G}}(1)
 =
 \frac12.
\]
Hence
\[
 \kappa_p\longrightarrow\kappa_\Xi.
\]

Differentiating the scalar stationarity equation a second time gives
\[
 \partial_{\lambda\lambda}m_{p,\lambda}
 =
 -\frac{
 j_{mmm}(\partial_\lambda m_{p,\lambda})^2
 +2j_{mm\lambda}\partial_\lambda m_{p,\lambda}
 +j_{m\lambda\lambda}
 }{
 j_{mm}
 },
\]
where every derivative of $j_p$ on the right is evaluated at
$(m_{p,\lambda},\lambda)$.  The uniform $C^3$ bound for $j_p$, the lower
bound on $j_{mm}$, and
\eqref{eq:m-lambda-first-derivative} give
\[
 |\partial_{\lambda\lambda}m_{p,\lambda}|\leq C
\]
uniformly on a fixed one-sided neighborhood of
$\lambda_{\mathrm G,p}$.

Choose $\eta_0\in(0,\eta/2)$.  Since
$\lambda_{\mathrm G,p}\to\lambda_{\mathrm G}$, increase $p_0$ so that
\[
 |\lambda_{\mathrm G,p}-\lambda_{\mathrm G}|<\frac{\eta}{2}
\]
for every even $p\geq p_0$, and shrink $\eta_0$ if necessary so that the
preceding uniform second-derivative bound holds whenever
\[
 0<\lambda-\lambda_{\mathrm G,p}\leq\eta_0.
\]
For these choices,
\[
 \lambda_{\mathrm G,p}+\eta_0
 <
 \lambda_{\mathrm G}+\eta,
\]
so the entire parameter interval lies in the upper phase covered by
\Cref{thm:gardner}.

Taylor's theorem at $\lambda_{\mathrm G,p}$ gives, uniformly for
$0<\delta\leq\eta_0$,
\begin{align*}
 1-m_{p,\lambda_{\mathrm G,p}+\delta}
 &=
 -\left.
 \partial_\lambda m_{p,\lambda}
 \right|_{\lambda=\lambda_{\mathrm G,p}+}\delta
 +O(\delta^2)=
 \frac{\delta}{2\kappa_p}
 +O(\delta^2).
\end{align*}
Increasing $C$ if necessary gives
\eqref{eq:gardner-uniform-mass-law}.

For the location of the new atom,
\eqref{eq:transition-Q-root} gives
\[
 Q_{p,\lambda}
 =
 1-
 p^{-1/2}e^{-m_{p,\lambda}^2\lambda p/2}
 y_{p,\lambda}
 \bigl(z_p^{\mathrm{bif}}(\lambda),m_{p,\lambda}\bigr).
\]
The uniform bounds in
\Cref{prop:uniform-endpoint-reduction} give
\[
 0<c
 \leq
 y_{p,\lambda}
 \bigl(z_p^{\mathrm{bif}}(\lambda),m_{p,\lambda}\bigr)
 \leq C.
\]
Therefore, after changing $C$ if necessary,
\[
 C^{-1}p^{-1/2}e^{-m_{p,\lambda}^2\lambda p/2}
 \leq
 1-Q_{p,\lambda}
 \leq
 Cp^{-1/2}e^{-m_{p,\lambda}^2\lambda p/2},
\]
which is \eqref{eq:gardner-uniform-location-law}.

Finally, let
\[
 \lambda_j
 =
 \lambda_{\mathrm G,p_j}+\delta_j,
 \qquad
 \delta_j\longrightarrow0.
\]
For all sufficiently large $j$, one has
$\delta_j\leq\eta_0$, and
\eqref{eq:gardner-uniform-mass-law} gives
\[
 1-m_{p_j,\lambda_j}
 =
 \frac{\delta_j}{2\kappa_{p_j}}
 +O(\delta_j^2).
\]
Since $\kappa_{p_j}\to\kappa_\Xi>0$,
\[
 \frac{\delta_j}{2\kappa_{p_j}}
 =
 \frac{\delta_j}{2\kappa_\Xi}
 +o(\delta_j),
\]
and
\[
 O(\delta_j^2)=o(\delta_j).
\]
Hence
\[
 1-m_{p_j,\lambda_j}
 =
 \frac{\delta_j}{2\kappa_\Xi}
 +o(\delta_j).
\]
\end{proof}

\section{Construction of arbitrary finite RSB depth}\label{sec:consequences}

By \Cref{thm:gardner}, if an admissible finite polynomial covariance has a
nondegenerate $r$-RSB Parisi measure, then adding a sufficiently
high-degree positive monomial produces, for coefficients just above the
critical value $\lambda_{\mathrm G,p}$, a new covariance whose Parisi
measure is nondegenerate and $(r+1)$-RSB.  As
$\lambda\downarrow\lambda_{\mathrm G,p}$, the new atom has vanishing mass
and the pre-existing atoms converge to their values at the transition.
Thus the original atoms can be kept arbitrarily close to their previous
masses and locations while one new support point is added.  The resulting
covariance is again a finite polynomial with a nondegenerate Parisi
measure, so \Cref{thm:gardner} can be applied again.  Repeating this
argument yields any prescribed finite RSB depth.

\subsection{Iterated addition of support points}

The next proposition makes this iteration precise.  In addition to
prescribing the final RSB depth, one may prescribe an arbitrary lower bound
for the interaction order added at each stage.

\begin{proposition}\label{prop:finite-completion}
Let
\[
 \mu_*
 =
 \sum_{i=0}^r w_i\delta_{q_i},
 \qquad
 0=q_0<q_1<\cdots<q_r<1,
 \qquad
 w_i>0,
\]
be the nondegenerate $r$-RSB Parisi measure of an admissible finite
polynomial covariance $\Xi$.  Fix an integer $k>r$, a number
$\varepsilon>0$, and arbitrary prescribed thresholds
\[
 P_{r+1},\ldots,P_k>0.
\]
Then there exist even integers
\[
 \deg\Xi<p_{r+1}<\cdots<p_k,
 \qquad
 p_j>P_j\quad(r+1\leq j\leq k),
\]
and positive coefficients
\[
 \lambda_{r+1},\ldots,\lambda_k
\]
such that
\[
 \widetilde\Xi(q)
 =
 \Xi(q)+\sum_{j=r+1}^k\lambda_jq^{p_j}
\]
has a nondegenerate $k$-RSB Parisi measure
\[
 \widetilde\mu
 =
 \sum_{i=0}^k\widetilde w_i\delta_{\widetilde q_i},
 \qquad
 0=\widetilde q_0<\widetilde q_1<\cdots<\widetilde q_k<1,
\]
whose first $r+1$ atoms can be labeled so that
\[
 \max_{0\leq i\leq r}
 \left(
 |\widetilde w_i-w_i|
 +
 |\widetilde q_i-q_i|
 \right)
 <\varepsilon.
\]
\end{proposition}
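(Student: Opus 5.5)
We argue by induction on $k-r$, so it suffices to treat $k=r+1$ with a single threshold $P_{r+1}$ and tolerance $\varepsilon$. The general case then follows by iteration. At each stage the output covariance is again admissible, since a positive monomial of even degree larger than $\deg\Xi\ge4$ has been added, and its Parisi measure is again nondegenerate. The input to the next stage therefore satisfies the hypotheses of the proposition. We split the tolerance, demanding at each of the $k-r$ stages that the atoms inherited from $\mu_*$ move by at most $\varepsilon/(k-r)$; the triangle inequality then gives the final bound. At each stage we choose the new degree larger than the previous degree and the prescribed threshold. This is allowed because \Cref{thm:gardner} holds for every sufficiently large even $p$.

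For the single step, apply \Cref{thm:gardner} to $\Xi$. Fix an even $p>\max\{\deg\Xi,P_{r+1}\}$ large enough that the theorem applies, and take $\lambda=\lambda_{\mathrm G,p}+\delta$ with small $\delta>0$. By part (iii), the Parisi measure $\mu^{\mathrm{bif}}_{p,\lambda}$ of $\Xi+\lambda q^p$ is nondegenerate and $(r+1)$-RSB. Its atoms are $m_{p,\lambda}w_i(z_p^{\mathrm{bif}}(\lambda))$ located at $q_i(z_p^{\mathrm{bif}}(\lambda))$ for $0\le i\le r$, together with a new atom of mass $1-m_{p,\lambda}$ at $Q^{\mathrm{bif}}_{p,\lambda}$, which is the largest support point. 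Labeling the first $r+1$ atoms as the lower atoms in order, it remains to show that these lower atoms are within $\varepsilon$ of $(w_i,q_i)$.

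We control this distance in two stages, first sending $p\to\infty$ and then $\delta\to0$. By \Cref{prop:stationary-branches}(i),(iii), as $\lambda\downarrow\lambda_{\mathrm G,p}$ we have $m_{p,\lambda}\to1$ and $z_p^{\mathrm{bif}}(\lambda)\to z_p^-(\lambda_{\mathrm G,p})$. Moreover $\|z_p^--z_*\|_{C^0(\Lambda)}\to0$ as $p\to\infty$. So we first choose $p$ large enough that $|z_p^-(\lambda_{\mathrm G,p})-z_*|$ is small. Then, for that fixed $p$, we choose $\delta$ small enough that $1-m_{p,\lambda}$ and $|z_p^{\mathrm{bif}}(\lambda)-z_p^-(\lambda_{\mathrm G,p})|$ are small; the bound $1-m\le C\delta$ from \Cref{cor:gardner-asymptotics} can be used here. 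Since the masses $w_i(z)$ and locations $q_i(z)$ are continuous (indeed coordinate) functions of $z$, the lower atoms $(m\,w_i(z),q_i(z))$ are then within $\varepsilon$ of $(w_i,q_i)$.

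The main obstacle is bookkeeping rather than analysis. We must check that the nondegeneracy hypothesis of \Cref{thm:gardner} is exactly what part (iii) produces, and this is supplied by \Cref{prop:interior-nondegeneracy}. We must also track the old atoms correctly through the iteration, using that new atoms are always appended near $1$ and above all earlier ones. Admissibility needs care, namely that $c_4>0$ is preserved and the added exponents are distinct even integers at least $6$; this is immediate since each added degree exceeds the current degree.
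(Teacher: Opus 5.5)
Your proposal is correct and follows the paper's proof essentially step for step. The paper also proves a one-step statement with tolerance $\tau=\varepsilon/(k-r)$, chooses the quantifiers in the same order, and iterates using that each new atom lies above all inherited ones. First an even $p>\max\{\deg\Xi,P\}$ is taken large enough that $z_p^-(\lambda_{\mathrm G,p})$ is close to $z_*$ by \Cref{prop:stationary-branches}~(i), and then, for that fixed $p$, $\lambda\downarrow\lambda_{\mathrm G,p}$ via the meeting assertion in part~(iii). Appealing to \Cref{cor:gardner-asymptotics} for $1-m_{p,\lambda}\le C\delta$ is harmless but unnecessary, since $m_{p,\lambda}\to1$ already suffices.
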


\begin{proof}
We first prove a one-step statement.  Let $\Xi^{(s)}$ be an admissible
finite polynomial covariance with nondegenerate $s$-RSB Parisi measure
\[
 \mu^{(s)}
 =
 \sum_{i=0}^s
 w_i^{(s)}\delta_{q_i^{(s)}},
 \qquad
 0=q_0^{(s)}<\cdots<q_s^{(s)}<1,
\]
and let $z^{(s)}$ denote its atomic coordinate vector.  Fix a tolerance
$\tau>0$ and a prescribed degree threshold $P>0$.

The masses and support locations are continuous functions of the atomic
coordinate vector on a fixed neighborhood of $z^{(s)}$ in which all
positive masses remain positive and all support gaps remain positive.
Hence, when $s\geq1$, there exists $\delta_z>0$ such that
\[
 |z-z^{(s)}|<\delta_z
\]
implies
\begin{equation}\label{eq:finite-completion-coordinate-continuity}
 \max_{0\leq i\leq s}
 \left(
 |w_i(z)-w_i^{(s)}|
 +
 |q_i(z)-q_i^{(s)}|
 \right)
 <\frac{\tau}{2}.
\end{equation}
When $s=0$, the atomic coordinate vector is empty and
\eqref{eq:finite-completion-coordinate-continuity} is automatic.

Apply \Cref{thm:gardner} to the covariance $\Xi^{(s)}$.  To distinguish
this application from the other stages of the iteration, write $
 \lambda_{\mathrm G,p}^{(s)}$ 
for its transition value, and write
\[
 z_{p,s}^-(\lambda),
 \qquad
 z_{p,s}^{\mathrm{bif}}(\lambda),
 \qquad
 m_{p,\lambda}^{(s)}
\]
for the corresponding mass-one and interior-branch parameters. The
transition theorem applies for every sufficiently large even interaction
order, and \Cref{prop:stationary-branches}~\textup{(i)} gives
$z_{p,s}^-(\lambda)\to z^{(s)}$ uniformly on the relevant parameter interval.
Choose an even $p$ such that 
\[
 p>\max\{\deg\Xi^{(s)},P\}
\]
and, when $s\geq1$,
\[
 \left|
 z_{p,s}^-(\lambda_{\mathrm G,p}^{(s)})
 -
 z^{(s)}
 \right|
 <\delta_z.
\]
For $s=0$ there is no lower atomic coordinate to control.  In either case,
\eqref{eq:finite-completion-coordinate-continuity} gives
\begin{equation}\label{eq:finite-completion-critical-closeness}
 \max_{0\leq i\leq s}
 \left(
 \left|
 w_i\bigl(z_{p,s}^-(\lambda_{\mathrm G,p}^{(s)})\bigr)
 -
 w_i^{(s)}
 \right|
 +
 \left|
 q_i\bigl(z_{p,s}^-(\lambda_{\mathrm G,p}^{(s)})\bigr)
 -
 q_i^{(s)}
 \right|
 \right)
 <\frac{\tau}{2}.
\end{equation}

For this fixed $p$, consider the interior branch with
$\lambda>\lambda_{\mathrm G,p}^{(s)}$.  By
\Cref{prop:stationary-branches},
\[
 z_{p,s}^{\mathrm{bif}}(\lambda)
 \longrightarrow
 z_{p,s}^-(\lambda_{\mathrm G,p}^{(s)}),
 \qquad
 m_{p,\lambda}^{(s)}
 \longrightarrow1
\]
as
\[
 \lambda\downarrow\lambda_{\mathrm G,p}^{(s)}.
\]
The lower atoms of the interior Parisi measure have locations and masses
\[
 q_i^{\mathrm{new}}
 =
 q_i\bigl(z_{p,s}^{\mathrm{bif}}(\lambda)\bigr),
 \qquad
 w_i^{\mathrm{new}}
 =
 m_{p,\lambda}^{(s)}
 w_i\bigl(z_{p,s}^{\mathrm{bif}}(\lambda)\bigr),
 \qquad
 0\leq i\leq s.
\]
Their continuity in $(z,m)$ allows us to choose
$\lambda>\lambda_{\mathrm G,p}^{(s)}$ sufficiently close to
$\lambda_{\mathrm G,p}^{(s)}$ that
\begin{equation}\label{eq:finite-completion-upper-closeness}
 \max_{0\leq i\leq s}
 \left(
 \left|
 w_i^{\mathrm{new}}
 -
 w_i\bigl(z_{p,s}^-(\lambda_{\mathrm G,p}^{(s)})\bigr)
 \right|
 +
 \left|
 q_i^{\mathrm{new}}
 -
 q_i\bigl(z_{p,s}^-(\lambda_{\mathrm G,p}^{(s)})\bigr)
 \right|
 \right)
 <\frac{\tau}{2}.
\end{equation}
Combining
\eqref{eq:finite-completion-critical-closeness} and
\eqref{eq:finite-completion-upper-closeness} gives
\begin{equation}\label{eq:finite-completion-one-step-closeness}
 \max_{0\leq i\leq s}
 \left(
 |w_i^{\mathrm{new}}-w_i^{(s)}|
 +
 |q_i^{\mathrm{new}}-q_i^{(s)}|
 \right)
 <\tau.
\end{equation}

For this value of $\lambda$, \Cref{thm:gardner} identifies the interior
candidate as the unique nondegenerate $(s+1)$-RSB Parisi measure of
\[
 \Xi^{(s+1)}(q)
 =
 \Xi^{(s)}(q)+\lambda q^p.
\]
The coefficient $\lambda$ is positive because the parameter interval in
\Cref{thm:gardner} is contained in $(0,\infty)$.  The additional atom has
mass
\[
 1-m_{p,\lambda}^{(s)}>0
\]
and lies at a point
\[
 Q_{p,\lambda}^{(s)}
 >
 \max\supp\nu_{z_{p,s}^{\mathrm{bif}}(\lambda)}.
\]
Thus the $s+1$ support points inherited from $\mu^{(s)}$ retain their
ordering and labels, while the new support point is added above them.

Because $p$ is even and $\lambda>0$, the new covariance
\[
 \Xi^{(s+1)}(q)
 =
 \Xi^{(s)}(q)+\lambda q^p
\]
is again an admissible finite polynomial covariance.  Its Parisi measure is
nondegenerate by \Cref{thm:gardner}.  It therefore satisfies the hypotheses
needed to apply the same one-step construction again.  This completes the
one-step argument.

We now iterate it.  Set
\[
 d=k-r,
 \qquad
 \tau=\frac{\varepsilon}{d},
\]
and initialize
\[
 \Xi^{(r)}=\Xi,
 \qquad
 \mu^{(r)}=\mu_*.
\]
For notational purposes set
\[
 p_r=\deg\Xi.
\]
Suppose that, for some $s\in\{r,\ldots,k-1\}$, the covariance
$\Xi^{(s)}$ has degree $p_s$ and has a nondegenerate $s$-RSB Parisi
measure.  Apply the one-step construction with tolerance $\tau$ and
prescribed threshold $P_{s+1}$, choosing the new even interaction order
so that
\[
 p_{s+1}>\max\{p_s,P_{s+1}\}.
\]
This produces a positive coefficient $\lambda_{s+1}$ such that
\[
 \Xi^{(s+1)}(q)
 =
 \Xi^{(s)}(q)+\lambda_{s+1}q^{p_{s+1}}
\]
has a nondegenerate $(s+1)$-RSB Parisi measure.  By
\eqref{eq:finite-completion-one-step-closeness}, every support point
inherited from the preceding stage changes in mass and location by less
than $\tau$.

After $d=k-r$ stages,
\[
 \Xi^{(k)}(q)
 =
 \Xi(q)+\sum_{j=r+1}^k\lambda_jq^{p_j}
\]
has a nondegenerate $k$-RSB Parisi measure.  At every stage the newly
created support point is added above all support points inherited from the
preceding stage, so the original atoms with indices
$0,\ldots,r$ can be followed unambiguously throughout the iteration.  For
each such index, the triangle inequality gives
\[
 |\widetilde w_i-w_i|
 +
 |\widetilde q_i-q_i|
 <
 \sum_{s=r}^{k-1}\tau
 =
 (k-r)\frac{\varepsilon}{k-r}
 =
 \varepsilon.
\]
The inequalities
\[
 \deg\Xi<p_{r+1}<\cdots<p_k,
 \qquad
 p_j>P_j
\]
hold by construction, completing the proof.
\end{proof}

\subsection{Proofs of the main results}\label{s:mainresults}

It remains to supply an explicit replica-symmetric starting point for the
iteration and to deduce the two main theorems.

\begin{lemma}\label{lem:replica-symmetric-seed}
Let
\[
    \Xi_0(q)=\frac{1}{16}q^4.
\]
Then $\Xi_0$ is admissible, its Parisi measure is $\delta_0$, and this
$0$-RSB Parisi measure is nondegenerate.
\end{lemma}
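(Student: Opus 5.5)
Admissibility is immediate: $\Xi_0=c_4q^4$ with $c_4=1/16>0$ and an empty sum. For $r=0$ the Hessian condition (i) of \Cref{def:nondegenerate} holds by convention and (iii) is vacuous. So the plan is to show that $\mu=\delta_0$ satisfies $f_{\Xi_0,\delta_0}(q)<0$ for all $q\in(0,1]$. By \Cref{prop:first-variation-criterion} this makes $\delta_0$ the unique Parisi measure, and it also gives the exact zero-set condition (ii).

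For $\mu=\delta_0$ one has $\alpha_\mu\equiv1$ on $[0,1]$, so the Cole--Hopf formula gives $u(q,x)=\log\cosh x+\tfrac12(\Xi_0'(1)-\Xi_0'(q))$. Hence $u_x=\tanh x$, and the drift of the optimal diffusion \eqref{eq:optimal-diffusion} is $\Xi_0''(q)\tanh X_q$. Rather than computing $\Gamma$ explicitly, I will use the same crude estimate as in \Cref{lem:origin-contact}, with explicit constants. By $|\tanh x|\le|x|$ we get $\Gamma(q)\le\E X_q^2$. The martingale part has variance $\Xi_0'(q)=q^3/4$, and the drift integral is bounded in absolute value by $\int_0^q\Xi_0''=\Xi_0'(q)$, since $|\tanh|\le1$. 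Therefore
\[
 \E X_q^2\le 2\Xi_0'(q)+2\Xi_0'(q)^2=\frac{q^3}{2}+\frac{q^6}{8}\le\frac58 q^3 .
\]
For $q\in(0,1]$ this gives $\Gamma(q)-q\le \tfrac58q^3-q<0$.

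Since $\Xi_0''(q)=\tfrac34q^2>0$ on $(0,1]$, we get $f'(q)=\tfrac12\Xi_0''(q)(\Gamma(q)-q)<0$ there. Combined with $f(0)=0$, this yields $f(q)<0$ for $0<q\le1$, so the zero set of $f$ is exactly $\{0\}=\supp\delta_0$.

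The only point needing care is the moment bound. If the factor $2$ from $(a+b)^2\le2a^2+2b^2$ made the constants too loose, I would instead use $\Gamma(q)\le\E\tanh^2(X_q)\le1$ together with a sharper variance bound, or track the Itô computation exactly. The coefficient $1/16$ leaves ample room, so I do not expect an obstacle here.
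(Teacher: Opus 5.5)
Your proof is correct and follows the paper's own argument. Both use the bound $\Gamma(q)\le\E X_q^2\le 2\Xi_0'(q)+2\Xi_0'(q)^2<q$, the resulting sign $f'<0$ on $(0,1]$, the first-variation criterion, and nondegeneracy by the $r=0$ convention. The only difference is cosmetic: you use the explicit solution $u_x=\tanh x$ (valid because $\alpha_{\delta_0}\equiv1$), whereas the paper derives $|u_x|\le|x|$ from the curvature bounds of \Cref{lem:curvature}.
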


\begin{proof}
Let $u$ and $X$ denote the Parisi PDE solution and optimal diffusion
associated with $(\Xi_0,\delta_0)$, and abbreviate
\[
    \Gamma(q)=\Gamma_{\Xi_0,\delta_0}(q),
    \qquad
    f(q)=f_{\Xi_0,\delta_0}(q).
\]
By symmetry, $u_x(q,0)=0$. By \Cref{lem:curvature},
\[
    |u_x(q,x)|\leq1,
    \qquad
    0\leq u_{xx}(q,x)\leq1.
\]
Therefore
\[
    |u_x(q,x)|
    =
    \left|
        \int_0^x u_{xx}(q,y)\dd y
    \right|
    \leq |x|.
\]
Since $\alpha_{\delta_0}(q)=1$, write the optimal diffusion as
\[
    X_q=A_q+M_q,
\]
where
\[
    A_q=\int_0^q
        \Xi_0''(s)u_x(s,X_s)\dd s,
    \qquad
    M_q=\int_0^q
        \sqrt{\Xi_0''(s)}\dd W_s.
\]
The preceding bound on $u_x$ gives
\[
    |A_q|
    \leq
    \int_0^q\Xi_0''(s)\dd s
    =
    \Xi_0'(q),
\]
while
\[
    \E M_q^2
    =
    \int_0^q\Xi_0''(s)\dd s
    =
    \Xi_0'(q).
\]
Consequently,
\[
    \Gamma(q)
    =
    \E u_x(q,X_q)^2
    \leq
    \E X_q^2
    \leq
    2\Xi_0'(q)+2\Xi_0'(q)^2.
\]
Since
\[
    \Xi_0'(q)=\frac14q^3,
\]
for $0<q\leq1$ we obtain
\[
    \Gamma(q)
    \leq
    \frac12q^3+\frac18q^6
    \leq
    \frac58q
    <
    q.
\]
Thus
\[
    \Gamma(q)-q<0,
    \qquad 0<q\leq1.
\]
Since $\Xi_0''(q)>0$ for $q>0$, the definition of the first-variation
function gives
\[
    f'(q)
    =
    \frac12\Xi_0''(q)\bigl(\Gamma(q)-q\bigr)
    <0,
    \qquad 0<q\leq1.
\]
Together with $f(0)=0$, this shows that
\[
    f(q)\leq0,
    \qquad 0\leq q\leq1,
\]
and
\[
    \{q\in[0,1]:f(q)=0\}=\{0\},
    \qquad
    \delta_0\bigl(\{q:f(q)=0\}\bigr)=1.
\]
\Cref{prop:first-variation-criterion}  therefore identifies
$\delta_0$ as the unique Parisi measure of $\Xi_0$.

It remains to verify nondegeneracy. For a $0$-RSB measure, the atomic
coordinate vector is empty and its empty Hessian is positive definite by
the convention preceding \Cref{def:nondegenerate}. The exact zero-set
condition was proved above, while the negative-curvature condition at
positive contacts is vacuous. Thus $\delta_0$ is nondegenerate.
\end{proof}

\begin{proof}[Proof of \Cref{thm:main}]
Fix $k\geq1$ and take the admissible covariance
\[
    \Xi^{(0)}(q)=\frac{1}{16}q^4.
\]
By \Cref{lem:replica-symmetric-seed}, its Parisi measure is
nondegenerate and $0$-RSB.

Apply \Cref{prop:finite-completion} with initial depth $r=0$, target
depth $k$, $\varepsilon=1$, and thresholds
\[
    P_1=\cdots=P_k=4.
\]
It gives even integers
\[
    4=p_0<p_1<\cdots<p_k
\]
and positive coefficients $\lambda_1,\ldots,\lambda_k$ such that
\[
    \Xi^{(k)}(q)
    =
    \frac{1}{16}q^4
    +
    \sum_{j=1}^k\lambda_jq^{p_j}
\]
has a nondegenerate $k$-RSB Parisi measure.

For these fixed exponents, let $\mathcal O_k$ be the set of
$\mathbf c=(c_0,\ldots,c_k)\in(0,\infty)^{k+1}$ for which
\[
    \Xi_{\mathbf c}(q)
    =
    \sum_{j=0}^k c_jq^{p_j}
\]
has a nondegenerate $k$-RSB Parisi measure. By
\Cref{prop:stability}, $\mathcal O_k$ is open. Since
\[
    \left(
        \frac{1}{16},
        \lambda_1,\ldots,\lambda_k
    \right)
    \in\mathcal O_k,
\]
it is nonempty. In particular, every $\mathbf c\in\mathcal O_k$
has a $k$-RSB Parisi measure.
\end{proof}

\begin{proof}[Proof of \Cref{thm:gardner-main}]
By \Cref{lem:replica-symmetric-seed}, there exists an admissible finite
polynomial covariance with a nondegenerate $0$-RSB Parisi measure. For
each $r\geq1$, applying \Cref{prop:finite-completion} to this covariance
with initial depth $0$ and target depth $r$ produces an admissible finite
polynomial covariance whose Parisi measure is nondegenerate and $r$-RSB.
Thus, for every $r\geq0$, there exists an admissible finite polynomial
covariance $\Xi$ with a nondegenerate $r$-RSB Parisi measure.

Fix such a covariance and define
\[
    \mathfrak p(b)
    =
    \inf_{\mu\in\M}\Pp_{b^2\Xi}(\mu),
    \qquad
    s_\Xi
    =
    \mathfrak p(1)-\mathfrak p'(1).
\]
By \Cref{prop:positive-entropy},
\[
    s_\Xi>0.
\]
Set
\[
    \lambda_{\mathrm G}=2s_\Xi.
\]
By \Cref{thm:gardner}, there exists
\[
    0<\eta<\lambda_{\mathrm G}=2s_\Xi
\]
such that, for every sufficiently large even integer $p$, there is a
unique
\[
    \lambda_{\mathrm G,p}
    \in
    (\lambda_{\mathrm G}-\eta,\lambda_{\mathrm G}+\eta)
\]
with the following phase diagram for
\[
    \Xi_{p,\lambda}(q)
    =
    \Xi(q)+\lambda q^p,
    \qquad
    \lambda\in
    [2s_\Xi-\eta,2s_\Xi+\eta]:
\]
\[
    \begin{cases}
        r\mathrm{-RSB},
        & \lambda\leq\lambda_{\mathrm G,p},\\
        (r+1)\mathrm{-RSB},
        & \lambda>\lambda_{\mathrm G,p}.
    \end{cases}
\]
Moreover, the same theorem gives
\[
    \lambda_{\mathrm G,p}
    \longrightarrow
    \lambda_{\mathrm G}
    =
    2s_\Xi
\]
as $p\to\infty$ through even integers. These are precisely the
assertions of \Cref{thm:gardner-main}.
\end{proof}

\appendix

\crefalias{section}{appendix}

\section{Finite Cole--Hopf recursions}\label{app:recursion}

This appendix develops the finite-dimensional Cole--Hopf estimates underlying
\Cref{prop:finite-step-regularity,prop:stability}, the moving-cutoff argument in
\Cref{lem:moving-cutoff}, and the lower-overlap approximation in
\Cref{lem:lower-recursion}.

\subsection{Fixed-order stability of finite Cole--Hopf recursions}

For an integer $k\geq1$, let $\mathsf T^k$ be the space of $C^k$ functions
$g:\R\to\R$ such that $g^{(j)}$ is bounded and uniformly continuous for
$1\leq j\leq k$, equipped with the norm
\[
 \|g\|_{\mathsf T^k}
 =
 |g(0)|+\sum_{j=1}^k\|g^{(j)}\|_\infty.
\]

Fix an integer $L\geq1$.  Let $\Omega$ be a finite-dimensional parameter
domain and let $\Theta\subset\Omega$ be compact.  If $\Omega$ has a boundary,
we assume that locally near $\Theta$, after an affine change of coordinates,
it is relatively open in
\[
 [0,\infty)^k\times\mathbb R^{d-k}
\]
for some $0\leq k\leq d$.  At boundary faces and their intersections,
parameter derivatives are interpreted as the corresponding coordinate-wise
one-sided limits.  The same convention applies after adjoining closed
interval parameters such as $\tau\in[0,1]$.

For $\vartheta\in\Omega$, consider levels
\[
 a_0(\vartheta),\ldots,a_{L-1}(\vartheta),
\]
variance increments
\[
 V_0(\vartheta),\ldots,V_{L-1}(\vartheta),
\]
and a terminal function $g_\vartheta$.  We collect these into
\[
 \mathbf d_\vartheta
 =
 \bigl(
 a_0(\vartheta),\ldots,a_{L-1}(\vartheta),
 V_0(\vartheta),\ldots,V_{L-1}(\vartheta),
 g_\vartheta
 \bigr)
\]
and call
\[
 \mathbf d=(\mathbf d_\vartheta)_{\vartheta\in\Omega}
\]
a family of recursion data.

Suppose that the scalar recursion data $a_i$ and $V_i$ depend
$C^3$-smoothly on $\vartheta$ on a neighborhood of $\Theta$.  For the terminal
family we use Banach-scale $C^3$ regularity: for every multi-index
$|\alpha|\leq3$, the derivative $\partial_\vartheta^\alpha g_\vartheta$
exists as a derivative in the norm of
$\mathsf T^{9-2|\alpha|}$ and depends continuously on $\vartheta$ in that
norm.  Define
\[
 \|\mathbf d\|_{3;\Theta}
 =
 \max_{\substack{0\leq i<L\\|\alpha|\leq3}}
 \sup_{\vartheta\in\Theta}
 \left(
   |\partial_\vartheta^\alpha a_i(\vartheta)|
   +
   |\partial_\vartheta^\alpha V_i(\vartheta)|
 \right)
 +
 \max_{|\alpha|\leq3}
 \sup_{\vartheta\in\Theta}
 \|\partial_\vartheta^\alpha g_\vartheta\|_
   {\mathsf T^{9-2|\alpha|}}.
\]
For two families of recursion data $\mathbf d$ and
$\widetilde{\mathbf d}$ on the same parameter domain, set
\[
 \delta_{3;\Theta}(\mathbf d,\widetilde{\mathbf d})
 =
 \|\mathbf d-\widetilde{\mathbf d}\|_{3;\Theta},
\]
where the difference is taken componentwise.

For later use, the \emph{order-$J$ data bounds} mean the analogous assumptions
with parameter derivatives through order $J$ and terminal-function norms
\[
 \mathsf T^{2J+3-2|\alpha|},
 \qquad |\alpha|\leq J.
\]
The corresponding comparison norm is defined componentwise in the same way.

In the lemmas below, constants denoted by $C$ may depend only on the fixed
recursion length, the compact bounds appearing in the relevant statement,
and the dimension of the parameter domain; when a positive lower variance
bound $V_-$ is among the hypotheses, they may also depend on $V_-$.  Constants
denoted by $C_c$ may additionally depend on $c$.  In particular, none of
these constants depends on the particular family of recursion data satisfying
the stated assumptions.

The next one-step estimate records the centering that is needed for uniform
control in the spatial variable.

\begin{lemma}\label{lem:centered-cole-hopf-calculus}
Let $a_\vartheta$, $V_\vartheta$, and $g_\vartheta$ be a parameter family on
$\Theta$ such that
\[
 0\leq a_\vartheta\leq A,
 \qquad
 V_-\leq V_\vartheta\leq V_+,
 \qquad
 \|\partial_xg_\vartheta\|_\infty\leq1,
\]
and
\[
 \max_{|\alpha|\leq3}\sup_{\vartheta\in\Theta}
 \left(
  |\partial_\vartheta^\alpha a_\vartheta|
  +|\partial_\vartheta^\alpha V_\vartheta|
  +\|\partial_\vartheta^\alpha g_\vartheta\|_
       {\mathsf T^{9-2|\alpha|}}
 \right)
 \leq B.
\]
Assume the derivatives of $g_\vartheta$ are genuine Banach-space derivatives
in the sense specified above.  Set
\[
 \mathcal C_\vartheta
 =
 \mathcal C_{a_\vartheta,V_\vartheta}(g_\vartheta)
 =
 T_{a_\vartheta,V_\vartheta}g_\vartheta-g_\vartheta.
\]
Then
\begin{equation}\label{eq:centered-cole-hopf-bounds}
 \max_{|\alpha|\leq3}\sup_{\vartheta\in\Theta}
 \left(
  \|\partial_\vartheta^\alpha\mathcal C_\vartheta\|_
      {\mathsf T^{9-2|\alpha|}}
  +
  \|\partial_\vartheta^\alpha\mathcal C_\vartheta\|_\infty
 \right)
 \leq C.
\end{equation}
Moreover, for every $|\alpha|\leq3$, the map
\[
 \vartheta\longmapsto
 \partial_\vartheta^\alpha\mathcal C_\vartheta
\]
is continuous from $\Theta$ into $L^\infty(\R)$.  The same
$L^\infty$-continuity holds in the order-$J$ extension below.

For a second family $(\widetilde a,\widetilde V,\widetilde g)$ satisfying the
same assumptions, suppose that
\begin{equation}\label{eq:centered-step-distance}
 \Delta
 =
 \max_{|\alpha|\leq3}\sup_{\vartheta\in\Theta}
 \left(
  |\partial_\vartheta^\alpha(a-\widetilde a)|
  +|\partial_\vartheta^\alpha(V-\widetilde V)|
  +\|\partial_\vartheta^\alpha(g-\widetilde g)\|_
       {\mathsf T^{7-2|\alpha|}}
 \right).
\end{equation}
Then
\begin{equation}\label{eq:centered-cole-hopf-comparison}
 \max_{|\alpha|\leq3}\sup_{\vartheta\in\Theta}
 \left(
  \|\partial_\vartheta^\alpha
     (\mathcal C-\widetilde{\mathcal C})\|_
       {\mathsf T^{7-2|\alpha|}}
  +
  \|\partial_\vartheta^\alpha
     (\mathcal C-\widetilde{\mathcal C})\|_\infty
 \right)
 \leq C\Delta.
\end{equation}
For every fixed $J$, the same conclusions hold with the primary spatial
index $2J+3-2|\alpha|$ and comparison index
$2J+1-2|\alpha|$, provided the corresponding order-$J$ data bounds hold.
\end{lemma}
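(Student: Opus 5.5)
The plan is to write the centered Cole--Hopf map as an explicit Gaussian integral in which $x$ enters only through the integrand $g(x+\sqrt V Z)-g(x)$. Then every spatial and parameter derivative is an expectation of bounded quantities, because $g'$ and its derivatives are bounded even though $g$ itself may grow linearly. Concretely, for $a>0$ we write
\[
 \mathcal C_\vartheta(x)=a^{-1}\log\E\exp\bigl(a\,\Delta_\vartheta(x,Z)\bigr),
 \qquad
 \Delta_\vartheta(x,Z)=g_\vartheta(x+\sqrt{V}Z)-g_\vartheta(x),
\]
and $\mathcal C=\E\Delta$ for $a=0$. Since $\|\partial_xg\|_\infty\le1$, we have $|\Delta|\le\sqrt V|Z|$, which gives $\|\mathcal C\|_\infty\le C$ directly. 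To handle $a\downarrow0$ uniformly, I would use the representation
\[
 \mathcal C=\int_0^1\E_{sa}\Delta\,\dd s,
\]
where $\E_{b}$ denotes the tilt by $e^{b\Delta}$ and $\partial_a(a\mathcal C)=\E_a\Delta$. This avoids dividing by $a$, and all tilted expectations are controlled by Gaussian moments $\E e^{A\sqrt{V_+}|Z|}|Z|^k<\infty$.

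The key steps, in order, are as follows.

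First, spatial derivatives. Differentiating in $x$ produces tilted cumulants of $\partial_x^j\Delta=g^{(j)}(x+\sqrt VZ)-g^{(j)}(x)$. Each such term is bounded by $\|g\|_{\mathsf T^{j}}$ times polynomial moments, so $\|\mathcal C\|_{\mathsf T^9}\le C$.

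Second, parameter derivatives. Derivatives in $a$ produce cumulants of $\Delta$. Derivatives in $V$ are handled with Gaussian integration by parts: write $\partial_V\E[\Phi(\sqrt VZ)]=\tfrac12\E[\Phi''(\sqrt VZ)]$, which is valid since $V\ge V_->0$ (alternatively, differentiate $\sqrt V$ and use $|Z|$-moments). This trades one $V$-derivative for two spatial derivatives, and that is exactly why the index drops by $2$ per parameter derivative, giving $\mathsf T^{9-2|\alpha|}$. Derivatives of $g_\vartheta$ land in $\mathsf T^{9-2|\alpha|}$ by hypothesis. The chain rule through $\vartheta$ then gives \eqref{eq:centered-cole-hopf-bounds}, with constants depending only on $A,V_\pm,B$.

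Third, continuity in $L^\infty$. Each derivative is an expectation of an integrand that is continuous in $\vartheta$ in sup norm, since the $g$-derivatives are uniformly continuous and Banach-continuous. Dominated convergence with an $e^{C|Z|}$ majorant, which is uniform in $x$, then gives continuity.

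Fourth, the comparison \eqref{eq:centered-cole-hopf-comparison}. Interpolate linearly, $(a_s,V_s,g_s)=(1-s)(a,V,g)+s(\widetilde a,\widetilde V,\widetilde g)$; the interpolants still satisfy the hypotheses, since $|g_s'|\le1$ by convexity. The difference $\mathcal C-\widetilde{\mathcal C}$ is then $\int_0^1\partial_s\mathcal C_s\,\dd s$, a directional derivative in $(a,V,g)$. Applying the previous estimates with one extra derivative costs two spatial orders, which accounts for the passage from $9-2|\alpha|$ to $7-2|\alpha|$. The order-$J$ version is verbatim with $2J+3$ in place of $9$.

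The main obstacle is the uniformity in $x$. The integrand $\Delta$ itself, and likewise $\partial_\vartheta^\alpha g$ at order zero, can grow, so I must check that only differences $\partial_\vartheta^\alpha g(x+\sqrt VZ)-\partial_\vartheta^\alpha g(x)$ ever appear. Any undifferentiated value $\partial_\vartheta^\alpha g(x)$ must cancel because of the centering. I would verify this by expanding $\partial^\alpha_\vartheta$ of the tilted cumulant formula and bounding each difference by $\|\partial_\vartheta^\alpha g\|_{\mathsf T^1}\sqrt V|Z|$. Boundedness of $a$ in a compact range ensures the exponential tilts $e^{a\sqrt{V_+}|Z|}$ stay integrable uniformly.
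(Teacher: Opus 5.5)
Your proposal is correct and follows the paper's proof essentially step for step. Both use centered increments $\Delta_xg(d)$ with $|\Delta_xg(d)|\le|d|$, the tilted-mean integral $\int_0^1\langle\Delta_xg\rangle_{ta}\,\dd t$ to remove the $a=0$ singularity, cumulant formulas in which function directions enter only through centered increments, dominated convergence for norm continuity, and linear interpolation for the comparison. The only difference is in the $V$-derivatives: the paper differentiates the Gaussian density directly ($\partial_V^rp_V=P_{r,V}\,p_V$, which costs no spatial derivatives since $V\ge V_->0$), rather than using your heat-equation trade of one $V$-derivative for two spatial derivatives. So in the paper the two-order loss per parameter derivative comes from the data norms rather than from $V$, but your accounting still fits the available regularity exactly, in both the primary and the comparison estimates.
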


\begin{proof}
For a function $r$, write
\[
 \Delta_xr(d)=r(x+d)-r(x).
\]
With $d$ distributed according to the centered Gaussian density
$p_V(0,d)$, we have
\begin{equation}\label{eq:centered-cole-hopf-representation}
 \mathcal C_{a,V}(g)(x)
 =
 \begin{cases}
  a^{-1}\log\displaystyle\int
  e^{a\Delta_xg(d)}p_V(0,d)\dd d,&a>0,\\[2mm]
  \displaystyle\int\Delta_xg(d)p_V(0,d)\dd d,&a=0.
 \end{cases}
\end{equation}
Since $g$ is one-Lipschitz,
\begin{equation}\label{eq:centered-increment-bound}
 |\Delta_xg(d)|\leq|d|
\end{equation}
uniformly in $x$.  For $s\in[0,A]$, let
$\langle\,\cdot\,\rangle_{s,V,g,x}$ denote expectation under the
probability measure proportional to
\[
 e^{s\Delta_xg(d)}p_V(0,d)\dd d.
\]
The normalizing denominator is bounded away from zero uniformly in $x$,
$s\in[0,A]$, and $V\in[V_-,V_+]$, and every tilted moment of $|d|$ of any
fixed order is uniformly bounded.

The apparent singularity at $a=0$ is removed by writing
\begin{equation}\label{eq:centered-level-integral}
 \mathcal C_{a,V}(g)(x)
 =
 \int_0^1
 \left\langle\Delta_xg(d)\right\rangle_{ta,V,g,x}\dd t.
\end{equation}
For a terminal-function direction $\dot g$, differentiation of
\eqref{eq:centered-cole-hopf-representation} gives, for $a>0$ and by
continuity also at $a=0$,
\begin{equation}\label{eq:centered-terminal-derivative}
 D_g\mathcal C_{a,V}(g)[\dot g](x)
 =
 \left\langle\Delta_x\dot g(d)\right\rangle_{a,V,g,x}.
\end{equation}
Higher terminal-function derivatives are tilted cumulants of the increments
$\Delta_x\dot g_j$, multiplied by the appropriate power of $a$; derivatives
in $a$ are obtained from \eqref{eq:centered-level-integral} and are cumulants
of $\Delta_xg$.  Thus every function-parameter derivative occurs through a
centered increment.  In particular,
\begin{equation}\label{eq:centered-direction-bound}
 |\Delta_x\dot g(d)|
 \leq
 \|\partial_x\dot g\|_\infty|d|.
\end{equation}

Variance derivatives are uniformly bounded here because $V$ is bounded away from zero. 
For every fixed $r$,
\[
 \partial_V^r p_V(0,d)
 =P_{r,V}(d)p_V(0,d),
\]
where $P_{r,V}$ is a polynomial in $d$ whose coefficients are uniformly
bounded for $V\in[V_-,V_+]$.  Hence all mixed derivatives in $a$, $V$, and
the terminal function are finite sums of normalized tilted moments with a
common integrable envelope
\[
 C(1+|d|)^M
 \exp\left(-\frac{d^2}{4V_+}+A|d|\right).
\]
Spatial differentiation preserves the same structure, since
\[
 \partial_x^j\Delta_xr(d)
 =r^{(j)}(x+d)-r^{(j)}(x).
\]
The bounds in the statement therefore give the pointwise formulas and the
uniform $x$-bounds in \eqref{eq:centered-cole-hopf-bounds}.

We also need these derivatives in the stated Banach norms, rather than only
pointwise.  If $r_n\to r$ in $\mathsf T^k$, then for every fixed $d$,
\[
 \sup_x|\Delta_x(r_n-r)(d)|
 \leq
 |d|\|r_n'-r'\|_\infty,
\]
while for $1\leq j\leq k$,
\[
 \sup_x
 |\partial_x^j\Delta_x(r_n-r)(d)|
 \leq
 2\|r_n^{(j)}-r^{(j)}\|_\infty.
\]
Together with the common envelope above, dominated convergence gives
continuity of every displayed derivative in the required $\mathsf T^k$ norm
and, for the centered corrections, in $L^\infty(\R)$.
Applying the fundamental theorem of calculus in each parameter direction then
shows that the pointwise derivative formulas are the genuine Banach-space
derivatives.  Uniform continuity of the highest spatial derivative is
preserved for the same reason.  This proves
\eqref{eq:centered-cole-hopf-bounds}.

For the comparison, interpolate linearly between the two triples.  For $s\in[0,1]$, set
\[
 a^s=(1-s)\widetilde a+sa,
 \qquad
 V^s=(1-s)\widetilde V+sV,
 \qquad
 g^s=(1-s)\widetilde g+sg,
\]
and define
\[
 \mathcal C^s=\mathcal C_{a^s,V^s}(g^s).
\]
One
interpolation derivative produces either a scalar difference
$a-\widetilde a$, $V-\widetilde V$, or a centered increment of a function
difference.  Formulas
\eqref{eq:centered-level-integral}--\eqref{eq:centered-direction-bound} and the
Gaussian-density derivative formula therefore give the same common envelope
with one factor bounded by $\Delta$.  After $|\alpha|\leq3$ parameter
derivatives, an output spatial derivative of order at most
$7-2|\alpha|$ requires no more than the corresponding derivative of a
difference factor; if that factor has parameter order $|\beta|\leq|\alpha|$,
then
\[
 7-2|\alpha|\leq7-2|\beta|.
\]
All remaining factors come from one of the two undifferenced families and are
controlled by the primary bounds.  Hence
\[
 \max_{|\alpha|\leq3}
 \sup_{\vartheta\in\Theta}
 \left(
  \|\partial_\vartheta^\alpha\partial_s\mathcal C^s\|_
       {\mathsf T^{7-2|\alpha|}}
  +
  \|\partial_\vartheta^\alpha\partial_s\mathcal C^s\|_\infty
 \right)
 \leq C\Delta.
\]
Integrating in $s$ proves
\eqref{eq:centered-cole-hopf-comparison}.  For general $J$, the preceding
derivative count applies with spatial index $2J+3-2|\alpha|$ and
comparison index $2J+1-2|\alpha|$. 
\end{proof}

\begin{lemma}
\label{lem:backward-recursion-stability}
Suppose that, on a neighborhood of $\Theta$,
\[
 0\leq a_i\leq A,
 \qquad
 V_-\leq V_i\leq V_+,
 \qquad
 \|\partial_x g_\vartheta\|_\infty\leq1,
\]
and that
\[
 \|\mathbf d\|_{3;\Theta}\leq B.
\]
For each $\vartheta\in\Theta$, define
\[
 u_L(\vartheta,\cdot)=g_\vartheta,
 \qquad
 u_i(\vartheta,\cdot)
 =
 T_{a_i(\vartheta),V_i(\vartheta)}
 u_{i+1}(\vartheta,\cdot),
 \qquad 0\leq i<L.
\]
We suppress the dependence on $\vartheta$ below.

For every $i$ and $\vartheta\in\Theta$,
\[
 \|\partial_xu_i(\vartheta,\cdot)\|_\infty\leq1,
\]
and
\begin{equation}\label{eq:fixed-recursion-function-bounds}
 \max_{i,|\alpha|\leq3}
 \sup_{\vartheta\in\Theta}
 \|\partial_\vartheta^\alpha u_i\|_{\mathsf T^{9-2|\alpha|}}
 \leq C.
\end{equation}
For another family $\widetilde{\mathbf d}$ satisfying the same assumptions,
with corresponding recursion $\widetilde u_i$,
\begin{equation}\label{eq:fixed-recursion-function-stability}
 \max_{i,|\alpha|\leq3}
 \sup_{\vartheta\in\Theta}
 \|\partial_\vartheta^\alpha(u_i-\widetilde u_i)\|_
       {\mathsf T^{7-2|\alpha|}}
 \leq
 C\delta_{3;\Theta}(\mathbf d,\widetilde{\mathbf d}).
\end{equation}
For every fixed $J$, the same assertions hold under the order-$J$ data bounds,
with the primary and comparison spatial indices
$2J+3-2|\alpha|$ and $2J+1-2|\alpha|$, respectively.
\end{lemma}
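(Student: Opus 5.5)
The plan is a backward induction on $i$ from $L$ down to $0$, writing each step as a centered Cole--Hopf correction so that \Cref{lem:centered-cole-hopf-calculus} applies directly. By definition,
\[
 u_i=u_{i+1}+\mathcal C_{a_i,V_i}(u_{i+1}),
\]
so
\[
 u_i=g+\sum_{j=i}^{L-1}\mathcal C_{a_j,V_j}(u_{j+1}).
\]

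The Lipschitz bound $\|\partial_xu_i\|_\infty\le1$ comes first and needs no parameter regularity. With the tilted expectation $\E_a$ from the proof of \Cref{lem:curvature},
\[
 \partial_xT_{a,V}g=\E_a g',
\]
so $|\partial_xT_{a,V}g|\le\|g'\|_\infty$, and induction from $\|\partial_x g_\vartheta\|_\infty\le1$ propagates the bound. This bound is exactly the hypothesis that \Cref{lem:centered-cole-hopf-calculus} requires at every level. The growth of $u_i$ in $x$ is only linear, which is why we use the $\mathsf T^k$ norm: it controls $|u(0)|$ plus derivatives, not a sup norm.

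For \eqref{eq:fixed-recursion-function-bounds} the induction hypothesis is
\[
 \max_{|\alpha|\le3}\sup_\Theta\|\partial_\vartheta^\alpha u_{i+1}\|_{\mathsf T^{9-2|\alpha|}}\le C_{i+1},
\]
together with the genuine Banach-derivative property. The family $(a_i,V_i,u_{i+1})$ then satisfies the hypotheses of \Cref{lem:centered-cole-hopf-calculus} with $B=\max(\|\mathbf d\|_{3;\Theta},C_{i+1})$. That lemma bounds $\partial_\vartheta^\alpha\mathcal C$ in $\mathsf T^{9-2|\alpha|}$ and gives $L^\infty$-continuity, and adding this to $\partial^\alpha_\vartheta u_{i+1}$ gives $C_i$. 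Since $L$ is fixed, the constants compound only finitely many times.

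One point needs care here. The lemma treats $g_\vartheta$ as a parameter-dependent input, so the chain rule through $u_{i+1}$ (Faà di Bruno in $\vartheta$) is already built into its hypothesis. We therefore only need to confirm that $\vartheta\mapsto u_{i+1}$ has Banach derivatives in the graded spaces $\mathsf T^{9-2|\alpha|}$ that are continuous in $\vartheta$. This holds by the inductive step, since \Cref{lem:centered-cole-hopf-calculus} asserts exactly this continuity for $\mathcal C$ and the sum of two such families inherits it.

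For the stability estimate \eqref{eq:fixed-recursion-function-stability} we induct again, writing
\[
 u_i-\widetilde u_i=(u_{i+1}-\widetilde u_{i+1})+\bigl(\mathcal C_{a_i,V_i}(u_{i+1})-\mathcal C_{\widetilde a_i,\widetilde V_i}(\widetilde u_{i+1})\bigr).
\]
The second bracket is controlled by \eqref{eq:centered-cole-hopf-comparison}, with $\Delta$ bounded by $\delta_{3;\Theta}$ plus the inductive bound on $u_{i+1}-\widetilde u_{i+1}$ in the comparison norm $\mathsf T^{7-2|\alpha|}$. Both families obey the primary bounds from the previous step, which is the remaining hypothesis of \Cref{lem:centered-cole-hopf-calculus}. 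This gives a linear recursion $D_i\le D_{i+1}+C(\delta+D_{i+1})$ with $D_L\le\delta$, hence $D_0\le C\delta$. The order-$J$ version is identical, using indices $2J+3-2|\alpha|$ and $2J+1-2|\alpha|$.

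The main obstacle is the loss of spatial derivatives. We must check that the comparison index does not degrade further with each level. It does not, because \Cref{lem:centered-cole-hopf-calculus} maps comparison index $7-2|\alpha|$ to the same index $7-2|\alpha|$; the loss of two derivatives happens only once, between the primary and comparison scales. The primary index is likewise preserved step to step, so no induction on regularity loss is needed.
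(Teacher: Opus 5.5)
Your proposal is correct and follows the paper's proof essentially step for step. Both arguments rest on the same three ingredients:
\begin{itemize}
\item preservation of the one-Lipschitz bound via the tilted-expectation formula for $\partial_x T_{a,V}g$;
\item the decomposition $u_i=u_{i+1}+\mathcal C_{a_i,V_i}(u_{i+1})$, with a downward induction through \Cref{lem:centered-cole-hopf-calculus} giving the primary bounds;
\item a second downward induction using \eqref{eq:centered-cole-hopf-comparison} with $\Delta\le\delta_{3;\Theta}(\mathbf d,\widetilde{\mathbf d})+D_{i+1}$ for the comparison estimate.
\end{itemize}
Your observation that the primary and comparison indices are each preserved across a step, so the two-derivative loss occurs only once, is exactly why the finite induction closes.
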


\begin{proof}
A Cole--Hopf step preserves the one-Lipschitz property.  Indeed, for $a>0$,
\[
 \partial_xT_{a,V}g(x)
 =
 \frac{\E[g'(x+\sqrt VZ)e^{ag(x+\sqrt VZ)}]}
      {\E e^{ag(x+\sqrt VZ)}},
\]
with the untilted expectation when $a=0$.  Hence
$\|\partial_xT_{a,V}g\|_\infty\leq\|g'\|_\infty$.

Write
\[
 u_i
 =
 u_{i+1}
 +
 \mathcal C_{a_i,V_i}(u_{i+1}).
\]
Starting from $u_L=g$, downward induction and
\Cref{lem:centered-cole-hopf-calculus} give the Banach-space derivatives and
bounds in \eqref{eq:fixed-recursion-function-bounds}.  In particular, at each
step the centered correction and all its parameter derivatives through order
three are bounded uniformly in $x$ by
\eqref{eq:centered-cole-hopf-bounds}.

For the comparison, suppose inductively that the difference at level $i+1$
satisfies \eqref{eq:fixed-recursion-function-stability}.  The scalar data
differences at level $i$ are bounded by
$\delta_{3;\Theta}(\mathbf d,\widetilde{\mathbf d})$, while the function
difference entering the step has precisely the comparison regularity required
in \eqref{eq:centered-step-distance}.  Therefore
\eqref{eq:centered-cole-hopf-comparison} gives
\[
 \max_{|\alpha|\leq3}\sup_{\vartheta\in\Theta}
 \left\|
 \partial_\vartheta^\alpha
 \left[
  \mathcal C_{a_i,V_i}(u_{i+1})
  -
  \mathcal C_{\widetilde a_i,\widetilde V_i}(\widetilde u_{i+1})
 \right]
 \right\|_{\mathsf T^{7-2|\alpha|}}
 \leq
 C\delta_{3;\Theta}(\mathbf d,\widetilde{\mathbf d}).
\]
Adding the inductive bound for $u_{i+1}-\widetilde u_{i+1}$ closes the
downward induction.  The terminal case follows directly from the definition
of $\delta_{3;\Theta}$.  This proves
\eqref{eq:fixed-recursion-function-stability}.  The order-$J$ assertion follows
from the order-$J$ part of \Cref{lem:centered-cole-hopf-calculus} by the same
finite induction.
\end{proof}
The preceding lemma controls the backward Cole--Hopf functions.  We next
propagate these estimates to the forward laws of the associated optimal
diffusions.

\begin{lemma}
\label{lem:forward-law-stability}
Suppose that the hypotheses of
\Cref{lem:backward-recursion-stability} hold.  
Define 
\begin{equation}\label{eq:fixed-recursion-kernel}
 K_i(x,\dd y)
 =
 p_{V_i}(x,y)e^{a_i u_{i+1}(y)-a_i u_i(x)}\dd y,
 \qquad
 \rho_0=\delta_0,\qquad
 \rho_{i+1}=\rho_iK_i.
\end{equation}
Then, for every $c>0$, the maps
$\vartheta\mapsto\rho_i(\vartheta)$ are $C^3$ in the weighted measure space
$\mathcal M_c$ defined in \Cref{sec:geometry}, and
\begin{equation}\label{eq:fixed-recursion-law-stability}
 \max_{i,|\alpha|\leq3}\sup_{\vartheta\in\Theta}
 \|\partial_\vartheta^\alpha\rho_i\|_{c,*}\leq C_c.
\end{equation}
For $|\alpha|\geq1$, the parameter derivative
$\partial_\vartheta^\alpha\rho_i$ is a finite signed measure of total mass
zero.

For another family $\widetilde{\mathbf d}$ satisfying the same assumptions,
with corresponding kernels $\widetilde K_i$ and laws $\widetilde\rho_i$,
and with
\[
 \widetilde V_i\geq V_->0
\]
for every $i$,
\begin{equation}\label{eq:fixed-recursion-law-comparison}
 \max_{i,|\alpha|\leq3}\sup_{\vartheta\in\Theta}
 \|\partial_\vartheta^\alpha(\rho_i-\widetilde\rho_i)\|_{c,*}
 \leq
 C_c\delta_{3;\Theta}(\mathbf d,\widetilde{\mathbf d}).
\end{equation}

More generally, for every fixed $J$, if the recursion data satisfy the
order-$J$ bounds defined above, then the forward laws depend $C^J$-smoothly
as $\mathcal M_c$-valued maps for every $c>0$.
\end{lemma}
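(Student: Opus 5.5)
The plan is to treat the forward laws by induction on $i$, running in the direction opposite to the backward induction of \Cref{lem:backward-recursion-stability}. First I will check that each $K_i$ is a Markov kernel. For $a_i>0$ the Cole--Hopf identity gives
\[
 \int p_{V_i}(x,y)e^{a_iu_{i+1}(y)}\dd y=e^{a_iu_i(x)},
\]
and for $a_i=0$ this is trivial. Consequently every $\rho_i$ is a probability measure, so $\partial_\vartheta^\alpha\rho_i$ has total mass zero whenever $|\alpha|\ge1$. The main tool will be a weighted kernel estimate. Since $\|\partial_xu_j\|_\infty\le1$,
\[
 |a_iu_{i+1}(y)-a_iu_i(x)|\le A|y-x|+|a_i\mathcal C_{a_i,V_i}(u_{i+1})(x)|\le A|y-x|+C,
 \]
where the bound on the centered correction comes from \eqref{eq:centered-cole-hopf-bounds}. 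This yields
\[
 \int e^{c|y|}K_i(x,\dd y)\le C_ce^{c|x|},
\]
because $\int p_{V}(x,y)e^{(c+A)|y-x|}\dd y$ is uniformly bounded for $V\le V_+$. Hence $\eta\mapsto\eta K_i$ is a bounded operator on $\mathcal M_c$ with norm at most $C_c$.

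Next comes the parameter derivatives. Differentiating the density of $K_i$ in $\vartheta$ yields the same density multiplied by a finite sum of products. The factors in these products are $\partial^\beta_\vartheta$ of $\log p_{V_i}(x,y)$, which are polynomials in $(y-x)$ with coefficients bounded because $V_i\ge V_-$. They also include $\partial^\beta(a_iu_{i+1}(y)-a_iu_i(x))$. I will write the latter as
\[
 a_i\bigl(u_{i+1}(y)-u_{i+1}(x)\bigr)-a_i\mathcal C_i(x),
\]
which allows both pieces to be controlled: the increment is at most $\|\partial_x\partial^\beta u_{i+1}\|_\infty|y-x|$ by \eqref{eq:fixed-recursion-function-bounds}, and $\partial^\beta\mathcal C_i$ is bounded in $L^\infty$ by \eqref{eq:centered-cole-hopf-bounds}. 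Each derivative of the kernel of order at most $3$ is therefore dominated by $C(1+|y-x|)^Mp_{V_i}(x,y)e^{A|y-x|+C}$. By the same weighted estimate, it acts boundedly on $\mathcal M_c$.

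The Leibniz rule then gives
\[
 \partial^\alpha\rho_{i+1}=\sum_{\beta\le\alpha}\binom\alpha\beta(\partial^\beta\rho_i)(\partial^{\alpha-\beta}K_i),
\]
and with $\rho_0=\delta_0$ this closes the induction for \eqref{eq:fixed-recursion-law-stability}. To see that these are genuine $C^3$ derivatives in $\mathcal M_c$, and not merely formal ones, I will argue by dominated convergence using the same envelope together with the $L^\infty$-continuity of $\partial^\alpha\mathcal C$ from \Cref{lem:centered-cole-hopf-calculus}, and then apply the fundamental theorem of calculus in each parameter direction.

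For the comparison \eqref{eq:fixed-recursion-law-comparison}, I will use the telescoping identity
\[
 \rho_{i+1}-\widetilde\rho_{i+1}=(\rho_i-\widetilde\rho_i)K_i+\widetilde\rho_i(K_i-\widetilde K_i).
\]
To bound $K_i-\widetilde K_i$, I will interpolate linearly between the two data families, as in the proof of \Cref{lem:centered-cole-hopf-calculus}; the hypothesis $\widetilde V_i\ge V_-$ ensures that the interpolated variances stay bounded below. The $s$-derivative of the interpolated kernel density produces exactly one factor of size $\delta_{3;\Theta}$: either a scalar difference, a centered increment of $u_{i+1}-\widetilde u_{i+1}$ controlled by \eqref{eq:fixed-recursion-function-stability}, or a correction difference controlled by \eqref{eq:centered-cole-hopf-comparison}. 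The remaining factors are handled by the same envelope as before. The order-$J$ version follows by the identical argument. The main obstacle is uniformity in $x$: the exponent $a_iu_{i+1}(y)-a_iu_i(x)$ must be controlled by $|y-x|$ alone, rather than by $|x|$. This is handled through the centered decomposition together with the $L^\infty$ bounds on $\mathcal C$ and its parameter derivatives.
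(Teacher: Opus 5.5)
Your proposal is correct and follows the paper's proof essentially step for step. The shared steps are the centered splitting of the kernel exponent into an increment of $u_{i+1}$ plus the bounded correction $\mathcal C_{a_i,V_i}(u_{i+1})$, a Gaussian envelope in $y-x$ making the kernels (and their parameter derivatives) act boundedly on $\mathcal M_c$, Leibniz induction from $\rho_0=\delta_0$ with dominated convergence for genuine derivatives, and the identity $\rho_{i+1}-\widetilde\rho_{i+1}=(\rho_i-\widetilde\rho_i)K_i+\widetilde\rho_i(K_i-\widetilde K_i)$ for the comparison. The only differences are cosmetic: you check the Markov property explicitly, which the paper uses implicitly via $\rho_i(\R)=1$, and you bound $K_i-\widetilde K_i$ by interpolating the exponent data, where the paper uses a direct mean-value comparison, giving the same estimate.
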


\begin{proof}
\medskip
\noindent\emph{Step 1: Kernel estimates and differentiability.}
For a signed kernel $K$, set
\[
 \|K\|_{\mathscr K_c}
 =
 \sup_x e^{-c|x|}
 \int e^{c|y|}|K|(x,\dd y).
\]
This norm is chosen so that kernels act boundedly on $\mathcal M_c$:
for every $\eta\in\mathcal M_c$,
\begin{equation}\label{e:kernelineq}
 \|\eta K\|_{c,*}
 \leq
 \|\eta\|_{c,*}\|K\|_{\mathscr K_c}.
\end{equation}
Thus uniform estimates on the kernels and their parameter derivatives can be
propagated through the recursion
$\rho_{i+1}=\rho_iK_i$.

We first establish such estimates for $K_i$.  Write $y=x+d$.  The
exponential factor in \eqref{eq:fixed-recursion-kernel} has exponent
\[
 \begin{aligned}
 a_i\bigl(u_{i+1}(x+d)-u_i(x)\bigr)
 ={}&
 a_i\bigl(u_{i+1}(x+d)-u_{i+1}(x)\bigr)-a_i(u_i-u_{i+1})(x).
 \end{aligned}
\]
By \Cref{lem:backward-recursion-stability}, for every $|\alpha|\leq3$,
\[
 \left|
 \partial_\vartheta^\alpha u_{i+1}(x+d)
 -
 \partial_\vartheta^\alpha u_{i+1}(x)
 \right|
 \leq C|d|.
\]
Moreover,
\[
 u_i-u_{i+1}
 =
 \mathcal C_{a_i,V_i}(u_{i+1}),
\]
and \eqref{eq:centered-cole-hopf-bounds}, applied at the $i$th recursion
step, gives
\begin{equation}\label{eq:forward-centered-correction-bound}
 \max_{|\alpha|\leq3}
 \sup_{\vartheta\in\Theta}\sup_x
 \left|
 \partial_\vartheta^\alpha(u_i-u_{i+1})(x)
 \right|
 \leq C.
\end{equation}
Thus every parameter derivative of the exponent in
\eqref{eq:fixed-recursion-kernel} is bounded by a polynomial in $1+|d|$,
uniformly in $x$ and the parameters.  Consequently, parameter derivatives of
the exponential factor are bounded by its original exponential factor times
a polynomial in $1+|d|$.

The Gaussian factor is controlled similarly.  Since
\[
 V_-\leq V_i,\widetilde V_i\leq V_+,
\]
each derivative of $p_{V_i}(x,x+d)$ with respect to $V_i$ is a Gaussian
density times a polynomial in $d$, with coefficients bounded uniformly in
the parameters.  Moreover, the weight in the kernel norm satisfies
\[
 e^{-c|x|}e^{c|x+d|}
 \leq e^{c|d|}.
\]
Combining these estimates and slightly weakening the Gaussian decay, the
weighted density associated with every parameter derivative of the kernel
through the required order is bounded, uniformly in $x$, by
\begin{equation}\label{eq:fixed-recursion-kernel-envelope}
 C(1+|d|)^M
 \exp\left(
 -\frac{d^2}{4V_+}+C|d|+c|d|
 \right)
\end{equation}
for some fixed $M<\infty$.  Here the factor $e^{c|d|}$ accounts for the
weight $e^{c(|x+d|-|x|)}$ in the $\mathscr K_c$ norm.  The right-hand side
is integrable in $d$.

It remains to justify that the explicit parameter derivatives of the kernel
density are genuine derivatives in the full kernel norm, which includes a
supremum over $x$.  After the change of variables $y=x+d$, write
\[
 K_\vartheta(x,\dd y)
 =
 k_\vartheta(x,y-x)\,\dd y.
\]
The Banach-space continuity in
\Cref{lem:backward-recursion-stability}, together with the centered-correction
bounds in \eqref{eq:centered-cole-hopf-bounds}, shows that, for every fixed
$d$, each required parameter derivative of the increment term and of the
centered correction depends continuously on $\vartheta$, uniformly in $x$.
The explicit Gaussian formulas have the same property.  Hence every
parameter derivative of $k_\vartheta(x,d)$ through order three depends
continuously on $\vartheta$, uniformly in $x$, and, after multiplication by
$e^{c(|x+d|-|x|)}$, is bounded by the common integrable envelope in
\eqref{eq:fixed-recursion-kernel-envelope}. 

For example, for a scalar parameter, take as the candidate first derivative
the kernel with density
\[
 \partial_\vartheta k_\vartheta(x,y-x).
\]
The fundamental theorem of calculus gives
\[
 \frac{k_{\vartheta+h}(x,d)-k_\vartheta(x,d)}{h}
 -\partial_\vartheta k_\vartheta(x,d)
 =
 \int_0^1
 \left(
 \partial_\vartheta k_{\vartheta+th}(x,d)
 -\partial_\vartheta k_\vartheta(x,d)
 \right)\,\dd t.
\]
Consequently,
\[
 \begin{aligned}
 &\left\|
 \frac{K_{\vartheta+h}-K_\vartheta}{h}
 -\partial_\vartheta K_\vartheta
 \right\|_{\mathscr K_c}\\
 &\qquad\leq
 \int_{\R}
 \sup_x e^{c(|x+d|-|x|)}
 \left|
 \frac{k_{\vartheta+h}(x,d)-k_\vartheta(x,d)}{h}
 -\partial_\vartheta k_\vartheta(x,d)
 \right|\,\dd d,
 \end{aligned}
\]
where $\partial_\vartheta K_\vartheta$ denotes the candidate kernel above.
For each fixed $d$, the integrand tends to zero by the uniform-in-$x$
continuity established above.  For $h$ sufficiently small, the interpolation
points $\vartheta+th$ remain in the parameter neighborhood on which the
uniform estimates hold, and the integrand is bounded by a constant multiple
of the integrable envelope in
\eqref{eq:fixed-recursion-kernel-envelope}.  Dominated convergence therefore
gives
\[
 \left\|
 \frac{K_{\vartheta+h}-K_\vartheta}{h}
 -\partial_\vartheta K_\vartheta
 \right\|_{\mathscr K_c}
 \longrightarrow0.
\]

Applying the same argument successively to parameter derivatives of orders
one and two proves the existence of kernel derivatives through order three.
The same dominated-convergence argument shows that these derivatives are
continuous in the $\mathscr K_c$ norm, while
\eqref{eq:fixed-recursion-kernel-envelope} gives their uniform bounds.
Thus the kernels are $C^3$ in $\mathscr K_c$.  For a multidimensional
parameter, the argument is applied to each coordinate direction and to the
resulting mixed derivatives.

\medskip
\noindent\emph{Step 2: Propagation to the forward laws.}
We now propagate these estimates to the forward laws.  Since
\[
 \rho_{i+1}=\rho_iK_i,
\]
the product rule gives
\[
 \partial_\vartheta^\alpha\rho_{i+1}
 =
 \sum_{\beta\leq\alpha}
 \binom{\alpha}{\beta}
 (\partial_\vartheta^\beta\rho_i)
 (\partial_\vartheta^{\alpha-\beta}K_i).
\]
The kernel inequality \eqref{e:kernelineq} therefore gives
\[
 \|\partial_\vartheta^\alpha\rho_{i+1}\|_{c,*}
 \leq
 C\sum_{\beta\leq\alpha}
 \|\partial_\vartheta^\beta\rho_i\|_{c,*}
 \|\partial_\vartheta^{\alpha-\beta}K_i\|_{\mathscr K_c}.
\]
Starting from $\rho_0=\delta_0$ and proceeding inductively in $i$ proves
\eqref{eq:fixed-recursion-law-stability}.

\medskip
\noindent\emph{Step 3: Stability under perturbation.}
For the comparison estimate, we first compare the two kernels directly.
After writing $y=x+d$, \eqref{eq:fixed-recursion-function-stability} gives,
for every $|\alpha|\leq3$,
\[
 \left|
 \partial_\vartheta^\alpha
 \left[
  (u_{i+1}-\widetilde u_{i+1})(x+d)
  -(u_{i+1}-\widetilde u_{i+1})(x)
 \right]
 \right|
 \leq
 C\delta_{3;\Theta}(\mathbf d,\widetilde{\mathbf d})|d|.
\]
Moreover, by \eqref{eq:centered-cole-hopf-comparison},
\[
 \max_{|\alpha|\leq3}
 \sup_{\vartheta\in\Theta}\sup_x
 \left|
 \partial_\vartheta^\alpha
 \left[
  (u_i-u_{i+1})-(\widetilde u_i-\widetilde u_{i+1})
 \right](x)
 \right|
 \leq
 C\delta_{3;\Theta}(\mathbf d,\widetilde{\mathbf d}).
\]
Together with the scalar-data comparison in the definition of
$\delta_{3;\Theta}$, these estimates show that the parameter derivatives
through order three of the two kernel exponents differ by at most
\[
 C\delta_{3;\Theta}(\mathbf d,\widetilde{\mathbf d})(1+|d|).
\]
Since
\[
 V_-\leq V_i,\widetilde V_i\leq V_+,
\]
the mean-value theorem applied to the explicit Gaussian density and its
$V$-derivatives gives the same comparison for the Gaussian factors, with the
common envelope \eqref{eq:fixed-recursion-kernel-envelope}.  The product rule
therefore yields
\begin{equation}\label{eq:fixed-recursion-kernel-comparison}
 \max_{i,|\alpha|\leq3}
 \sup_{\vartheta\in\Theta}
 \left\|
  \partial_\vartheta^\alpha(K_i-\widetilde K_i)
 \right\|_{\mathscr K_c}
 \leq
 C_c\delta_{3;\Theta}(\mathbf d,\widetilde{\mathbf d}).
\end{equation}

Set
\[
 D_i=\rho_i-\widetilde\rho_i.
\]
Then $D_0=0$ and
\[
 D_{i+1}
 =
 D_iK_i+\widetilde\rho_i(K_i-\widetilde K_i).
\]
After applying $\partial_\vartheta^\alpha$, the product rule, the kernel
inequality \eqref{e:kernelineq}, the primary bounds
\eqref{eq:fixed-recursion-law-stability}, and
\eqref{eq:fixed-recursion-kernel-comparison} give inductively
\[
 \max_{|\alpha|\leq3}
 \sup_{\vartheta\in\Theta}
 \|\partial_\vartheta^\alpha D_i\|_{c,*}
 \leq
 C_c\delta_{3;\Theta}(\mathbf d,\widetilde{\mathbf d}).
\]
This proves \eqref{eq:fixed-recursion-law-comparison}.

Finally, the $C^3$ dependence just proved is as a map into
$\mathcal M_c$, so every positive-order parameter derivative is a finite
signed measure with finite weighted total variation.  Since
$\rho_i(\vartheta)(\R)=1$ for every $\vartheta$, differentiation also gives
\[
 \partial_\vartheta^\alpha\rho_i(\R)=0,
 \qquad |\alpha|\geq1.
\]

More generally, for every fixed $J$, if the recursion data satisfy the
order-$J$ bounds defined above, the same kernel argument gives $C^J$
dependence of the forward laws as $\mathcal M_c$-valued maps for every
$c>0$.
\end{proof}
The remaining cancellation needed below is most transparent after normalizing
the tilted heat kernel.  For $a,W\geq0$, set
\begin{equation}\label{eq:normalized-observable-def}
 \mathcal N_{a,W}(h,\phi)(x)
 =
 \frac{\mathsf H_W(e^{ah}\phi)(x)}
      {\mathsf H_W(e^{ah})(x)},
\end{equation}
where $\mathcal N_{a,0}(h,\phi)=\phi$.

\begin{lemma}\label{lem:normalized-observable-calculus}
Let $\Upsilon$ be a compact finite-dimensional parameter set with the same
one-sided boundary convention as above.  Suppose that
$a_\eta\in[0,A]$, $W_\eta\in[0,W_+]$, $h_\eta$, and $\phi_\eta$ depend on
$\eta\in\Upsilon$ through three parameter derivatives, that
$\|\partial_xh_\eta\|_\infty\leq1$, and that
\[
 \max_{|\alpha|\leq3}\sup_{\eta\in\Upsilon}
 \left(
  |\partial_\eta^\alpha a_\eta|
  +|\partial_\eta^\alpha W_\eta|
  +\|\partial_\eta^\alpha h_\eta\|_{\mathsf T^{9-2|\alpha|}}
 \right)
 \leq B.
\]
Assume also that, for $|\alpha|\leq3$, the spatial derivatives of
$\partial_\eta^\alpha\phi_\eta$ through order
$7-2|\alpha|$ are bounded by $B$ uniformly in $\eta$ and $x$, and that the
highest such derivatives are uniformly continuous in $x$, uniformly in
$\eta$.  All these parameter derivatives are assumed continuous in the
indicated norms.  Then
\[
 \eta\longmapsto
 \mathcal N_{a_\eta,W_\eta}(h_\eta,\phi_\eta)
\]
is $C^3$ as an $L^\infty(\R)$-valued map and
\begin{equation}\label{eq:normalized-observable-bounds}
 \max_{|\alpha|\leq3}
 \sup_{\eta\in\Upsilon}
 \left\|
 \partial_\eta^\alpha
 \mathcal N_{a_\eta,W_\eta}(h_\eta,\phi_\eta)
 \right\|_\infty
 \leq C.
\end{equation}

If $(\widetilde a,\widetilde W,\widetilde h,\widetilde\phi)$ is a second family
satisfying the same primary assumptions and, for every $|\alpha|\leq2$,
\[
 \sup_{\eta\in\Upsilon}
 \|\partial_\eta^\alpha(h_\eta-\widetilde h_\eta)\|_
      {\mathsf T^{7-2|\alpha|}}
 \leq\varepsilon,
\]
while the spatial derivatives of
$\partial_\eta^\alpha(\phi_\eta-\widetilde\phi_\eta)$ through order
$5-2|\alpha|$ are bounded by $\varepsilon$ uniformly in $\eta$ and $x$,
and the parameter derivatives of $a-\widetilde a$ and
$W-\widetilde W$ through order two are bounded by $\varepsilon$, then
\begin{equation}\label{eq:normalized-observable-comparison}
 \max_{|\alpha|\leq2}
 \sup_{\eta\in\Upsilon}
 \left\|
 \partial_\eta^\alpha
 \left(
  \mathcal N_{a_\eta,W_\eta}(h_\eta,\phi_\eta)
  -
  \mathcal N_{\widetilde a_\eta,\widetilde W_\eta}
       (\widetilde h_\eta,\widetilde\phi_\eta)
 \right)
 \right\|_\infty
 \leq C\varepsilon.
\end{equation}
More generally, for every fixed $J\geq1$, the primary $C^J$ estimate holds
with spatial indices $2J+3-2|\alpha|$ for $h$ and
$2J+1-2|\alpha|$ for $\phi$, for $|\alpha|\leq J$.  The corresponding
comparison estimate holds through parameter order $J-1$, with spatial indices
$2J+1-2|\alpha|$ for $h$ and $2J-1-2|\alpha|$ for $\phi$. 
\end{lemma}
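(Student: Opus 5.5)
The plan is to reduce everything to tilted Gaussian expectations with a common integrable envelope, exactly as in the proof of \Cref{lem:centered-cole-hopf-calculus}. Write $d=\sqrt{W}Z$ and
\[
 \mathcal N_{a,W}(h,\phi)(x)
 =
 \frac{\E\bigl[e^{a\Delta_xh(\sqrt WZ)}\phi(x+\sqrt WZ)\bigr]}
      {\E\, e^{a\Delta_xh(\sqrt WZ)}},
\]
having multiplied numerator and denominator by $e^{-ah(x)}$. Only the centered increment $\Delta_xh$ then appears in the exponent, and the one-Lipschitz bound gives $|\Delta_xh(d)|\le|d|$. Because the denominator is at least $e^{-A\E|\sqrt{W_+}Z|}$ by Jensen, it is bounded below uniformly. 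The representation in the variable $Z$, rather than the density $p_W$, is essential here, since $W$ may vanish. I would therefore never differentiate the Gaussian density in $W$.

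Parameter derivatives fall on three objects. A derivative on $a$ or on $h$ produces a factor $\Delta_x\partial_\eta^\beta h(\sqrt WZ)$, which is bounded by $C|Z|$. A derivative on $\phi$ produces $\partial_\eta^\beta\phi(x+\sqrt WZ)$, which is bounded. A derivative on $W$ acts through the argument $\sqrt W Z$, and this is the delicate point, since $\partial_W\sqrt W$ blows up at $W=0$. To avoid it, I would use Gaussian integration by parts or, more cleanly, write $\E F(x+\sqrt W Z)=\mathsf H_W F(x)$ and use $\partial_W\mathsf H_W F=\tfrac12\mathsf H_W F''$. Each $W$-derivative thus costs two spatial derivatives of the integrand $e^{a\Delta_xh}\phi$ (or $e^{a\Delta_x h}$), with $x$ frozen in the increment's base point. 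This is exactly why the hypotheses lose two spatial derivatives per parameter derivative: $h\in\mathsf T^{9-2|\alpha|}$ and $\phi$ with $7-2|\alpha|$ derivatives. After at most three parameter derivatives, the integrands involve at most six spatial derivatives of $h$ and of $\phi$ (counting the losses from $W$-derivatives). Every term is then a product of a bounded or polynomially growing factor, $(1+|Z|)^M$, with $e^{a\Delta_x h}$, dominated by $C(1+|Z|)^Me^{A\sqrt{W_+}|Z|}$, which is Gaussian-integrable uniformly in $x$ and $\eta$. The quotient rule for the normalized expectation then gives \eqref{eq:normalized-observable-bounds}.

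To obtain genuine $C^3$ dependence in $L^\infty$, I would repeat the argument of \Cref{lem:centered-cole-hopf-calculus}. For fixed $Z$, each candidate derivative integrand is continuous in $\eta$ uniformly in $x$, using the assumed continuity of parameter derivatives in the $\mathsf T$-norms and the uniform continuity of the highest spatial derivatives, which handles the shift $x+\sqrt{W_\eta}Z$. Dominated convergence with the envelope above, together with the fundamental theorem of calculus in each coordinate direction, then identifies the pointwise formulas as Fréchet derivatives. The same argument applies at the one-sided boundary faces.

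For the comparison \eqref{eq:normalized-observable-comparison}, I would interpolate linearly, setting $(a^s,W^s,h^s,\phi^s)=(1-s)(\widetilde a,\widetilde W,\widetilde h,\widetilde\phi)+s(a,W,h,\phi)$. Each $\partial_s$ produces one difference factor: $a-\widetilde a$, $W-\widetilde W$ (which via the heat equation costs two spatial derivatives), $\Delta_x(h-\widetilde h)$, or $\phi-\widetilde\phi$. Each such factor is bounded by $\varepsilon(1+|Z|)$ after up to two further $\eta$-derivatives. The derivative count works because the interpolation derivative uses one parameter order's worth of spatial regularity, matching the reduced indices $7-2|\alpha|$ and $5-2|\alpha|$ for $|\alpha|\le2$. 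Integrating in $s$ finishes the comparison, and the general-$J$ statement follows from the same counting.

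I expect the main obstacle to be the $W$-derivatives near $W=0$. Done naively they produce singular factors, so the heat-equation trick, and the careful spatial-derivative bookkeeping it forces, is the step that must be checked.
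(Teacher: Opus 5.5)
Your argument is correct. Its skeleton matches the paper's: a normalized tilted expectation in which $h$ enters only through centered increments, a uniform Gaussian envelope, dominated convergence to upgrade pointwise formulas to genuine $L^\infty$-derivatives, a loss of two spatial derivatives per $W$-derivative, and linear interpolation for the comparison estimate.

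The one real difference is how you get through $W=0$.

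\emph{The paper's route.} It works with the density $p_W(0,d)$ for $W>0$. For $W\downarrow0$ it derives a nonlinear flow for the normalized quantity itself, using the first-variation identity $N_W=\frac{\dd}{\dd\varepsilon}T_{a,W}(h+\varepsilon\phi)|_{\varepsilon=0}$ and the Cole--Hopf equation:
\[
\partial_WN_W=\tfrac12(N_W)_{xx}+a(U_W)_x(N_W)_x .
\]
It then proves that the one-sided $W$-derivatives at $0$ exist by an induction on integral identities. That induction needs spatial regularity of both $U_W$ and $N_W$.

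\emph{Your route.} You use the $Z$-representation, which is valid uniformly for $W\in[0,W_+]$. You apply the linear heat equation separately to the numerator $e^{-ah(x)}\mathsf H_W(e^{ah}\phi)(x)$ and the denominator $e^{-ah(x)}\mathsf H_W(e^{ah})(x)$, with the base point frozen. Then $\partial_W^k$ of the numerator is the explicit expectation
\[
2^{-k}\E\bigl[e^{a\Delta_xh(\sqrt WZ)}\bigl(e^{-ah}(e^{ah}\phi)^{(2k)}\bigr)(x+\sqrt WZ)\bigr],
\]
and similarly for the denominator. The quotient rule then finishes the argument.

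\emph{What each buys.} Your route is more elementary. Every derivative, including the one-sided ones at $W=0$, is given by a single closed formula dominated by the same envelope. The one-sided derivatives follow from $\mathsf H_WG-G=\frac12\int_0^W\mathsf H_sG''\,\dd s$ plus uniform continuity of the top derivatives. No induction is needed, and neither is separate control of $U_W$. This integral identity also justifies the chain rule through $W_\eta$ even where $W_\eta$ touches $0$. The paper's flow formulation instead keeps the normalized object, bounded by $\|\phi\|_\infty$, in view throughout. It also matches the variance-flow calculus the paper reuses in \Cref{lem:split-observable-stability}.
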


\begin{proof}
For $W>0$, write
\[
 \Delta_xr(d)=r(x+d)-r(x)
\]
and let $\langle\,\cdot\,\rangle_{a,W,h,x}$ denote expectation
under the probability measure on $d\in\R$ proportional to
\[
 e^{a\Delta_xh(d)}p_W(0,d)\dd d.
\]
We use $\operatorname{Cov}_{a,W,h,x}$ for covariance under the same law.  Then
\begin{equation}\label{eq:normalized-observable-centered}
 \mathcal N_{a,W}(h,\phi)(x)
 =
 \left\langle\phi(x+d)\right\rangle_{a,W,h,x}.
\end{equation}
Since $h$ is one-Lipschitz,
\[
 |\Delta_xh(d)|\leq|d|.
\]
Thus the normalizing denominator is bounded away from zero uniformly in $x$,
$a\in[0,A]$, and $W\in(0,W_+]$, while every tilted moment of $|d|$ of any
fixed order is uniformly bounded.

The normalized form reveals the cancellation under differentiation.  For a
direction $\dot h$ and a direction $\dot\phi$,
\begin{align}
 D_h\mathcal N_{a,W}(h,\phi)[\dot h]
 &=
 a\,\operatorname{Cov}_{a,W,h,x}
 \left(
   \phi(x+d),\Delta_x\dot h(d)
 \right),
 \label{eq:normalized-h-derivative}\\
 D_\phi\mathcal N_{a,W}(h,\phi)[\dot\phi]
 &=
 \left\langle\dot\phi(x+d)\right\rangle_{a,W,h,x},
 \label{eq:normalized-phi-derivative}\\
 \partial_a\mathcal N_{a,W}(h,\phi)
 &=
 \operatorname{Cov}_{a,W,h,x}
 \left(
   \phi(x+d),\Delta_xh(d)
 \right).
 \label{eq:normalized-a-derivative}
\end{align}
In particular, if $\dot h$ is allowed to have linear growth, it enters only through
its centered increment, and
\[
 |\Delta_x\dot h(d)|
 \leq
 \|\partial_x\dot h\|_\infty|d|.
\]
Repeated differentiation of
\eqref{eq:normalized-observable-centered}--\eqref{eq:normalized-a-derivative}
therefore produces finite sums of tilted moments and cumulants involving
bounded spatial derivatives of $\phi$ and centered increments of parameter
and spatial derivatives of $h$.  The hypotheses give a common integrable
Gaussian envelope for all such terms, uniformly in $x$.  Spatial
differentiation has the same form, since
\[
 \partial_x^j\Delta_xr(d)
 =r^{(j)}(x+d)-r^{(j)}(x).
\]
Hence all derivatives not involving $W$ have the asserted uniform bounds and
are genuine derivatives in the uniform norm by the fundamental theorem of
calculus and dominated convergence, using continuity of the data in the
stated Banach norms.

It remains to include derivatives through $W=0$.  Put
\[
 U_W=T_{a,W}h,
 \qquad
 N_W=\mathcal N_{a,W}(h,\phi).
\]
The first-variation identity
\[
 N_W
 =
 \left.
 \frac{\dd}{\dd\varepsilon}
 T_{a,W}(h+\varepsilon\phi)
 \right|_{\varepsilon=0}
\]
and the Cole--Hopf equation imply, for $W>0$,
\begin{equation}\label{eq:normalized-W-flow}
 \partial_WN_W
 =
 \frac12(N_W)_{xx}+a(U_W)_x(N_W)_x,
 \qquad
 N_0=\phi.
\end{equation}
Likewise,
\[
 \partial_WU_W
 =
 \frac12\left((U_W)_{xx}+a(U_W)_x^2\right),
 \qquad
 U_0=h.
\]
Differentiating \eqref{eq:normalized-W-flow} repeatedly gives
\[
 \partial_W^{r+1}N_W
 =
 \frac12\partial_x^2\partial_W^rN_W
 +
 a\sum_{s=0}^r\binom rs
 (\partial_W^sU_{W,x})
 (\partial_x\partial_W^{r-s}N_W).
\]
Thus each $W$-derivative uses at most two additional spatial derivatives.
We make the Banach-space argument at $W=0$ explicit.  The Gaussian formulas,
together with the uniform continuity assumed above, imply as $W\downarrow0$
that $U_W\to h$ and $N_W\to\phi$, together with all spatial derivatives that
occur below, uniformly in $x$; at order zero for $U_W$ this means
$\|U_W-h\|_\infty\to0$.  Integrating the two flow equations from
$\varepsilon$ to $W$ and then letting $\varepsilon\downarrow0$ therefore
gives the $L^\infty$ identities
\[
 U_W-h
 =
 \frac12\int_0^W
 \bigl((U_s)_{xx}+a(U_s)_x^2\bigr)\,\dd s,
\]
and
\[
 N_W-\phi
 =
 \int_0^W
 \left(\frac12(N_s)_{xx}+a(U_s)_x(N_s)_x\right)\,\dd s.
\]
In particular, the integrands converge in $L^\infty$ as $s\downarrow0$, so
the corresponding difference quotients converge in $L^\infty$ and give the
right $W$-derivatives at zero.

We proceed inductively.  Suppose that the $W$-derivatives through order $r$
have continuous $L^\infty$ extensions to $W=0$.  Let $F_r(W)$ denote the
right-hand side of the displayed formula for $\partial_W^{r+1}N_W$.
The available spatial derivatives and the induction hypothesis imply that
$F_r(W)$ extends continuously to $W=0$ in $L^\infty$.  Hence
\[
 \partial_W^rN_W-\partial_W^rN_0
 =
 \int_0^W F_r(s)\,\dd s,
\]
and therefore
\[
 \left\|
 \frac{\partial_W^rN_W-\partial_W^rN_0}{W}-F_r(0)
 \right\|_\infty
 \leq
 \sup_{0\leq s\leq W}\|F_r(s)-F_r(0)\|_\infty
 \longrightarrow0.
\]
The identical argument applied to the $U_W$ equation proves the corresponding
claim for $U_W$.  Thus the continuous extensions are genuine right
derivatives in the uniform norm.  
The existence and continuity of the mixed derivatives follow by differentiating
these integral identities with respect to the external parameters.  After a
total of three parameter or $W$ derivatives, the required spatial derivatives
remain available by the index assumptions in the statement.

For the comparison estimate, interpolate linearly between the two data sets.
One derivative in the interpolation parameter inserts exactly one data
difference.  Formulas
\eqref{eq:normalized-h-derivative}--\eqref{eq:normalized-W-flow} show that
function differences again occur only through centered increments, except for
the bounded observable difference itself.  Applying the preceding estimates
with at most two further parameter derivatives gives a bound
$C\varepsilon$ for the interpolation derivative in $L^\infty$; integration
of that derivative proves
\eqref{eq:normalized-observable-comparison}.
For general $J$, the interpolation argument applies with the spatial indices
stated in the lemma. 
\end{proof}

We next combine the backward-function and forward-law estimates to control
observables evaluated at an intermediate point of a recursion interval.

\begin{lemma}
\label{lem:split-observable-stability}
Under the hypotheses of \Cref{lem:forward-law-stability}, fix $i$ and, for
$0\leq\tau\leq1$, define
\[
 h_{i,\tau}
 =
 T_{a_i,(1-\tau)V_i}u_{i+1},
\]
and, for $\ell=1,2$,
\[
 \Phi_{i,\tau}^{(\ell)}(x)
 =
 e^{-a_i u_i(x)}
 \mathsf H_{\tau V_i}
 \left(
   e^{a_i h_{i,\tau}}
   (\partial_x^\ell h_{i,\tau})^2
 \right)(x).
\]
We interpret $T_{a,0}$ and $\mathsf H_0$ as the identity.  Then
\begin{equation}\label{eq:fixed-recursion-split-stability}
 \max_{\substack{\ell=1,2\\|\alpha|+j\leq3}}
 \sup_{(\vartheta,\tau)\in\Theta\times[0,1]}
 \left\|
 \partial_\vartheta^\alpha\partial_\tau^j
 \Phi_{i,\tau}^{(\ell)}
 \right\|_\infty
 \leq C.
\end{equation}

For a second family $\widetilde{\mathbf d}$ satisfying the same assumptions,
and with $\widetilde V_i\geq V_-$, define
$\widetilde\Phi_{i,\tau}^{(\ell)}$ analogously.  Then
\begin{equation}\label{eq:fixed-recursion-split-comparison}
 \max_{\substack{\ell=1,2\\|\alpha|+j\leq2}}
 \sup_{(\vartheta,\tau)\in\Theta\times[0,1]}
 \left\|
 \partial_\vartheta^\alpha\partial_\tau^j
 \bigl(
   \Phi_{i,\tau}^{(\ell)}
   -
   \widetilde\Phi_{i,\tau}^{(\ell)}
 \bigr)
 \right\|_\infty
 \leq
 C\delta_{3;\Theta}(\mathbf d,\widetilde{\mathbf d}).
\end{equation}

Moreover, for $\ell=1,2$,
\[
 (\vartheta,\tau)
 \longmapsto
 \left\langle
 \rho_i,\Phi_{i,\tau}^{(\ell)}
 \right\rangle
\]
has uniformly bounded parameter and $\tau$ derivatives through total order
three.  For two families of data, the difference of these pairings and its
derivatives through total order two are bounded by
\[
 C_c\delta_{3;\Theta}(\mathbf d,\widetilde{\mathbf d}).
\]

Derivatives at $\tau=0,1$ are understood one-sidedly.  More generally, for
each fixed $J$, if the recursion data satisfy the order-$J$ bounds defined
above with the corresponding two-spatial-derivative reserve, then the split
functions are $C^J$ in the uniform norm and the scalar pairings are $C^J$,
with one-sided derivatives at $\tau=0,1$.
\end{lemma}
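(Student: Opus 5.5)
The plan is to reduce the split functions to the normalized-observable calculus of \Cref{lem:normalized-observable-calculus}, and then pair them against the forward laws using \Cref{lem:forward-law-stability}.

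\emph{Rewriting $\Phi^{(\ell)}_{i,\tau}$.} The Cole--Hopf semigroup property gives $T_{a_i,\tau V_i}h_{i,\tau}=u_i$. Hence
\[
 e^{a_iu_i}=\mathsf H_{\tau V_i}\bigl(e^{a_ih_{i,\tau}}\bigr)
\]
(untilted when $a_i=0$), and therefore
\[
 \Phi^{(\ell)}_{i,\tau}=\mathcal N_{a_i,\tau V_i}\bigl(h_{i,\tau},(\partial_x^\ell h_{i,\tau})^2\bigr).
\]
We apply \Cref{lem:normalized-observable-calculus} on $\Upsilon=\Theta\times[0,1]$ with $\eta=(\vartheta,\tau)$, $W_\eta=\tau V_i\in[0,V_+]$, $h_\eta=h_{i,\tau}$ and $\phi_\eta=(\partial_x^\ell h_{i,\tau})^2$. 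The fact that $W$ may vanish at $\tau=0$ is exactly what that lemma permits.

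\emph{Verifying the hypotheses.} Regularity of $h_{i,\tau}$ in $(\vartheta,\tau)$ comes from writing
\[
 h_{i,\tau}=u_{i+1}+\mathcal C_{a_i,(1-\tau)V_i}(u_{i+1}).
\]
The obstacle is that $(1-\tau)V_i\to0$ at $\tau=1$, so \Cref{lem:centered-cole-hopf-calculus}, which needs $V\geq V_->0$, does not apply directly. I would split $[0,1]$ into two pieces.
\begin{enumerate}
\item On $\tau\leq1/2$, the variance is at least $V_-/2$, and \Cref{lem:centered-cole-hopf-calculus} together with \Cref{lem:backward-recursion-stability} gives $\mathsf T^{9-2|\alpha|}$ bounds on $\partial^\alpha_{(\vartheta,\tau)}h$. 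A $\tau$-derivative is just a variance derivative times $-V_i$.
\item Near $\tau=1$, I would instead use the small-$W$ argument from the proof of \Cref{lem:normalized-observable-calculus}: the flow $\partial_WU_W=\tfrac12(U_{xx}+aU_x^2)$ integrated from $W=0$. Each $\tau$-derivative then costs two spatial derivatives, which matches the index loss $9-2|\alpha|$ when $\tau$ is counted as a parameter.
\end{enumerate}

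\emph{Consequences of these bounds.} With the bounds in hand:
\begin{itemize}
\item $h$ is one-Lipschitz.
\item $\phi=(\partial_x^\ell h)^2$ with $\ell\le2$ loses at most two spatial derivatives, so its derivatives of order $7-2|\alpha|$ are bounded and uniformly continuous.
\end{itemize}
This is the "two-spatial-derivative reserve." \Cref{lem:normalized-observable-calculus} then yields \eqref{eq:fixed-recursion-split-stability}. For the comparison \eqref{eq:fixed-recursion-split-comparison}, the same splitting applied to the difference bounds of \eqref{eq:fixed-recursion-function-stability} and \eqref{eq:centered-cole-hopf-comparison} gives $\varepsilon=C\delta_{3;\Theta}$ in the hypotheses of \eqref{eq:normalized-observable-comparison}, one parameter order lower. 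The order-$J$ version follows verbatim with the shifted indices.

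\emph{The scalar pairings.} Differentiate $\langle\rho_i,\Phi\rangle$ by the product rule. Each term is bounded by
\[
 \|\partial^\beta\rho_i\|_{c,*}\,\|\partial^{\alpha-\beta}\partial_\tau^j\Phi\|_\infty,
\]
and $\|\cdot\|_{c,*}$ dominates total variation, so \eqref{eq:fixed-recursion-law-stability} gives the bounds. For the comparison, write
\[
 \langle\rho_i,\Phi\rangle-\langle\widetilde\rho_i,\widetilde\Phi\rangle=\langle\rho_i-\widetilde\rho_i,\Phi\rangle+\langle\widetilde\rho_i,\Phi-\widetilde\Phi\rangle,
\]
and use \eqref{eq:fixed-recursion-law-comparison} on the first term and \eqref{eq:fixed-recursion-split-comparison} on the second.

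The main difficulty is the uniform regularity of $h_{i,\tau}$ through $\tau=1$, where the Gaussian variance degenerates; I expect the flow-integral argument to handle it.
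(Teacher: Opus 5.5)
Your proposal is correct and follows the paper's proof essentially step for step. The semigroup identity puts $\Phi^{(\ell)}_{i,\tau}$ in the form $\mathcal N_{a_i,\tau V_i}\bigl(h_{i,\tau},(\partial_x^\ell h_{i,\tau})^2\bigr)$; the variance flow in $S=(1-\tau)V_i$ gives the $\mathsf T^{9-2(|\alpha|+j)}$ bounds on $h_{i,\tau}$ down to $S=0$; \Cref{lem:normalized-observable-calculus} is applied with $(\vartheta,\tau)$ as parameter; and the pairings follow from the product rule with \Cref{lem:forward-law-stability}. The only differences are cosmetic: the paper runs the flow argument over the whole range $0\le S\le V_+$, using centered increments with a factor $\sqrt S$ for the comparison, rather than splitting at $\tau=1/2$, and it bounds the pairings by $\sup_x e^{-c|x|}|\Phi(x)|$ where your total-variation bound suffices because $\Phi$ is bounded.
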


\begin{proof}
\medskip
\noindent\emph{Step 1: Regularity and stability of the split observables.}
We first identify the split observables directly from the Cole--Hopf formula.
For a function $h$ and a perturbation $\phi$, ordinary differentiation in a
scalar parameter gives
\[
 \left.
 \frac{\dd}{\dd\varepsilon}
 T_{a,V}(h+\varepsilon\phi)
 \right|_{\varepsilon=0}
 =
 e^{-aT_{a,V}h}\mathsf H_V(e^{ah}\phi).
\]
For
\[
 h_{i,\tau}
 =
 T_{a_i,(1-\tau)V_i}u_{i+1},
 \qquad 0\leq\tau\leq1,
\]
the semigroup property of the Cole--Hopf transform gives
\[
 \begin{aligned}
 T_{a_i,\tau V_i}h_{i,\tau}
 &=
 T_{a_i,\tau V_i}
 T_{a_i,(1-\tau)V_i}u_{i+1}\\
 &=
 T_{a_i,V_i}u_{i+1}
 =
 u_i.
 \end{aligned}
\]
Taking
\[
 \phi=(\partial_x^\ell h_{i,\tau})^2,
 \qquad \ell=1,2,
\]
in the preceding differentiation formula therefore gives
\[
 e^{-a_i u_i(x)}
 \mathsf H_{\tau V_i}
 \left(
 e^{a_i h_{i,\tau}}
 (\partial_x^\ell h_{i,\tau})^2
 \right)(x)
 =
 \Phi_{i,\tau}^{(\ell)}(x).
\]
Thus $\Phi_{i,\tau}^{(1)}$ and $\Phi_{i,\tau}^{(2)}$ are the
split-recursion observables corresponding respectively to $u_x^2$ and
$u_{xx}^2$.

We next bound their parameter and $\tau$ derivatives.  Put
\[
 S=(1-\tau)V_i,
 \qquad
 W=\tau V_i,
 \qquad
 U_S=T_{a_i,S}u_{i+1}.
\]
Thus $h_{i,\tau}=U_S$.  The Cole--Hopf equation in the variance variable is
\begin{equation}\label{eq:split-variance-flow}
 \partial_SU_S
 =
 \frac12\left((U_S)_{xx}+a_i(U_S)_x^2\right),
 \qquad
 U_0=u_{i+1}.
\end{equation}
For $S>0$ this follows by differentiating the heat semigroup.  The integral
form
\[
 U_S-u_{i+1}
 =
 \frac12\int_0^S
 \left((U_v)_{xx}+a_i(U_v)_x^2\right)\dd v
\]
shows, using the uniform continuity built into the $\mathsf T^k$ norms, that
\eqref{eq:split-variance-flow} also gives the right derivative at $S=0$ in
the required spatial norms.  Repeated differentiation gives
\[
 \partial_S^{r+1}U_S
 =
 \frac12\partial_x^2\partial_S^rU_S
 +
 \frac{a_i}{2}
 \sum_{s=0}^r\binom rs
 (\partial_S^sU_{S,x})
 (\partial_S^{r-s}U_{S,x}),
 \qquad r\geq0,
\]
with the additional product-rule terms in which derivatives of $a_i$ multiply
the displayed products. 
Consequently each $S$-derivative costs exactly two spatial derivatives, and
the displayed formulas, rather than a pointwise derivative count, provide the
one-sided derivatives at $S=0$ as genuine derivatives in the corresponding
$\mathsf T^k$ norms.  Differentiating the explicit Cole--Hopf formula in $x$
shows at the same time that the required spatial norms of $U_S$ are bounded
uniformly for $0\leq S\leq V_+$; no inverse power of $S$ is used in these
bounds.

Applying these identities to the bounds in
\Cref{lem:backward-recursion-stability} gives
\begin{equation}\label{eq:split-h-bounds}
 \max_{|\alpha|+j\leq3}
 \sup_{(\vartheta,\tau)\in\Theta\times[0,1]}
 \left\|
 \partial_\vartheta^\alpha\partial_\tau^j h_{i,\tau}
 \right\|_{\mathsf T^{9-2(|\alpha|+j)}}
 \leq C.
\end{equation}
For two families of data, interpolate only the one-step inputs by
\[
 a_i^s=(1-s)a_i+s\widetilde a_i,
 \qquad
 V_i^s=(1-s)V_i+s\widetilde V_i,
 \qquad
 g^s=(1-s)u_{i+1}+s\widetilde u_{i+1},
\]
and set
\[
 S_\tau^s=(1-\tau)V_i^s,
 \qquad
 U_\tau^s=T_{a_i^s,S_\tau^s}g^s,
 \qquad 0\leq s\leq1.
\]
Thus $U_\tau^0=h_{i,\tau}$ and
$U_\tau^1=\widetilde h_{i,\tau}$.  The scalar data differences and
\eqref{eq:fixed-recursion-function-stability} control the single
$s$-derivative of these one-step inputs by
$C\delta_{3;\Theta}(\mathbf d,\widetilde{\mathbf d})$.

For derivatives in the level and terminal function, write
\[
 U_\tau^s
 =
 g^s+\mathcal C_{a_i^s,S_\tau^s}(g^s).
\]
The centered increment and cumulant formulas used in the proof of
\Cref{lem:centered-cole-hopf-calculus}, together with
\[
 |\Delta_x r(\sqrt S Z)|
 \leq
 \sqrt S\,|Z|\,\|r'\|_\infty,
\]
give uniform bounds down to $S=0$.  Derivatives in $S_\tau^s$ are handled by
the variance-flow formulas above, including at zero variance.  Consequently,
for $|\alpha|+j\leq2$,
\[
 \sup_{s\in[0,1]}
 \sup_{(\vartheta,\tau)\in\Theta\times[0,1]}
 \left\|
 \partial_\vartheta^\alpha\partial_\tau^j\partial_sU_\tau^s
 \right\|_{\mathsf T^{7-2(|\alpha|+j)}}
 \leq
 C\delta_{3;\Theta}(\mathbf d,\widetilde{\mathbf d}).
\]
Here one $s$-derivative together with the parameter and variance derivatives
has total order at most three, so \eqref{eq:split-h-bounds} supplies the
required two-derivative reserve.  Integrating in $s$ gives
\begin{equation}\label{eq:split-h-comparison}
 \max_{|\alpha|+j\leq2}
 \sup_{(\vartheta,\tau)\in\Theta\times[0,1]}
 \left\|
 \partial_\vartheta^\alpha\partial_\tau^j
 (h_{i,\tau}-\widetilde h_{i,\tau})
 \right\|_{\mathsf T^{7-2(|\alpha|+j)}}
 \leq
 C\delta_{3;\Theta}(\mathbf d,\widetilde{\mathbf d}).
\end{equation}

Set
\[
 \phi_{i,\tau}^{(\ell)}
 =
 (\partial_x^\ell h_{i,\tau})^2,
 \qquad \ell=1,2.
\]
By the product rule and \eqref{eq:split-h-bounds}, for
$|\alpha|+j\leq3$ all spatial derivatives of
$\partial_\vartheta^\alpha\partial_\tau^j
\phi_{i,\tau}^{(\ell)}$ through order
$7-2(|\alpha|+j)$ are uniformly bounded, and the highest such derivatives
are uniformly continuous.  For $|\alpha|+j\leq2$, the analogous differences,
together with their spatial derivatives through order
$5-2(|\alpha|+j)$, are bounded by
$C\delta_{3;\Theta}(\mathbf d,\widetilde{\mathbf d})$ by
\eqref{eq:split-h-comparison}.

By the semigroup identity above,
\[
 e^{a_i u_i(x)}
 =
 \mathsf H_W(e^{a_i h_{i,\tau}})(x),
\]
with the same identity interpreted trivially when $a_i=0$.  Hence
\begin{equation}\label{eq:split-normalized-form}
 \Phi_{i,\tau}^{(\ell)}
 =
 \mathcal N_{a_i,W}
 \left(h_{i,\tau},\phi_{i,\tau}^{(\ell)}\right).
\end{equation}
The functions $a_i$, $W=\tau V_i$, $h_{i,\tau}$, and
$\phi_{i,\tau}^{(\ell)}$ satisfy the hypotheses of
\Cref{lem:normalized-observable-calculus}, with
$(\vartheta,\tau)$ as the parameter.  That lemma applied to
\eqref{eq:split-normalized-form} proves
\eqref{eq:fixed-recursion-split-stability}.  Applying its comparison estimate,
together with \eqref{eq:split-h-comparison}, proves
\eqref{eq:fixed-recursion-split-comparison}, including the one-sided
derivatives at $\tau=0,1$.

\medskip
\noindent\emph{Step 2: Pairings with the forward laws.}
Finally, consider their pairings with the incoming laws.  Since $\rho_i$
does not depend on $\tau$, the product rule gives
\[
 \partial_\vartheta^\alpha\partial_\tau^j
 \left\langle\rho_i,\Phi_{i,\tau}^{(\ell)}\right\rangle
 =
 \sum_{\beta\leq\alpha}
 \binom{\alpha}{\beta}
 \left\langle
 \partial_\vartheta^\beta\rho_i,\,
 \partial_\vartheta^{\alpha-\beta}\partial_\tau^j
 \Phi_{i,\tau}^{(\ell)}
 \right\rangle.
\]
Each term is controlled by
\[
 |\langle\rho,\Phi\rangle|
 \leq
 \|\rho\|_{c,*}
 \sup_x e^{-c|x|}|\Phi(x)|,
\]
so \Cref{lem:forward-law-stability} and the split-observable estimates above
give the asserted third-order bounds.

For the comparison, use
\[
 \langle\rho,\Phi\rangle
 -
 \langle\widetilde\rho,\widetilde\Phi\rangle
 =
 \langle\rho-\widetilde\rho,\Phi\rangle
 +
 \langle\widetilde\rho,\Phi-\widetilde\Phi\rangle.
\]
After parameter differentiation, the product rule produces terms of the same
two types.  The law comparison from \Cref{lem:forward-law-stability} and the
observable comparison above therefore give the asserted second-order
comparison bounds for the pairings.

The one-sided assertions at $\tau=0,1$ follow from the zero-variance argument
above.  Repeating the same argument under the order-$J$ data bounds proves the
fixed-order smoothness assertion in the statement.
\end{proof}

We apply the preceding abstract recursion estimates to finite-step Parisi
measures to prove the regularity statement used in \Cref{sec:geometry}.

\begin{proof}[Proof of \Cref{prop:finite-step-regularity}]
Write
\[
 a_i=\mu_z([0,q_i])=\sum_{j=0}^i w_j,
 \qquad
 V_i=\Xi'(q_{i+1})-\Xi'(q_i),
 \qquad 0\leq i\leq r.
\]
Since $\alpha_{\mu_z}(q)=a_i$ for $q\in(q_i,q_{i+1})$, the Parisi PDE on
that interval is solved by a single Cole--Hopf step with parameter $a_i$
and variance increment $V_i$.  Consequently, if
\[
 u_{r+1}(x)=\log\cosh x,
\]
then the values of the Parisi solution at the support points are given by
the finite recursion
\[
 u_i=T_{a_i,V_i}u_{i+1},
 \qquad 0\leq i\leq r.
\]
In particular,
\[
 u_0(0)=u_{\mu_z}(0,0).
\]

We first explain how
\Cref{lem:backward-recursion-stability}, \Cref{lem:forward-law-stability}, and 
\Cref{lem:split-observable-stability}
apply uniformly
to these data.  Fix $(\Xi_*,z_*)\in\mathcal K$.  By the strict lower bounds
and continuity, we may choose a compact coordinate neighborhood $\Theta$ of
$z_*$ and a $C^4$ neighborhood $\mathcal U$ of $\Xi_*$ such that, for every
admissible $(\Xi,z)\in\mathcal U\times\Theta$, all weights, overlap gaps, and
variance increments remain bounded away from zero.  Regard the atomic
coordinates $z\in\Theta$ as the parameter $\vartheta$ in these lemmas.  Here $0\leq a_i\leq1$, while the
terminal function $u_{r+1}=\log\cosh$ is one-Lipschitz and belongs to
$\mathsf T^k$ for every fixed $k$.

The quantities $a_i$ are linear functions of the weights, while
\[
 V_i=\Xi'(q_{i+1})-\Xi'(q_i).
\]
Three derivatives of $V_i$ with respect to the atomic coordinates involve
derivatives of $\Xi$ only through $\Xi''''$.  Thus, on
$\mathcal U\times\Theta$, the recursion-data norm in
\Cref{lem:backward-recursion-stability} is bounded in terms of a local
$C^4$ bound on $\Xi$.  Moreover, for two covariances $\Xi$ and
$\widetilde\Xi$ in $\mathcal U$, the corresponding families of recursion
data satisfy
\[
 \delta_{3;\Theta}(\mathbf d^\Xi,\mathbf d^{\widetilde\Xi})
 \leq
 C\|\Xi-\widetilde\Xi\|_{C^4}.
\]
Compactness of $\mathcal K$ allows us to cover it by finitely many such
neighborhoods.

We next apply these estimates to the Parisi functional.  Since
$\alpha_{\mu_z}=a_i$ on $(q_i,q_{i+1})$,
\[
 \Pp_\Xi(\mu_z)
 =
 \log2+u_0(0)
 -
 \frac12\sum_{i=0}^r
 a_i\int_{q_i}^{q_{i+1}}q\Xi''(q)\,\dd q.
\]
Integration by parts gives
\[
 \int_{q_i}^{q_{i+1}}q\Xi''(q)\,\dd q
 =
 \bigl[q\Xi'(q)-\Xi(q)\bigr]_{q_i}^{q_{i+1}}.
\]
It is therefore locally Lipschitz in $(\Xi,z)$, together with its first two
$z$-derivatives, when $\Xi$ is measured in the $C^4$ norm.

For the term $u_0(0)$, \Cref{lem:backward-recursion-stability}, with the atomic coordinates as
parameters, controls its $z$-derivatives through order three.  Its comparison
estimate controls the change of the derivatives through order two when
$\Xi$ is changed.  Combining the uniform $z$-derivative bounds with this
covariance comparison by the triangle inequality gives joint local
Lipschitz control in $(\Xi,z)$.  Thus $u_0(0)$, its $z$-gradient, and its
$z$-Hessian are locally Lipschitz in $(\Xi,z)$.  Combining this with the
preceding explicit formula proves the asserted local Lipschitz bounds for
\[
 \Pp_\Xi(\mu_z),\qquad
 \nabla_z\Pp_\Xi(\mu_z),\qquad
 D_z^2\Pp_\Xi(\mu_z).
\]

We now turn to the overlap observables.  The kernels in
\Cref{lem:forward-law-stability} are precisely
the transition kernels of the optimal diffusion across the intervals
$[q_i,q_{i+1}]$.  Hence
\[
 \rho_i=\Law(X_{q_i}).
\]
For $t\in[0,1]$, set
\[
 q=q_i(t)=q_i+t(q_{i+1}-q_i)
\]
and define the corresponding fraction of the variance increment by
\[
 \tau_i(t)
 =
 \frac{\Xi'(q_i(t))-\Xi'(q_i)}
      {V_i}.
\]
Since $\Xi''\geq0$,
\[
 0\leq\tau_i(t)\leq1.
\]
Moreover, the lower bound $V_i\geq\delta$ implies that $\tau_i(t)$ is
locally Lipschitz in $(\Xi,z)$, uniformly in $t$; the same holds smoothly
for finite-dimensional smooth polynomial families.

The remaining variance from $q_i(t)$ to $q_{i+1}$ is
\[
 \Xi'(q_{i+1})-\Xi'(q_i(t))
 =
 (1-\tau_i(t))V_i.
\]
Thus the split recursion from
\Cref{lem:split-observable-stability} satisfies
\[
 h_{i,\tau_i(t)}(x)
 =
 u_{\mu_z}(q_i(t),x).
\]
The corresponding split-observable identities are therefore
\[
 \Gamma_{\Xi,\mu_z}(q_i(t))
 =
 \left\langle
 \rho_i,\Phi_{i,\tau_i(t)}^{(1)}
 \right\rangle
\]
and
\[
 \E\!\left[
 u_{xx}(q_i(t),X_{q_i(t)})^2
 \right]
 =
 \left\langle
 \rho_i,\Phi_{i,\tau_i(t)}^{(2)}
 \right\rangle.
\]
\Cref{lem:forward-law-stability} and \Cref{lem:split-observable-stability},
together with the local Lipschitz
control of $\tau_i(t)$, therefore give local Lipschitz control of both
quantities in $(\Xi,z)$, uniformly for $t\in[0,1]$.  The endpoint assertions
follow from the one-sided estimates at $\tau=0,1$ in \Cref{lem:split-observable-stability}.

By \eqref{eq:Gamma-derivative},
\[
 \partial_q\Gamma_{\Xi,\mu_z}(q)
 =
 \Xi''(q)
 \E\!\left[u_{xx}(q,X_q)^2\right]
\]
on each recursion interval.  Hence
\[
 \partial_q\Gamma_{\Xi,\mu_z}(q_i(t))
\]
is also jointly locally Lipschitz in $(\Xi,z)$, uniformly in $t$, with the
stated one-sided interpretation at the endpoints.

Finally,
\[
 \partial_q f
 =
 \frac12\Xi''(\Gamma-q),
 \qquad
 \partial_q^2 f
 =
 \frac12\Xi'''(\Gamma-q)
 +
 \frac12\Xi''(\partial_q\Gamma-1),
\]
so the preceding estimates immediately give the asserted local Lipschitz
control of the first two overlap derivatives of $f$.

For $f$ itself, recall that $f(0)=0$.  Writing
\[
 G(q)=\frac12\Xi''(q)\bigl(\Gamma(q)-q\bigr),
\]
we have, for $q=q_i(t)$,
\[
 \begin{aligned}
 f(q_i(t))
 ={}&
 \sum_{j=0}^{i-1}
 (q_{j+1}-q_j)
 \int_0^1G\big(q_j(s)\big)\,\dd s+
 (q_{i+1}-q_i)
 \int_0^tG\big(q_i(s)\big)\,\dd s.
 \end{aligned}
\]
The integrands are locally Lipschitz in $(\Xi,z)$ uniformly in $s$, by the
bounds already proved.  This gives the required local Lipschitz control of
$f(q_i(t))$.

If $\vartheta\mapsto\Xi_\vartheta$ is a smooth finite-dimensional family of
admissible polynomial covariances, then the quantities $a_i$, the variance
increments, the functions $\tau_i(t)$, and the explicit finite-dimensional
term in the Parisi functional all depend smoothly on $(\vartheta,z)$.
Applying the fixed-order smooth-dependence assertions in
\Cref{lem:backward-recursion-stability}, \Cref{lem:forward-law-stability}, and \Cref{lem:split-observable-stability}
at arbitrary fixed order therefore
gives smooth dependence of all the quantities above.
\end{proof}

\subsection{Approximation on the lower-overlap interval} 
Recall that, for atomic coordinates $z$, $\nu_z=\mu_z$ denotes the atomic probability
measure with coordinates $z$, and $
 a(z)=\max\supp\nu_z$. 
 
The high-degree perturbation in $\Xi_{p,\lambda}$ introduced in
\Cref{sec:high-degree-perturbation} affects the Parisi solution below the
largest lower support point $a(z)$ mainly through the boundary value at
$a(z)$ produced by solving the Parisi PDE on $[a(z),1]$.   The next lemma shows that the resulting terminal data are exponentially close
to those provided by the temperature--mass scaling in \Cref{s:tempmass}.
Consequently, below $a(z)$ the finite-$p$ Parisi solution is exponentially
close to the solution of the Parisi problem for the scaled covariance
$m^2\Xi$ and atomic measure $\nu_z$, as are the parameter derivatives needed
later.

\begin{lemma}\label{lem:lower-recursion}
Fix $\cN_z$, $M_-$, and $\Lambda$ as in
\Cref{sec:high-degree-perturbation}, and define
\[
 t_p(m,\lambda)=p^{-1/2}e^{-m^2\lambda p/2}.
\]
For every compact $\mathcal Y\Subset(0,\infty)$ and every $c_0>0$, there
exist $p_0\in\mathbb N$ and $c,C,A>0$ such that, for every even
$p\geq p_0$, with
\[
 Q_p(m,\lambda,y)=1-t_p(m,\lambda)y,
 \qquad y\in\mathcal Y,
\]
the law $\rho_{p,z,m,\lambda,Q_p}$ at $a(z)$ of the optimal diffusion for
the covariance $\Xi_{p,\lambda}$ and candidate measure
$\mu_{p,z,m,Q_p}$ satisfies
\begin{equation}\label{eq:lower-law-approximation}
 \max_{|\alpha|\leq3}
 \sup_{\cN_z\times M_-\times\Lambda\times\mathcal Y}
 \left\|
 \partial_{z,m,\lambda,y}^{\alpha}
 \bigl(\rho_{p,z,m,\lambda,Q_p}-\rho_{z,m}\bigr)
 \right\|_{c_0,*}
 \leq Cp^Ae^{-cp}.
\end{equation}
Here $\rho_{z,m}$ is the law of $X_{a(z)}$, where $X$ is the diffusion in
\Cref{lem:temperature-mass-scaling} with $\nu=\nu_z$ and
$q_{\mathrm c}=a(z)$, and we recall that the norm was defined in \eqref{e:thenorm}.

Let $u_p$ be the Parisi PDE solution for
$(\Xi_{p,\lambda},\mu_{p,z,m,Q_p})$, and write $a=a(z)$.  Then
\begin{align}
 u_p(0,0)
 ={}&C_p^{\rm add}(z,m,\lambda,y)
 +\frac1m u_{m^2\Xi,\nu_z}(0,0)\notag\\
 &-\frac m2\bigl(\Xi'(1)-\Xi'(a)\bigr)
 +\mathcal R_p(z,m,\lambda,y),
 \label{eq:quantitative-lower-recursion}
\end{align}
where
\begin{align}
 C_p^{\rm add}(z,m,\lambda,y)
 ={}&\frac12\bigl(\Xi_{p,\lambda}'(1)
                  -\Xi_{p,\lambda}'(Q_p)\bigr)\notag\\
 &+\frac m2\bigl(\Xi_{p,\lambda}'(Q_p)
                 -\Xi_{p,\lambda}'(a)\bigr)
 +\left(\frac1m-1\right)\log2,
 \label{eq:additive-constant}
\end{align}
and
\begin{equation}\label{eq:lower-remainder-bound}
 \max_{|\alpha|\leq3}
 \sup_{\cN_z\times M_-\times\Lambda\times\mathcal Y}
 \left|
 \partial_{z,m,\lambda,y}^{\alpha}\mathcal R_p
 \right|
 \leq Cp^Ae^{-cp}.
\end{equation}

For every fixed $b>0$, the maps
\[
 (z,m,\lambda,y)\longmapsto\rho_{p,z,m,\lambda,Q_p},
 \qquad
 (z,m,\lambda,y)\longmapsto\rho_{z,m},
\]
where the second map is extended trivially in $(\lambda,y)$, are $C^3$ as
maps into $\mathcal M_b$, uniformly in $p$, and their parameter derivatives
through order three are uniformly bounded in $\|\cdot\|_{b,*}$.  If
$\rho^{\rm cut}_{p,z,m,\lambda,y}$ denotes the law at $a_{p,\lambda}$ of
the optimal diffusion for
$(\Xi_{p,\lambda},\mu_{p,z,m,Q_p})$, then
\[
 (z,m,\lambda,y)\longmapsto
 \rho^{\rm cut}_{p,z,m,\lambda,y}
\]
is $C^2$ as a map into $\mathcal M_b$, uniformly in $p$, and
\begin{equation}\label{eq:moving-cutoff-law-regularity}
 \max_{|\alpha|\leq2}
 \sup_{\cN_z\times M_-\times\Lambda\times\mathcal Y}
 \left\|
 \partial_{z,m,\lambda,y}^{\alpha}
 \rho^{\rm cut}_{p,z,m,\lambda,y}
 \right\|_{b,*}
 \leq C_b.
\end{equation}
The law $\rho^{\rm cut}_{p,z,m,\lambda,y}$ is determined by the evolution up
to $a_{p,\lambda}$ and therefore does not depend on any later observation
point above $a_{p,\lambda}$. All parameter derivatives involving $m$ extend
continuously to $m=1$ from the left, and their values there are understood
in this sense.
\end{lemma}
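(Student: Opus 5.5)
The plan is to reduce everything below $a=a(z)$ to a comparison of two finite Cole--Hopf recursions on the common grid $0=q_0<\cdots<q_r=a$. These recursions share the level data $m\alpha_{\nu_z}$ and differ only in their variance increments and in their terminal data at $a$. The comparison is then closed with \Cref{lem:backward-recursion-stability,lem:forward-law-stability}.

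\emph{Step 1: terminal data above $a$.} Above $a$ the candidate measure has cumulative mass $m$ on $[a,Q_p)$ and mass $1$ on $[Q_p,1]$. Two explicit Cole--Hopf steps therefore give
\[
 u_p(a,x)=\tfrac12\bigl(\Xi_{p,\lambda}'(1)-\Xi_{p,\lambda}'(Q_p)\bigr)+T_{m,W_p}\log\cosh x,
 \qquad W_p=\Xi_{p,\lambda}'(Q_p)-\Xi_{p,\lambda}'(a).
\]
I will subtract the constant $C_p^{\rm add}$ of \eqref{eq:additive-constant}, which exactly matches the centering used in \eqref{eq:moving-terminal-datum}. Then I will show that the remaining function $g_p=T_{m,W_p}\log\cosh-\frac m2W_p-(\frac1m-1)\log2$ differs from $\psi_m$ by $O(p^Ae^{-cp})$ in $\mathsf T^9$, together with its $(z,m,\lambda,y)$-derivatives through order three. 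For this I invoke the large-variance estimate \Cref{lem:large-V} from \Cref{app:large-variance}, which gives an error $O(e^{-cW})$ with polynomial prefactors.

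The parameter derivatives require a separate check. The quantity $W_p$ equals $\lambda p+\Xi'(1)-\Xi'(a)+O(p^Ae^{-cp})$, with derivatives polynomially bounded in $p$. This holds because $\partial^\alpha t_p\lesssim p^A t_p$, and because $\lambda p a^{p-1}\le p^A a_*^{p}$ on $\cN_z$. Since $W_p\asymp p$, the chain rule converts the exponential smallness in $W$ into $p^Ae^{-cp}$.

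\emph{Step 2: comparison below $a$.} The comparison recursion has covariance $\Xi$, levels $m\alpha_{\nu_z}$, and terminal value $\psi_m$ at $a$. By \Cref{lem:temperature-mass-scaling} with $q_{\mathrm c}=a$, this is exactly the $V$-recursion. Hence its value at $(0,0)$ is $\frac1m u_{m^2\Xi,\nu_z}(0,0)-\frac m2(\Xi'(1)-\Xi'(a))$, and its law at $a$ is $\rho_{z,m}$.

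The finite-$p$ recursion has the same levels. Its variance increments $\Xi_{p,\lambda}'(q_{i+1})-\Xi_{p,\lambda}'(q_i)$ differ from those of $\Xi$ by $\lambda p(q_{i+1}^{p-1}-q_i^{p-1})=O(p^Aa_*^{p})$ in $C^3$ of the atomic coordinates. Its terminal data differ by Step 1. Both families satisfy the hypotheses of \Cref{lem:backward-recursion-stability}: levels lie in $[0,1]$, variances are bounded above and below by compactness of $\cN_z$, and both terminal functions are $1$-Lipschitz. We regard $\vartheta=(z,m,\lambda,y)$ as the parameter, with one-sided convention at $m=1$, and we use the uniform derivative bounds on $\psi_m$ from the last clause of \Cref{lem:temperature-mass-scaling}.

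Then $\delta_{3;\Theta}\le Cp^Ae^{-cp}$. Estimate \eqref{eq:fixed-recursion-function-stability} at $i=0$, $x=0$ gives \eqref{eq:quantitative-lower-recursion} with remainder \eqref{eq:lower-remainder-bound}. Estimate \eqref{eq:fixed-recursion-law-comparison} with weight $c_0$ gives \eqref{eq:lower-law-approximation}. The uniform $C^3$ bounds in $\mathcal M_b$ for both laws follow from \eqref{eq:fixed-recursion-law-stability}.

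\emph{Step 3: the cutoff law.} For $\rho^{\rm cut}$ I add one more recursion level, from $a$ to $a_{p,\lambda}$, with level $m$ and variance $\Xi_{p,\lambda}'(a_{p,\lambda})-\Xi_{p,\lambda}'(a)=\Xi'(a_{p,\lambda})-\Xi'(a)+p^{-10}+O(a_*^p)$, which is bounded above and below. Its terminal function is $u_p(a_{p,\lambda},\cdot)$, recentered, which by \Cref{lem:large-V} is close to $\psi_m$ with polynomially bounded derivatives. The $\lambda$-derivatives of $a_{p,\lambda}$ are $O(1/p)$. \Cref{lem:forward-law-stability} then gives \eqref{eq:moving-cutoff-law-regularity}; here only $C^2$ is claimed, so one derivative can be spent on the moving endpoint.

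The law is determined by the evolution up to $a_{p,\lambda}$ by the Markov property. Continuity of the derivatives at $m=1$ from the left follows because all data are smooth in $m$ up to $m=1$.

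The main obstacle is Step 1. It requires verifying that \Cref{lem:large-V} controls mixed parameter derivatives, including $y$-derivatives through $Q_p$ and $m$-derivatives up to $m=1$, with only polynomial loss. It also requires the recentering to kill the linear growth of $T_{m,W}\log\cosh$, so that the difference is genuinely small in $\mathsf T^9$ including the $|g(0)|$ term.
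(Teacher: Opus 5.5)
Your proposal is correct and follows the paper's proof essentially step for step. It uses the same recentering by $C_p^{\rm add}$ and application of \Cref{lem:large-V} with $W_p\asymp p$, the same comparison with the temperature--mass-scaled recursion via \Cref{lem:backward-recursion-stability,lem:forward-law-stability}, and the same extra Cole--Hopf step from $a(z)$ to $a_{p,\lambda}$ for the cutoff law. For that last step the paper composes $\rho^{\rm cut}=\rho_{p,z,m,\lambda,Q_p}K_p$ directly with the kernel-envelope bound instead of re-running the extended recursion, but this is the same argument. The only point you left implicit is the degenerate case $r=0$: there $a(z)=0$, both laws are $\delta_0$, and the value at $(0,0)$ follows directly from the terminal approximation.
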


\begin{proof}
Write $Q_p=1-t_py$ and set
\[
 W_p=\Xi_{p,\lambda}'(Q_p)-\Xi_{p,\lambda}'(a(z)).
\]
Recall that $a_{p,\lambda}$ is the cutoff point defined in \eqref{e:movingcutoff} as
\[
 \lambda p\,a_{p,\lambda}^{p-1}=p^{-10}.
\]
After increasing $p_0$ if necessary, this definition and the exponential
smallness of $t_p$ give
\[
 a_*=\sup_{z\in\cN_z}a(z)<a_{p,\lambda}<Q_p<1
\]
uniformly on the parameter set.  Indeed,
\[
 1-a_{p,\lambda}\asymp\frac{\log p}{p},
 \qquad
 1-Q_p=t_py,
\]
and the second quantity is exponentially small in $p$, uniformly for
$y\in\mathcal Y$.

On $[Q_p,1]$, the cumulative mass
$\alpha_{\mu_{p,z,m,Q_p}}$ of the candidate Parisi measure is equal to one.
Thus, with
\[
 V_p^{\rm top}
 =\Xi_{p,\lambda}'(1)-\Xi_{p,\lambda}'(Q_p),
\]
the Cole--Hopf transform on this interval satisfies
\[
 T_{1,V_p^{\rm top}}\log\cosh
 =\log\cosh+\frac{V_p^{\rm top}}2.
\] 
The Parisi solution at $a(z)$ can be written as
\[
 u_p(a(z),x)
 =C_p^{\rm add}(z,m,\lambda,y)+g_p(x),
\]
where, by \eqref{eq:additive-constant},
\[
 g_p(x)=T_{m,W_p}\log\cosh x-\frac{mW_p}{2}
       -\left(\frac1m-1\right)\log2.
\]
Thus $g_p$ is the boundary function at $a(z)$ after removal of its explicit
$x$-independent additive constant. 
Since $Q_p=1-t_py$ and $a(z)\leq a_*<1$, we have
\[
 W_p
 =\Xi'(Q_p)-\Xi'(a(z))
  +\lambda p\bigl(Q_p^{p-1}-a(z)^{p-1}\bigr).
\]
Direct expansion therefore gives, uniformly together with all derivatives
of total order at most three in $(z,m,\lambda,y)$,
\[
 Q_p^{p-1}=1+O_{C^3}(p^At_p),
\]
and
\[
 W_p=\lambda p+\Xi'(1)-\Xi'(a(z))
       +O_{C^3}\!\left(p^At_p+p^Aa_*^{p-A}\right).
\]
In particular, $W_p=\lambda p+O(1)$, so $W_p\asymp p$ uniformly, while its
parameter derivatives through order three grow at most polynomially in $p$.

By the definition of $g_p$, \Cref{lem:large-V} gives
\[
 g_p(x)-\psi_m(x)=\mathcal E_{m,W_p}(x),
\]
where the error term and all of its fixed-order derivatives are
bounded by a polynomial in $W_p$ times $e^{-cW_p}$.  Since $W_p\asymp p$,
applying the lemma through total order ten in $(x,m,V)$ and using the chain
rule therefore yields
\begin{equation}\label{eq:lower-terminal-approximation}
 \max_{|\alpha|\leq3}
 \left\|
 \partial_{z,m,\lambda,y}^{\alpha}(g_p-\psi_m)
 \right\|_{\mathsf T^{9-2|\alpha|}}
 \leq Cp^Ae^{-cp}.
\end{equation}
The tenth spatial derivative also makes the ninth derivative Lipschitz, and
hence uniformly continuous, as required in the definition of $\mathsf T^9$.

On the interval containing the lower support, the high-degree perturbation is
itself exponentially small.  More precisely, for every fixed $j$,
\[
 \sup_{q\leq a_*}
 \left|\partial_q^j(\lambda q^p)\right|
 \leq Cp^ja_*^{p-j}
 \leq Cp^Ae^{-cp}.
\]
Thus, on $[0,a(z)]$, the finite-$p$ problem has covariance
$\Xi_{p,\lambda}$ and boundary value $g_p$, while the comparison problem
has covariance $\Xi$ and boundary value $\psi_m$; their respective
cumulative masses are both $m\alpha_{\nu_z}$.  Moreover, both $g_p$ and
$\psi_m$ are one-Lipschitz.  For $r\geq1$, the variance increments between
consecutive lower support points are uniformly bounded away from zero.
Thus
\Cref{lem:backward-recursion-stability} and \Cref{lem:forward-law-stability},
applied with parameters
$(z,m,\lambda,y)$, give \eqref{eq:lower-law-approximation} for the field
laws at $a(z)$, together with the corresponding estimate for the value of
the Parisi solution at $(0,0)$.  When $r=0$, we have $a(z)=0$, the two laws
at $a(z)$ are both $\delta_0$, and the estimate for the value at $(0,0)$
follows directly from \eqref{eq:lower-terminal-approximation}.  Restoring
the explicit additive constants and applying
\Cref{lem:temperature-mass-scaling} then gives
\eqref{eq:quantitative-lower-recursion} and
\eqref{eq:lower-remainder-bound}.  The same application of
\Cref{lem:forward-law-stability} gives the asserted $C^3$ dependence of the two
incoming-law families as $\mathcal M_b$-valued maps for every fixed $b>0$. 

It remains to obtain the uniform regularity of
\[
 \rho^{\rm cut}_{p,z,m,\lambda,y}
 =\Law(X_{a_{p,\lambda}}).
\]
We first record explicitly the normalized boundary datum at
$a_{p,\lambda}$.  Set
\[
 \widehat W_p
 =
 \Xi_{p,\lambda}'(Q_p)
 -
 \Xi_{p,\lambda}'(a_{p,\lambda}).
\]
Since the cumulative mass equals one on $[Q_p,1]$ and equals $m$ on
$[a_{p,\lambda},Q_p]$, the same Cole--Hopf calculation as above gives
\[
 u_p(a_{p,\lambda},x)
 =
 C_p^{\rm cut}
 +
 \widehat g_p(x),
\]
where $C_p^{\rm cut}$ is independent of $x$ and
\[
 \widehat g_p(x)
 =
 T_{m,\widehat W_p}\log\cosh x
 -
 \frac{m\widehat W_p}{2}
 -
 \left(\frac1m-1\right)\log2.
\]
Now
\[
 \widehat W_p
 =
 \Xi'(Q_p)-\Xi'(a_{p,\lambda})
 +
 \lambda pQ_p^{p-1}
 -
 p^{-10},
\]
because
$\lambda p\,a_{p,\lambda}^{p-1}=p^{-10}$.
As above, $Q_p=1-t_py$ with $t_p$ exponentially small, while
$a_{p,\lambda}\to1$ uniformly.  Hence
\[
 \widehat W_p=\lambda p+O(1),
 \qquad
 \widehat W_p\asymp p,
\]
uniformly on the parameter set, and all parameter derivatives of
$\widehat W_p$ through order three grow at most polynomially in $p$.
Consequently, \Cref{lem:large-V}, the chain rule, and the same derivative
count used in \eqref{eq:lower-terminal-approximation} give
\begin{equation}\label{eq:cut-terminal-approximation}
 \max_{|\alpha|\leq3}
 \left\|
 \partial_{z,m,\lambda,y}^{\alpha}
 (\widehat g_p-\psi_m)
 \right\|_{\mathsf T^{9-2|\alpha|}}
 \leq Cp^Ae^{-cp}.
\end{equation}
In particular, $\widehat g_p$ and its required parameter derivatives are
uniformly bounded in the fixed spatial norms entering
\Cref{lem:backward-recursion-stability}; moreover
$\|\partial_x\widehat g_p\|_\infty\leq1$.

We now propagate the law from $a(z)$ to $a_{p,\lambda}$.  Put
\[
 \Delta_p
 =
 \Xi_{p,\lambda}'(a_{p,\lambda})
 -
 \Xi_{p,\lambda}'(a(z)).
\]
Using the explicit formula
\[
 a_{p,\lambda}
 =
 \exp\!\left[-\frac{11\log p+\log\lambda}{p-1}\right],
\]
we have
\[
 \Delta_p
 =
 \Xi'(a_{p,\lambda})-\Xi'(a(z))
 +p^{-10}-\lambda p\,a(z)^{p-1}.
\]
Since $a_{p,\lambda}\to1$ uniformly while $a(z)\leq a_*<1$,
there are constants $0<V_-<V_+<\infty$, independent of $p$, such that
\[
 V_-\leq\Delta_p\leq V_+
\]
for all sufficiently large $p$; its parameter derivatives through order
three are also uniformly bounded.

After removal of the $x$-independent additive constants, the transition
kernel of the optimal diffusion from $a(z)$ to $a_{p,\lambda}$ is therefore
\[
 K_p(x,\dd y)
 =
 p_{\Delta_p}(x,y)
 \exp\left\{
 m\widehat g_p(y)
 -
 mT_{m,\Delta_p}\widehat g_p(x)
 \right\}\dd y.
\]
This is exactly a kernel of the form
\eqref{eq:fixed-recursion-kernel}.  By
\eqref{eq:cut-terminal-approximation}, the bounds on $\Delta_p$, and the
kernel-envelope argument
\eqref{eq:fixed-recursion-kernel-envelope} in the proof of
\Cref{lem:forward-law-stability}, for every fixed $b>0$, 
\[
 \max_{|\alpha|\leq2}
 \sup_{\cN_z\times M_-\times\Lambda\times\mathcal Y}
 \left\|
 \partial_{z,m,\lambda,y}^{\alpha}K_p
 \right\|_{\mathscr K_b}
 \leq C_b.
\]
Since
\[
 \rho^{\rm cut}_{p,z,m,\lambda,y}
 =
 \rho_{p,z,m,\lambda,Q_p}K_p,
\]
the product rule, \eqref{e:kernelineq}, and the weighted law bounds from
\Cref{lem:forward-law-stability}, used above to control
$\rho_{p,z,m,\lambda,Q_p}$ and its parameter derivatives, yield
\[
 \max_{|\alpha|\leq2}
 \sup_{\cN_z\times M_-\times\Lambda\times\mathcal Y}
 \left\|
 \partial_{z,m,\lambda,y}^{\alpha}
 \rho^{\rm cut}_{p,z,m,\lambda,y}
 \right\|_{b,*}
 \leq C_b,
\]
which is \eqref{eq:moving-cutoff-law-regularity}.  The same product-rule
argument, together with continuity of the kernel derivatives in
$\mathscr K_b$ and of the incoming-law derivatives in $\mathcal M_b$, shows
that the cutoff-law map is $C^2$ into $\mathcal M_b$.  Taking $\alpha=0$
also gives the stated uniform exponential-moment bounds.  The same formulas
show that derivatives involving $m$ extend continuously to $m=1$ from the
left.  Finally, the kernel $K_p$ involves only the evolution up to
$a_{p,\lambda}$, so the cutoff law is independent of any later observation
point above $a_{p,\lambda}$.

Since $\mathcal Y\Subset(0,\infty)$ and $t_p>0$, we have $Q_p<1$
throughout the parameter set.  Thus no extension of these estimates through
$Q=1$ is required.
\end{proof}

\section{Large-variance Cole--Hopf estimates}\label{app:large-variance}

This appendix proves the endpoint and high-overlap estimates
\Cref{lem:endpoint-asymptotics,lem:high-overlap-asymptotics}, which are used
in the analysis of the high-overlap region in \Cref{sec:high-degree-perturbation}.   We use the heat-semigroup, Gaussian-kernel, and
weighted-measure notation from \Cref{sec:geometry}.

\begin{definition}\label{def:CJ-regular}
Let $K$ be a compact subset of a finite-dimensional parameter domain
$U\subset\R^d$, where, after permuting and possibly reversing some
coordinates, $U$ is relatively open in
\[
 [0,\infty)^k\times\R^{d-k}
\]
for some $0\leq k\leq d$. 
A probability-measure family
$\vartheta\mapsto\rho_\vartheta$, defined on a neighborhood of $K$ in $U$,
is $C^J$-regular at weight $c_*>0$ if it is a $C^J$ map into
$\mathcal M_{c_*}$ and
\[
 \sup_{\vartheta\in K}
 \|\partial_\vartheta^\alpha\rho_\vartheta\|_{c_*,*}<\infty,
 \qquad |\alpha|\leq J.
\]
Positive-order derivatives are finite signed measures.  At boundary faces
of $U$, parameter derivatives are understood as the corresponding one-sided
limits.
\end{definition}

\subsection{Large-variance Cole--Hopf asymptotics}
The next lemma identifies the leading behavior of
$T_{m,V}\log\cosh$ as $V\to\infty$ and gives uniform bounds for the mixed
derivatives of the error.
\begin{lemma}\label{lem:large-V}
Let $I\Subset(0,2)$.  For every integer $J\geq0$, there are constants
$c>0$ and $C_J>0$, depending only on $I$ and $J$, such that, for all
$m\in I$ and $V\geq1$,
\begin{equation}\label{eq:large-V}
 T_{m,V}\log\cosh x
 =\frac{mV}{2}+\left(\frac1m-1\right)\log2
 +\frac1m\log\cosh(mx)+\mathcal E_{m,V}(x),
\end{equation}
where
\begin{equation}\label{eq:large-V-error}
 \max_{j+\ell+h\leq J}\sup_{x\in\R}
 \left|
 \partial_x^j\partial_m^\ell\partial_V^h
 \mathcal E_{m,V}(x)
 \right|
 \leq C_JV^{C_J}e^{-cV}.
\end{equation}
\end{lemma}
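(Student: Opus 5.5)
The proof computes $T_{m,V}\log\cosh x=\frac1m\log\E\cosh^m(x+\sqrt V Z)$ explicitly and extracts the leading term. Write
\[
 \cosh^m y=2^{-m}e^{m|y|}\bigl(1+e^{-2|y|}\bigr)^m .
\]
The main contribution then comes from the two Gaussian tails where $y=x+\sqrt VZ$ has a definite sign. Concretely, we split
\[
 \E\cosh^m(x+\sqrt VZ)=2^{-m}\bigl(\E e^{m(x+\sqrt VZ)}+\E e^{-m(x+\sqrt VZ)}\bigr)+\E D_m(x+\sqrt VZ),
\]
where
\[
 D_m(y)=\cosh^m y-2^{-m}\bigl(e^{my}+e^{-my}\bigr).
\]
The first bracket equals $2^{-m}e^{m^2V/2}\cdot 2\cosh(mx)$. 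Taking $\frac1m\log$ of this term alone produces exactly
\[
 \frac{mV}{2}+\Bigl(\frac1m-1\Bigr)\log2+\frac1m\log\cosh(mx).
\]
The error is therefore
\[
 \mathcal E_{m,V}(x)=\frac1m\log\bigl(1+R_{m,V}(x)\bigr),
 \qquad
 R_{m,V}(x)=\frac{2^{m-1}e^{-m^2V/2}}{\cosh(mx)}\,\E D_m(x+\sqrt VZ).
\]

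The key estimate is a pointwise bound on $D_m$. For $m\in I\Subset(0,2)$ we have
\[
 |D_m(y)|\le C\bigl(e^{(m-2)|y|}+e^{-m|y|}\bigr),
\]
obtained by expanding $(1+e^{-2|y|})^m-1$ for large $|y|$ and using boundedness for small $|y|$. Both exponents are negative, with $|m-2|$ and $m$ bounded below on $I$. We then Gaussian-integrate $e^{-\kappa|x+\sqrt VZ|}$ with $\kappa\in\{2-m,m\}$, which gives at most $2e^{\kappa^2V/2}\cosh(\kappa x)$. Dividing by $e^{m^2V/2}\cosh(mx)$ leaves two factors:
\[
 e^{(\kappa^2-m^2)V/2}
 \qquad\text{and}\qquad
 \frac{\cosh(\kappa x)}{\cosh(mx)}.
\]
For $\kappa=m$ the first factor is $1$, which is not small, so the crude bound is insufficient. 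This is where the real work lies. The fix is to compare the integrand directly with the main term: the correction is significant only where $|x+\sqrt VZ|=O(1)$. Under the tilted Gaussian measure $e^{\pm m\sqrt V Z-m^2V/2}$, the variable $x+\sqrt V Z$ is shifted to mean $x\pm mV$, so the probability of the region $|y|\lesssim$ const is $\lesssim e^{-c V}$ uniformly in $x$. For the same reason the ratio $\cosh(\kappa x)/\cosh(mx)$ should not be bounded crudely but handled through this change of measure. The precise step is to write
\[
 \E D_m(x+\sqrt VZ)=\E\bigl[\cosh^m(y)\,\eta(y)\bigr],
 \qquad
 \eta(y)=1-\frac{e^{m|y|}+e^{-m|y|}}{2^m\cosh^m y}\cdot(\cdots),
\]
and then use $|\eta(y)|\le Ce^{-\min(2,2m)|y|}$ together with
\[
 \cosh^m y\le e^{m|y|}.
\]
After the Girsanov shift, the integral becomes an expectation of $e^{-c'|x\pm mV+\sqrt VZ|}$. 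Its value is bounded by $CV^{C}e^{-cV}$ uniformly in $x$, since the Gaussian of variance $V$ centered at distance $\ge mV$ from the region where $|y|$ is small assigns that region mass at most $e^{-cV}$. The case $x$ large is handled by the same calculation with the tilt aligned with $\operatorname{sign}x$. This gives $\sup_x|R_{m,V}|\le CV^Ce^{-cV}$, and hence the bound on $\mathcal E$ for $J=0$, once $V$ is large. For $1\le V\le V_0$ the bound is trivial after enlarging $C_J$, since the quantities involved are bounded uniformly there.

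For derivatives, differentiate under the expectation. The operators $\partial_x$ and $\partial_m$ fall on $D_m$ (producing similar exponentially decaying functions times polynomials in $|y|$) and on $\cosh(mx)$ (giving bounded ratios such as $\tanh$). For $\partial_V$ we use $\partial_V\E F(x+\sqrt VZ)=\tfrac12\E F''(x+\sqrt VZ)$, which again preserves the structure, while $\partial_V e^{-m^2V/2}$ contributes a bounded factor. Each derivative thus costs at most a polynomial factor in $V$ and $|y|$, absorbed into $V^{C_J}$. Finally, the Faà di Bruno formula applied to $\frac1m\log(1+R)$, with $|R|\le\frac12$ for $V\ge V_0$, transfers the bounds to $\mathcal E_{m,V}$. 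The main obstacle is the uniform-in-$x$ exponential smallness of $R$ discussed above: the naive $\kappa=m$ term does not decay, and one must exploit the Girsanov-shift localization.
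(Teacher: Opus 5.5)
Your setup coincides with the paper's. $D_m$ is the paper's $r_m$, the explicit part evolves to $2^{1-m}e^{m^2V/2}\cosh(mx)$, $R_{m,V}$ is the paper's $z_{m,V}$, and the finish via $\frac1m\log(1+R_{m,V})$ and Fa\`a di Bruno is the same. The trouble is the step you call the main obstacle: it does not exist, and the argument you give to overcome it is incorrect as written.

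The difficulty comes entirely from the lossy bound $e^{-\kappa|y|}\le 2\cosh(\kappa y)$, which trades a bounded function for a growing one. (For $\kappa=2-m$ with $m<1$ it even yields the growing factor $e^{2(1-m)V}$, which you did not flag.) Your own pointwise estimate shows $\sup_{m\in I}\|D_m\|_\infty<\infty$. Hence, using $\cosh(mx)\ge1$,
\[
 |R_{m,V}(x)|\le 2^{m-1}\|D_m\|_\infty\,e^{-m^2V/2}
\]
uniformly in $x$, with $c=(\inf I)^2/2$. Derivatives go the same way:
\[
 \partial_x^j\partial_m^\ell\partial_V^h\E D_m(x+\sqrt VZ)=2^{-h}\E\bigl[\partial_y^{j+2h}\partial_m^\ell D_m(x+\sqrt VZ)\bigr]
\]
is bounded, because each $\partial_y^a\partial_m^bD_m$ is a polynomial in $|y|$ times $e^{-\min(m,2-m)|y|}$. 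Derivatives of $e^{-m^2V/2}$ cost only powers of $V$. Also $|\partial_x^j\partial_m^k(1/\cosh(mx))|\le C(1+|x|)^k/\cosh(mx)\le C'$ for $m\ge\inf I$. Note that $\partial_m$ acting on $\cosh(mx)$ produces the relative factor $x\tanh(mx)$, not a bounded ratio as you state. This is harmless only because $1/\cosh(mx)$ absorbs it.

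Your Girsanov paragraph contains a false claim. Since $\cosh^my\le e^{m|y|}\le e^{my}+e^{-my}$, both tilts occur, and "choosing the tilt aligned with $\operatorname{sign}x$" is not available. For the misaligned tilt, the shifted expectation is not small uniformly in $x$. At $x=-mV$ one has $\E e^{-c'|x+mV+\sqrt VZ|}=\E e^{-c'\sqrt V|Z|}\asymp V^{-1/2}$. Smallness there comes only from the prefactor $e^{mx}/\cosh(mx)\le 2e^{-2m|x|}$. So you would need a case split on $|x|$ versus $V$, and you would have to absorb the $|x|^k$ factors from $m$-derivatives into the shift.

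For $1\le V\le V_0$, boundedness of the quantities is not enough either. Differentiating $\log(1+R)$ requires $1+R$ to be bounded below uniformly in $x$. The paper gets this from compactness together with $z_{m,V}\to0$ as $|x|\to\infty$. More simply, $\cosh^my\ge2^{-m}e^{m|y|}\ge2^{-m-1}(e^{my}+e^{-my})$ gives $1+R_{m,V}\ge\frac12$ for all $x$ and $V$.

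For comparison, the paper controls $\mathsf H_Vr_m$ by integrating $r_m$ against $e^{\gamma|y|}$ with $\gamma<\min(m,2-m)$. This gives $CV^Ce^{\gamma^2V/2-\gamma|x|}$ and hence the rate $e^{-(m^2-\gamma^2)V/2}$ after division. The $L^\infty$ bound above is shorter and gives the better rate $e^{-m^2V/2}$. The paper's route additionally supplies the spatial decay it uses in the bounded-$V$ step.
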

\begin{proof}
We first separate from $\cosh^m$ the two exponential tails whose heat
evolution can be computed explicitly.  Choose
\[
 0<\gamma<\min\{\inf I,2-\sup I\},
\]
and write
\[
 \cosh^m y
 =2^{-m}(e^{my}+e^{-my})+r_m(y),
 \qquad
 r_m(y)=\cosh^m y-2^{-m}(e^{my}+e^{-my}).
\]
The remainder $r_m$ is exponentially localized: every fixed collection of
$m$- and $y$-derivatives is integrable against
$e^{\gamma|y|}\dd y$, uniformly for $m\in I$.  Indeed, on $y\geq0$,
\[
 r_m(y)
 =2^{-m}e^{my}\bigl((1+e^{-2y})^m-1\bigr)-2^{-m}e^{-my},
\]
and the two terms, together with their fixed-order derivatives, are bounded
by a polynomial in $y$ times $e^{-(2-m)y}$ and $e^{-my}$, respectively.
The corresponding bounds on $y\leq0$ follow by symmetry.

The explicit part evolves under the heat semigroup according to
\[
 \mathsf H_V[2^{-m}(e^{m\cdot}+e^{-m\cdot})](x)
 =2^{1-m}e^{m^2V/2}\cosh(mx).
\]
We therefore measure the contribution of the remainder relative to this
explicit term by setting
\[
 z_{m,V}(x)
 =
 \frac{\mathsf H_Vr_m(x)}
 {2^{1-m}e^{m^2V/2}\cosh(mx)}.
\]
Then
\[
 \mathsf H_V(\cosh^m)(x)
 =
 2^{1-m}e^{m^2V/2}\cosh(mx)\bigl(1+z_{m,V}(x)\bigr),
\]
and hence
\[
 T_{m,V}\log\cosh x
 =
 \frac{mV}{2}
 +\left(\frac1m-1\right)\log2
 +\frac1m\log\cosh(mx)
 +\frac1m\log(1+z_{m,V}(x)).
\]
Thus the lemma reduces to proving that $z_{m,V}$ and its required
derivatives are exponentially small as $V\to\infty$.

The weighted integrability above and differentiation of the Gaussian kernel
give, for $j+\ell+h\leq J$ and $V\geq1$,
\[
 \left|
 \partial_x^j\partial_m^\ell\partial_V^h
 \mathsf H_Vr_m(x)
 \right|
 \leq
 C_JV^{C_J}e^{\gamma^2V/2-\gamma|x|}.
\]
Indeed, writing $d=x-y$,
\[
 p_V(d)e^{-\gamma|y|}
 \leq
 (2\pi V)^{-1/2}e^{-\gamma|x|}
 e^{-d^2/(2V)+\gamma|d|}
 \leq
 C e^{\gamma^2V/2-\gamma|x|},
\]
and derivatives of the Gaussian kernel introduce only polynomial factors in
$V$ and $|d|$.

Since $m\geq\inf I>\gamma$, division by
$e^{m^2V/2}\cosh(mx)$ gives exponential decay in $V$.  Differentiating the
quotient introduces only polynomial factors in $V$ and $|x|$, with the
latter absorbed by the exponential decay in $|x|$.  Hence, for some $c>0$,
\[
 \max_{j+\ell+h\leq J}\sup_{x\in\R}
 \left|
 \partial_x^j\partial_m^\ell\partial_V^h z_{m,V}(x)
 \right|
 \leq C_JV^{C_J}e^{-cV}.
\]
For all sufficiently large $V$, this gives
$\sup_x|z_{m,V}(x)|\leq1/2$.  Differentiating
$\log(1+z_{m,V})$ then yields the same bound for
\[
 \mathcal E_{m,V}(x)=\frac1m\log(1+z_{m,V}(x)).
\]

It remains only to treat a bounded range $1\leq V\leq V_0$.  In this range,
the weighted bounds above imply that $z_{m,V}$ and all of its mixed
derivatives of the required orders tend to zero as $|x|\to\infty$,
uniformly for $(m,V)\in\overline I\times[1,V_0]$.  Moreover,
\[
 1+z_{m,V}(x)
 =
 \frac{\mathsf H_V(\cosh^m)(x)}
 {2^{1-m}e^{m^2V/2}\cosh(mx)}
 >0.
\]
Choose $R$ so that $|z_{m,V}(x)|\leq1/2$ for $|x|\geq R$, uniformly on
this parameter set.  On the compact set
\[
 \overline I\times[1,V_0]\times[-R,R],
\]
the continuous function $1+z_{m,V}(x)$ therefore has a positive minimum.
All mixed derivatives of $m^{-1}\log(1+z_{m,V})$ through total order $J$
are consequently uniformly bounded for $1\leq V\leq V_0$.  Enlarging
$C_J$ absorbs this bounded range into \eqref{eq:large-V-error}, completing
the proof.
\end{proof}

\subsection{Large-variance approximation of the optimal diffusion}

The next lemma shows that, after the diffusion has traversed variance $t$ out
of a total variance $W$, its law and magnetization are exponentially close
to explicit limiting quantities when $W-t$ is large.

\begin{lemma}\label{lem:large-variance-optimal-diffusion}
Let $I\Subset(0,2)$.  For $m\in I$ and $W>0$, set
\[
 h_t=T_{m,W-t}\log\cosh,
 \qquad 0\leq t\leq W.
\]
For $x\in\R$, let $X_0=x$ and consider the diffusion
\[
 \dd X_t=m h_t'(X_t)\,\dd t+\dd B_t,
 \qquad 0\leq t\leq W.
\]
Then, for $t\in(0,W]$, the law of $X_t$ has density
\begin{equation}\label{eq:exact-Doob}
 K_{t,W}^x(\dd y)
 =\frac{p_t(x,y)e^{mh_t(y)}}{e^{mh_0(x)}}\dd y.
\end{equation}
Define the limiting kernel
\[
 K_t^{\infty,x}(\dd y)
 =\frac{p_t(x,y)e^{-m^2t/2}\cosh(my)}{\cosh(mx)}\dd y.
\]

For every pair of integers $J,M\geq0$ and every $t_0>0$, there are constants
$c,C,A,c_0>0$, depending only on $I,J,M$, and $t_0$, such that, whenever
$m\in I$, $t\geq t_0$, $W-t\geq1$, and $a,b,d,e\geq0$ satisfy
$a+b+d+e\leq J$,
\begin{equation}\label{eq:limit-Doob}
 \int(1+|y-x|)^M
 \left|\partial_x^a\partial_m^b\partial_W^d\partial_t^e
 (K_{t,W}^x-K_t^{\infty,x})\right|(\dd y)
 \leq C(1+t)^Ae^{-c(W-t)}e^{c_0|x|}.
\end{equation}
For the same parameter range, if $j,b,d,e\geq0$ and
$j+b+d+e\leq J$, then
\begin{equation}\label{eq:limit-Doob-magnetization}
 \sup_{y\in\R}
 \left|\partial_y^j\partial_m^b\partial_W^d\partial_t^e
 \bigl(h_t'(y)-\tanh(my)\bigr)\right|
 \leq C(1+W-t)^Ae^{-c(W-t)}.
\end{equation}
\end{lemma}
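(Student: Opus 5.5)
The plan is to treat \eqref{eq:exact-Doob} first as a Doob $h$-transform identity, and then to read off both estimates from \Cref{lem:large-V}.

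\emph{Step 1: the exact kernel.} Since $h_t=T_{m,W-t}\log\cosh$, the function $\phi_t=e^{mh_t}=\mathsf H_{W-t}(\cosh^m)$ is space-time harmonic: $\partial_t\phi_t+\frac12\phi_t''=0$. The diffusion with drift $(\log\phi_t)'=mh_t'$ is therefore the $h$-transform of Brownian motion by $\phi$. Its transition density from $(0,x)$ to $(t,y)$ is $p_t(x,y)\phi_t(y)/\phi_0(x)$, which is \eqref{eq:exact-Doob}. To make this rigorous, I would check that the drift is globally Lipschitz, using $0\le h_t''\le1$ from the curvature bounds in \Cref{lem:curvature}; this gives pathwise uniqueness. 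I would then verify the density via It\^o applied to $\phi_s(X^{\rm BM}_s)$ together with Girsanov. Alternatively, one can check directly that the displayed kernel solves the forward equation.

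\emph{Step 2: the magnetization estimate \eqref{eq:limit-Doob-magnetization}.} Here I would apply \Cref{lem:large-V} with $V=W-t\ge1$:
\[
h_t(y)=\tfrac{mV}{2}+\bigl(\tfrac1m-1\bigr)\log2+\tfrac1m\log\cosh(my)+\mathcal E_{m,V}(y).
\]
Differentiating in $y$ gives $h_t'(y)-\tanh(my)=\partial_y\mathcal E_{m,W-t}(y)$. The $t$- and $W$-derivatives become $V$-derivatives through the chain rule $\partial_t=-\partial_V$, $\partial_W=\partial_V$. Estimate \eqref{eq:large-V-error} at order $J+1$ then gives the bound $C(1+W-t)^Ae^{-c(W-t)}$ directly.

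\emph{Step 3: the kernel estimate \eqref{eq:limit-Doob}.} Substituting the expansion into \eqref{eq:exact-Doob}, the constant $mV/2$ and the $\log 2$ terms cancel between numerator and denominator up to the difference $m^2(V_t-V_0)/2=-m^2t/2$, where $V_t=W-t$ and $V_0=W$. This yields the exact relation
\[
K^x_{t,W}(\dd y)=K^{\infty,x}_t(\dd y)\,\exp\bigl(m\mathcal E_{m,W-t}(y)-m\mathcal E_{m,W}(x)\bigr).
\]
Hence the difference of the two kernels is $K^{\infty,x}_t\cdot(e^{E}-1)$, where $E=m\mathcal E_{m,W-t}(y)-m\mathcal E_{m,W}(x)$. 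Since $W\ge W-t$, the error bounds give $|E|$, together with all its mixed derivatives through order $J$, at most $C(1+W)^Ae^{-c(W-t)}$. To control the polynomial factor in $W=t+(W-t)$, I would split $(1+W)^A\le C(1+t)^A(1+W-t)^A$ and absorb $(1+W-t)^A$ into a slightly smaller exponential rate.

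\emph{Main obstacle.} The step I expect to be hardest is the derivative bookkeeping: $x$-, $m$- and $t$-derivatives also fall on $K^{\infty,x}_t$. Writing $K^{\infty,x}_t$ as $p_t(x,y)e^{-m^2t/2}\cosh(my)/\cosh(mx)$, each derivative produces a polynomial in $|y-x|$, $t^{-1}$ (bounded since $t\ge t_0$), $t$ and $|y|$, times the kernel itself. After expanding $\cosh(my)\le e^{m|x|}e^{m|y-x|}$, the total mass of $(1+|y-x|)^{M'}K^{\infty,x}_t$ is bounded by $C(1+t)^Ae^{C t}$. That factor $e^{Ct}$ is not of the form $(1+t)^A$, so the Gaussian moment computation must be done carefully. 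The key point is that $e^{-m^2t/2}$ exactly compensates the growth of $\mathsf H_t\cosh(m\cdot)$, so that $\int(1+|y-x|)^M K^{\infty,x}_t(\dd y)\le C(1+t)^{A}$ after using $\cosh(my)/\cosh(mx)$ tilting. Concretely, under $K^{\infty,x}_t$, $y-x$ is a mixture of $\mathcal N(\pm mt,t)$ with weights bounded by $1$, so its moments are polynomial in $t$. The $x$-derivatives of $\cosh(mx)^{-1}$ and of $\tanh$ are bounded, leaving at worst the factor $e^{c_0|x|}$ allowed in \eqref{eq:limit-Doob}. Finally, the bounded range of $x$-growth from differentiating $\mathcal E_{m,W}(x)$ is uniform, so collecting terms gives \eqref{eq:limit-Doob}.
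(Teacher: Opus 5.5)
Your proposal is correct and follows the paper's proof essentially step for step. The shared steps are the Doob $h$-transform with $e^{mh_t}=\mathsf H_{W-t}(\cosh^m)$, the exact factorization $K^x_{t,W}=e^{m(\mathcal E_{m,W-t}(y)-\mathcal E_{m,W}(x))}K^{\infty,x}_t$ obtained from \Cref{lem:large-V}, the two-Gaussian mixture $\mathcal N(x\pm mt,t)$ that gives polynomial-in-$t$ weighted moments of $K^{\infty,x}_t$, and the identity $h_t'(y)-\tanh(my)=\partial_y\mathcal E_{m,W-t}(y)$ for the magnetization bound.

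One small correction concerns your well-posedness remark: \Cref{lem:curvature} gives $0\le h_t''\le1$ only when $m\le1$. For $m\in I\Subset(0,2)$, the identity $U''=\E_a g''+a\Var_a(g')$ from its proof, with $a=m$ and $g=\log\cosh$, still gives $0\le h_t''\le1+m$, so the drift remains globally Lipschitz.
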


\begin{proof}
Set
\[
 H(t,x)=e^{mh_t(x)}
       =\mathsf H_{W-t}(\cosh^m)(x).
\]
Then $H$ is a positive solution of the backward heat equation, and
\[
 \partial_x\log H(t,x)=m h_t'(x).
\]
Moreover, by the heat-semigroup identity,
\[
 \int_{\R}p_t(x,y)H(t,y)\,\dd y=H(0,x).
\]
It follows that the diffusion
\[
 \dd X_t=m h_t'(X_t)\,\dd t+\dd B_t
\]
started from $x$ has density
\[
 \frac{p_t(x,y)H(t,y)}{H(0,x)}\,\dd y,
\]
which is \eqref{eq:exact-Doob}.

Write $\varepsilon_V$ for the remainder in \eqref{eq:large-V}.  Applying
that expansion with $V=W-t$ at $y$ and with $V=W$ at $x$ gives
\[
 m\bigl(h_t(y)-h_0(x)\bigr)
 =
 -\frac{m^2t}{2}
 +\log\frac{\cosh(my)}{\cosh(mx)}
 +m\bigl(\varepsilon_{W-t}(y)-\varepsilon_W(x)\bigr).
\]
Substitution into \eqref{eq:exact-Doob} therefore yields the exact
factorization
\begin{equation}\label{eq:Doob-factorization}
 K_{t,W}^x(\dd y)
 =e^{m(\varepsilon_{W-t}(y)-\varepsilon_W(x))}
 K_t^{\infty,x}(\dd y).
\end{equation}
The limiting kernel can in turn be written explicitly as the Gaussian
mixture
\begin{equation}\label{eq:limit-Doob-two-Gaussians}
 K_t^{\infty,x}
 =\frac{e^{mx}}{2\cosh(mx)}p_t(x+mt,\cdot)
  +\frac{e^{-mx}}{2\cosh(mx)}p_t(x-mt,\cdot).
\end{equation}
This follows by completing the square separately in the two terms
of $\cosh(my)$.

Set $s=W-t$.  By \eqref{eq:large-V-error}, all required derivatives of
$\varepsilon_s$ and $\varepsilon_W$ are bounded by
\[
 C(1+s)^Ae^{-cs}+C(1+W)^Ae^{-cW}
 \leq C(1+t)^Ae^{-c's},
\]
after decreasing $c'>0$ and increasing $A$ if necessary.  The same bound
holds for the corresponding derivatives of
\[
 e^{m(\varepsilon_s(y)-\varepsilon_W(x))}-1.
\]
On the other hand, \eqref{eq:limit-Doob-two-Gaussians} shows directly that,
for $t\geq t_0$, every required derivative of $K_t^{\infty,x}$ has
$(1+|y-x|)^M$-weighted total variation at most
\[
 C(1+t)^Ae^{c_0|x|}.
\]
Here derivatives of the Gaussian densities contribute only polynomial
moments, while $m$-derivatives of the mixture weights contribute at most
polynomial factors in $|x|$, which are absorbed by $e^{c_0|x|}$.
Differentiating \eqref{eq:Doob-factorization} and applying the product rule
now gives \eqref{eq:limit-Doob}.

Finally, differentiating \eqref{eq:large-V} in the spatial variable gives
the exact identity
\[
 h_t'(y)-\tanh(my)
 =\partial_y\varepsilon_{W-t}(y).
\]
Applying \eqref{eq:large-V-error} through the required order, with one
additional spatial derivative, gives
\eqref{eq:limit-Doob-magnetization}.
\end{proof}

\subsection{Averaged heat-kernel and endpoint estimates}

The next lemma gives the large-$t$ behavior of the averaged heat-kernel
quantities associated with the two observables used below.  The leading term
is of order $t^{-1/2}$ with an explicit positive coefficient, and the error
is $O(t^{-3/2})$ together with the required parameter derivatives.

\begin{lemma}\label{lem:averaged-heat-expansion}
Let $J\geq0$ be an integer, fix $I\Subset(0,2)$, let $U\subset\R^d$ be a
parameter domain as in \Cref{def:CJ-regular}, and let $K\subset U$ be compact.
Suppose that $m=m(\vartheta)$ is $C^J$ with values in $I$ and that
$\rho_\vartheta$ is a $C^J$-regular 
probability-measure family at some weight $c_*>0$ in the sense of \Cref{def:CJ-regular}.  Assume moreover that
\begin{equation}\label{eq:incoming-moment}
 \sup_{\vartheta\in K}
 \int_{\R}e^{c|x|}\rho_\vartheta(\dd x)<\infty
 \qquad\text{for every }c>0.
\end{equation}
For either of the two functions
\[
 g_m^{\mathrm{obs}}(y)=\sech(my),
 \qquad
 g_m^{\mathrm{end}}(y)=\cosh^{m-2}y,
\]
define
\[
 H_g(t,\vartheta)
 =\int_{\R}
 \frac{(\mathsf H_tg_{m(\vartheta)})(x)}
      {\cosh(m(\vartheta)x)}
 \rho_\vartheta(\dd x).
\]
We also write $H_g(t;m,\rho)$ when $m$ and the probability measure $\rho$
are displayed separately.

There exists a positive $C^J$ function $h_{g,0}$ and constants
$0<c_g\leq C_g<\infty$ such that
\[
 c_g\leq h_{g,0}(\vartheta)\leq C_g,
 \qquad \vartheta\in K,
\]
and
\begin{equation}\label{eq:averaged-heat-expansion}
 H_g(t,\vartheta)
 =t^{-1/2}\bigl(h_{g,0}(\vartheta)+\mathcal R_g(t,\vartheta)\bigr).
\end{equation}
For every $j+|\alpha|\leq J$,
\begin{equation}\label{eq:averaged-heat-remainder}
 \sup_{\substack{t\geq1\\ \vartheta\in K}}
 t^{1+j}
 \left|
 \partial_t^j\partial_\vartheta^\alpha
 \mathcal R_g(t,\vartheta)
 \right|
 \leq C.
\end{equation}
The leading coefficient is
\begin{equation}\label{eq:h0-formula}
 h_{g,0}(\vartheta)
 =\frac1{\sqrt{2\pi}}
 \left(\int_{\R}g_{m(\vartheta)}(y)\,\dd y\right)
 \int_{\R}\sech(m(\vartheta)x)\rho_\vartheta(\dd x).
\end{equation}
Finally,
\begin{equation}\label{eq:H-log-derivative}
 \partial_t\log H_g(t,\vartheta)
 =-\frac1{2t}+O(t^{-2}),
\end{equation}
uniformly for $t\geq1$ and $\vartheta\in K$.
\end{lemma}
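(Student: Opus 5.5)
The plan is to write $\mathsf H_tg(x)=\int p_t(x,y)g(y)\,\dd y$ and exploit that $g=g_m$ is integrable with exponential decay. For $g^{\mathrm{obs}}$ the decay rate is $m$; for $g^{\mathrm{end}}$ it is $2-m$. Both rates are bounded below uniformly for $m\in I\Subset(0,2)$, and $y$-derivatives and $m$-derivatives of $g$ keep this decay up to polynomial factors. We then Taylor-expand the Gaussian kernel in the small parameter.

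\emph{Step 1.} Expand the kernel. Write
\[
 \sqrt t\,p_t(x,y)=\frac1{\sqrt{2\pi}}e^{-(x-y)^2/(2t)},
 \qquad
 e^{-s}=1-s+s^2\int_0^1(1-\theta)e^{-\theta s}\,\dd\theta,
\]
with $s=(x-y)^2/(2t)$. This gives
\[
 \sqrt t\,\mathsf H_tg(x)=\frac1{\sqrt{2\pi}}\int g\,\dd y+\mathcal R^{(1)}(t,x).
\]
The remainder satisfies $|\mathcal R^{(1)}|\le Ct^{-1}\int(x-y)^2|g(y)|\,\dd y\le Ct^{-1}(1+x^2)$.

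\emph{Step 2.} Integrate against $\rho_\vartheta(\dd x)/\cosh(mx)$. The leading term produces exactly $h_{g,0}$ in \eqref{eq:h0-formula}. The remainder is $O(t^{-1})$ because $(1+x^2)\sech(mx)$ is bounded. The bounds $c_g\le h_{g,0}\le C_g$ follow on the compact set $K$ from three facts: $\int g_m>0$ is continuous in $m$; $\int\sech(mx)\rho_\vartheta(\dd x)\le1$; and this integral is bounded below. The lower bound holds because \eqref{eq:incoming-moment} gives tightness, so $\rho_\vartheta([-R,R])\ge1/2$ uniformly for some fixed $R$.

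\emph{Step 3.} Handle derivatives. The $t$-derivatives act on $e^{-(x-y)^2/(2t)}$ and its prefactor. Each $\partial_t$ brings a factor $t^{-1}$ times a polynomial in $(x-y)^2/t$, which gives the $t^{1+j}$ gain in \eqref{eq:averaged-heat-remainder}; alternatively one can use the heat equation $\partial_t\mathsf H_tg=\tfrac12\mathsf H_tg''$. There are two sources of $\vartheta$-derivatives. Derivatives through $m(\vartheta)$ fall on $g_m$ and on $\sech(mx)$, and produce only polynomial factors in $x$ and $y$ together with the same exponential decay. Derivatives of $\rho_\vartheta$ are signed measures in $\mathcal M_{c_*}$ with uniformly bounded norm. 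The integrand after all $x$-polynomial factors is at most $C(1+|x|)^N\sech(mx)$, which is bounded and hence controlled by $e^{c_*|x|}$. Pairing with $\|\partial^\beta\rho\|_{c_*,*}$ therefore closes the estimates by the product rule.

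\emph{Step 4.} Derive \eqref{eq:H-log-derivative}. We have
\[
 \partial_t\log H_g=-\frac1{2t}+\frac{\partial_t\mathcal R_g}{h_{g,0}+\mathcal R_g}.
\]
The numerator of the last term is $O(t^{-2})$ and the denominator is at least $c_g/2$ for large $t$. For bounded $t\ge1$, positivity and continuity of $H_g$ on a compact range handle the remaining region, after enlarging the constant.

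The main obstacle is Step 3. There we must track $m$-derivatives of the singular-looking weight $\sech(mx)$ against measures with only weight $c_*$. This should be manageable because every factor stays polynomial times $\sech(mx)$, with $m$ bounded below on $I$.
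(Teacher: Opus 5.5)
Your proposal is correct and follows essentially the same route as the paper. The shared steps are the exact split $e^{-(x-y)^2/(2t)}=1+\bigl(e^{-(x-y)^2/(2t)}-1\bigr)$ producing $h_{g,0}$ and $\mathcal R_g$, and the bound on $\partial_t^j$ of the remainder by $t^{-1-j}$ times polynomial factors absorbed by the decay of $g_m$ and by $\sech(mx)$. (The extra power $t^{-1}$ holds because, for $j\geq1$, every term of $\partial_t^je^{-(x-y)^2/(2t)}$ carries a factor $(x-y)^2/t$.) They also include tightness for the lower bound on $h_{g,0}$ and the same large-$t$/bounded-$t$ split for the logarithmic derivative. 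Your departures are cosmetic: you use the boundedness of $(1+|x|)^N\sech(mx)$ to control the pairings with $\partial_\vartheta^\beta\rho_\vartheta$ using only the weight $c_*$, and continuity and compactness on $[1,T]\times K$ in place of the paper's explicit lower bound via $\rho_\vartheta([-R,R])\geq 1/2$.
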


\begin{proof}
Write $m=m(\vartheta)$.  By the Gaussian-kernel formula,
\[
 H_g(t,\vartheta)
 =
 \frac1{\sqrt{2\pi t}}
 \int_{\R}\sech(mx)\rho_\vartheta(\dd x)
 \int_{\R}e^{-(y-x)^2/(2t)}g_m(y)\,\dd y.
\]
Splitting
\[
 e^{-(y-x)^2/(2t)}
 =
 1+\bigl(e^{-(y-x)^2/(2t)}-1\bigr)
\]
gives the exact decomposition
\[
 H_g(t,\vartheta)
 =
 t^{-1/2}\bigl(h_{g,0}(\vartheta)+\mathcal R_g(t,\vartheta)\bigr),
\]
where
\[
 h_{g,0}(\vartheta)
 =
 \frac1{\sqrt{2\pi}}
 \left(\int_{\R}g_m(y)\,\dd y\right)
 \int_{\R}\sech(mx)\rho_\vartheta(\dd x)
\]
and
\[
 \mathcal R_g(t,\vartheta)
 =
 \frac1{\sqrt{2\pi}}
 \int_{\R}\sech(mx)\rho_\vartheta(\dd x)
 \int_{\R}
 \bigl(e^{-(y-x)^2/(2t)}-1\bigr)g_m(y)\,\dd y.
\]
This proves \eqref{eq:averaged-heat-expansion} and
\eqref{eq:h0-formula} once the stated regularity and remainder bounds are
verified.

For every fixed $j\geq0$ and $t\geq1$,
\[
 \left|
 \partial_t^j
 \bigl(e^{-(y-x)^2/(2t)}-1\bigr)
 \right|
 \leq
 C_jt^{-j-1}
 \bigl(1+|x|^{2j+2}+|y|^{2j+2}\bigr).
\]
Since $I\Subset(0,2)$, for every fixed $k,N\geq0$,
\[
 \sup_{m\in I}\int_{\R}(1+|y|)^N
 \left|\partial_m^k g_m(y)\right|\,\dd y<\infty
\]
for either $g_m=g_m^{\mathrm{obs}}$ or $g_m=g_m^{\mathrm{end}}$.
Likewise, fixed $m$-derivatives of $\sech(mx)$ are bounded by a polynomial
in $|x|$, uniformly for $m\in I$.  These bounds, together with the
$C^J$-regularity of $\rho_\vartheta$ in $\mathcal M_{c_*}$ and
\eqref{eq:incoming-moment}, justify differentiation under the integrals and
give, whenever $j+|\alpha|\leq J$,
\[
 \left|
 \partial_t^j\partial_\vartheta^\alpha
 \mathcal R_g(t,\vartheta)
 \right|
 \leq Ct^{-j-1}.
\]
This proves \eqref{eq:averaged-heat-remainder} and also shows that
$h_{g,0}$ is $C^J$.

It remains to verify the uniform positive lower bound for $h_{g,0}$.  Fix
$c>0$ and let
\[
 M=\sup_{\vartheta\in K}
 \int_{\R}e^{c|x|}\rho_\vartheta(\dd x)<\infty.
\]
Choose $R>0$ so that $Me^{-cR}\leq1/2$.  Then
\[
 \rho_\vartheta([-R,R])\geq\frac12
\]
uniformly in $\vartheta$.  Since $m(\vartheta)\in I$, the function
$\sech(m(\vartheta)x)$ has a uniform positive lower bound on $[-R,R]$.
Moreover,
\[
 \inf_{m\in I}\int_{\R}g_m(y)\,\dd y>0
\]
for either choice of $g_m$.  Formula \eqref{eq:h0-formula} therefore gives
$h_{g,0}\geq c_g>0$; the uniform upper bound follows from the same integral
estimates.

Finally, differentiating \eqref{eq:averaged-heat-expansion} gives
\[
 \partial_t\log H_g
 =
 -\frac1{2t}
 +\frac{\partial_t\mathcal R_g}
        {h_{g,0}+\mathcal R_g}.
\]
By \eqref{eq:averaged-heat-remainder},
$\mathcal R_g=O(t^{-1})$ and
$\partial_t\mathcal R_g=O(t^{-2})$.  Since $h_{g,0}$ is uniformly bounded
away from zero, the second term is $O(t^{-2})$ for $t\geq T$, where $T$
depends only on the constants in the preceding estimates.  On
$1\leq t\leq T$, choose $R$ as above.  Since $m\in I$,
\[
 \inf_{\substack{1\leq t\leq T,\ |x|\leq R\\ m\in I}}
 \frac{(\mathsf H_tg_m)(x)}{\cosh(mx)}>0,
\]
and $\rho_\vartheta([-R,R])\geq1/2$, the defining formula for $H_g$ gives
a uniform lower bound $H_g\geq c_T>0$.  The differentiated Gaussian-kernel
formula gives $|\partial_tH_g|\leq C_T$ on the same interval.  Hence
$\partial_t\log H_g+1/(2t)$ is uniformly bounded there, which is
$O(t^{-2})$ because $1\leq t\leq T$.  This proves
\eqref{eq:H-log-derivative}.
\end{proof}

Let $X_t$ denote the optimal diffusion on a constant-$m$ interval,
parametrized by its variance increment, and let $Y=X_W$ be its value at the
upper endpoint.  Since $h_W=\log\cosh$, we have
\[
 1-\E h_W'(Y)^2=\E\sech^2Y.
\]
The next lemma applies the preceding heat-kernel expansion to this quantity
and determines its large-$W$ behavior.

\begin{lemma}\label{lem:endpoint-asymptotics}
Assume the hypotheses of \Cref{lem:averaged-heat-expansion}.  Let
$X\sim\rho_\vartheta$, and, conditionally on $X=x$, let $Y$ have density
\[
 \frac{p_W(x,y)\cosh^{m(\vartheta)}y}
      {\mathsf H_W(\cosh^{m(\vartheta)})(x)}\,\dd y,
 \qquad W\geq1.
\]
Set
\[
 A^{\mathrm{end}}(W,\vartheta)=\E\sech^2Y.
\]
Then
\[
 A^{\mathrm{end}}(W,\vartheta)
 =D(W,\vartheta)W^{-1/2}
  e^{-m(\vartheta)^2W/2},
\]
where $D$ is positive and there are constants $0<c\leq C<\infty$ such that
\[
 c\leq D(W,\vartheta)\leq C,
 \qquad W\geq1,\quad \vartheta\in K.
\]
Moreover, there exists a positive $C^J$ function $d_{\mathrm{end}}$ such that,
for every $j+|\alpha|\leq J$,
\[
 \sup_{\substack{W\geq1\\ \vartheta\in K}}
 W^{1+j}
 \left|
 \partial_W^j\partial_\vartheta^\alpha
 \bigl(D(W,\vartheta)-d_{\mathrm{end}}(\vartheta)\bigr)
 \right|
 \leq C,
\]
with
\begin{equation}\label{eq:d-end-formula}
 d_{\mathrm{end}}(\vartheta)
 =
 \frac{2^{m(\vartheta)-1}}{\sqrt{2\pi}}
 \left(\int_{\R}\cosh^{m(\vartheta)-2}y\,\dd y\right)
 \int_{\R}\sech(m(\vartheta)x)\rho_\vartheta(\dd x).
\end{equation}

Suppose in addition that $(\vartheta,\lambda,y)$ ranges over a compact
parameter set on which $\lambda$ is bounded away from zero.  For some
$c_*>0$, let $(\vartheta,\lambda)\mapsto\rho_{\vartheta,\lambda}$ be a
$C^J$-regular probability-measure family at weight $c_*$, extended
independently of $y$, and for each $p$ let
$(\vartheta,\lambda,y)\mapsto\rho_{p,\vartheta,\lambda,y}$ be $C^J$-regular
at the same weight.  Assume
\[
 \max_{|\alpha|\leq J}
 \sup_{\vartheta,\lambda,y}
 \left\|
 \partial_{\vartheta,\lambda,y}^\alpha
 \bigl(\rho_{p,\vartheta,\lambda,y}
       -\rho_{\vartheta,\lambda}\bigr)
 \right\|_{c_*,*}
 \leq Cp^Ae^{-cp}.
\]
Assume also uniform exponential moments for both families for every fixed
positive weight. 
Define
\[
 t_p(\vartheta,\lambda)
 =p^{-1/2}e^{-m(\vartheta)^2\lambda p/2},
\]
and suppose
\[
 W_p(\vartheta,\lambda,y)
 =\lambda p+B(\vartheta,\lambda)+o_{C^J}(1),
\]
where $B$ is $C^J$.  Let $X_p\sim\rho_{p,\vartheta,\lambda,y}$ and,
conditionally on $X_p=x$, let $Y_p$ have density
\[
 \frac{p_{W_p}(x,u)\cosh^{m(\vartheta)}u}
      {\mathsf H_{W_p}(\cosh^{m(\vartheta)})(x)}\,\dd u.
\]
Set
\[
 \mathcal A_p(\vartheta,\lambda,y)=\E\sech^2Y_p,
\]
and define
\[
 d_{\mathrm{end}}(\vartheta,\lambda)
 =
 \frac{2^{m(\vartheta)-1}}{\sqrt{2\pi}}
 \left(\int_{\R}\cosh^{m(\vartheta)-2}u\,\dd u\right)
 \int_{\R}\sech(m(\vartheta)x)\rho_{\vartheta,\lambda}(\dd x),
\]
\[
 d_0(\vartheta,\lambda)
 =
 \lambda^{-1/2}d_{\mathrm{end}}(\vartheta,\lambda)
 \exp\left(
 -\frac{m(\vartheta)^2}{2}B(\vartheta,\lambda)
 \right).
\]
Then
\begin{equation}\label{eq:endpoint-rescaled}
 t_p^{-1}\mathcal A_p
 =d_0(\vartheta,\lambda)+o_{C^J}(1)
\end{equation}
uniformly in $(\vartheta,\lambda,y)$.  In particular, $d_0$ is $C^J$ and
is bounded above and below by positive constants.
\end{lemma}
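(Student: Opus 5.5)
The plan has two parts. First, for fixed $W$, compute $\E\sech^2Y$ in closed form in terms of the averaged heat quantity $H_g$ of \Cref{lem:averaged-heat-expansion} with $g=g^{\mathrm{end}}$. Second, substitute the perturbed data $W_p,\rho_p$.

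Conditionally on $X=x$,
\[
 \E[\sech^2Y\mid X=x]
 =\frac{\mathsf H_W(\cosh^{m-2})(x)}{\mathsf H_W(\cosh^m)(x)} .
\]
The denominator is controlled by \Cref{lem:large-V}:
\[
 \mathsf H_W(\cosh^m)(x)=2^{1-m}e^{m^2W/2}\cosh(mx)\bigl(1+z_{m,W}(x)\bigr),
\]
with $z_{m,W}$ exponentially small in $W$, together with its derivatives, uniformly in $x$. This gives
\[
 A^{\mathrm{end}}(W,\vartheta)=2^{m-1}e^{-m^2W/2}\,
 \int\frac{\mathsf H_W(g^{\mathrm{end}}_m)(x)}{\cosh(mx)}\,\bigl(1+z_{m,W}(x)\bigr)^{-1}\rho_\vartheta(\dd x).
\]
Expand $(1+z)^{-1}=1+O(V^Ce^{-cW})$. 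The principal term is exactly $2^{m-1}e^{-m^2W/2}H_{g^{\mathrm{end}}}(W,\vartheta)$. Applying \Cref{lem:averaged-heat-expansion}, we set
\[
 D=2^{m-1}\bigl(h_{g,0}+\mathcal R_g\bigr)+(\text{exponentially small}),
\]
and \eqref{eq:h0-formula} then yields \eqref{eq:d-end-formula}.

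The correction term needs separate control. Its integrand is bounded by $|z|\,\mathsf H_W(g)(x)/\cosh(mx)$, and $\mathsf H_Wg\le CW^{-1/2}$ because $g^{\mathrm{end}}_m$ is integrable for $m<2$. So the correction is $O(W^{-1/2}W^Ce^{-cW})$, which is absorbed into the $W^{-1-j}$ remainder bound, and its derivatives are handled by the same product rule. Upper and lower bounds on $D$ for large $W$ come from those on $h_{g,0}$. For bounded $W\in[1,W_0]$, positivity and continuity on a compact set suffice, using the mass of $\rho$ on $[-R,R]$ exactly as in the last step of \Cref{lem:averaged-heat-expansion}.

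For the rescaled statement I would write
\[
 \mathcal A_p=D_p(W_p)\,W_p^{-1/2}e^{-m^2W_p/2},
\]
where $D_p$ is built from $\rho_p$. The exponential factor factorizes as
\[
 e^{-m^2W_p/2}=e^{-m^2\lambda p/2}\,e^{-m^2B/2}\,e^{-m^2 o_{C^J}(1)/2},
\]
and $W_p^{-1/2}=(\lambda p)^{-1/2}(1+O(1/p))$. After dividing by $t_p$, the factors $p^{-1/2}$ cancel, leaving
\[
 t_p^{-1}\mathcal A_p=\lambda^{-1/2}e^{-m^2B/2}\,D_p(W_p,\cdot)\,(1+o_{C^J}(1)).
\]
Then $D_p(W_p)-d_{\mathrm{end}}(\rho_p)=O(p^{-1})$ in $C^J$. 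Here $W$-derivatives are converted by the chain rule, and since derivatives of $W_p$ are $O(p)$, each one loses a factor $p$ while gaining $W^{-1}\sim p^{-1}$; the bound $W^{1+j}|\partial_W^j(\cdot)|\le C$ is exactly what keeps these terms bounded. Finally, $d_{\mathrm{end}}(\rho_p)-d_{\mathrm{end}}(\rho)=O(p^Ae^{-cp})$, because $\sech(mx)$ has derivatives bounded by polynomials, hence by $e^{c_*|x|}$, so it pairs continuously with $\mathcal M_{c_*}$.

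I expect the main obstacle to be precisely this chain-rule bookkeeping. One issue is that the $o_{C^J}(1)$ term in the exponent must vanish after exponentiation, and it does, since it is multiplied only by $m^2/2$. The other issue is that the derivatives of $D_p(W_p)$ must not blow up despite $\partial W_p=O(p)$. Uniformity of \Cref{lem:averaged-heat-expansion} over the $p$-dependent family $\rho_p$, via the uniform moments and $C^J$ bounds, must also be checked, but this is routine.
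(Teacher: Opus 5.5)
Your proposal is correct and follows essentially the same route as the paper. Both arguments write the conditional ratio $\mathsf H_W(\cosh^{m-2})/\mathsf H_W(\cosh^m)$, factor the denominator via the large-variance lemma (your $(1+z_{m,W})^{-1}$ is exactly the paper's $e^{-m\varepsilon_W}$), reduce to $H_{g^{\mathrm{end}}}$ and the averaged heat expansion, and then compose with $W_p$, where the $W^{1+j}$-weighted bounds absorb the $O(p)$ derivatives of $W_p$. The only difference is cosmetic: you first compare $D_p(W_p)$ with $d_{\mathrm{end}}$ built from $\rho_p$ and then pass to $\rho_{\vartheta,\lambda}$, whereas the paper combines both errors into the single estimate $CW^{-1-j}+Cp^Ae^{-cp}$.
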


\begin{proof}
Write $m=m(\vartheta)$.  Conditioning on $X=x$ gives
\[
 \E[\sech^2Y\mid X=x]
 =
 \frac{\mathsf H_W(\cosh^{m-2})(x)}
      {\mathsf H_W(\cosh^m)(x)}.
\]
Hence
\[
 A^{\mathrm{end}}(W,\vartheta)
 =
 \int_{\R}
 \frac{\mathsf H_W(\cosh^{m-2})(x)}
      {\mathsf H_W(\cosh^m)(x)}
 \rho_\vartheta(\dd x).
\]

Let $\varepsilon_W$ denote the remainder in \eqref{eq:large-V}.  Since
\[
 \mathsf H_W(\cosh^m)(x)
 =\exp\left(mT_{m,W}\log\cosh x\right),
\]
equation \eqref{eq:large-V} gives the exact identity
\[
 \mathsf H_W(\cosh^m)(x)
 =
 2^{1-m}e^{m^2W/2}\cosh(mx)e^{m\varepsilon_W(x)}.
\]
Substituting this into the preceding formula yields
\[
 A^{\mathrm{end}}(W,\vartheta)
 =
 D(W,\vartheta)W^{-1/2}e^{-m^2W/2},
\]
where
\[
 D(W,\vartheta)
 =
 2^{m-1}W^{1/2}
 \int_{\R}
 \frac{\mathsf H_W(\cosh^{m-2})(x)}
      {\cosh(mx)}
 e^{-m\varepsilon_W(x)}
 \rho_\vartheta(\dd x).
\]

By \eqref{eq:large-V-error},
\[
 e^{-m\varepsilon_W(x)}=1+O(W^Ce^{-cW})
\]
uniformly together with the mixed derivatives required below.  Therefore,
using the definition of $H_{g^{\mathrm{end}}}$,
\[
 \partial_W^j\partial_\vartheta^\alpha
 \left[
 D
 -2^{m-1}W^{1/2}H_{g^{\mathrm{end}}}(W,\vartheta)
 \right]
 =
 O(W^Ce^{-cW})
\]
for $j+|\alpha|\leq J$.  By
\eqref{eq:averaged-heat-expansion},
\[
 W^{1/2}H_{g^{\mathrm{end}}}(W,\vartheta)
 =
 h_{g^{\mathrm{end}},0}(\vartheta)
 +\mathcal R_{g^{\mathrm{end}}}(W,\vartheta).
\]
Thus, with
\[
 d_{\mathrm{end}}(\vartheta)
 =
 2^{m(\vartheta)-1}
 h_{g^{\mathrm{end}},0}(\vartheta),
\]
equations \eqref{eq:averaged-heat-remainder} and
\eqref{eq:h0-formula} give
\[
 \left|
 \partial_W^j\partial_\vartheta^\alpha
 \bigl(D-d_{\mathrm{end}}\bigr)
 \right|
 \leq CW^{-1-j}
\]
for all sufficiently large $W$, together with the formula
\eqref{eq:d-end-formula}.  On any fixed interval $1\leq W\leq W_0$, the
Gaussian formulas give uniform bounds for all of these derivatives; since
$W^{-1-j}$ is bounded below there, enlarging $C$ gives the same estimate.
Since $D>0$, its convergence to the positive function
$d_{\mathrm{end}}$ gives a uniform positive lower bound for large $W$;
positivity and compactness give the lower bound on $1\leq W\leq W_0$.
The upper bound follows in the same way.

We now prove the second assertion.  Define
$D_p(W,\vartheta,\lambda,y)$ by the same formula as
$D(W,\vartheta)$, with $\rho_{p,\vartheta,\lambda,y}$ in place of
$\rho_\vartheta$.  Then
\[
 \mathcal A_p
 =
 D_p(W_p,\vartheta,\lambda,y)
 W_p^{-1/2}e^{-m(\vartheta)^2W_p/2}.
\]
The preceding estimates, together with the assumed
$C^J(\mathcal M_{c_*})$ convergence of
$\rho_{p,\vartheta,\lambda,y}$ to $\rho_{\vartheta,\lambda}$, give
\[
 \left|
 \partial_W^j
 \partial_{\vartheta,\lambda,y}^{\alpha}
 \bigl(D_p(W,\vartheta,\lambda,y)
       -d_{\mathrm{end}}(\vartheta,\lambda)\bigr)
 \right|
 \leq
 CW^{-1-j}+Cp^Ae^{-cp}
\]
whenever $j+|\alpha|\leq J$.

Now compose with $W=W_p$.  Since
\[
 W_p=\lambda p+B(\vartheta,\lambda)+o_{C^J}(1)
\]
and $\lambda$ is bounded away from zero, $W_p\asymp p$.  In the chain rule,
a term involving $r\geq1$ derivatives with respect to $W$ is bounded by
\[
 Cp^rW_p^{-1-r}+Cp^Ae^{-cp}
 =
 O(p^{-1})+Cp^Ae^{-cp},
\]
because the required parameter derivatives of $W_p$ are at most $O(p)$.
Terms with no $W$-derivative are
$O(W_p^{-1})+O(p^Ae^{-cp})$.  Consequently,
\[
 D_p(W_p,\vartheta,\lambda,y)
 =
 d_{\mathrm{end}}(\vartheta,\lambda)+o_{C^J}(1).
\]

Finally, the definition of $t_p$ gives the exact identity
\[
 t_p^{-1}\mathcal A_p
 =
 \sqrt{\frac p{W_p}}
 \exp\left(
 -\frac{m(\vartheta)^2}{2}(W_p-\lambda p)
 \right)
 D_p(W_p,\vartheta,\lambda,y).
\]
The assumptions on $W_p$ imply, in $C^J$,
\[
 \sqrt{\frac p{W_p}}\longrightarrow\lambda^{-1/2},
 \qquad
 W_p-\lambda p\longrightarrow B(\vartheta,\lambda),
\]
while the preceding estimate gives
\[
 D_p(W_p,\vartheta,\lambda,y)
 \longrightarrow d_{\mathrm{end}}(\vartheta,\lambda).
\]
Therefore
\[
 t_p^{-1}\mathcal A_p
 =
 d_0(\vartheta,\lambda)+o_{C^J}(1),
\]
where
\[
 d_0(\vartheta,\lambda)
 =
 \lambda^{-1/2}d_{\mathrm{end}}(\vartheta,\lambda)
 \exp\left(
 -\frac{m(\vartheta)^2}{2}B(\vartheta,\lambda)
 \right).
\]
This proves \eqref{eq:endpoint-rescaled}.  The stated regularity and
positive upper and lower bounds for $d_0$ follow from compactness.
\end{proof}

The next lemma studies the optimal diffusion at an observation point inside
a constant-$m$ interval.  If $t$ is the variance traversed before the
observation and $s$ the variance still to be traversed, it shows that one
minus the expected squared magnetization at the observation point is given
by an explicit averaged heat-kernel expression, up to an error exponentially
small in $s$.

\begin{lemma}\label{lem:high-overlap-asymptotics}
Fix $I\Subset(0,2)$, $t_0>0$, and a compact finite-dimensional parameter
set $\Theta$.  For each $p$, let
\[
 (m,\theta)\longmapsto\rho_{p,m,\theta},
 \qquad (m,\theta)\in I\times\Theta,
\]
be a probability-measure family.  Let $c_0>0$ be the exponential weight
furnished by \Cref{lem:large-variance-optimal-diffusion} in
\eqref{eq:limit-Doob} with $J=2$, $M=0$, and the chosen $t_0$.  Assume that,
for some $c_*>c_0$, these families are uniformly 
$C^2$-regular at weight $c_*$ in the sense of
\Cref{def:CJ-regular} and that, for every $c>0$,
\[
 \sup_{p,m,\theta}
 \int_{\R}e^{c|x|}\rho_{p,m,\theta}(\dd x)<\infty.
\]

For $m\in I$ and $r\geq0$, set
\[
 h_r=T_{m,r}\log\cosh.
\]
For $x\in\R$, let $X_0=x$ and consider the diffusion
\[
 \dd X_r=m h_{t+s-r}'(X_r)\,\dd r+\dd B_r,
 \qquad 0\leq r\leq t.
\]
Let $K_{t,s}^{x,m}$ denote the law of $X_t$. For $t\geq t_0$ and $s\geq1$, this law is
\[
 K_{t,s}^{x,m}(\dd y)
 =
 \frac{p_t(x,y)e^{mh_s(y)}}{e^{mh_{t+s}(x)}}\,\dd y.
\]
Define
\[
 \Gamma_p(t,s;m,\theta)
 =
 \int_{\R}\rho_{p,m,\theta}(\dd x)
 \int_{\R}h_s'(y)^2K_{t,s}^{x,m}(\dd y).
\]
Then
\begin{equation}\label{eq:high-overlap-representation}
 1-\Gamma_p(t,s;m,\theta)
 =
 e^{-m^2t/2}
 H_{g^{\mathrm{obs}}}(t;m,\rho_{p,m,\theta})
 +\mathcal E_p(t,s;m,\theta),
\end{equation}
where
\[
 g_m^{\mathrm{obs}}(y)=\sech(my).
\]
There are constants $c,C>0$ and $A\geq0$, independent of
$p,m,\theta,t,s$, such that, for every $j+k+|\alpha|\leq2$,
\begin{equation}\label{eq:high-overlap-error}
 \left|
 \partial_t^j\partial_s^k
 \partial_{m,\theta}^{\alpha}
 \mathcal E_p(t,s;m,\theta)
 \right|
 \leq C(1+t)^Ae^{-cs}.
\end{equation}
Here $\alpha$ is a multi-index in the variables $(m,\theta)$.
\end{lemma}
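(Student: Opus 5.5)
The plan is to reduce everything to \Cref{lem:large-variance-optimal-diffusion}, applied with total variance $W=t+s$. With $W=t+s$, the diffusion in the statement is exactly the diffusion of that lemma, since $h_{t+s-r}=T_{m,W-r}\log\cosh$. The displayed law $K_{t,s}^{x,m}$ is therefore \eqref{eq:exact-Doob} with $h_0$ replaced by $h_{t+s}$. The hypotheses $t\geq t_0$ and $W-t=s\geq1$ are exactly those of that lemma. Derivatives in $(t,s)$ at fixed $s$ are converted to $(t,W)$-derivatives by the chain rule: $\partial_t|_s=\partial_t|_W+\partial_W$ and $\partial_s=\partial_W$. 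Hence \eqref{eq:limit-Doob} and \eqref{eq:limit-Doob-magnetization} hold in the $(t,s)$ variables through order two, with error factor $e^{-cs}$.

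\emph{Step 1 (isolating the exact main term).} Write $h_s'(y)=\tanh(my)+\epsilon_s(y)$. By \eqref{eq:limit-Doob-magnetization}, $\epsilon_s$ and its mixed derivatives of order at most two are $O((1+s)^Ae^{-cs})$ uniformly in $y$. Then
\[
 1-h_s'(y)^2=\sech^2(my)-2\tanh(my)\epsilon_s(y)-\epsilon_s(y)^2 .
\]
Since $K_{t,s}^{x,m}$ is a probability measure,
\[
 1-\Gamma_p=\int\rho_{p,m,\theta}(\dd x)\int\bigl(1-h_s'(y)^2\bigr)K_{t,s}^{x,m}(\dd y).
\]
Next I replace $K_{t,s}^{x,m}$ by $K_t^{\infty,x}$ in the $\sech^2$ term. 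The limiting kernel gives an exact identity:
\[
 \int\sech^2(my)K_t^{\infty,x}(\dd y)
 =\frac{e^{-m^2t/2}}{\cosh(mx)}\int p_t(x,y)\sech(my)\,\dd y
 =e^{-m^2t/2}\frac{(\mathsf H_t g_m^{\mathrm{obs}})(x)}{\cosh(mx)} .
\]
Integrating against $\rho_{p,m,\theta}$ produces exactly $e^{-m^2t/2}H_{g^{\mathrm{obs}}}(t;m,\rho_{p,m,\theta})$. Thus $\mathcal E_p$ is the sum of two pieces:
\[
 \int\rho(\dd x)\int\sech^2(my)\,(K_{t,s}^{x,m}-K_t^{\infty,x})(\dd y)
 \quad\text{and}\quad
 \int\rho(\dd x)\int\bigl(-2\tanh(my)\epsilon_s-\epsilon_s^2\bigr)K_{t,s}^{x,m}(\dd y).
\]

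\emph{Step 2 (derivative bounds).} I differentiate each piece in $(t,s,m,\theta)$ through total order two using the product rule. A derivative falling on $\rho_{p,m,\theta}$ gives a signed measure with bounded $\|\cdot\|_{c_*,*}$ norm, by uniform $C^2$-regularity. Derivatives falling on $\sech^2(my)$ or $\tanh(my)$ in $m$ produce polynomial factors in $|y|\leq|x|+|y-x|$; these are controlled by taking $M=2$ in \eqref{eq:limit-Doob}, at the cost of enlarging $c_0$ slightly. For the first piece, \eqref{eq:limit-Doob} bounds the inner integral by $C(1+t)^Ae^{-cs}e^{c_0|x|}$. This is integrable against $|\partial^\beta\rho|$ because $c_*>c_0$; if $M=2$ forces a larger $c_0$, I will instead split off the polynomial using the moment bounds for every $c>0$ assumed for $\rho$, since for positive-order derivatives only the $c_*$ weight is available. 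For the second piece, $\epsilon_s$ already supplies $e^{-cs}$. I then need weighted total-variation bounds on derivatives of $K_{t,s}^{x,m}$ themselves. These follow from the bound on $K_t^{\infty,x}$ given by \eqref{eq:limit-Doob-two-Gaussians}, namely $C(1+t)^Ae^{c_0|x|}$, plus the difference estimate \eqref{eq:limit-Doob}. Combining gives \eqref{eq:high-overlap-error}, after absorbing $(1+s)^A$ into a smaller exponential rate $c$.

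The main obstacle is this bookkeeping of exponential weights in Step 2. The $x$-growth $e^{c_0|x|}$ from the kernel estimates must be paired with $\partial^\beta\rho$, which is only controlled in $\mathcal M_{c_*}$. The argument therefore relies on $c_*>c_0$ and on keeping $c_0$ independent of the polynomial weights introduced by the $m$-derivatives. I would handle this first by fixing $J=2$ and $M=0$ as in the statement: $\sech^2$ and $\tanh$ have bounded $m$-derivatives after factoring $|y|^k\leq 2^k(|x|^k+|y-x|^k)$, and the $|x|^k$ factor is absorbed into $e^{(c_*-c_0)|x|}$.
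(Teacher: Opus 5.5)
Your proposal is correct and follows the paper's proof: the same exact decomposition of $1-\Gamma_p$ into the $K_t^{\infty,x}$ main term plus the kernel-difference and magnetization-error pieces, bounded using \eqref{eq:limit-Doob}, \eqref{eq:limit-Doob-two-Gaussians}, and $c_*>c_0$. Your worry about polynomial weights is unnecessary. Since $m\geq\inf I>0$, the functions $y^k\sech^2(my)$ are bounded, so every $m$-derivative of $\sech^2(my)$ and $\tanh(my)$ is bounded outright and $M=0$ suffices. You also do not need the splitting $|y|^k\leq2^k(|x|^k+|y-x|^k)$, which with $M=0$ would in fact leave the $|y-x|^k$ part uncontrolled.
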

\begin{proof}
Fix $p$, $m\in I$, and $\theta\in\Theta$, and abbreviate
\[
 \rho=\rho_{p,m,\theta}.
\]
With total variance $W=t+s$, the kernel $K_{t,s}^{x,m}$ is exactly the
kernel $K_{t,W}^x$ in \eqref{eq:exact-Doob}.  Hence
\eqref{eq:limit-Doob}, after substituting $W=t+s$, gives, for
$j+k+\ell\leq2$,
\[
 \int_{\R}
 \left|
 \partial_t^j\partial_s^k\partial_m^\ell
 \bigl(K_{t,s}^{x,m}-K_t^{\infty,x}\bigr)
 \right|(\dd y)
 \leq C(1+t)^Ae^{-cs}e^{c_0|x|}.
\]
Here differentiation in $s$ corresponds to differentiation in $W$, while
differentiation in $t$ with $s$ fixed acts as
$\partial_t+\partial_W$ on the variables in \eqref{eq:limit-Doob}.

Let $\varepsilon_s$ denote the remainder in \eqref{eq:large-V}.
Differentiating that expansion in the spatial variable gives
\[
 h_s'(y)=\tanh(my)+\partial_y\varepsilon_s(y).
\]
Therefore
\begin{equation}\label{eq:high-overlap-magnetization-error}
 \delta_s(y)
 =
 1-h_s'(y)^2-\sech^2(my)
 =
 -2\tanh(my)\partial_y\varepsilon_s(y)
 -\bigl(\partial_y\varepsilon_s(y)\bigr)^2.
\end{equation}
By \eqref{eq:large-V-error}, after decreasing $c>0$ if necessary, all
derivatives of $\delta_s$ of total order at most two in $(s,m,y)$ are
bounded by $Ce^{-cs}$.

From the definition of $\Gamma_p$,
\[
 1-\Gamma_p
 =
 \int_{\R}\rho(\dd x)
 \int_{\R}\bigl(1-h_s'(y)^2\bigr)K_{t,s}^{x,m}(\dd y).
\]
Using
\[
 1-h_s'(y)^2=\sech^2(my)+\delta_s(y)
\]
and adding and subtracting $K_t^{\infty,x}$ gives the exact decomposition
\[
 1-\Gamma_p=M_p+\mathcal E_p,
\]
where
\[
 M_p
 =
 \int_{\R}\rho(\dd x)
 \int_{\R}\sech^2(my)K_t^{\infty,x}(\dd y)
\]
and
\begin{align*}
 \mathcal E_p
 ={}&
 \int_{\R}\rho(\dd x)
 \int_{\R}\sech^2(my)
 \bigl(K_{t,s}^{x,m}-K_t^{\infty,x}\bigr)(\dd y)+
 \int_{\R}\rho(\dd x)
 \int_{\R}\delta_s(y)K_{t,s}^{x,m}(\dd y).
\end{align*}

By the definition of $K_t^{\infty,x}$,
\[
 \int_{\R}\sech^2(my)K_t^{\infty,x}(\dd y)
 =
 e^{-m^2t/2}
 \frac{\mathsf H_t(\sech(m\,\cdot))(x)}{\cosh(mx)}.
\]
Consequently,
\[
 M_p
 =
 e^{-m^2t/2}
 H_{g^{\mathrm{obs}}}(t;m,\rho),
\]
which proves \eqref{eq:high-overlap-representation}.

It remains to bound $\mathcal E_p$.  The first term in its definition is
controlled directly by \eqref{eq:limit-Doob}.  For the second term,
\eqref{eq:high-overlap-magnetization-error} gives the factor $e^{-cs}$;
the required derivatives of the kernel are controlled by
\eqref{eq:limit-Doob} together with the explicit two-Gaussian formula
\eqref{eq:limit-Doob-two-Gaussians}.  Thus, before averaging in $x$, all
derivatives of total order at most two are bounded by
\[
 C(1+t)^Ae^{-cs}e^{c_0|x|}.
\]

When differentiating in $(m,\theta)$, derivatives in $\theta$ act only on
$\rho_{p,m,\theta}$, while derivatives in $m$ act on both the integrands
and $\rho_{p,m,\theta}$.  The derivatives of $\rho_{p,m,\theta}$ that arise
are finite signed measures uniformly bounded in $\|\cdot\|_{c_*,*}$.
Since $c_*>c_0$, the factors $e^{c_0|x|}$, together with the polynomial
factors in $|x|$ produced by differentiation, are integrable against all
of these measures.  Differentiating under the integrals
therefore gives
\[
 \left|
 \partial_t^j\partial_s^k
 \partial_{m,\theta}^{\alpha}
 \mathcal E_p(t,s;m,\theta)
 \right|
 \leq C(1+t)^Ae^{-cs},
 \qquad
 j+k+|\alpha|\leq2.
\]
This proves \eqref{eq:high-overlap-error}.
\end{proof}

The preceding estimate gives a more explicit asymptotic when the observation
point is parametrized by the high-overlap variance coordinate $v$.

\begin{corollary}\label{cor:high-overlap-v-window}
Assume the hypotheses of \Cref{lem:high-overlap-asymptotics}.  Fix
$L>0$ and $C_0>0$, and suppose that, for
\[
 L\leq v\leq C_0\log p,
\]
we have 
\[
 \tau_p(v)=v+B_p(v),
 \qquad
 |B_p(v)|\leq C,
 \qquad
 |B_p'(v)|\leq\frac{C}{pv},
\]
with
\[
 \tau_p(v)\geq\max\{t_0,1\}.
\]
Let $s_p(v)\geq c_sp$ for some $c_s>0$, and assume that
$|s_p'(v)|$ grows at most polynomially in $p$.  With $m$ and $\theta$
fixed when differentiating in $v$, set
\[
 \Gamma_p(v)
 =
 \Gamma_p\bigl(\tau_p(v),s_p(v);m,\theta\bigr)
\]
and
\[
 b_p(v)
 =
 v^{1/2}e^{m^2v/2}\bigl(1-\Gamma_p(v)\bigr).
\]
For all sufficiently large $p$,
\[
 0<c\leq b_p(v)\leq C
\]
uniformly on $L\leq v\leq C_0\log p$, and
\begin{equation}\label{eq:high-overlap-log-derivative}
 \left|
 \frac{\dd}{\dd v}\log\bigl(1-\Gamma_p(v)\bigr)
 +\frac{m^2}{2}+\frac1{2v}
 \right|
 \leq\frac{C}{v^2}+\frac{C}{p}.
\end{equation}
Moreover, once a common constant $C_B$ in the bounds
\[
 |B_p(v)|\leq C_B,
 \qquad
 |B_p'(v)|\leq\frac{C_B}{pv}
\]
is fixed, there exists $L_*=L_*(C_B)>0$ such that, for $L\geq L_*$, the
constant $C$ in \eqref{eq:high-overlap-log-derivative} may be chosen
independently of $L$, $C_0$, and $c_s$; only the threshold in $p$ may depend
on these parameters. 
\end{corollary}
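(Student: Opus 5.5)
The plan is to combine the representation \eqref{eq:high-overlap-representation} of \Cref{lem:high-overlap-asymptotics} with the averaged heat-kernel expansion of \Cref{lem:averaged-heat-expansion}. We write $t=\tau_p(v)$ and $s=s_p(v)$, and apply the lemma with $g=g^{\mathrm{obs}}$ and $\rho=\rho_{p,m,\theta}$. This gives
\[
 1-\Gamma_p(v)=e^{-m^2t/2}t^{-1/2}\bigl(h_{g,0}+\mathcal R_g(t)\bigr)+\mathcal E_p(t,s).
\]
The hypotheses of \Cref{lem:averaged-heat-expansion} (uniform $C^2$-regularity and exponential moments) are inherited directly from those of \Cref{lem:high-overlap-asymptotics}. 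Consequently the main term $M_p(t)=e^{-m^2t/2}H_{g^{\mathrm{obs}}}(t)$ satisfies $c\,t^{-1/2}e^{-m^2t/2}\le M_p\le C\,t^{-1/2}e^{-m^2t/2}$ for $t\ge1$, and $\partial_t\log M_p=-m^2/2-1/(2t)+O(t^{-2})$ by \eqref{eq:H-log-derivative}.

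Next we show the error is negligible in relative terms. On the window $t\le C_0\log p+C$, the main term is at least $c(\log p)^{-1/2}p^{-m^2C_0/2}$, which is only polynomially small. By \eqref{eq:high-overlap-error}, the error and its $t$- and $s$-derivatives are bounded by $C(\log p)^Ae^{-cc_sp}$, which is exponentially small. Hence $1-\Gamma_p=M_p(1+O(e^{-c'p}))$, with the same relative bound for $\partial_t\mathcal E_p/M_p$ and $\partial_s\mathcal E_p/M_p$.

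For the bounds on $b_p$, substitute $t=v+B_p(v)$. Since $|B_p|\le C$, we get $e^{-m^2t/2}=e^{-m^2v/2}e^{O(1)}$ and $t^{-1/2}\asymp v^{-1/2}$ because $t\ge 1$ and $v\ge L$. Together with $c_g\le h_{g,0}\le C_g$, this gives $c\le b_p\le C$.

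For the derivative bound \eqref{eq:high-overlap-log-derivative}, differentiate along the curve:
\[
\frac{\dd}{\dd v}\log(1-\Gamma_p)=\tau_p'(v)\,\partial_t\log(1-\Gamma_p)+s_p'(v)\,\partial_s\log(1-\Gamma_p).
\]
The main term depends only on $t$, so the $s$-derivative of $\log(1-\Gamma_p)$ equals $\partial_s\mathcal E_p/(1-\Gamma_p)$, which is $O(e^{-c'p})$. Since $|s_p'|$ grows only polynomially in $p$, the second summand is $O(e^{-c''p})\le C/p$.

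For the first summand, $\tau_p'=1+B_p'$ with $|B_p'|\le C/(pv)$, and $\partial_t\log(1-\Gamma_p)=-m^2/2-1/(2t)+O(t^{-2})+O(e^{-c'p})$. We then use $1/(2t)-1/(2v)=O(1/v^2)$ and $t^{-2}=O(v^{-2})$, both valid because $|B_p|\le C_B$ and $v\ge L_*$ with $L_*$ chosen, say, as $2C_B+1$ so that $t\ge v/2$. Finally, $|B_p'|\cdot\bigl|\partial_t\log(1-\Gamma_p)\bigr|\le C/(pv)\le C/p$. Collecting terms yields $C/v^2+C/p$.

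Constant bookkeeping is the main obstacle. The constants coming from \Cref{lem:averaged-heat-expansion} and from the $B_p$ estimates depend only on $C_B$ and the fixed data ($I$, $\Theta$, $t_0$). The constants $L$, $C_0$ and $c_s$ enter only through the exponentially small error terms. Those terms can be absorbed into $C/p$ once $p$ is large enough, with a threshold depending on $L$, $C_0$ and $c_s$. This gives the final uniformity claim.
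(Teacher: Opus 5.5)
Your proposal is correct and follows essentially the same route as the paper. You split $1-\Gamma_p$ into the heat-kernel main term $M_p=e^{-m^2t/2}H_{g^{\mathrm{obs}}}$ and the error $\mathcal E_p$. The polynomial lower bound on $M_p$ over the logarithmic window makes that error relatively exponentially small, and the bounds on $b_p$ and the log-derivative then follow from \eqref{eq:H-log-derivative} with $t=v+B_p(v)$.

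The differences are cosmetic. You apply the chain rule to $\log(1-\Gamma_p)$ through $\partial_t$ and $\partial_s$, whereas the paper compares $\frac{\dd}{\dd v}\log M_p$ with $\frac{\dd}{\dd v}\log(M_p+E_p)$ through the quotient identity. You also make the choice of $L_*(C_B)$ explicit by ensuring $t\geq v/2$, which the paper leaves implicit.
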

\begin{proof}
Write
\[
 t=\tau_p(v),
 \qquad
 s=s_p(v),
\]
and set
\[
 M_p(v)
 =
 e^{-m^2t/2}
 H_{g^{\mathrm{obs}}}(t;m,\rho_{p,m,\theta}),
 \qquad
 E_p(v)
 =
 \mathcal E_p(t,s;m,\theta).
\]
By \Cref{lem:high-overlap-asymptotics},
\begin{equation}\label{eq:v-window-main-error}
 1-\Gamma_p(v)=M_p(v)+E_p(v).
\end{equation}
Since
\[
 t'
 =1+B_p'(v)
 =1+O((pv)^{-1}),
\]
while $s\geq c_sp$ and $s'$ grows at most polynomially in $p$, the chain
rule and \eqref{eq:high-overlap-error} give, after decreasing $c>0$ and
enlarging $C,A$ if necessary,
\begin{equation}\label{eq:v-window-error-bound}
 |E_p(v)|+|E_p'(v)|
 \leq Cp^A(1+v)^Ae^{-cp}.
\end{equation}

Apply \Cref{lem:averaged-heat-expansion} with
$g=g^{\mathrm{obs}}$.  Its positive leading coefficient and remainder
estimate imply
\[
 c\,t^{-1/2}
 \leq
 H_{g^{\mathrm{obs}}}(t;m,\rho_{p,m,\theta})
 \leq
 C\,t^{-1/2},
 \qquad t\geq1,
\]
with constants uniform in $p,m,\theta$.  Since $t=v+O(1)$,
\[
 c\,v^{-1/2}e^{-m^2v/2}
 \leq M_p(v)
 \leq
 C\,v^{-1/2}e^{-m^2v/2}.
\]
In particular, if $m_+=\sup I$, then on
$L\leq v\leq C_0\log p$,
\[
 M_p(v)
 \geq
 c(1+\log p)^{-1/2}p^{-m_+^2C_0/2}.
\]
Combining this polynomial lower bound with
\eqref{eq:v-window-error-bound} gives, for some $c'>0$,
\begin{equation}\label{eq:v-window-relative-error}
 \frac{|E_p(v)|+|E_p'(v)|}{M_p(v)}
 \leq Cp^Ae^{-c'p}.
\end{equation}
Hence, for all sufficiently large $p$,
\[
 0<c\leq
 v^{1/2}e^{m^2v/2}\bigl(1-\Gamma_p(v)\bigr)
 \leq C,
\]
which is the asserted bound for $b_p(v)$.

It remains to estimate the logarithmic derivative.  Since $m$ and
$\theta$ are fixed when differentiating in $v$,
\[
 \frac{\dd}{\dd v}\log M_p(v)
 =
 t'
 \left(
 -\frac{m^2}{2}
 +\partial_t\log
 H_{g^{\mathrm{obs}}}(t;m,\rho_{p,m,\theta})
 \right).
\]
By \eqref{eq:H-log-derivative},
\[
 \partial_t\log
 H_{g^{\mathrm{obs}}}(t;m,\rho_{p,m,\theta})
 =
 -\frac1{2t}+O(t^{-2}),
\]
and therefore, using $t'=1+O((pv)^{-1})$,
\[
 \frac{\dd}{\dd v}\log M_p(v)
 =
 -\frac{m^2}{2}
 -\frac1{2t}
 +O(t^{-2})+O(p^{-1}).
\]
Since $t=v+O(1)$ and $t\geq1$,
\[
 \frac1t=\frac1v+O(v^{-2}),
\]
so
\begin{equation}\label{eq:v-window-main-log-derivative}
 \frac{\dd}{\dd v}\log M_p(v)
 =
 -\frac{m^2}{2}
 -\frac1{2v}
 +O(v^{-2})+O(p^{-1}).
\end{equation}

Finally, \eqref{eq:v-window-main-error} gives the exact identity
\[
 \frac{\dd}{\dd v}\log\bigl(1-\Gamma_p(v)\bigr)
 -\frac{\dd}{\dd v}\log M_p(v)
 =
 \frac{E_p'M_p-M_p'E_p}
      {M_p(M_p+E_p)}.
\]
By \eqref{eq:v-window-relative-error}, $|E_p|/M_p\leq1/2$ for all
sufficiently large $p$.  Moreover,
\eqref{eq:v-window-main-log-derivative} gives
$|M_p'|\leq CM_p$.  The right-hand side above is therefore
$O(p^Ae^{-c'p})$, uniformly on the window, and hence is absorbed by
$O(p^{-1})$.  Combining this with
\eqref{eq:v-window-main-log-derivative} proves
\eqref{eq:high-overlap-log-derivative}.
\end{proof}

\section{Positive finite-temperature entropy}\label{app:entropy}

The purpose of this appendix is to prove that the finite-temperature models considered in the paper have strictly positive entropy density. We first establish a Gaussian Gibbs estimate controlling the relevant cavity-field moments, and then use it to obtain a uniform lower bound on the conditional spin entropy and hence on the thermodynamic entropy density.

\begin{lemma}\label{lem:gaussian-gibbs-estimate} 
Let $\Omega$ be a finite nonempty set, let $g\sim N(0,I_d)$, and let
$(a_\sigma,b_\sigma)_{\sigma\in\Omega}\subset\R^d\times\R^d$ be deterministic.
Define
\[
 G(\sigma)=\frac{e^{g\cdot b_\sigma}}
 {\sum_{\rho\in\Omega}e^{g\cdot b_\rho}}.
\]
If
\[
 \|a_\sigma\|^2\leq v,
 \qquad
 |a_\sigma\cdot b_\rho|\leq c
 \quad(\sigma,\rho\in\Omega),
\]
then
\begin{equation}\label{eq:gaussian-gibbs-estimate}
 \E\sum_{\sigma\in\Omega}G(\sigma)(g\cdot a_\sigma)^2
 \leq v+4c^2.
\end{equation}
\end{lemma}

\begin{proof}
Write
\[
 \langle b\rangle_G=\sum_{\rho\in\Omega}G(\rho)b_\rho,
 \qquad
 \Var_G(a_\sigma\cdot b)
 =\sum_{\rho\in\Omega}G(\rho)
 \bigl(a_\sigma\cdot(b_\rho-\langle b\rangle_G)\bigr)^2.
\]
For fixed $\sigma$, differentiation of the Gibbs weight gives
\[
 a_\sigma\cdot\nabla_g G(\sigma)
 =
 G(\sigma)\,
 a_\sigma\cdot(b_\sigma-\langle b\rangle_G).
\]
Moreover,
\[
 a_\sigma\cdot\nabla_g
 \bigl[a_\sigma\cdot(b_\sigma-\langle b\rangle_G)\bigr]
 =
 -\Var_G(a_\sigma\cdot b).
\]
Indeed, the second identity follows from
\[
 a_\sigma\cdot\nabla_g G(\rho)
 =
 G(\rho)\,
 a_\sigma\cdot(b_\rho-\langle b\rangle_G).
\]
Applying Gaussian integration by parts first to
$(g\cdot a_\sigma)G(\sigma)$ gives
\begin{align*}
 \E G(\sigma)(g\cdot a_\sigma)^2
 &=
 \E G(\sigma)\|a_\sigma\|^2+
 \E G(\sigma)(g\cdot a_\sigma)
 a_\sigma\cdot(b_\sigma-\langle b\rangle_G).
\end{align*}
Applying it once more to the second term gives
\[
 \E G(\sigma)(g\cdot a_\sigma)
 a_\sigma\cdot(b_\sigma-\langle b\rangle_G)
 =
 \E G(\sigma)\left[
 \big(a_\sigma\cdot(b_\sigma-\langle b\rangle_G)\big)^2
 -\Var_G(a_\sigma\cdot b)\right].
\]
Summing over $\sigma$, we obtain
\begin{align*}
 \E\sum_\sigma G(\sigma)(g\cdot a_\sigma)^2
 =
 \E\sum_\sigma G(\sigma)\left[
 \|a_\sigma\|^2+
 \big(a_\sigma\cdot(b_\sigma-\langle b\rangle_G)\big)^2
 -\Var_G(a_\sigma\cdot b)\right].
\end{align*} 
The first term is at most $v$, the square is at most $4c^2$, and the final
term is nonpositive.  This proves \eqref{eq:gaussian-gibbs-estimate}.
\end{proof}

\begin{proof}[Proof of \Cref{prop:positive-entropy}]
We use a uniform lower bound on the Gibbs entropy.  Let $H_N$ be the
centered Gaussian Hamiltonian whose covariance is
\[
 \E H_N(\sigma^1)H_N(\sigma^2)=N\Xi(R_{12}).
\]
Set
\[
 F_N(b)=\frac1N\E\log\sum_{\sigma}e^{bH_N(\sigma)}.
\]
At $b=1$, the Gibbs law is $G_N$. 
For random variables under $G_N$, write
$\operatorname{Ent}_{G_N}(X\mid Y)$ for their conditional Shannon entropy.
Then
\begin{equation}\label{eq:entropy-pressure-finite}
 \frac1N\E S_N=F_N(1)-F_N'(1).
\end{equation}

For $i\leq N$ and a configuration $\tau$ of the remaining spins, set
\[
 h_i(\tau)
 =
 \frac12\bigl(H_N(+1,\tau)-H_N(-1,\tau)\bigr).
\]
Conditioned on $\sigma_{-i}=\tau$, the entropy of $\sigma_i$ is
\[
 \chi(h_i(\tau)),
 \qquad
 \chi(x)=\log(2\cosh x)-x\tanh x.
\]
Consequently,
\[
 \operatorname{Ent}_{G_N}(\sigma_i\mid\sigma_{-i})
 =
 \langle\chi(h_i)\rangle,
\]
where $\langle\cdot\rangle$ denotes Gibbs expectation.  For any ordering of
the spins, conditioning on all other spins can only decrease conditional
entropy, so
\[
 \operatorname{Ent}_{G_N}(\sigma_i\mid\sigma_{-i})
 \leq
 \operatorname{Ent}_{G_N}
 (\sigma_i\mid\sigma_1,\ldots,\sigma_{i-1}).
\]
The chain rule for Shannon entropy therefore gives
\begin{equation}\label{eq:entropy-chain}
 S_N
 \geq
 \sum_{i=1}^N\langle\chi(h_i)\rangle.
\end{equation}
For all $x\in\R$,
\begin{equation*}
 \chi(x)\geq\frac12e^{-2|x|}.
\end{equation*}
Indeed, with $t=e^{-2|x|}\in(0,1]$,
\[
 \chi(x)=\log(1+t)+\frac{2|x|t}{1+t}
 \geq\log(1+t)\geq\frac{t}{1+t}\geq\frac t2.
\]

Represent \(H_N(\sigma)=g\cdot b_\sigma\) and put
\(h_i(\tau)=g\cdot a_{i,\tau}\).  The covariance formula and the mean-value
theorem give, for all \(\tau,\rho\),
\[
 \|a_{i,\tau}\|^2\leq\Xi'(1),\qquad
 |a_{i,\tau}\cdot b_\rho|\leq\Xi'(1).
\]
Applying \Cref{lem:gaussian-gibbs-estimate} with
$a_\sigma=a_{i,\sigma_{-i}}$ therefore gives
\begin{equation}\label{eq:cavity-second-moment}
 \E\langle h_i^2\rangle
 \leq M_\Xi=\Xi'(1)+4\Xi'(1)^2.
\end{equation}
Using \eqref{eq:entropy-chain}--\eqref{eq:cavity-second-moment}, Jensen's
inequality for $y\mapsto e^{-2y}$ on $[0,\infty)$, and Cauchy--Schwarz,
\begin{align}
 \frac1N\E S_N
 &\geq\frac1{2N}\sum_{i=1}^N\E\langle e^{-2|h_i|}\rangle \ge \frac12\exp\left(-\frac2N\sum_{i=1}^N\E\langle|h_i|\rangle\right)
 \geq\frac12e^{-2\sqrt{M_\Xi}}>0.
 \label{eq:entropy-positive-uniform}
\end{align}
By the Parisi formula \cite{Talagrand2006,Panchenko2014}, for every $b$ in a
neighborhood of one the convex functions $F_N$ converge pointwise to
\[
 \mathfrak p(b)=\inf_{\mu\in\M}\Pp_{b^2\Xi}(\mu).
\]
For every $0<h<1$, convexity gives
\[
 \frac{F_N(1)-F_N(1-h)}h
 \leq F_N'(1)
 \leq\frac{F_N(1+h)-F_N(1)}h.
\]
For each fixed $0<h<1$, pointwise convergence therefore gives
\[
 \frac{\mathfrak p(1)-\mathfrak p(1-h)}h
 \leq\liminf_{N\to\infty}F_N'(1)
 \leq\limsup_{N\to\infty}F_N'(1)
 \leq\frac{\mathfrak p(1+h)-\mathfrak p(1)}h.
\]
Letting $h\downarrow0$ and using differentiability of $\mathfrak p$ at one,
supplied by \Cref{prop:stability} together with the smooth-family clause of
\Cref{prop:finite-step-regularity}, shows that
$F_N'(1)\to\mathfrak p'(1)$. Taking limits in
\eqref{eq:entropy-pressure-finite} and
\eqref{eq:entropy-positive-uniform} proves
\[
 s_\Xi\geq\frac12e^{-2\sqrt{M_\Xi}}>0.
\]
\end{proof}

\bibliographystyle{amsplain}
\bibliography{references}

\end{document}